\documentclass[11pt]{article}

\usepackage[a4paper,margin=29mm]{geometry}
\usepackage[T1]{fontenc}
\usepackage[utf8]{inputenc}
\usepackage{lmodern}
\usepackage{microtype}
\usepackage{amsmath,amssymb,amsthm,mathtools}
\usepackage{array,booktabs,longtable}
\usepackage{arydshln}
\usepackage{enumitem}
\usepackage{tikz-cd}
\usepackage{graphicx}
\usepackage{xcolor}
\usepackage{hyperref}
\usepackage{aliascnt}
\usepackage{cleveref}
\hypersetup{
  colorlinks=true,
  linkcolor=blue!45!black,
  citecolor=blue!45!black,
  urlcolor=blue!45!black
}

\newcommand{\Fp}{\mathbb F_p}
\newcommand{\Ffive}{\mathbb F_5}
\newcommand{\Fnine}{\mathbb F_9}

\newcommand{\Ftwentyfive}{\mathbb F_{25}}
\newcommand{\Fthree}{\mathbb F_3}

\newcommand{\mgen}{\mathsf{X}}

\newcommand{\Cl}{\operatorname{Cl}}
\newcommand{\Gal}{\operatorname{Gal}}
\newcommand{\Hom}{\operatorname{Hom}}
\newcommand{\cd}{\operatorname{cd}}
\newcommand{\AV}{\mathrm{AV}}
\newcommand{\HT}{\operatorname{HT}}
\newcommand{\rk}{\operatorname{rk}}
\newcommand{\coker}{\operatorname{coker}}
\newcommand{\tr}{\operatorname{tr}}
\newcommand{\Spec}{\operatorname{Spec}}

\newcommand{\ab}{\mathrm{ab}}

\newtheorem{theorem}{Theorem}[section]
\newtheorem*{theoremA}{Theorem A}
\newtheorem*{theoremB}{Theorem B}
\newtheorem*{theoremC}{Theorem C}
\newtheorem*{theoremD}{Theorem D}

\newaliascnt{proposition}{theorem}
\newtheorem{proposition}[proposition]{Proposition}
\aliascntresetthe{proposition}
\crefname{proposition}{proposition}{propositions}
\Crefname{proposition}{Proposition}{Propositions}

\newaliascnt{lemma}{theorem}
\newtheorem{lemma}[lemma]{Lemma}
\aliascntresetthe{lemma}
\crefname{lemma}{lemma}{lemmas}
\Crefname{lemma}{Lemma}{Lemmas}

\newaliascnt{corollary}{theorem}
\newtheorem{corollary}[corollary]{Corollary}
\aliascntresetthe{corollary}
\crefname{corollary}{corollary}{corollaries}
\Crefname{corollary}{Corollary}{Corollaries}

\theoremstyle{definition}
\newaliascnt{definition}{theorem}
\newtheorem{definition}[definition]{Definition}
\aliascntresetthe{definition}
\crefname{definition}{definition}{definitions}
\Crefname{definition}{Definition}{Definitions}

\newaliascnt{algo}{theorem}
\newtheorem{algo}[algo]{Algorithm}
\aliascntresetthe{algo}
\crefname{algo}{algorithm}{algorithms}
\Crefname{algo}{Algorithm}{Algorithms}

\theoremstyle{remark}
\newaliascnt{remark}{theorem}
\newtheorem{remark}[remark]{Remark}
\aliascntresetthe{remark}
\crefname{remark}{remark}{remarks}
\Crefname{remark}{Remark}{Remarks}

\newaliascnt{example}{theorem}

\aliascntresetthe{example}
\crefname{example}{example}{examples}
\Crefname{example}{Example}{Examples}
\crefname{appendix}{Appendix}{Appendices}
\Crefname{appendix}{Appendix}{Appendices}

\title{Mild $p$-Class Tower Groups of Imaginary Quadratic Fields}
\author{Denis Vogel}
\date{}

\newcommand{\blfootnote}[1]{%
  \begingroup
  \renewcommand{\thefootnote}{}%
  \footnote{#1}%
  \addtocounter{footnote}{-1}%
  \endgroup
}

\begin{document}
\maketitle
\blfootnote{\emph{2020 Mathematics Subject Classification.} Primary
11R37; Secondary 11R34, 11Y40, 20E18.\\
\emph{Key words and phrases.} Class field tower, mild pro-$p$
group, strongly free presentation, triple Massey product,
cohomological dimension.}

\vspace{-2.5em}
\begin{abstract}
Let $K$ be an imaginary quadratic number field, let $p$ be an
odd prime, and let
$G=G_{\varnothing}(K)(p)$ be the Galois
group of the maximal everywhere unramified pro-$p$ extension of
$K$.  To each mod-$p$ character $x$ of $G$ we associate a
linear map $D_x$ from $\Cl(K)[p]$ to $\Cl(K)/p$;
a formula of Ahlqvist and Carlson expresses it through the
class of a norm ideal in the unramified cyclic degree-$p$
extension attached to $x$.  These maps determine all triple Massey products
on $H^1(G,\Fp)$, and with them the cubic initial relations of
$G$.

Suppose that the $p$-class rank $d=\dim_{\Fp}\Cl(K)/p$ is at
least three.  The $(d-1)\times(d-1)$ minors of the family
$x\mapsto D_x$ define a subscheme $\Sigma_D$ of
$\mathbb P^{d-1}_{\Fp}$, an invariant of $K$, the
norm-degeneracy scheme.  We prove: if the rank condition
$\rk D_x=d-2$ holds transversally at a point of $\Sigma_D$,
over some finite extension of $\Fp$, then $G$ is mild, and
hence of cohomological dimension $2$.  For $p>3$ transversality
means that $\Sigma_D$ is smooth of dimension $d-3$ at the
point; at $p=3$ the kernel of the Bockstein map enters as an
additional constraint.

We treat every imaginary quadratic field of $p$-class rank at
least three with $|D_K|<2^{30}$.  Only $p=3$, $5$, and $7$ occur.  The criterion decides $206$ of the $207$ fields at $p=5$ and $7$,
and $505$ of the $12\,750$ fields at $p=3$, where the Bockstein
condition restricts its reach.  A direct computation with the cubic
initial relations settles the remaining field at $p=5$ and a
further $11\,765$ at $p=3$, $26$ of them not mild, while $480$
remain undecided.  In all, the $p$-class tower group is proved mild for
$12\,451$ of the $12\,957$ fields.  These appear to be the
first number fields for which the full maximal everywhere
unramified pro-$p$ Galois group is proved to be mild, and hence of
cohomological dimension~$2$.
\end{abstract}

{\small
\setcounter{tocdepth}{1}
\tableofcontents}

\section{Introduction}\label{sec:introduction}

Let $K$ be a number field and let $p$ be a prime.  Write
$G_{\varnothing}(K)(p)$ for the $p$-class tower group, the Galois
group of the maximal everywhere unramified pro-$p$ extension of
$K$.  It is finite exactly when the $p$-class field tower of $K$
terminates.  Whether that always happens was a classical
question.  Golod and Shafarevich answered it negatively in 1964,
giving a criterion for infinitude and exhibiting fields that
satisfy it.  Since then the group has been studied from
several sides: the type of presentation it admits,
classifications in small rank, explicit candidates for tower
groups, and heuristics for their distribution
(see for instance
\cite{VenkovKoch,McLeman,BartholdiBush,BostonBushHajir,AhlqvistPink}).

The cohomological dimension of the group is an invariant whose
value has structural consequences.  For a
pro-$p$ group it is at most one precisely for the free groups,
it is finite only for torsion-free groups, and
it does not increase on passing to closed subgroups.  For an
infinite $p$-class tower group it has not been determined.
Hajir--Maire--Ramakrishna record the computation as an open
problem \cite[\S3.4]{HMR}.  The partial results, due to Maire,
are exclusions: the maximal unramified pro-$2$ group of an
imaginary quadratic field is almost never of cohomological
dimension two, a special case of
\cite[Theorem~4.6]{Maire2018}.

One way to reach the invariant is through the defining relations
of the group.  There is a well-developed theory of mild pro-$p$
groups, initiated by Labute.  For its genesis see his own account \cite{LabuteGenesis}.
A pro-$p$ group is mild if it admits a presentation whose initial defining
relations (the lowest nonzero homogeneous parts of the relators with respect
to the Zassenhaus filtration) form a strongly free sequence.  Here strong
freeness is a condition on the Hilbert series of the quotient of the free
associative algebra on the generators of the presentation by the two-sided
ideal generated by these initial forms.  The precise definition is recalled in
\cref{sec:relations}.  This graded-algebra condition implies cohomological dimension $2$.
For initial forms lying in the free Lie algebra this is Labute's
theorem \cite{Labute2006}.  G\"artner's extension \cite{Gaertner}
covers arbitrary homogeneous initial forms, in particular the
restricted $p$-th powers that occur at $p=3$.  In arithmetic the theory has been developed for Galois groups
with \emph{restricted ramification}.  For a number field $k$ and a finite set $S$ of primes of $k$, let $G_S(k)$
denote the Galois group of the maximal pro-$p$ extension of $k$ unramified
outside $S$.  Labute
constructed mild groups $G_S(\mathbb Q)$ for suitable finite tame sets $S$
\cite{Labute2006}.  Schmidt gave a cohomological criterion for mildness in
terms of cup products and applied Labute's theory in this arithmetic setting
\cite[Theorem~5.5]{Schmidt2007}.  G\"artner later extended this viewpoint to
higher Massey products \cite{Gaertner}.  Other subsequent work includes
Labute--Minac \cite{LabuteMinac}, Salle \cite{Salle}, Maire
\cite{Maire2014}, and Bouazzaoui--El Habibi, who treat imaginary quadratic
base fields with nonempty tame ramification set \cite{BouazzaouiElHabibi}.
On the tame side,
Feuerpfeil--Hamza--Lim show that, starting from a finite tame set,
one can add two suitably chosen tame primes and obtain cohomological dimension
$2$ \cite{FeuerpfeilHamzaLim}.  In all these constructions the set $S$ is nonempty and can be
arranged so that the initial relations satisfy a mildness
criterion.

In the everywhere unramified case $S$ is empty, and the group
is determined by $K$ alone.  The
purpose of this paper is to close the gap there in one concrete
case.  From here on $K$ is an imaginary quadratic number
field and $p$ an odd prime.  For $p=3$ we exclude the single
field $K=\mathbb Q(\sqrt{-3})$.  We write $G=G_{\varnothing}(K)(p)$,
and $d$ for the $p$-class rank $\dim_{\Fp}\Cl(K)/p$.

\subsection*{Massey products and the arithmetic problem}

Complex conjugation forces the cup product in the group
cohomology of $G_{\varnothing}(K)(p)$,
\[
  H^1(G_{\varnothing}(K)(p),\Fp)\times H^1(G_{\varnothing}(K)(p),\Fp)
  \longrightarrow H^2(G_{\varnothing}(K)(p),\Fp)
\]
to vanish (Koch--Venkov \cite{VenkovKoch}).  Here and throughout, the cohomology of a profinite group is
understood in the usual sense, using continuous cochains and discrete
coefficients, and $\Fp$ always carries the trivial action.  Higher cohomological
information therefore begins with the triple Massey products.  The
author's interest in these products goes back to his dissertation
\cite[Corollary~2.3.2]{Vogel} (see also \cite{VogelCrelle}).  There the triple
Massey products are defined without obstruction in the everywhere
unramified case and give a trilinear
pairing on $(\Cl(K)/p)^*$, whose traces against the defining relations
are the cubic Magnus coefficients \cite[Theorem~1.2.6]{Vogel}.  What
interpretation that pairing admits was raised there as a question, and no
satisfactory intrinsic answer was available at the time.

Ahlqvist and Carlson supplied the missing arithmetic input.  Their explicit
formula for triple Massey products in the étale cohomology of rings of
integers expresses the relevant evaluations through norm ideals in cyclic
unramified degree-$p$ extensions \cite{AhlqvistCarlson}.  With this
formula they produced, for odd $p$, the first
known imaginary quadratic fields of $p$-class rank two with infinite
$p$-class tower.
More recently, Ahlqvist--Pink made explicit the comparison with the
cohomology of the $p$-class tower group and translated the
presentation formula in Proposition~1.3.3 of \cite{Vogel} into étale
cohomology \cite[\S7, Propositions~7.5 and~7.8]{AhlqvistPink}.

The present paper uses the triple Massey products differently.  We
assemble all of them into a single trilinear map, recover from this map
the cubic initial relation space, and then
test that relation space for strong freeness.

For each character
\[
  x\in H^1(G_{\varnothing}(K)(p),\Fp),
\]
we introduce an $\Fp$-linear map
\[
  D_x\colon \Cl(K)[p]\longrightarrow \Cl(K)/p,
\]
which we call the \emph{secondary norm operator attached to $x$}.  For
$x\ne0$, the Artin map identifies $x$ with a character of $\Cl(K)/p$,
and class field theory associates to the kernel of that character the
cyclic unramified degree-$p$ extension $L_x| K$.  Ahlqvist--Carlson's formula expresses $D_x$ through
the class of a norm ideal from this extension.  The operators $D_x$ record
that term for all characters at once.

The secondary norm operators vary quadratically with the
character.  More precisely, their failure to be additive is bilinear.  We call
the resulting bilinear map the \emph{polarization} of the secondary norm
operators.  It follows that, if the $p$-class rank is $d$, the operators
attached to a basis of the character space and to its pairwise
sums (altogether $d(d+1)/2$ of them) determine all secondary
norm operators.

We show that the triple Massey product
\[
  \langle x,y,z\rangle\in H^2(G_{\varnothing}(K)(p),\Fp),
  \qquad x,y,z\in H^1(G_{\varnothing}(K)(p),\Fp),
\]
is recovered from the polarization in the two outer variables $x$ and $z$,
with the middle character $y$ providing the corresponding evaluation.

\begin{theoremA}[Reconstruction]
Let $K$ be an imaginary quadratic number field, let $p$ be an odd
prime ($K\neq\mathbb Q(\sqrt{-3})$ if $p=3$), and put
\[
  G=G_{\varnothing}(K)(p),\qquad d=\dim_{\Fp}\Cl(K)/p.
\]
For a basis $\chi_1,\ldots,\chi_d$ of $H^1(G,\Fp)$, the
$d(d+1)/2$ secondary norm operators
\[
  D_{\chi_i},\qquad D_{\chi_i+\chi_j}\quad(1\le i<j\le d)
\]
determine the complete triple Massey product
\[
  M\colon H^1(G,\Fp)\times H^1(G,\Fp)\times H^1(G,\Fp)
  \longrightarrow H^2(G,\Fp).
\]
More precisely, for all $x,y,z\in H^1(G,\Fp)$ and
$e\in\Cl(K)[p]$,
\[
  \bigl\langle M(x,y,z),e\bigr\rangle_{\AV}
  =y\bigl((D_x+D_z-D_{x+z})(e)\bigr).
\]
Here $\langle\cdot\,,\cdot\rangle_{\AV}$ is the perfect
Artin--Verdier pairing between $H^2(G,\Fp)$ and $\Cl(K)[p]$,
recalled in \cref{subsec:comparison}.
\end{theoremA}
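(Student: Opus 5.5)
We begin by recalling how $D_x$ is defined. The paper defines it in a later section; we take it to be the operator supplied by the Ahlqvist--Carlson formula. Concretely, for $x\neq 0$ and $e\in\Cl(K)[p]$, the class $D_x(e)\in\Cl(K)/p$ is the class of a norm ideal $N_{L_x|K}(\mathfrak B)$. Here $\mathfrak B$ is an ideal of $L_x$ with $\mathfrak B^{1-\sigma}$ built from $e$, and $D_0=0$. The Ahlqvist--Carlson formula then reads, for the Massey product $\langle x,x,y\rangle$ or $\langle y,x,x\rangle$ (up to sign conventions),
\[
\langle\langle x,y,x\rangle,e\rangle_{\AV}\ \text{or}\ \langle\langle x,x,y\rangle,e\rangle_{\AV} \;=\; \pm\, y(D_x(e)).
\]
Via the Ahlqvist--Pink comparison in \S\ref{subsec:comparison}, this equates étale triple Massey products on $\Spec\mathcal O_K$ with those of $G$, and the étale $H^2$ pairing with the Artin--Verdier pairing against $\Cl(K)[p]$. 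The first step is to fix the exact statement: the case $x=z$ of the theorem says $\langle M(x,y,x),e\rangle_{\AV}=y((2D_x-D_{2x})(e))$. So we also need the scaling $D_{\lambda x}=\lambda^2 D_x$, which reduces the right side to $-2y(D_x(e))$. We would prove the scaling directly from the norm-ideal description: $L_{\lambda x}=L_x$, while the generator $\sigma$ changes to $\sigma^{\lambda^{-1}}$, and this rescales the chosen $\mathfrak B$ accordingly.

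Next we use structural properties of $M$. Since all cup products vanish by Koch--Venkov, triple Massey products are defined with zero indeterminacy. Hence $M$ is a genuine trilinear map $H^1\times H^1\times H^1\to H^2$. It also satisfies the standard shuffle relations, valid for uniquely defined Massey products with $p$ odd. In particular
\[
M(x,y,z)+M(y,x,z)+M(y,z,x)=0,\qquad M(x,y,z)=M(z,y,x)
\]
(the latter up to the sign conventions in use; the reversal symmetry follows from the antisymmetry of defining systems when all cups vanish). Given trilinearity, polarizing the diagonal identity $\langle M(x,y,x),e\rangle=-2y(D_x(e))$ in $x$ gives
\[
\langle M(x,y,z)+M(z,y,x),e\rangle_{\AV} = -2\,y\bigl(\tfrac12(D_{x+z}-D_x-D_z)(e)\bigr)\cdot 2 \cdot\tfrac12 .
\]
Concretely, $M(x+z,y,x+z)-M(x,y,x)-M(z,y,z)=M(x,y,z)+M(z,y,x)$. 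By the reversal symmetry this equals $2M(x,y,z)$, and the right side becomes $-2y((D_{x+z}-D_x-D_z)(e))$. Dividing by $2$, which is allowed since $p$ is odd, yields the claimed formula $y((D_x+D_z-D_{x+z})(e))$. The same computation shows that $D$ is quadratic, with bilinear polarization. The statement that the $d(d+1)/2$ operators $D_{\chi_i}$, $D_{\chi_i+\chi_j}$ determine everything then follows from bilinearity of the polarization together with the scaling law: these operators determine the polarization $B(\chi_i,\chi_j)$ on basis pairs, hence $B$ everywhere, and $M$ by the formula.

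The main obstacle is the first step. It requires matching the Ahlqvist--Carlson formula, which is stated in étale cohomology with their own normalizations of defining systems and of the $\mu_p$-twisted duality, to the group-cohomological $\langle\cdot,\cdot\rangle_{\AV}$ with the correct sign and factor. It also requires verifying that $D_x$ is well defined, that is, independent of the choice of $\mathfrak B$ modulo $p$-th powers and norms, and satisfies the quadratic scaling. For this we would lean on the Ahlqvist--Pink comparison (\cite[Prop.~7.5, 7.8]{AhlqvistPink}). We would then fix the overall constant by checking a single sign convention, namely which ordering $\langle x,x,y\rangle$ versus $\langle x,y,x\rangle$ the formula computes, using the shuffle identity $\langle x,y,x\rangle=-2\langle x,x,y\rangle$.
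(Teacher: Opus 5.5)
Your polarization step is correct and is essentially the paper's argument. You polarize $w\mapsto M(w,y,w)$ with the reversal symmetry and divide by $2$, where the paper polarizes $Q(x,y)=M(x,x,y)$ in $x$ using both shuffle identities (\cref{lem:Q}). (Your first display, with the factors $\tfrac12\cdot2\cdot\tfrac12$, is off by a factor $2$; the line beginning ``Concretely'' is right.)

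The gap is in your first step, which is also the source of your ``main obstacle''. You take the Ahlqvist--Carlson norm class as the \emph{definition} of $D_x$, and as you formulate it this step fails at $p=3$, which the theorem includes.
\begin{itemize}
\item At $p=3$ the operator of the theorem is $D_x(e)=[N_x(I')\,J]$ (\cref{prop:AC-p3}). Your operator is $D'_x(e)=[N_x(I')]$ for $x\neq0$, with $D'_0=0$. Then $D'_x=D_x-\delta$ for $x\neq0$, where $\delta\colon\Cl(K)[3]\to\Cl(K)/3$ is the natural map. So for $x,z,x+z\neq0$ (e.g.\ $z=x$) your right-hand side equals $\langle M(x,y,z),e\rangle_{\AV}-y(\delta(e))$. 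The identity therefore fails whenever the $3$-part of $\Cl(K)$ has a cyclic factor of order exactly $3$.
\item Another symptom: $x(D'_x(e))=0$ always, because norm classes have trivial Artin symbol in $L_x$. But $\langle M(x,x,x),e\rangle_{\AV}$ is the Bockstein value of \cref{lem:diagonal-p3}, which is nonzero in general.
\item The shuffle identity $\langle x,y,x\rangle=-2\langle x,x,y\rangle$ cannot fix your constant. It is linear in $M$, hence blind to $M\mapsto -M$. With an arithmetically defined $D$, the identity holds for $M=M^{\mathrm{AC}}$ but picks up a sign for $M^{\mathrm V}=-M^{\mathrm{AC}}$ (\cref{lem:sign}). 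That sign has to be read off from the cochain convention of Ahlqvist--Carlson, which is what the paper does by declaring $M=M^{\mathrm{AC}}$.
\end{itemize}

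The paper avoids all of this by defining $D_x$ intrinsically (\cref{def:D}). $D_x(e)$ is the unique class in $\Cl(K)/p$ with $y(D_x(e))=\langle M(x,x,y),e\rangle_{\AV}$ for all $y$. This makes sense because the right side is linear in $y$ and the evaluation pairing is perfect.

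With this definition Theorem~A is purely formal. Your own polarization argument proves it verbatim, for every odd $p$ and for either Massey convention, since $D$ changes sign with $M$. Linearity in $e$ comes from bilinearity of $\langle\cdot,\cdot\rangle_{\AV}$, the law $D_{\lambda x}=\lambda^2D_x$ comes from trilinearity of $M$, and there are no choices to be independent of.

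No norm-ideal computation of the scaling is needed. Even in your setup it would follow from the formula itself, which holds for every witness and every $y$. The Ahlqvist--Carlson formula then becomes the separate statement that this $D_x$ is computed by $[N_x(I')]$ for $p>3$ and by $[N_x(I')J]$ for $p=3$ (\cref{prop:AC,prop:AC-p3}). The questions of ordering and sign are settled there, not inside the reconstruction theorem.
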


Theorem~A is proved in numbered form as \cref{thm:reconstruction}.  Combined
with the Ahlqvist--Carlson norm formula, it turns a finite collection of
unramified cyclic class-field computations into the complete cubic Massey
data needed for the relation theory below.

We call the quadratic map $x\mapsto D_x$ the \emph{secondary norm
family}.  The $d(d+1)/2$ operators of Theorem~A determine its
bilinear polarization.  Being bilinear, the polarization extends
by scalars to every finite extension of $\Fp$, and the family is
recovered from it.  This is the canonical scalar extension of the
family; the quadratic map itself is not simply tensored.  From
here on we assume that the $p$-class rank $d$ is at least three.
The following criterion extracts mildness directly from the
secondary norm family.

\begin{theoremB}[Transversality criterion]
Let $K$ be an imaginary quadratic number field, let $p$ be an odd prime, and
assume
\[
  d=\dim_{\Fp}\Cl(K)/p\ge3.
\]
Let $k|\Fp$ be a finite extension and let
\[
  0\ne x\in
  H^1(G_{\varnothing}(K)(p),\Fp)\otimes_{\Fp}k.
\]
Extend the secondary norm family $D$ to $k$ by polarization and suppose
that
\[
  D_x\colon \Cl(K)[p]\otimes_{\Fp}k
  \longrightarrow
  \Cl(K)/p\otimes_{\Fp}k
\]
has rank $d-2$ and that the diagonal Massey product vanishes at $x$,
a condition that holds for every $x$ when $p>3$ and that for $p=3$
places $x$ in the kernel of the Bockstein map.  Using
\[
  \Cl(K)/p\otimes_{\Fp}k
  \simeq
  \bigl(H^1(G_{\varnothing}(K)(p),\Fp)\otimes_{\Fp}k\bigr)^*,
\]
the polarization induces a linear map
\[
  \Theta_x\colon 
  \bigl(H^1(G_{\varnothing}(K)(p),\Fp)\otimes_{\Fp}k\bigr)/kx
  \longrightarrow
  \Hom_k\!\left(\ker D_x,\,x^\perp/\operatorname{im}D_x\right),
\]
Here $x^\perp\subseteq\Cl(K)/p\otimes_{\Fp}k$ is the annihilator
of $x$ under the evaluation pairing.
If $\Theta_x$ is surjective, we call $x$ a \emph{transverse
element}.  In that case $G_{\varnothing}(K)(p)$ is mild, and
hence
\[
  \cd G_{\varnothing}(K)(p)=2.
\]
\end{theoremB}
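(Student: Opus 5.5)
We will reduce Theorem~B to a strong-freeness statement for the cubic initial relations, and then verify that statement by passing to a quotient on which the Hilbert series can be controlled.

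\textbf{Step 1: the relations are cubic and explicit.} By Koch--Venkov the cup product vanishes, so $G$ has a minimal presentation $1\to R\to F\to G\to1$ with $F$ free on $d$ generators $X_1,\dots,X_d$ dual to $\chi_1,\dots,\chi_d$. It has $r=\dim H^2(G,\Fp)=d$ relations; this uses $\dim \Cl(K)[p]=d$ and Artin--Verdier duality. Every relator then lies in the third Zassenhaus filtration step. By the standard Massey--Magnus dictionary, the cubic initial form of the relator $\rho_e$ dual to $e\in\Cl(K)[p]$ is
\[
\sum_{i,j,l}\langle M(\chi_i,\chi_j,\chi_l),e\rangle_{\AV}X_iX_jX_l .
\]
At $p=3$ the diagonal Massey products $\langle M(x,x,x),e\rangle$ contribute restricted cube terms. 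Theorem~A rewrites the coefficients via the polarization $B(x,z)=D_x+D_z-D_{x+z}$, so the space of initial forms is
\[
\rho_e=\sum_j X_j\,\cdot\,(\text{quadratic form }(x,z)\mapsto \chi_j(B(x,z)e))
\]
in the outer variables. Since mildness and strong freeness are insensitive to extending scalars from $\Fp$ to $k$ (the Hilbert series are unchanged), we work over $k$ throughout. We then change basis so that $x$ is the first basis vector $X_1$ of $V=H_1\otimes k$.

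\textbf{Step 2: choose a good filtration.} We will prove strong freeness of the $d$ cubic forms in $A=k\langle X_1,\dots,X_d\rangle$, meaning that the Hilbert series of $A/(\rho)$ equals $(1-dt+dt^3)^{-1}$. Following Labute's and Gärtner's method, it suffices to find a weighting or degeneration of the generators for which the leading parts of the $\rho_e$ form a strongly free sequence; strong freeness lifts from the associated graded, as in Labute's elimination/weight argument. Concretely, we assign $X_1$ a larger weight (or rescale $X_1\mapsto \lambda^{-1}X_1$ and let $\lambda\to0$) so that the dominant terms of each relator are those containing $X_1$ twice in the outer positions: $X_1\,Y_e\,X_1$-type terms together with $X_1X_1Z$ and $ZX_1X_1$ terms. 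Here $D_x$ governs the $X_1(\cdot)X_1$ coefficient, and the diagonal Massey vanishing removes the pure $X_1^3$ term. Decompose $\Cl(K)[p]\otimes k=\ker D_x\oplus C$ with $\dim C=d-2$. For $e\in C$ the leading term is $X_1 Y_eX_1$, where $Y_e$ runs over $d-2$ independent elements of $x^\perp$; indeed $\operatorname{im}D_x\subseteq x^\perp$ because $y(D_x(e))$ with $y=x$ is the diagonal Massey value. For the two elements $e\in\ker D_x$ the leading term drops one level, to the terms linear in $X_1$ in an outer slot. Those terms are given exactly by $B(x,z)e$ modulo $\operatorname{im}D_x$ and paired against $x^\perp$, that is, by $\Theta_x(z)(e)$.

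\textbf{Step 3: strong freeness of the leading forms.} This is the main obstacle. The $d-2$ forms $X_1Y_eX_1$ with independent $Y_e$ are strongly free by a direct monomial/Gröbner argument (Anick's criterion: their leading words have no self-overlaps giving new obstructions once an order with $Y_e$ suitable is chosen). The surjectivity of $\Theta_x$ onto the $2\times 2$-dimensional $\Hom(\ker D_x,x^\perp/\operatorname{im}D_x)$ should let us choose coordinates in which the two remaining forms become $X_1U+U'X_1$-type terms in the two generators $U,U'$ of $x^\perp$ complementary to the $Y_e$. Their leading words do not overlap the previous ones except along $X_1$, and the combinatorial condition of Anick is then checked by hand. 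We expect the delicate point to be arranging a monomial order in which all $d$ leading words form a combinatorially free set, with the overlap chains of length two contributing nothing. We would try the degree-lexicographic order with $X_1$ largest first. At $p=3$ we must also check that the restricted cube terms do not reach the leading parts; this is exactly where Bockstein-kernel membership of $x$ enters. Once strong freeness is established, Gärtner's theorem gives that $G$ is mild and $\cd G=2$.
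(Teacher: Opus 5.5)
Your reduction is the right one: cubic initial forms read off from the Massey product, scalar extension to $k$ justified by descent of strong freeness, $x$ as the first coordinate, $D_x$ governing the terms with two letters $X_1$, and Anick's criterion with G\"artner's theorem at the end. The gap is Step~3. Turning surjectivity of $\Theta_x$ into combinatorially free high terms is the whole content of the theorem, you leave it as ``the main obstacle'', and your sketch of it rests on a miscount.

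\emph{The dimension count.} Since $\operatorname{im}D_x\subseteq x^\perp$, $\dim x^\perp=d-1$ and $\rk D_x=d-2$, the quotient $x^\perp/\operatorname{im}D_x$ is one-dimensional. So the target of $\Theta_x$ has dimension $2$, not $2\times 2$. There is a single direction of $x^\perp$ complementary to the $Y_e$, not two generators $U,U'$, so the shape you propose for the two remaining forms cannot occur.

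\emph{Two further errors in Step~3.}
\begin{itemize}
\item The words $X_1Y_eX_1$ overlap themselves (the prefix $X_1$ is also a suffix), so they are not combinatorially free.
\item The part of a $\ker D_x$-relator that is linear in $X_1$ is not ``exactly'' $\Theta_x$: $\Theta_x$ sees only its component modulo $\operatorname{im}D_x$.
\end{itemize}

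What is missing is an explicit adapted basis. Choose $c_1,\dots,c_d$ in $\Cl(K)/p\otimes k$ with $x(c_1)=1$, with $c_2\in x^\perp$ spanning $x^\perp/\operatorname{im}D_x$, and with $c_3,\dots,c_d$ a basis of $\operatorname{im}D_x$. Let $\eta_1=x,\eta_2,\dots,\eta_d$ be the dual basis. Choose $w_j$ with $D_x(w_j)=c_{j+2}$ for $j\le d-2$, and let $w_{d-1},w_d$ be a basis of $\ker D_x$.

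\emph{The first $d-2$ forms.} The coefficient of $X_1X_1X_m$ in the form attached to $w$ is $\eta_m(D_x(w))$. This gives high terms $X_1X_1X_{j+2}$ for the first $d-2$ forms. The coefficient of $X_1^3$ vanishes by the cone condition, and that of $X_1X_1X_2$ vanishes because $\eta_2$ kills $\operatorname{im}D_x$. These high terms are the leading words of $[X_1,[X_1,Y_e]]$ in degree-lexicographic order, which is the correct replacement for $X_1Y_eX_1$.

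\emph{The two forms from $\ker D_x$.} For $w\in\ker D_x$ all coefficients of $X_1X_1X_m$ vanish. So does that of $X_1X_2X_1$, since $M(x,y,x)=-2M(x,x,y)$. The coefficient of $X_1X_2X_m$ is $-\eta_2\bigl(\Theta_x(\overline{\eta_m})(w)\bigr)$. Surjectivity of $\Theta_x$ says precisely that this $(d-1)\times 2$ matrix has rank $2$. Putting it in column echelon form by a change of basis of $\ker D_x$ gives high terms $X_1X_2X_{k_1}$ and $X_1X_2X_{k_2}$ with $2\le k_1<k_2\le d$. The components $X_1X_aX_m$ with $a\ge 3$, which $\Theta_x$ does not control, lie below these in the order.

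\emph{Combinatorial freeness.} The prefixes $X_1,X_1X_1,X_1X_2$ must be compared with the suffixes $X_m$ ($m\ge2$), $X_1X_m$ ($m\ge3$) and $X_2X_{k_i}$; none coincide, so the $d$ high terms are combinatorially free.

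\emph{Two simplifications.} Your weight degeneration is unnecessary, because degree-lexicographic order with $X_1$ largest already does the sorting. If you keep it, the lifting step needs Anick's lower bound on the Hilbert series, which you do not state. At $p=3$ the cubes $X_j^3$ with $j\ge 2$ remain in the forms but lie below the high terms. The cone condition is needed only to kill $X_1^3$ and to give $\operatorname{im}D_x\subseteq x^\perp$.
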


Theorem~B is proved in numbered form as
\cref{thm:transverse-rank-one}.  From a transverse element
the proof constructs new generators $X_1,\ldots,X_d$ of the free
associative algebra carrying the initial relations, chosen so that
the $d$ cubic initial relations have leading words
\[
  X_1X_1X_3,\ \ldots,\ X_1X_1X_d,\qquad
  X_1X_2X_{k_1},\qquad X_1X_2X_{k_2}
\]
for two indices $2\le k_1<k_2\le d$, in the degree-lexicographic
order with $X_1>\cdots>X_d$.  These $d$ words are combinatorially
free, and Anick's criterion applies.  Independently of
this choice, Theorem~A always yields the full coefficient matrix
of the $d$ initial relations in all $d^3$ cubic words, which may
be used with other strong-freeness criteria when the
transversality criterion does not apply.

The $(d-1)\times(d-1)$ minors of the secondary norm family
$x\mapsto D_x$ define a closed subscheme
$\Sigma_D\subseteq\mathbb P\bigl(H^1(G,\Fp)\bigr)
\simeq\mathbb P^{d-1}_{\Fp}$, the
\emph{norm-degeneracy scheme}.  It does not depend on any of the choices
made, and is therefore an invariant of $K$ itself.  For every
finite extension $k|\Fp$, its $k$-points are the classes $[x]$
of the characters $x\in H^1(G,\Fp)\otimes_{\Fp}k$ with
$\rk D_x\le d-2$.  The vanishing locus of the diagonal map
$x\mapsto M(x,x,x)$ of the Massey product is a second invariant, the
\emph{Bockstein cone} $C_\beta$.
For $p>3$ this map vanishes identically, and $C_\beta$ is
the whole space.  For $p=3$ it is the Bockstein map, which is
linear in $x$, and $C_\beta$ is the projectivized kernel, a
linear subspace whose codimension is the number of cyclic
factors of order exactly $3$ in the $3$-part of $\Cl(K)$.
At a point of $C_\beta$ with $\rk D_x=d-2$, the map $\Theta_x$ of
Theorem~B detects whether this rank condition holds
transversely there.  For $p>3$, the map $\Theta_x$ is surjective
if and only if $\Sigma_D\otimes_{\Fp}k$ is smooth of dimension
$d-3$ at $[x]$; for $d=3$ and every odd $p$, if and only if
$[x]$ is a reduced isolated point of $\Sigma_D\otimes_{\Fp}k$
(\cref{prop:rank-drop-geometry}, spelled out in
\cref{rem:two-equations}).  This is the geometric content of
the word \emph{transverse}, and it turns Theorem~B into an
equivalent geometric statement, proved as
\cref{cor:geometric-mildness}:

\begin{theoremC}[Geometric mildness criterion]
Let $K$ be an imaginary quadratic number field and let $p$ be
an odd prime.  Assume $d=\dim_{\Fp}\Cl(K)/p\ge3$, and for
$p=3$ assume $d=3$.  Then a finite extension $k|\Fp$ and a
transverse element $x$ as in Theorem~B exist if and only if
\begin{enumerate}[label=\textup{(\alph*)}]
\item for $p>3$: the norm-degeneracy scheme $\Sigma_D$ has a
closed point at which it is smooth of dimension $d-3$;
\item for $p=3$: the scheme $\Sigma_D$ has a reduced isolated
closed point lying on the Bockstein cone $C_\beta$.
\end{enumerate}
In either case $G_{\varnothing}(K)(p)$ is mild, and hence
$\cd G_{\varnothing}(K)(p)=2$; for $k$ one may take the residue
field of the point.
\end{theoremC}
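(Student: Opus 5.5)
Theorem~C reduces to Theorem~B once we know that, at a point $[x]$ of $C_\beta$ with $\rk D_x=d-2$, surjectivity of $\Theta_x$ means exactly that $\Sigma_D$ is smooth of dimension $d-3$ there. The plan is to set up this dictionary (the content of \cref{prop:rank-drop-geometry}) and then read off both directions.

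\emph{Step 1: tangent space at a corank-one point.} Fix a closed point $P$ of $\mathbb P(H^1(G,\Fp))$ with residue field $k$, represented by $0\ne x\in H^1(G,\Fp)\otimes k$. Expand the family at $x$ to first order: $D_{x+\varepsilon v}=D_x+\varepsilon B(x,v)$ with $\varepsilon^2=0$, where $B$ is the polarization. If $\rk D_x=d-1$, some $(d-1)\times(d-1)$ minor is a unit near $P$, so $P\notin\Sigma_D$. Suppose $\rk D_x=d-2$. Choose bases so that $D_x=\begin{pmatrix} I_{d-2}&0\\0&0\end{pmatrix}$. Since the determinantal locus of corank $\ge1$ is cut out by the minors, the standard local computation (Schur complement) applies. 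Near $P$, the ideal of $(d-1)$-minors agrees, modulo the square of the maximal ideal, with the ideal generated by the four entries of the lower-right $2\times2$ block of the family. This block is the composite $\ker D_x\to\coker D_x$.

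We must also account for the constraint from $x$. Euler-type identities, or the Massey relation of Theorem~A (via $M(x,x,z)$ and the Bockstein), force the image of $D_x$ and of the linear terms into $x^\perp$. One of the four block equations is then either identically degenerate or tied to the Bockstein condition. So the relevant linear forms are the components of
\[
v\mapsto \overline{B(x,v)}|_{\ker D_x}\in\Hom_k(\ker D_x,x^\perp/\operatorname{im}D_x),
\]
which is a space of dimension $2$. This map is exactly $\Theta_x$, well defined on $V/kx$. Hence the Zariski tangent space of $\Sigma_D\otimes k$ at $P$ is $\ker\Theta_x$, and $\Sigma_D$ is locally cut out by two equations. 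Surjectivity of $\Theta_x$ is then equivalent to these two equations having independent differentials, i.e.\ to smoothness of dimension $(d-1)-2=d-3$.

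For $p>3$ every $x$ lies in $C_\beta$, which gives (a). For $p=3$ and $d=3$, smoothness of dimension $0$ means a reduced isolated point. The Bockstein requirement of Theorem~B is precisely $P\in C_\beta$, which gives (b).

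\emph{Step 2: the two directions.} Suppose a transverse $x$ exists over some $k$. Its rank is $d-2$, so $[x]$ gives a $k$-point of $\Sigma_D$ whose image closed point is smooth of dimension $d-3$, by Step~1 and because smoothness descends along the field extension; for $p=3$ this point lies on $C_\beta$. Conversely, given such a closed point with residue field $k$, take the tautological $x$ over $k$. Smoothness of dimension $d-3<d-2$ excludes rank $\le d-3$: at such points every $(d-2)$-minor vanishes, so all $(d-1)$-minors lie in $\mathfrak m^2$ and the tangent space would be the whole space. Hence $\rk D_x=d-2$ exactly, and Step~1 gives $\Theta_x$ surjective. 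Mildness and $\cd=2$ follow from Theorem~B.

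\emph{Main obstacle.} The delicate point is the identification in Step~1 of the tangent space with $\ker\Theta_x$. One has to show that the quotient by $kx$ and the restriction of the target to $x^\perp/\operatorname{im}D_x$ are exactly what the minors see, rather than the naive $\Hom(\ker D_x,\coker D_x)$, which is $4$-dimensional. I would first prove the identity $\langle D_x(e),x\rangle=$ (Bockstein/diagonal Massey term) from Theorem~A with $x=y=z$, and then differentiate it.
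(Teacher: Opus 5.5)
Your architecture is the paper's. The tangent space of $\Sigma_D\otimes k$ at $[x]$ is $\ker d^\perp\Phi_{[x]}$. The inclusions $\operatorname{im}D_x\subseteq x^\perp$ (on the cone) and $\Delta D(x,v)(\ker D_x)\subseteq x^\perp$ shrink the target so that this kernel is $\ker\Theta_x$. Rank $\le d-3$ is excluded because all $(d-1)$-minors then lie in $\mathfrak m^2$. Smoothness and reduced isolatedness pass along $k|\Fp$.

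\textbf{The gap.} It is the clause ``and $\Sigma_D$ is locally cut out by two equations'', and the implication from transversality to smoothness of dimension $d-3$ in (a) rests on it. Nothing before it yields this.
\begin{itemize}
\item Your Schur-complement comparison only determines the ideal modulo $\mathfrak m^2$, i.e.\ the tangent space. The ideal itself is generated by all four entries of the $2\times2$ Schur complement.
\item Differentiating $x(D_x(e))=\langle M(x,x,x),e\rangle$ at $x$, as your last paragraph proposes, again gives only first-order information.
\item A tangent space of dimension $d-3$ does not by itself give local dimension $d-3$: a general corank-two locus of $d\times d$ matrices has codimension four. You need a bound on the number of local generators.
\end{itemize}

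\textbf{The missing idea.} Use the undifferentiated identity on a whole chart. For $p>3$, $x'(D_{x'}(e))=0$ holds for every $R$-point $x'$, because the diagonal Massey product vanishes identically. So the evaluation functional $\lambda(x')$ is a cosection, nonvanishing near $[x]$, with $\lambda\circ\Phi=0$.

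Then $\Phi$ factors through the rank-$(d-1)$ subbundle $\ker\lambda$. The $(d-1)$-minors become the maximal minors of a $(d-1)\times d$ matrix. Elimination against an invertible $(d-2)$-minor leaves two generators whose differentials at $[x]$ are the two components of $\Theta_x$ (\cref{lem:evaluation-family}, \cref{lem:syzygy-smooth}). Regularity of the quotient then gives smoothness. In your block picture this kills a whole row of the Schur complement, two of the four entries, not one.

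\textbf{The case $p=3$.} Here this step is unavailable, and your phrase ``tied to the Bockstein condition'' hides that. The quantity $x'(D_{x'}(e))=\langle M(x',x',x'),e\rangle$ vanishes only along $C_\beta$, not on a neighbourhood of $[x]$ unless $\beta=0$. So there is no cosection and no two-equation presentation. For $d>3$ the paper explicitly leaves the smoothness statement at $p=3$ open.

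Theorem~C does not need it, because at $p=3$ it assumes $d=3$. There $\Theta_x$ is a map between $2$-dimensional spaces, so it is surjective if and only if $\ker\Theta_x=T_{[x]}(\Sigma_D\otimes k)$ vanishes. By Nakayama, a $k$-rational point with zero tangent space is exactly one whose local ring is $k$, i.e.\ a reduced isolated point.

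So derive (b) from the tangent-space identification alone, which is valid at cone points for every odd $p$. Reserve the two-equation presentation, justified by the chart-wide identity as above, for (a).
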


In this formulation the auxiliary pair $(k,x)$ of Theorem~B
acquires an intrinsic meaning.  Mildness is detected on an invariant
of $K$, the scheme $\Sigma_D$.  This has two practical
consequences.  First, the computational search for transverse elements
(\cref{subsec:rank-one-computation}) is conducted on this
invariant.
The candidate directions, over every finite extension alike, are
the solutions of one fixed system of equations over $\Fp$, the
minors that define $\Sigma_D$.  And the extensions are
genuinely needed.  At $p=5$, $52$ of the fields treated below
have no transverse element over $\Ffive$ itself, the relevant
closed point having degree between $2$ and $6$.  Second, failure of
the criterion becomes a well-defined statement about $K$ rather
than about an unsuccessful search, and this is what makes the
field-by-field bookkeeping of \cref{sec:example-p3} possible.

Several
questions suggested by the construction remain open.  Is the
norm-degeneracy scheme of dimension $d-3$ for every field of
$p$-class rank $d\ge3$, or at least under natural arithmetic
hypotheses?  Can the existence and the residue degrees of the
points at which the criterion applies be controlled
arithmetically?  These are smooth closed points of dimension
$d-3$, and for $d=3$ reduced isolated closed points.  And does the geometric
characterization of transversality extend to $p=3$ in rank
$d>3$?  The description of the tangent spaces of $\Sigma_D$
carries over.  The proof of the smoothness part needs the
identity $\operatorname{im}D_x\subseteq x^\perp$ at every point
of a chart, and $p=3$ grants it only on the Bockstein cone;
whether the smoothness part itself extends is open.

The class-group tables of Mosunov--Jacobson are
unconditional to $|D_K|<2^{40}$ \cite{MosunovJacobson}.  Within
$|D_K|<2^{30}$, fields of $p$-class rank three occur only for
$p=3$, $5$, and $7$, with $12\,749$, $204$, and three fields.
A single further field at $p=3$ has rank four
(\cref{subsec:exceptional-fields}).  We treat every one of these
fields, with the following outcome.  The
proof is computer-assisted, and \cref{app:verification} specifies
what is computed and how it is checked.

\begin{theoremD}[Mild $p$-class tower groups for $|D_K|<2^{30}$]
Let $K$ be an imaginary quadratic number field of $p$-class rank
at least three for an odd prime $p$, with $|D_K|<2^{30}$.  Then the
$p$-class tower group
\[
  G_{\varnothing}(K)(p)=\Gal(K_{\varnothing}(p)| K)
\]
is an infinite mild FAb pro-$p$ group, and in particular
\[
  \cd G_{\varnothing}(K)(p)=2,
\]
in each of the following cases:
\begin{itemize}
\item $p=5$;
\item $p=7$;
\item $p=3$ and $K$ is one of the $12\,243$ fields of $3$-class
  rank three listed in Parts~W6 and~W7 of \cite{Repository};
\item $p=3$ and $K$ is the field of $3$-class rank four,
  $D_K=-653329427$.
\end{itemize}
For $26$ further fields at $p=3$ the cubic initial relation space
is not strongly free, so their tower groups are not mild with
respect to the Zassenhaus filtration.
\end{theoremD}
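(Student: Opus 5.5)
The proof is a finite, field-by-field verification that combines Theorems~A--C with the class-group data of Mosunov--Jacobson. First I would fix the list of fields. From the unconditional tables \cite{MosunovJacobson} I would extract every imaginary quadratic $K$ with $|D_K|<2^{30}$ whose $p$-class rank is at least three for some odd $p$: $12\,749$ fields of rank three at $p=3$, one of rank four at $p=3$, $204$ at $p=5$ and three at $p=7$. For each field I would compute generators of $\Cl(K)[p]$ and $\Cl(K)/p$, a basis $\chi_1,\dots,\chi_d$ of characters, and the $d(d+1)/2$ cyclic unramified degree-$p$ extensions $L_{\chi_i}$ and $L_{\chi_i+\chi_j}$. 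In each $L_x$ the Ahlqvist--Carlson formula gives $D_x(e)$ as the class of a norm ideal. By Theorem~A these operators give the polarization, and hence the whole trilinear Massey product $M$ and the $d\times d^3$ coefficient matrix of the cubic initial relations.

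For mildness I would run two tiers of tests. In the first tier I would compute the $(d-1)\times(d-1)$ minors defining $\Sigma_D$. Using a Gröbner basis or primary decomposition over $\Fp$, I would look for a closed point as in Theorem~C: smooth of dimension $d-3$ when $p>3$, and reduced, isolated and on $C_\beta$ when $p=3$ and $d=3$. Where such a point exists, I would pass to its residue field $k$, take a representative $x$, and check directly that $\rk D_x=d-2$ and that $\Theta_x$ is surjective. Checking surjectivity is a rank computation of a matrix over $k$, so it does not depend on the geometric equivalence. Theorem~B then gives mildness.

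Fields that fail the first tier go to the second tier. There I would test strong freeness of the cubic relation space directly from the coefficient matrix, using Anick's criterion after a search over linear changes of generators for a combinatorially free set of leading words. As a fallback I would compute the Hilbert series of $A/(r_1,\dots,r_d)$ through degree high enough to compare with $(1-dt+dt^3)^{-1}$. The same Hilbert series computation identifies the $26$ non-mild fields: a coefficient that exceeds the bound shows the initial relations are not strongly free for any presentation. This uses that the initial forms, up to a change of basis, depend only on $G$, because they are read off from $M$. The rank-four field $D_K=-653329427$ goes through the same procedure with $d=4$. Since Theorem~C only covers $d=3$ at $p=3$, for this field I would use Theorem~B or the direct Anick test.

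The remaining assertions follow formally. Mild groups have $\cd=2$ by Labute and Gärtner. A mild group with $d\ge3$ generators and $d$ relations is infinite, since $1-dt+dt^3$ has a root in $(0,1)$ and the Hilbert series therefore diverges. $G$ is FAb by class field theory, because every open subgroup corresponds to a finite extension with finite class group. The main obstacle is reliability of the arithmetic input rather than the theory. The class groups of degree-$p$ extensions $L_x$ of degree $2p$ up to $14$ have to be computed unconditionally, or the results certified, for instance by checking that the relevant norm classes are consistent with the Artin--Verdier pairing and quadratic consistency across all $d(d+1)/2$ extensions. I would try GRH-conditional computations first and then certify only the specific ideal-class identities used. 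A secondary obstacle is the Gröbner computations over extensions of $\Fp$ of degree up to $6$ for the $52$ fields at $p=5$ that need them.
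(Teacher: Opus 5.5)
Your architecture is the paper's: the Mosunov--Jacobson census, secondary norm operators from norm witnesses, reconstruction of $T$, the transversality criterion first, a direct strong-freeness test where it fails, and the formal passage from mildness to infinite, FAb, $\cd=2$. The direct test, however, has a gap exactly where the theorem depends on it. Among the fields the criterion leaves undecided, the exhaustive searches over $\Fthree$ and $\Fnine$ find Anick witnesses for only seven. Your ``fallback'' therefore has to carry:
\begin{itemize}
\item $11\,731$ of the $12\,243$ mild fields at $p=3$;
\item the field $D_K=-781922404$ at $p=5$, whose $\Sigma_D$ is a single non-reduced point of degree two, so no transverse element exists over any extension;
\item the rank-four field, whose Bockstein map has rank four, so $C_\beta$ is empty and Theorem~B cannot apply there, contrary to your suggestion.
\end{itemize}
Computing $\dim(\mathcal A/I)_n$ ``through degree high enough'' cannot prove strong freeness. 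Strong freeness is an identity in every degree, and agreement on any finite range does not imply it.

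The missing idea is a \emph{terminating} completion of a two-sided Gr\"obner basis of $I=(\rho_1,\ldots,\rho_d)$. Once every overlap of high terms is resolved, the diamond lemma (Buchberger criterion) shows that the finite set $W$ of high terms generates $\HT(I)$. The normal words of degree $n$ are then the words with no factor in $W$. Their number satisfies a linear recurrence of order at most the number $s$ of proper prefixes of elements of $W$ (a transfer matrix on longest suffixes, plus Cayley--Hamilton). The coefficients of $1/(1-dz+dz^3)$ satisfy a recurrence of order three, so agreement for $n\le s+2$ forces agreement in all degrees.

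Termination depends on the coordinates, and in the given ones it mostly fails. The paper obtains it by running through complete flags over $\mathbb F_{3^e}$, $e\le8$, over $\Ftwentyfive$ for the $p=5$ field, and in $\mathrm{GL}_4(\Fthree)$ for the rank-four field. This is legitimate because graded automorphisms and scalar extension preserve the Hilbert series of the quotient. Without this mechanism, the positive half of your second tier has only Anick witnesses to offer.

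The non-mildness step needs a hypothesis you never check, namely $\rk_{\Fp}T=d$. Reading the cubic parts off $M$ makes only their span $R_3$ intrinsic. Mildness concerns the initial forms of a minimal system of relators. These are cubic forms spanning $R_3$ only when $\vartheta_3\colon\overline R\to\mathcal A_3$ is injective, i.e.\ when $\rk_{\Fp}T=d$. If $\rk_{\Fp}T<d$, some minimal relator has Zassenhaus initial degree $h\ge4$, and the target series has a term $z^h$ in place of $z^3$ for each missing cubic form. The surplus of normal words in degree three, which a rank defect produces automatically, then excludes nothing.

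This happens in the range. $D_K=-873204895$ has $\rk_{\Fp}T=2$, so your test would declare it non-mild, whereas the paper leaves it undecided. The $26$ fields of the statement all have $\rk_{\Fp}T=3$, and only then is a deviation from $1/(1-3z+3z^3)$ a property of $G$ rather than of a presentation.

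Finally, consistency checks across the $d(d+1)/2$ extensions do not certify the arithmetic input. What makes it unconditional in the paper is that the Ahlqvist--Carlson formula holds for \emph{every} norm witness. It therefore suffices to establish $\Cl(K)$ unconditionally and to verify by exact arithmetic:
\begin{itemize}
\item that $L_x$ is unramified cyclic with the prescribed Artin symbols;
\item that $I'^{\,(1-\sigma_x)^2}\,(t)\,i_x(J)=\mathcal O_{L_x}$;
\item that $N_x(t)=a'$, the sign being decided modulo one odd prime since $\mathcal O_K^\times=\{\pm1\}$.
\end{itemize}
No class group or unit group of any $L_x$ needs to be certified. Note also that at $p=3$ the recorded value is $[N_x(I')\,J]$, not the bare norm class.
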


Here FAb means that every open subgroup has finite abelianization.
We say ``mild FAb'' rather than Labute's \emph{fabulous}
\cite{LabuteFabulous}, because fabulous groups are in addition quadratic,
while our defining relations are cubic.

\Cref{sec:results} counts the fields decided by each route, and
Parts~W5--W7 of
\cite{Repository} record for every field the datum that
decides it.
The transversality criterion decides $711$ of the $12\,451$ mild fields,
and a direct verification of strong freeness of the cubic initial
forms decides the others.  That direct verification takes one of two forms,
both applied after an invertible linear change of variables, over
$\Fthree$ or a finite extension (\cref{rem:flag}).  An \emph{Anick
witness} is such a change of variables after which the high terms of
the cubic initial forms are combinatorially free in the sense of
Anick \cite{Anick}.  Such a witness proves strong freeness, and hence
mildness, directly (\cref{rem:direct-anick}).  A \emph{word count} computes,
degree by degree, the dimensions of the quotient of the free algebra
by the ideal of the cubic initial forms and compares them with the
coefficients of $1/(1-3z+3z^3)$.  Agreement in every degree is strong
freeness (\cref{cor:word-counts}).  The dimensions are read off from
a Gr\"obner basis of the ideal, and once its completion terminates,
finitely many degrees decide (Appendix~\ref{app:word-counts}).
Theorem~D is proved in numbered form as \cref{thm:mainexample},
\cref{thm:mild-p3}, \cref{thm:p7}, and \cref{thm:rank-four}.  The assertion about
the $26$ non-mild fields is \cref{prop:nonmild-p3}.  Being FAb of
cohomological dimension $2$, the groups of Theorem~D are in
particular not $p$-adic analytic (\cref{cor:nonanalytic}), in
accordance with the unramified Fontaine--Mazur conjecture
\cite{FontaineMazur}.  \Cref{rem:FM} compares this with the older
route to non-analyticity through the Golod--Shafarevich property,
and \cref{sec:consequences} collects the consequences common to all
these groups.

The outcome differs by prime.  At $p=5$ the criterion decides all
fields but one.  For the exception, the norm-degeneracy scheme has
no reduced isolated closed point at all, so the criterion provably
cannot apply.  That field is decided by the direct route
(\cref{sec:example}).  At $p=7$ it decides
every field, with the same arithmetic, unchanged
(\cref{sec:example-p7}).  At $p=3$ the same chain operates with
the two changes of \cref{rem:p3}: the Ahlqvist--Carlson formula
carries an additional ideal factor, and the cubic initial forms lie in an
eleven-dimensional space of shuffle-symmetric forms,
reconstructed from four of the secondary norm operators.  The
diagonal of the Massey product becomes the Bockstein map, and
the criterion operates on its kernel, the Bockstein cone.  On
more than half of the fields the cone is empty and the
criterion cannot apply at all.  Indeed, at $p=3$ the
criterion is far from necessary, accounting for $505$ of the
$12\,243$ mild fields of rank three.  The direct route settles the
rest, one field at a time.  Its failure proves nothing, and $480$
fields of the range remain undecided.

The fields of Theorem~D appear to be the first number fields for
which the full maximal everywhere unramified pro-$p$ Galois group
is proved to be mild.  Closest to the
present $p=3$ setting is the work of Ahlqvist--Pink
\cite[\S\S7--11]{AhlqvistPink}.  They use triple Massey products
to study rank-two $3$-class tower groups, determining their
quotient by the fourth Zassenhaus subgroup.  In several cases
they obtain finiteness or $3$-adic analyticity.
Recent work on mildness and Massey products concerns abstract criteria,
absolute or tame Galois groups, or groups with nonempty ramification set
\cite{Efrat2026,FeuerpfeilHamzaLim,MaireMinacRamakrishnaTan}.  As far as
we are aware, none establishes mildness or cohomological dimension $2$
for the $p$-class tower group of any number
field.

The data are computed first and then verified by exact
arithmetic, and only the verification enters the proofs.
Computations that may depend on a generalized Riemann hypothesis
occur before the verification, so the results are unconditional
(Appendix~\ref{app:verification}).

\subsection*{Organization and notation}

The paper is organized as follows.  In \cref{sec:secondary-norms} we set up
the relevant cohomology and duality, define the secondary norm operators,
prove the reconstruction theorem, and relate the operators to the arithmetic
formula of Ahlqvist--Carlson.  \Cref{sec:relations} identifies the reconstructed
Massey product with the cubic initial relations and proves the
transversality criterion for every $p$-class rank $d\ge3$,
together with its geometric formulation through the
norm-degeneracy scheme (two general results on determinantal loci are proved in
Appendix~\ref{app:rank-one-lemmas}).  It also records the more general
direct Anick test
available when that criterion does not apply.  \Cref{sec:computation} presents the two algorithms behind the
proof, the evaluation of the secondary norms and the
transversality search, together with the exact checks.  The case
$p=3$ follows the same chain, and each section states what
changes.  The Bockstein cone in the criterion is the one object
with no counterpart at $p>3$ (\cref{def:bockstein-cone}).
\Cref{sec:results} collects the results: the $204$ fields at $p=5$
(\cref{sec:example}), the three fields at $p=7$
(\cref{sec:example-p7}), and the $12\,243$ mild $3$-class tower
groups of rank three (\cref{sec:example-p3}).  The boundary cases,
among them the field of rank four, are gathered in
\cref{subsec:exceptional-fields}.
\Cref{sec:consequences} records the consequences shared by all the
groups obtained: non-analyticity, the relation-rank formula along
the tower, and the graded growth.  Appendix~\ref{app:word-counts}
develops the word counts that decide strong freeness, with the
completion algorithm behind them.  Verification and
reproducibility information, for all three primes, is collected in
Appendix~\ref{app:verification}.

\paragraph{Fixed identifications.}
The computations use explicit class-group elements, so we fix and name
the identifications used to pass between them and cohomology:
$H^1(G,\Fp)$ with $(\Cl(K)/p)^*$ in \eqref{eq:H-classgroup},
$H^2(G,\Fp)$ with $\Cl(K)[p]^*$ in \eqref{eq:H2-classgroup}, and
$\operatorname{Ext}^1_{\Spec\mathcal O_K}(\Fp,\mathbb G_m)$
with $\Cl(K)[p]$ in \cref{prop:comparison}\ref{comp:dual}.  The comparison maps between
group and étale cohomology are those of \cref{prop:comparison}, and
\cref{lem:sign} relates the sign conventions of \cite{Vogel} and
\cite{AhlqvistCarlson}.  The secondary-norm matrices displayed in
Appendix~\ref{app:verification} use these identifications and the
class-group bases fixed there.

\begin{remark}[Notation and comparison with Ahlqvist--Carlson]\label{rem:ACnotation}
Wherever practical, we follow the notation of Ahlqvist--Carlson
\cite{AhlqvistCarlson}, with one exception: identities among
fractional ideals are written multiplicatively throughout, where
they write divisors additively.  In particular,
$L_x| K$ denotes the unramified cyclic degree-$p$ extension
associated with a character $x$, and $\sigma_x$ the generator of
$\Gal(L_x| K)$ corresponding to $x$ under Artin reciprocity.  We
write $i_x$ for extension of fractional ideals from $K$ to $L_x$,
and $N_x$ for the norm from $L_x$ to $K$, on elements or ideals
as the context dictates.
For the maximal everywhere
unramified pro-$p$ extension and its Galois group we use the
$S$-ramification notation
\[
  K_{\varnothing}(p),\qquad
  G_{\varnothing}(K)(p)=\Gal(K_{\varnothing}(p)| K),
\]
corresponding to $S=\varnothing$.  All additional notation used for the
reconstruction and mildness arguments is introduced at first use.
\end{remark}

\subsection*{Acknowledgements}

The passage from strong freeness to cohomological dimension two, on which
the main theorem rests, is John Labute's theorem.  The long genesis of the
theory of mild pro-$p$ groups, from a question of Serre to the first
pro-$p$ groups of cohomological dimension two with finite abelianized
derived factors, he has recently recounted himself \cite{LabuteGenesis}.
The present paper carries that theory into the everywhere unramified case.

The strategy pursued here has its origin in
\cite[Corollary~2.3.2]{Vogel}, where the triple Massey products of
the everywhere unramified case give the trilinear pairing on
$(\Cl(K)/p)^*$ whose interpretation was left as a question.  I
discussed the plan of combining it with a criterion for
mildness with Jochen G\"artner in the early 2010s, when his work on mild
pro-$p$ groups and higher Massey products \cite{Gaertner} supplied a
criterion of the kind needed.  I thank him for those conversations.  What
was still missing was the arithmetic.  It became available with
\cite{AhlqvistCarlson}, where the question is answered by the norm-ideal
formula, and the present paper is the result of putting the pieces
together.

I thank Eric Ahlqvist for making his \texttt{Massey-pari} program publicly
available.  The computations reported here were carried out with an extended
version of it, and having a working implementation of the class-field and
Artin-symbol arithmetic to build on saved a great deal of work.  The
arithmetic formula evaluated in these computations is that of
Ahlqvist--Carlson
\cite{AhlqvistCarlson}, and the passage to the relations of the tower group
uses the comparison of Ahlqvist--Pink \cite{AhlqvistPink}.  The
boundary between his code and the extensions written for this paper
is recorded in the \texttt{NOTICE} file of \cite{Repository}.

\paragraph{AI declaration.}  Large language model assistants (OpenAI
GPT Sol; Anthropic Claude Fable and Claude Opus) were used throughout
this work: for proof exploration, for implementing and running the
code of the accompanying repository, and in the exposition.  Their
part in the searches that produced the stored arithmetic data is
described in \cref{subsec:searches}.  The conception of this paper,
its overall strategy, and all final mathematical judgements are the
author's.  Every computational claim rests on the public verification
procedure of Appendix~\ref{app:verification} rather than on model
output, and the author assumes full responsibility for the
mathematical content.

\section{Cohomology, secondary norms, and reconstruction}
\label{sec:secondary-norms}\label{sec:setup}\label{sec:secondary}

Let $K$ be an imaginary quadratic number field and let $p$ be an odd
prime.  For $p=3$ we exclude the single field $K=\mathbb Q(\sqrt{-3})$.
Put
\[
  X=\Spec\mathcal O_K
\]
and let $K_{\varnothing}(p)$ be the maximal everywhere unramified pro-$p$ extension of
$K$.  We write
\[
  G=G_{\varnothing}(K)(p)=\Gal(K_{\varnothing}(p)| K).
\]
Throughout this section, $H^i(G,\Fp)$ denotes the cohomology of the
profinite group $G$ with coefficients in the discrete trivial $G$-module
$\Fp$.  Thus the cochain complex consists of continuous cochains.  In
particular,
\[
  H^1(G,\Fp)=\Hom_{\mathrm{cont}}(G,\Fp).
\]
For a finite-dimensional vector space $V$ over a field $k$ we write
\[
  V^*=\Hom(V,k)
\]
for its linear dual, and we write the evaluation
pairing as $V^*\times V\to k$.  The canonical map $V\to V^{**}$ is
an isomorphism, and we use it to identify $V$ with $V^{**}$.  For $\Fp$-vector spaces this agrees
with the dual $\Hom(V,\mathbb Q/\mathbb Z)$ of \cite{NSW} and
\cite{Vogel} under the identification
$(\mathbb Q/\mathbb Z)[p]\simeq\Fp$,
$\tfrac{a}{p}\bmod\mathbb Z\mapsto a\bmod p$, which we fix once and
for all.  It will be used again only to normalize the Artin--Verdier
pairing in \eqref{eq:AV-etale}.
Class field theory identifies $H^1(G,\Fp)$ with
$(\Cl(K)/p)^*$.  We fix the identification.  The Artin map of the maximal unramified
abelian $p$-extension of $K$ induces an isomorphism
\[
  \operatorname{Artin}_K\colon\Cl(K)/p\xrightarrow{\ \sim\ }
  G^{\mathrm{ab}}/p,
\]
and every $x\in H^1(G,\Fp)=\Hom_{\mathrm{cont}}(G,\Fp)$ factors
through $G^{\mathrm{ab}}/p$.  Composition with $\operatorname{Artin}_K$
therefore gives an isomorphism
\begin{equation}\label{eq:H-classgroup}
  H^1(G,\Fp)\xrightarrow{\ \sim\ }\bigl(\Cl(K)/p\bigr)^*,
  \qquad x\longmapsto x\circ\operatorname{Artin}_K.
\end{equation}
Throughout, we use \eqref{eq:H-classgroup} to read a class
$y\in H^1(G,\Fp)$ as a linear form on $\Cl(K)/p$, and we write
$y(c)$ for its value at $c\in\Cl(K)/p$.  Thus $y(c)$ means
$y(\operatorname{Artin}_K(c))$, the value of the character $y$ on the
Artin image of $c$.

The same argument, applied over every finite subextension of the
tower, shows that every open subgroup of $G$ has finite
abelianization, since the abelianization is the $p$-primary part of a
class group.  This FAb property holds for every number field and
every prime and is used in \cref{sec:consequences}.

\begin{proposition}\label{prop:FAb}
For every number field $K$ and every prime $p$, with
$K_{\varnothing}(p)| K$ unramified at all places, the archimedean
ones included, the group $G_{\varnothing}(K)(p)$ is FAb: every open
subgroup has finite abelianization.
\end{proposition}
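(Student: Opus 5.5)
The plan is to prove this directly from Galois correspondence and class field theory, applied to each finite layer of the tower. Let $U\subseteq G=G_{\varnothing}(K)(p)$ be an open subgroup and let $F=K_{\varnothing}(p)^U$ be its fixed field. Then $F|K$ is a finite subextension of $K_{\varnothing}(p)|K$, and $U=\Gal(K_{\varnothing}(p)|F)$.

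The first step shows that $K_{\varnothing}(p)$ is also the maximal everywhere unramified pro-$p$ extension of $F$. One inclusion is immediate. Since $K_{\varnothing}(p)|F$ is a pro-$p$ extension, unramified at all places (the archimedean ones included), it lies in $F_{\varnothing}(p)$. For the reverse inclusion, let $M|F$ be a finite everywhere unramified $p$-extension, and let $\tilde M$ be the Galois closure of $M$ over $K$.
\begin{itemize}
\item Every conjugate $\tau M$, for $\tau$ an embedding over $K$, is again unramified over $\tau F$. Here I use that $K_{\varnothing}(p)|K$ is Galois, so $\tau F\subseteq K_{\varnothing}(p)$.
\item Hence the compositum $\tilde M$ is unramified over $K_{\varnothing}(p)\cap\tilde M$, and therefore unramified over $K$. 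Ramification indices are multiplicative in towers, and unramifiedness is preserved under composita.
\item The group $\Gal(\tilde M|K)$ is an extension of the $p$-group $\Gal(\tilde M|\tilde M\cap K_{\varnothing}(p))$, which is generated by the conjugates of a $p$-group, by a finite $p$-group.
\end{itemize}
Making the third point precise takes a little care. $\Gal(\tilde M|F')$, with $F'$ the Galois closure of $F$ inside $K_{\varnothing}(p)$, embeds into $\prod_\tau\Gal(\tau M F'|F')$. Each factor is a $p$-group, being a quotient of a subgroup of $\Gal(\tau M|\tau F)$. So $\tilde M|K$ is an unramified $p$-extension, hence $\tilde M\subseteq K_{\varnothing}(p)$, and therefore $F_{\varnothing}(p)=K_{\varnothing}(p)$.

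The second step computes the abelianization. We have $U^{\ab}=\Gal(F^{\mathrm{ab}}_{\varnothing}(p)|F)$, where $F^{\mathrm{ab}}_{\varnothing}(p)$ is the maximal abelian subextension of $F_{\varnothing}(p)$. This is the maximal abelian everywhere unramified $p$-extension of $F$, that is, the $p$-part of the Hilbert class field. Here I use the convention that real places are not allowed to ramify, which matches the narrow/wide distinction stated in the proposition. By Artin reciprocity it is isomorphic to $\Cl(F)\otimes\mathbb Z_p$, which is finite by the finiteness of the class number.

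The only step that needs care is the first one: showing that an unramified $p$-extension of an intermediate layer is absorbed into the tower. This requires the Galois-closure argument together with the fact that unramifiedness is stable under conjugates and composita, including at the infinite places. Everything else is standard class field theory.
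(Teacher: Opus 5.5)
Your proof is correct and follows the paper's route: pass to the fixed field $F$ of $U$, identify $U^{\ab}$ with the Galois group of the maximal abelian subextension of $K_{\varnothing}(p)|F$, and bound it by the $p$-part of $\Cl(F)$ via class field theory. The one difference is that your first step, $F_{\varnothing}(p)=K_{\varnothing}(p)$, is not needed for finiteness: that abelian subextension is already everywhere unramified over $F$, hence lies in the Hilbert $p$-class field, so $U^{\ab}$ is a quotient of $\Cl(F)\otimes\mathbb Z_p$. The paper's proof uses only this inclusion, and proves your equality separately, by essentially the same conjugate-and-compositum argument, in \cref{rem:chi-ledger}, where the exact identification is actually needed (in the form $H^1(U,\Fp)\simeq(\Cl(L)/p)^*$).
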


\begin{proof}
Let $U\subseteq G$ be open and let $L=(K_{\varnothing}(p))^U$.  Then $L| K$ is finite
and everywhere unramified.  The quotient $U^{\ab}$ is the Galois group of the
maximal abelian pro-$p$ subextension of $K_{\varnothing}(p)| L$.  Every finite
subextension of this abelian extension is everywhere unramified over $L$.
By global class field theory it is therefore contained in the Hilbert
$p$-class field of $L$, whose Galois group is the finite $p$-primary part of
$\Cl(L)$.  Hence $U^{\ab}$ is finite.
\end{proof}

\subsection{Comparison with étale cohomology and Artin--Verdier duality}
\label{subsec:comparison}

The secondary norm formula of Ahlqvist--Carlson is formulated in the étale
cohomology of $X$, whereas the defining-relation arguments later in the paper
use the group cohomology of the profinite Galois group $G$.  We therefore
recall the comparison between these two cohomology theories.  The maximal
unramified pro-$p$ cover of $X$ gives natural maps
\begin{equation}\label{eq:kappa}
  \kappa_i\colon H^i(G,\Fp)\longrightarrow H^i(X,\Fp).
\end{equation}
Ahlqvist--Pink
recall that $\kappa_1$ is an isomorphism and $\kappa_2$ is
injective, and that Bockstein, cup products, and Massey products are
compatible with these maps
\cite[\S7, especially (7.3)]{AhlqvistPink}.  The comparison is
developed in the present setting in Ahlqvist--Carlson
\cite[\S3]{AhlqvistCarlson}, and we cite these two papers because
they collect the statements in the form used below.  For an imaginary
quadratic number field, Ahlqvist--Pink prove that $\kappa_i$ is an
isomorphism for
$i=1,2$, except for the exceptional case $p=3$ and
$K=\mathbb Q(\sqrt{-3})$ \cite[Proposition~7.8]{AhlqvistPink}.  Outside
the excluded case, we henceforth identify the two cohomologies in
degrees one and two.

Artin--Verdier duality pairs $H^2(X,\Fp)$ perfectly with
$H^1(X,D(\Fp))$, with values in
$H^3(X,\mathbb G_m)\simeq\mathbb Q/\mathbb Z$.  Both factors are
killed by $p$, so the values lie in
$(\mathbb Q/\mathbb Z)[p]$, and the identification
$\tfrac{a}{p}\bmod\mathbb Z\mapsto a\bmod p$ fixed
above turns this into a perfect pairing
\begin{equation}\label{eq:AV-etale}
  H^2(X,\Fp)\times H^1\bigl(X,D(\Fp)\bigr)\longrightarrow\Fp,
  \qquad
  D(\Fp)=R\mathcal Hom_X(\Fp,\mathbb G_m).
\end{equation}

In the notation of Ahlqvist--Carlson, the classes of
$H^1(X,D(\Fp))$ are represented by pairs of an element and a
fractional ideal of $K$,
\begin{equation}\label{eq:pair-aJ}
  (a',J),\qquad
  a'\in K^\times,\quad
  (a')\,J^{\,p}=\mathcal O_K,
\end{equation}
and every class arises from such a pair, since
$H^1(X,D(\Fp))=\operatorname{Ext}^1_X(\Fp,\mathbb G_m)$ by the
definition of $D(\Fp)$, and these
$\operatorname{Ext}$-groups are computed in
\cite[Corollary~2.15]{AhlqvistCarlsonRing}.
These pairs sit in an exact sequence
\[
  0\longrightarrow\mathcal O_K^\times/p
  \longrightarrow H^1\bigl(X,D(\Fp)\bigr)
  \longrightarrow\Cl(K)[p]\longrightarrow0
  \qquad\text{\cite[\S4.1]{AhlqvistCarlson}},
\]
whose second map sends the class of a pair $(a',J)$ to $[J]$.
For an imaginary quadratic number field the unit group is finite of
order $2$, $4$, or $6$, so for odd $p$ the first term vanishes except
precisely in the excluded case ($\mathcal O_K^\times$ has order
divisible by an odd prime only for $K=\mathbb Q(\sqrt{-3})$); outside
it, that map is an isomorphism,
\[
  H^1\bigl(X,D(\Fp)\bigr)\xrightarrow{\ \sim\ }\Cl(K)[p],
  \qquad (a',J)\longmapsto[J].
\]
We represent elements of $\Cl(K)[p]$ by pairs accordingly, writing
\begin{equation}\label{eq:e-is-J}
  e=[J]\in\Cl(K)[p]
\end{equation}
for the class represented by $(a',J)$.  Since $J^{\,p}$ is the
principal ideal $(a')^{-1}$, $p[J]=0$.
Under these identifications, the Artin--Verdier pairing becomes a perfect
pairing of $\Fp$-vector spaces
\begin{equation}\label{eq:AVpair}
  H^2(G,\Fp)\times \Cl(K)[p]\longrightarrow\Fp,
  \qquad (\alpha,e)\longmapsto \langle\alpha,e\rangle_{\AV}.
\end{equation}
Equivalently, taking adjoints of \eqref{eq:AVpair},
\begin{equation}\label{eq:H2-classgroup}
  H^2(G,\Fp)\xrightarrow{\ \sim\ }\Cl(K)[p]^*,
  \qquad
  \alpha\longmapsto\langle\alpha,\cdot\,\rangle_{\AV}.
\end{equation}
We use \eqref{eq:H-classgroup} and \eqref{eq:H2-classgroup} with
these normalizations throughout.  They come from different sources,
class field theory and Artin--Verdier duality.

In particular, if
\[
  d_p\Cl(K):=\dim_{\Fp}\Cl(K)/p
\]
denotes the $p$-class rank, then
\begin{equation}\label{eq:ranks}
  \dim_{\Fp}H^1(G,\Fp)=\dim_{\Fp}H^2(G,\Fp)=d_p\Cl(K).
\end{equation}
Equivalently, the generator and relation ranks of $G$ agree in the present
setting.

The quotient $\Cl(K)/p$ and the subgroup $\Cl(K)[p]$ play different
roles.  By \eqref{eq:H-classgroup}, degree-one characters are linear forms
on $\Cl(K)/p$, whereas the Artin--Verdier pairing \eqref{eq:AVpair}
evaluates degree-two cohomology classes on $\Cl(K)[p]$.  We will later
choose bases of these two $\Fp$-vector spaces arising from the same
invariant-factor decomposition of $\Cl(K)$, but there is no canonical
identification between them.

We record the properties of the comparison used in the sequel.

\begin{proposition}[Comparison package]\label{prop:comparison}
\begin{enumerate}[label=\textup{(\roman*)}]
\item\label{comp:iso} The maps $\kappa_1$ and $\kappa_2$ of
\eqref{eq:kappa} are isomorphisms.
\item\label{comp:products} They are compatible with Bockstein maps,
cup products, and triple Massey products together with their
defining systems.
\item\label{comp:dual}
$H^1(X,D(\Fp))=\operatorname{Ext}^1_X(\Fp,\mathbb G_m)
\simeq\Cl(K)[p]$, induced by $(a',J)\mapsto[J]$.
\item\label{comp:pairing} Under \ref{comp:iso} and \ref{comp:dual},
Artin--Verdier duality \eqref{eq:AV-etale} becomes the perfect
pairing \eqref{eq:AVpair}.  Equivalently, \eqref{eq:H2-classgroup}
holds.
\item\label{comp:equivariant} The identifications in
\ref{comp:iso}--\ref{comp:pairing}, as well as the Artin
identification \eqref{eq:H-classgroup}, are functorial and
equivariant for the action of complex conjugation.
\end{enumerate}
\end{proposition}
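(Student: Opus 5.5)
The Comparison package collects results from the literature, so the proof assembles citations and adds only the short arguments that are genuinely new here.

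For \ref{comp:iso} I would cite Ahlqvist--Pink \cite[Proposition~7.8]{AhlqvistPink}, which gives that $\kappa_1,\kappa_2$ are isomorphisms for imaginary quadratic $K$ outside the excluded case $p=3$, $K=\mathbb Q(\sqrt{-3})$. Their injectivity and the identity $\kappa_1$ being an isomorphism come from the Hochschild--Serre spectral sequence for the pro-$p$ cover $\widetilde X\to X$. Surjectivity in degree two then reduces to $H^2(\widetilde X,\Fp)^G$ contributing nothing. This is where their argument uses the vanishing of $\mathcal O_K^\times/p$. For \ref{comp:products}, the maps $\kappa_i$ are induced by a morphism of cochain complexes (continuous group cochains to Čech or hypercover cochains of the Galois cover), compatible with cup products at the cochain level and with the Bockstein coboundary of $0\to\Fp\to\mathbb Z/p^2\to\Fp\to0$. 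Defining systems of Massey products are built from cochains and cup products, so they are transported as well. I would cite \cite[\S7, (7.3)]{AhlqvistPink} and \cite[\S3]{AhlqvistCarlson} for this.

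For \ref{comp:dual}, the argument is already in the text. Since $H^1(X,D(\Fp))=\operatorname{Ext}^1_X(\Fp,\mathbb G_m)$ by definition of $D(\Fp)$, \cite[Corollary~2.15]{AhlqvistCarlsonRing} gives the pair description together with the exact sequence with kernel $\mathcal O_K^\times/p$. That kernel vanishes because $|\mathcal O_K^\times|\in\{2,4,6\}$ and $p$ is odd, with $p=3$, $|\mathcal O_K^\times|=6$ excluded. Item \ref{comp:pairing} is then a transport of structure: compose the perfect pairing \eqref{eq:AV-etale} with the isomorphisms $\kappa_2$ and the map in \ref{comp:dual}. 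Perfectness is preserved, and the values are normalized by the fixed identification $(\mathbb Q/\mathbb Z)[p]\simeq\Fp$. Since $\Cl(K)[p]$ is finite, \eqref{eq:H2-classgroup} is the adjoint formulation of the same statement.

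The main obstacle is \ref{comp:equivariant}, which I would prove by naturality. Complex conjugation $c$ acts on $X$ by an automorphism. It also acts on $G$ by conjugation with a lift to $\Gal(K_\varnothing(p)|\mathbb Q)$, which is well defined up to inner automorphisms, and inner automorphisms act trivially on cohomology. The maps $\kappa_i$ are natural for the pair consisting of the automorphism of $X$ and the compatible automorphism of the cover, so they commute with $c$. The Artin map is Galois-equivariant by functoriality of class field theory under field automorphisms. The Ext computation of \cite{AhlqvistCarlsonRing} is functorial in $X$, and $(a',J)\mapsto(c(a'),c(J))$ corresponds to $[J]\mapsto[c(J)]$. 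Artin--Verdier duality is functorial for automorphisms of $X$, with $c$ acting trivially on $H^3(X,\mathbb G_m)\simeq\mathbb Q/\mathbb Z$ because the trace map is canonical. The delicate point is fixing the lift and checking the convention on the $\mathbb Q/\mathbb Z$ factor, and I would verify both against \cite[\S7]{AhlqvistPink}.
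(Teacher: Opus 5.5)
Your proposal follows the paper's proof: (i) and (ii) are cited from \cite[\S7, (7.3) and Proposition~7.8]{AhlqvistPink} together with \cite[\S3]{AhlqvistCarlson}, (iii) is the pair description with the vanishing unit term $\mathcal O_K^\times/p$, (iv) is a restatement, and (v) is functoriality. Your treatment of (v) is more explicit than the paper's one-line appeal to functoriality. You note that the lift of complex conjugation matters only up to inner automorphisms, and that conjugation must act trivially on $H^3(X,\mathbb G_m)\simeq\mathbb Q/\mathbb Z$; the route is nonetheless the same.
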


\begin{proof}
Parts \ref{comp:iso} and \ref{comp:products} are
\cite[\S7, especially (7.3) and Proposition~7.8]{AhlqvistPink},
with the comparison developed in \cite[\S3]{AhlqvistCarlson}.
Part \ref{comp:dual} is the description of
\cite[\S\S4--5]{AhlqvistCarlson} recalled above, using the
vanishing of the unit term.  Part \ref{comp:pairing} restates
\eqref{eq:AVpair} and \eqref{eq:H2-classgroup}.  Part
\ref{comp:equivariant} holds because every map involved is defined
functorially, in particular compatibly with the automorphisms
induced by complex conjugation on $G$, on $X$, and on $\Cl(K)$.
\end{proof}

We fix once and for all which cohomology carries which
statement.  Three statements about $G$ live in group cohomology and
are quoted there: the Koch--Venkov vanishing of the
cup product (\cref{lem:cupzero} below), the presentation theory of
\cref{sec:relations}, and the relation formula of Proposition~1.3.3 of
\cite{Vogel}.  The
explicit Massey formulas of Ahlqvist--Carlson and Artin--Verdier
duality live on $X$.  Under the identifications of
\cref{prop:comparison}, exactly two statements cross between the
sides: the vanishing of the cup product passes from $G$ to $X$,
where it makes the Ahlqvist--Carlson formula applicable
(\cref{prop:AC}), and the Artin--Verdier evaluations pass back
from $X$ to $G$, where they define and compute the secondary norm
operators (\cref{def:D}).

\subsection{Vanishing of cup products}

\begin{lemma}\label{lem:cupzero}
\begin{enumerate}[label=\textup{(\roman*)}]
\item\label{cupzero:group} The cup product
\[
  H^1(G,\Fp)\times H^1(G,\Fp)\longrightarrow H^2(G,\Fp)
\]
is identically zero.  Hence every triple Massey product of classes in
$H^1(G,\Fp)$ is uniquely defined and the map
\[
\begin{aligned}
  M\colon H^1(G,\Fp)\times H^1(G,\Fp)\times H^1(G,\Fp)
  &\longrightarrow H^2(G,\Fp),\\
  (x,y,z)&\longmapsto\langle x,y,z\rangle,
\end{aligned}
\]
is trilinear.  We refer to the map $M$ itself as the \emph{triple
Massey product}.
\item\label{cupzero:etale} The cup product
\[
  H^1(X,\Fp)\times H^1(X,\Fp)\longrightarrow H^2(X,\Fp)
\]
is likewise identically zero.
\end{enumerate}
\end{lemma}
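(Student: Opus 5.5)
The plan is to prove \ref{cupzero:group} in group cohomology with the Koch--Venkov argument via complex conjugation, and then transport the vanishing to $X$ with \cref{prop:comparison}.

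Let $\tau\in\Gal(K|\mathbb Q)$ be the nontrivial automorphism. Complex conjugation extends to $K_{\varnothing}(p)$, which is Galois over $\mathbb Q$ by maximality, so it acts on $G$ by conjugation up to inner automorphisms. Hence it acts on $H^i(G,\Fp)$, and this action commutes with cup products and Bockstein maps.

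\begin{itemize}
\item \textbf{Action on $H^1$.} Under \eqref{eq:H-classgroup}, and by \cref{prop:comparison}\ref{comp:equivariant}, the action corresponds to the action of $\tau$ on $\Cl(K)/p$. For any ideal $\mathfrak a$, the product $\mathfrak a\,\tau(\mathfrak a)=N(\mathfrak a)\mathcal O_K$ is principal. So $\tau$ acts as $-1$ on $\Cl(K)$, and therefore as $-1$ on $H^1(G,\Fp)$.
\item \textbf{Action on $H^2$.} Using \eqref{eq:H2-classgroup}, $H^2(G,\Fp)\simeq\Cl(K)[p]^*$. Since $\tau$ acts by $-1$ on $\Cl(K)[p]$, it acts by $-1$ on the dual as well, provided the Artin--Verdier pairing is $\tau$-invariant. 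That invariance should hold because $\tau$ acts trivially on $H^3(X,\mathbb G_m)\simeq\mathbb Q/\mathbb Z$ (the invariant map is functorial). Equivariance of the duality is part of \cref{prop:comparison}\ref{comp:equivariant}.
\item \textbf{Conclusion.} For $x,y\in H^1$ we get $\tau(x\cup y)=\tau x\cup\tau y=(-x)\cup(-y)=x\cup y$. But $\tau$ acts by $-1$ on $H^2$, so $x\cup y=-(x\cup y)$. Since $p$ is odd, $x\cup y=0$.
\end{itemize}

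The main point needing care is the sign of the action on $H^2$. An alternative that avoids duality is to cite Koch--Venkov \cite{VenkovKoch} directly, as the paper does. I would present the argument above and cite \cite{VenkovKoch} for the normalization.

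The Massey statements follow from standard facts. Because all cup products $x\cup y$ and $y\cup z$ vanish, every triple $\langle x,y,z\rangle$ is defined. Its indeterminacy is $x\cup H^1+H^1\cup z$, which is zero, so the product is single-valued. Trilinearity then holds because, with zero indeterminacy, the Massey product is additive in each variable: defining systems add, and the sum of defining systems is a defining system.

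For \ref{cupzero:etale}, \cref{prop:comparison}\ref{comp:iso} says $\kappa_1$ and $\kappa_2$ are isomorphisms, and \ref{comp:products} says they are compatible with cup products. Every pair of classes in $H^1(X,\Fp)$ is therefore $\kappa_1$ of a pair in $H^1(G,\Fp)$, and its cup product is $\kappa_2(0)=0$.
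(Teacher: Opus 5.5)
Your proposal is correct and follows the paper's proof. It uses the Koch--Venkov conjugation argument with $\tau$ acting by $-1$ on $H^1(G,\Fp)$ and, through the equivariant identification $H^2(G,\Fp)\simeq\Cl(K)[p]^*$, by $-1$ on $H^2(G,\Fp)$, then derives the Massey consequences and transports the vanishing to $X$ via \cref{prop:comparison}. One small precision: for additivity in an outer variable, say $\langle x+x',y,z\rangle$, keep the same $a_{23}$ and add only $a_{12}+a'_{12}$, because literally summing the two defining systems would double $a_{23}$; the paper instead cites Lemma~1.2.5 of \cite{Vogel} for this.
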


\begin{proof}
Part \ref{cupzero:group} is the complex-conjugation
argument of Koch--Venkov \cite{VenkovKoch}, who used it to force
the relations of the tower group into Zassenhaus depth at least
three.  Complex conjugation acts by inversion on the ideal class
group.  On $H^1(G,\Fp)\simeq(\Cl(K)/p)^*$
\eqref{eq:H-classgroup} it therefore acts by $-1$, by class field
theory alone.  To argue on $H^2(G,\Fp)$ one needs some description
of that group.  The description used
throughout this paper is \eqref{eq:H2-classgroup}, which is equivariant by
\cref{prop:comparison}\ref{comp:equivariant}, so conjugation acts
by $-1$ on $H^2(G,\Fp)\simeq\Cl(K)[p]^*$ as well (any
functorial description would serve).  Equivariance of the
cup product now makes a product of two degree-one classes
invariant, while the action on $H^2(G,\Fp)$ is by $-1$.  Since $p$
is odd, the product must vanish.  With all cup products zero,
\cite[Lemma~1.2.5]{Vogel}, applied with $m=3$, shows that every
triple Massey product of classes in $H^1(G,\Fp)$ is uniquely
defined and that the map $M$ is trilinear.
Part \ref{cupzero:etale} follows from part \ref{cupzero:group}
through the comparison isomorphisms of \cref{prop:comparison}.
\end{proof}

We shall use the shuffle identities
\begin{equation}\label{eq:shuffle}
  M(x,y,z)=M(z,y,x),\qquad
  M(x,y,z)+M(y,z,x)+M(z,x,y)=0.
\end{equation}
In group cohomology these are the degree-three case of the shuffle relations
in \cite[Corollary~1.2.10 and Example~1.2.11(b)]{Vogel}.  By
\cref{prop:comparison}\ref{comp:products} the same identities hold for the
étale Massey products used in the arithmetic formulas.  Taking all three
arguments equal in the cyclic identity gives
\begin{equation}\label{eq:diagonal-triple}
  3\,M(x,x,x)=0
  \qquad (x\in H^1(G,\Fp)).
\end{equation}
Hence for $p>3$ the diagonal $x\mapsto M(x,x,x)$ vanishes identically,
while at $p=3$ it need not.  The diagonal at $p=3$ is determined in
\cref{lem:diagonal-p3}.

Two cochain conventions for the triple Massey product occur in the
sources used here: that of \cite{Vogel}, in which the relation
coefficients of \cref{sec:relations} are computed, and that of
Ahlqvist--Carlson, in which the arithmetic evaluations of
\cref{prop:AC} take place.  Write $M^{\mathrm V}$ and
$M^{\mathrm{AC}}$ for the two products after the identification of
group and étale cohomology by the maps \eqref{eq:kappa}.

\begin{lemma}[The global Massey sign]\label{lem:sign}
We have
\begin{equation}\label{eq:global-sign}
  M^{\mathrm{AC}}=-M^{\mathrm V}
\end{equation}
as maps $H^1(G,\Fp)\times H^1(G,\Fp)\times H^1(G,\Fp)\to
H^2(G,\Fp)$.
\end{lemma}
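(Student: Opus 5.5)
The plan is to reduce \eqref{eq:global-sign} to a comparison of the defining cochain conventions for a triple Massey product, carried out once on the level of a universal cochain-level identity.

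\textbf{Step 1: write down both conventions.} In \cite{Vogel}, for classes $x,y,z$ with $x\cup y=0=y\cup z$, one chooses $1$-cochains $a,b$ with $\delta a = x\cup y$ and $\delta b = y\cup z$ (up to the sign convention fixed there). The Massey product is then the class of $a\cup z + x\cup b$, or of its negative, according to the conventions of \cite[\S1.2]{Vogel}. Ahlqvist--Carlson \cite[\S2]{AhlqvistCarlson} use a defining system $(a_{12},a_{23})$ with a possibly different sign in $\delta a_{ij}$ and in the final cocycle.

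\textbf{Step 2: tabulate the sign choices.} I would reduce each convention to the form $\epsilon_1\delta a=x\cup y$, $\epsilon_1\delta b=y\cup z$, and $\langle x,y,z\rangle=\epsilon_2[a\cup z+x\cup b]$. Here $\epsilon_1,\epsilon_2\in\{\pm1\}$, and the cochain cup products are compatible under $\kappa_*$ by \cref{prop:comparison}\ref{comp:products}. Rescaling a defining system $(a,b)\mapsto(-a,-b)$ converts one normalization of $\delta$ into the other. So the two products differ by the scalar $\epsilon_1^{\mathrm V}\epsilon_2^{\mathrm V}\epsilon_1^{\mathrm{AC}}\epsilon_2^{\mathrm{AC}}$, uniformly in $(x,y,z)$. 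The claim is that this scalar equals $-1$. Because $M$ is uniquely defined and trilinear (\cref{lem:cupzero}), no indeterminacy enters, and the comparison is an equality of maps.

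\textbf{Step 3: independent check, and the main obstacle.} The main obstacle is that the conventions are spread across the sources and interact with the identification $\kappa$ and the Čech/étale realization. To guard against a misread sign, I would check the scalar against a case where both sides are known. The best candidate is the formula $\langle x,y,z\rangle$ in terms of coefficients of the Magnus expansion, $-\,$(coefficient of $X_xX_yX_z$) \cite[Theorem~1.2.6]{Vogel}. I would compare it with the Ahlqvist--Carlson normalization on a Heisenberg-type quotient, the unipotent $4\times4$ representation, where both products are computed directly from the upper-right matrix entry. Since only a global scalar $\pm1$ is at stake, evaluating one nonzero instance in a universal group, such as a free pro-$p$ group modulo the relevant ideal, with both conventions suffices, and functoriality transports the sign to $G$.
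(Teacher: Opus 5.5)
Steps~1 and~2 are the same reduction the paper uses. You put both conventions in the form $\epsilon_1\,\delta a=x\smile y$, $\epsilon_1\,\delta b=y\smile z$, with representative $\epsilon_2(a\smile z+x\smile b)$. You pass between them by negating the defining system, and you use the vanishing of all cup products to rule out indeterminacy.

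\textbf{The gap.} You never determine the four signs. ``The claim is that this scalar equals $-1$'' is the statement itself, and Step~1 leaves open exactly the points on which the lemma turns (``up to the sign convention fixed there'', ``or of its negative'', ``a possibly different sign''). They must be read off from the two sources:
\begin{itemize}
\item In \cite[Definition~1.2.1]{Vogel} one has $\partial a_{12}=x\smile y$ and $\partial a_{23}=y\smile z$, normalized so that the twofold product is $u_1\smile u_2$. The representative is $x\smile a_{23}+a_{12}\smile z$. Hence $\epsilon_1^{\mathrm V}=\epsilon_2^{\mathrm V}=1$.
\item Ahlqvist--Carlson \cite[\S2]{AhlqvistCarlson}, following Dwyer, take $dk_{xy}=-x\smile y$ and $dk_{yz}=-y\smile z$. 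Their representative has the same formal ordering, $x\smile k_{yz}+k_{xy}\smile z$. Hence $\epsilon_1^{\mathrm{AC}}=-1$ and $\epsilon_2^{\mathrm{AC}}=1$.
\end{itemize}
The substitution $k_{xy}=-a_{12}$, $k_{yz}=-a_{23}$ then negates the representative. That is the lemma.

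\textbf{Step~3 does not close the gap.}
\begin{itemize}
\item You quote the coefficient theorem \cite[Theorem~1.2.6]{Vogel} with a minus sign. In the form used in the proof of \cref{prop:T-is-relations} it reads $\varepsilon_{ijk,3}(r)=\tr_r M^{\mathrm V}(\chi_i,\chi_j,\chi_k)$, with no sign. A check run with your formula would therefore point the wrong way.
\item Evaluating the Ahlqvist--Carlson product through the entries of a unipotent $4\times4$ representation requires already knowing their convention $dk_{xy}=-x\smile y$. So the check is not independent of the data you left unspecified.
\end{itemize}
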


\begin{proof}
Let $x,y,z\in H^1(G,\Fp)$.  The defining sets of
\cite[Definition~1.2.1]{Vogel} use cochains $a_{12},a_{23}$ with
\[
  \partial a_{12}=x\smile y,
  \qquad
  \partial a_{23}=y\smile z,
\]
the signs being fixed there so that $\langle u_1,u_2\rangle=u_1\smile
u_2$ in the twofold case, and represent the product by
$x\smile a_{23}+a_{12}\smile z$.  Ahlqvist--Carlson \cite[\S2]{AhlqvistCarlson}, following
\cite{Dwyer}, choose $k_{xy},k_{yz}$ with
\[
  dk_{xy}=-x\smile y,
  \qquad
  dk_{yz}=-y\smile z,
\]
and use the same formal ordering
$x\smile k_{yz}+k_{xy}\smile z$.
Taking $k_{xy}=-a_{12}$ and $k_{yz}=-a_{23}$ negates the
representative.  Since all cup products vanish, the Massey
indeterminacy is zero.  Thus the sign is global, with no
permutation-dependent factor and no exception on the diagonal.
\end{proof}

Throughout, $M$ denotes $M^{\mathrm{AC}}$.  It is the product that
the arithmetic formulas of \cref{prop:AC,prop:AC-p3} evaluate.

\subsection{Secondary norm operators}

Define
\[
  Q\colon H^1(G,\Fp)\times H^1(G,\Fp)\longrightarrow H^2(G,\Fp),
  \qquad
  (x,y)\longmapsto M(x,x,y).
\]

\begin{lemma}\label{lem:Q}
\begin{enumerate}[label=\textup{(\roman*)}]
\item\label{Q:linear} For fixed $x$, the map
$H^1(G,\Fp)\to H^2(G,\Fp)$, $y\mapsto Q(x,y)$, is linear.
\item\label{Q:quadratic} $Q(0,y)=0$ and
$Q(\lambda x,y)=\lambda^2\,Q(x,y)$ for every $\lambda\in\Fp$.
\item\label{Q:polarization} For all $x,y,z\in H^1(G,\Fp)$,
\begin{equation}\label{eq:polarization}
  M(x,y,z)=Q(x,y)+Q(z,y)-Q(x+z,y).
\end{equation}
\end{enumerate}
\end{lemma}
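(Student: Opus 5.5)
All three parts follow from the trilinearity of $M$ established in \cref{lem:cupzero}\ref{cupzero:group}, together with the shuffle identities \eqref{eq:shuffle}. No arithmetic input is needed.

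For \ref{Q:linear}, I fix $x$ and note that $y\mapsto M(x,x,y)$ is the restriction of the trilinear map $M$ to the slice with first two arguments equal to $x$. It is therefore linear in the third variable. For \ref{Q:quadratic}, trilinearity gives $M(0,0,y)=0$. It also gives $M(\lambda x,\lambda x,y)=\lambda^2M(x,x,y)$, since $M$ is linear separately in the first and in the second argument.

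For \ref{Q:polarization}, I expand $Q(x+z,y)=M(x+z,x+z,y)$ by bilinearity in the first two slots:
\[
  Q(x+z,y)=M(x,x,y)+M(x,z,y)+M(z,x,y)+M(z,z,y).
\]
This yields
\[
  Q(x,y)+Q(z,y)-Q(x+z,y)=-M(x,z,y)-M(z,x,y).
\]
It remains to identify $-M(x,z,y)-M(z,x,y)$ with $M(x,y,z)$. I use the cyclic identity of \eqref{eq:shuffle} applied to the triple $(x,y,z)$, namely
\[
  M(x,y,z)+M(y,z,x)+M(z,x,y)=0.
\]
I then rewrite $M(y,z,x)$ by the reversal identity $M(u,v,w)=M(w,v,u)$ as $M(x,z,y)$. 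This gives $M(x,y,z)=-M(x,z,y)-M(z,x,y)$, which is exactly \eqref{eq:polarization}.

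The only point needing care is the bookkeeping of which shuffle identity turns which word into which. I also have to make sure the identities are used for $M=M^{\mathrm{AC}}$ rather than $M^{\mathrm V}$. This is harmless, because by \cref{lem:sign} the two differ by a global sign, and both shuffle identities are homogeneous linear relations, so they hold for $M^{\mathrm{AC}}$ as well. I expect no real obstacle beyond this.
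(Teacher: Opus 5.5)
Your proof is correct and is essentially the paper's argument. Both expand $Q(x+z,y)$ by trilinearity and conclude with the reversal identity $M(y,z,x)=M(x,z,y)$ and the cyclic identity. Your extra check that the shuffle identities survive the global sign between $M^{\mathrm{AC}}$ and $M^{\mathrm V}$ is valid; the paper leaves that point implicit.
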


\begin{proof}
Parts \ref{Q:linear} and \ref{Q:quadratic} are immediate from
trilinearity of $M$ (\cref{lem:cupzero}\ref{cupzero:group}).  For
\ref{Q:polarization}, trilinearity gives
\[
  Q(x+z,y)=M(x+z,x+z,y)
  =Q(x,y)+M(x,z,y)+M(z,x,y)+Q(z,y).
\]
By the outer symmetry in \eqref{eq:shuffle}, $M(x,z,y)=M(y,z,x)$,
and the cyclic relation in \eqref{eq:shuffle} gives
$M(y,z,x)+M(z,x,y)=-M(x,y,z)$.  Substituting proves
\ref{Q:polarization}.
\end{proof}

For fixed $x\in H^1(G,\Fp)$ and $e\in\Cl(K)[p]$, the map
\[
  H^1(G,\Fp)\longrightarrow\Fp,\qquad
  y\longmapsto\langle Q(x,y),e\rangle_{\AV},
\]
where $\langle\cdot,\cdot\rangle_{\AV}$ is the Artin--Verdier
pairing \eqref{eq:AVpair}, is linear by \cref{lem:Q}\ref{Q:linear}.
Under the identification $H^1(G,\Fp)\simeq(\Cl(K)/p)^*$ of
\eqref{eq:H-classgroup}, it is therefore an element of the bidual
$(\Cl(K)/p)^{**}=\Cl(K)/p$, that is, evaluation at a unique
element of $\Cl(K)/p$.

\begin{definition}[Secondary norm operator]\label{def:D}
For $x\in H^1(G,\Fp)$, define
\[
  D_x\colon \Cl(K)[p]\longrightarrow \Cl(K)/p
\]
as the map sending $e$ to the unique element
$D_x(e)\in\Cl(K)/p$ with
\begin{equation}\label{eq:Ddef}
  y(D_x(e))
  =\langle Q(x,y),e\rangle_{\AV}
  \qquad\text{for all } y\in H^1(G,\Fp).
\end{equation}
\end{definition}

Since the right-hand side of \eqref{eq:Ddef} is also linear in $e$
(the Artin--Verdier pairing is bilinear), the map $D_x$ is
$\Fp$-linear.  The name \emph{secondary norm operator} will be
justified by the Ahlqvist--Carlson formula in \cref{prop:AC}.

For $x\in H^1(G,\Fp)$, viewed via \eqref{eq:H-classgroup} as a
linear form on $\Cl(K)/p$, its \emph{annihilator} under the
evaluation pairing is
\[
  x^\perp:=\{c\in\Cl(K)/p:x(c)=0\},
\]
an $\Fp$-subspace of $\Cl(K)/p$, of codimension one for $x\ne0$.

\begin{lemma}\label{lem:D-image-orthogonal}
Let $p>3$.  For every $x\in H^1(G,\Fp)$,
\[
  \operatorname{im}D_x\subseteq x^\perp.
\]
\end{lemma}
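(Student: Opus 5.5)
We must show $x(D_x(e))=0$ for every $e\in\Cl(K)[p]$. By \eqref{eq:Ddef} with $y=x$,
\[
  x\bigl(D_x(e)\bigr)=\langle Q(x,x),e\rangle_{\AV}=\langle M(x,x,x),e\rangle_{\AV}.
\]
It therefore suffices to show that $M(x,x,x)=0$. By \eqref{eq:diagonal-triple}, the cyclic shuffle identity with all three arguments equal gives $3\,M(x,x,x)=0$ in the $\Fp$-vector space $H^2(G,\Fp)$. For $p>3$, the element $3$ is invertible in $\Fp$, so $M(x,x,x)=0$. Hence $x(D_x(e))=0$ for all $e$, which says exactly that $D_x(e)\in x^\perp$.

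For $x=0$ the statement is trivial: $D_0=0$ because $Q(0,y)=0$ by \cref{lem:Q}\ref{Q:quadratic}, and moreover $x^\perp$ is then the whole space. So no case distinction is really needed; the argument above covers every $x$ uniformly.

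The only point needing care is that \eqref{eq:Ddef} may be applied with $y=x$. This is fine, since \eqref{eq:Ddef} holds for all $y\in H^1(G,\Fp)$. We also use that $y(c)$ denotes the evaluation of the linear form attached to $y$ under \eqref{eq:H-classgroup}, which is the same pairing used to define $x^\perp$. There is no real obstacle here. The substantive input is the shuffle relation \eqref{eq:shuffle}, which is already established, and the hypothesis $p>3$ enters only through the invertibility of $3$. This also explains why at $p=3$ the inclusion holds only where the Bockstein diagonal $M(x,x,x)$ vanishes.
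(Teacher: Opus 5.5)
Your proposal is correct and follows the paper's proof exactly: you take $y=x$ in \eqref{eq:Ddef} to get $x(D_x(e))=\langle M(x,x,x),e\rangle_{\AV}$, and then use \eqref{eq:diagonal-triple} together with the invertibility of $3$ in $\Fp$ for $p>3$. The separate remark on $x=0$ is harmless but unnecessary, since the uniform argument already covers it.
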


\begin{proof}
For $e\in\Cl(K)[p]$, the definition of $D_x$ gives
\[
  x(D_x(e))=\langle M(x,x,x),e\rangle_{\AV}.
\]
Since $p>3$, the diagonal vanishes by \eqref{eq:diagonal-triple},
and hence $x(D_x(e))=0$.
\end{proof}

At $p=3$ only the last step fails.  The identity
$x(D_x(e))=\langle M(x,x,x),e\rangle_{\AV}$ holds for every odd $p$,
so, the Artin--Verdier pairing being perfect,
$\operatorname{im}D_x\subseteq x^\perp$ holds precisely for those
characters $x$ with $M(x,x,x)=0$.  Which characters these are is
determined in \cref{lem:diagonal-p3}, stated below after the
arithmetic formulas that its proof uses.

\subsection{Polarization and reconstruction}

\begin{theorem}[Reconstruction theorem]\label{thm:reconstruction}
The map
\[
  D\colon H^1(G,\Fp)\longrightarrow
  \Hom_{\Fp}\bigl(\Cl(K)[p],\Cl(K)/p\bigr),\qquad x\longmapsto D_x,
\]
is quadratic.  More precisely,
\begin{equation}\label{eq:quadratic}
  D_0=0,\qquad D_{\lambda x}=\lambda^2D_x
  \quad(\lambda\in\Fp).
\end{equation}
Define
\begin{equation}\label{eq:DeltaD}
\begin{aligned}
  \Delta D\colon H^1(G,\Fp)\times H^1(G,\Fp)&\longrightarrow
  \Hom_{\Fp}\bigl(\Cl(K)[p],\Cl(K)/p\bigr),\\
  (x,z)&\longmapsto D_x+D_z-D_{x+z}.
\end{aligned}
\end{equation}
Then
\begin{equation}\label{eq:MfromD}
  \langle M(x,y,z),e\rangle_{\AV}
  =y\bigl(\Delta D(x,z)(e)\bigr)
\end{equation}
for all $x,y,z\in H^1(G,\Fp)$ and $e\in\Cl(K)[p]$.  In particular,
the map $\Delta D$ is $\Fp$-bilinear.  We call it the
\emph{polarization} of $D$.

Consequently, if $\chi_1,\ldots,\chi_d$ is a basis of $H^1(G,\Fp)$, then the
\[
  \frac{d(d+1)}2
\]
operators
\begin{equation}\label{eq:minimal-family}
  D_{\chi_i},\qquad D_{\chi_i+\chi_j}\quad(1\le i<j\le d)
\end{equation}
determine $D_x$ for every $x\in H^1(G,\Fp)$ and therefore determine the complete
triple Massey product $M$.
\end{theorem}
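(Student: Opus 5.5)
The whole statement reduces to \cref{lem:Q} once the definition \eqref{eq:Ddef} is used to turn statements about $D_x$ into statements about $Q(x,\cdot)$. Fix $x$. The map $D_x$ is characterized by the pairings $y(D_x(e))=\langle Q(x,y),e\rangle_{\AV}$ for all $y,e$. Because \eqref{eq:H-classgroup} is an isomorphism, the characters $y$ separate points of $\Cl(K)/p$. Hence two operators $\Cl(K)[p]\to\Cl(K)/p$ coincide as soon as their pairings with every $y$ agree on every $e$. Every identity below will be checked in this paired form.

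First, the quadratic property \eqref{eq:quadratic}. By \cref{lem:Q}\ref{Q:quadratic} we have $Q(0,y)=0$ and $Q(\lambda x,y)=\lambda^2Q(x,y)$. Pairing with $e$ gives $y(D_0(e))=0$ and $y(D_{\lambda x}(e))=\lambda^2y(D_x(e))$ for all $y$. By separation, $D_0=0$ and $D_{\lambda x}=\lambda^2D_x$.

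Next, \eqref{eq:MfromD}. Apply \cref{lem:Q}\ref{Q:polarization} to get $M(x,y,z)=Q(x,y)+Q(z,y)-Q(x+z,y)$. Pair this with $e$ under the bilinear Artin--Verdier pairing and substitute \eqref{eq:Ddef} term by term. This gives
\[
\langle M(x,y,z),e\rangle_{\AV}=y(D_x(e))+y(D_z(e))-y(D_{x+z}(e))=y\bigl(\Delta D(x,z)(e)\bigr),
\]
using only the linearity of $y$ on $\Cl(K)/p$. For bilinearity of $\Delta D$, note that the left-hand side is linear in $x$ and in $z$ by trilinearity of $M$ (\cref{lem:cupzero}\ref{cupzero:group}). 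Separation over $y$ and $e$ then shows that $\Delta D(x,z)$ is linear in each of $x$ and $z$. Symmetry of $\Delta D$ in $x,z$ is visible from the definition.

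Finally, the reconstruction from the $d(d+1)/2$ operators. The family in \eqref{eq:minimal-family} gives $\Delta D(\chi_i,\chi_j)=D_{\chi_i}+D_{\chi_j}-D_{\chi_i+\chi_j}$ for $i<j$. The diagonal values come from $\Delta D(\chi_i,\chi_i)=2D_{\chi_i}-D_{2\chi_i}=2D_{\chi_i}-4D_{\chi_i}=-2D_{\chi_i}$, using \eqref{eq:quadratic}. Thus the whole bilinear form $\Delta D$ is determined on a basis, and hence everywhere by bilinearity. We also recover every $D_x$ from the polarization: $\Delta D(x,x)=2D_x-D_{2x}=-2D_x$, so $D_x=-\tfrac12\Delta D(x,x)$, which makes sense because $p$ is odd and $2$ is invertible. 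Once $\Delta D$ is known, \eqref{eq:MfromD} gives $\langle M(x,y,z),e\rangle_{\AV}$ for all arguments, and the pairing is perfect by \eqref{eq:AVpair}, so $M$ is determined.

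The argument is formal throughout. The one point needing care is the invertibility of $2$, which the odd-prime hypothesis supplies. Everything substantive, namely trilinearity of $M$ and the shuffle identities, has already been used in \cref{lem:Q}.
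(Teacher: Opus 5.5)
Your proof is correct and follows the paper's argument step for step: you pair the identities of \cref{lem:Q} against $e$ and every $y$, and conclude by the uniqueness in \eqref{eq:Ddef} and the fact that the evaluations separate points. The only difference is cosmetic and lies in the last step: you recover $D_x$ as $-\tfrac12\Delta D(x,x)$, which uses that $p$ is odd, whereas the paper applies $D_{u+v}=D_u+D_v-\Delta D(u,v)$ repeatedly to obtain the expansion \eqref{eq:D-reconstruct}, which needs no division by $2$.
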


\begin{proof}
For \eqref{eq:quadratic}, let $x\in H^1(G,\Fp)$,
$\lambda\in\Fp$, and $e\in\Cl(K)[p]$.  For every
$y\in H^1(G,\Fp)$,
\[
  y\bigl(D_{\lambda x}(e)\bigr)
  =\langle Q(\lambda x,y),e\rangle_{\AV}
  =\lambda^2\langle Q(x,y),e\rangle_{\AV}
  =\lambda^2\,y\bigl(D_x(e)\bigr)
  =y\bigl(\lambda^2D_x(e)\bigr),
\]
using \eqref{eq:Ddef}, then \cref{lem:Q}\ref{Q:quadratic}, then
\eqref{eq:Ddef} again, and the linearity of $y$.  Thus
$\lambda^2D_x(e)$ satisfies the property characterizing
$D_{\lambda x}(e)$, and the uniqueness in \eqref{eq:Ddef} gives
$D_{\lambda x}=\lambda^2D_x$.  The same argument with $Q(0,y)=0$
gives $D_0=0$.

For \eqref{eq:MfromD}, evaluate both sides of the polarization
identity \eqref{eq:polarization} against $e$ under the
Artin--Verdier pairing.  By linearity of
$\langle\cdot,e\rangle_{\AV}$ in the first argument,
\eqref{eq:Ddef} applied to each term, and linearity of $y$,
\[
\begin{aligned}
  \langle M(x,y,z),e\rangle_{\AV}
  &=\langle Q(x,y),e\rangle_{\AV}
   +\langle Q(z,y),e\rangle_{\AV}
   -\langle Q(x+z,y),e\rangle_{\AV}\\
  &=y\bigl(D_x(e)\bigr)+y\bigl(D_z(e)\bigr)
   -y\bigl(D_{x+z}(e)\bigr)
  =y\bigl(\Delta D(x,z)(e)\bigr),
\end{aligned}
\]
which is \eqref{eq:MfromD}.  The same formula shows that $\Delta D$
is bilinear, since the left-hand side is trilinear in $x$, $y$, $z$ and
the evaluation pairings separate the points of
$\Hom_{\Fp}\bigl(\Cl(K)[p],\Cl(K)/p\bigr)$.

Finally, write $x=\sum_i x_i\chi_i$.  Bilinearity of
$\Delta D$ gives
\[
  D_{u+v}=D_u+D_v-\Delta D(u,v),
\]
and repeated application of this identity, together with
$D_{x_i\chi_i}=x_i^2D_{\chi_i}$, yields
\begin{equation}\label{eq:D-reconstruct}
  D_x=\sum_i x_i^2D_{\chi_i}
   +\sum_{i<j}x_ix_j
    \bigl(D_{\chi_i+\chi_j}-D_{\chi_i}-D_{\chi_j}\bigr).
\end{equation}
Indeed, the term in parentheses is $-\Delta D(\chi_i,\chi_j)$, so the displayed
formula is the decomposition of the quadratic map into its diagonal
and mixed terms.  Thus the family \eqref{eq:minimal-family} determines all
$D_x$, and then \eqref{eq:MfromD} determines $M$.
\end{proof}

\begin{remark}[Arithmetic cost]\label{rem:arithmetic-cost}
The polarization is a formal identity, but it has an arithmetic
consequence.  By the Ahlqvist--Carlson formula of \cref{prop:AC}
below, each operator $D_x$ is computed from a single unramified
cyclic degree-$p$ extension of $K$, and by \cref{thm:reconstruction}
already $d(d+1)/2$ of these operators determine the whole Massey
product, where $d=d_p\Cl(K)$.  The number of class-field
computations therefore grows quadratically in the $p$-class rank,
whereas $M$ has $d^3$ coordinates against each basis element of
$\Cl(K)[p]$.
\end{remark}

\begin{remark}\label{rem:six-computations}
Assume $d_p\Cl(K)=3$ throughout this remark, and fix bases
$\chi_1,\chi_2,\chi_3$ of $H^1(G,\Fp)$ and $e_1,e_2,e_3$ of
$\Cl(K)[p]$.  The six operators \eqref{eq:minimal-family} determine
the $27$ coordinates
\[
  \langle M(\chi_i,\chi_j,\chi_k),e_\ell\rangle_{\AV}
  \qquad(1\le i,j,k\le3)
\]
of the Massey product against each $e_\ell$.  By \eqref{eq:MfromD}
the coordinate equals
$\chi_j\bigl(\Delta D(\chi_i,\chi_k)(e_\ell)\bigr)$.  For
$i\ne k$ the polarization \eqref{eq:DeltaD} uses $D_{\chi_i}$,
$D_{\chi_k}$ and $D_{\chi_i+\chi_k}$, and for $i=k$ the quadratic
dependence \eqref{eq:quadratic} gives
\[
  \Delta D(\chi_i,\chi_i)
   =2D_{\chi_i}-D_{2\chi_i}
   =-2D_{\chi_i}.
\]
Six class-field computations therefore suffice.

How the coordinates with $i=j=k$ behave depends on the prime.  Taking
$y=x=\chi_i$ in \eqref{eq:Ddef} identifies them as the evaluations of
$D_{\chi_i}$ against its own character,
\[
  \langle M(\chi_i,\chi_i,\chi_i),e_\ell\rangle_{\AV}
  =\chi_i\bigl(D_{\chi_i}(e_\ell)\bigr).
\]
For $p>3$ these coordinates vanish by \cref{lem:D-image-orthogonal};
at $p=3$ they are the entries of the Bockstein matrix of
\cref{lem:diagonal-p3}.  At $p=3$ the count also improves.  The four
operators
$D_{\chi_1},D_{\chi_2},D_{\chi_3},D_{\chi_1+\chi_2+\chi_3}$
already determine all $27$ coordinates, and no three characters
suffice (\cref{lem:four-evaluations}).
\end{remark}

We make the scalar extension of the secondary norm family explicit.  Define
\[
  \begin{aligned}
  b\colon H^1(G,\Fp)\times H^1(G,\Fp)
    &\longrightarrow
      \Hom_{\Fp}\!\left(\Cl(K)[p],\Cl(K)/p\right),\\
  (x,z)&\longmapsto
      D_{x+z}-D_x-D_z=-\Delta D(x,z).
  \end{aligned}
\]
By \cref{thm:reconstruction}, $b$ is symmetric and bilinear.  Moreover,
$b(x,x)=2D_x$, because $D_{2x}=4D_x$.  For a finite extension
$k|\Fp$, extend $b$ by scalars to a $k$-bilinear map $b_k$ and define
\begin{equation}\label{eq:scalar-extended-D}
  D_{k,x}:=\frac12 b_k(x,x)
  \in
  \Hom_k\!\left(
    \Cl(K)[p]\otimes_{\Fp}k,\,
    \Cl(K)/p\otimes_{\Fp}k
  \right)
  \qquad
  \left(x\in H^1(G,\Fp)\otimes_{\Fp}k\right).
\end{equation}
In coordinates, this is the reconstruction formula
\eqref{eq:D-reconstruct}, now with coefficients in $k$.
This is the unique quadratic scalar extension of the original family.  Since
$D$ is quadratic rather than linear, this construction, not a
tensor product of the map itself, is what we mean by extending $D$ by
scalars.  We suppress $k$ from the notation and write $D_x$ also after
scalar extension.  The quadratic and polarization identities and \eqref{eq:MfromD}
remain valid over $k$ for every odd $p$, since the underlying shuffle
identities are multilinear.  For $p>3$ the conclusion of
\cref{lem:D-image-orthogonal} also remains valid over $k$.  What
happens to the diagonal at $p=3$ is recorded after
\cref{lem:diagonal-p3}.

\subsection{Arithmetic realization via Ahlqvist--Carlson}

It remains to explain why the operators occurring in the
reconstruction theorem are arithmetically computable.  Let
$0\ne x\in H^1(G,\Fp)$.  The character $x$ determines an unramified cyclic extension
\[
  L_x| K
\]
of degree $p$, namely the fixed field of $\ker x$.  It therefore
depends only on the subgroup generated by $x$ and is
unchanged when $x$ is replaced by $\lambda x$ with
$\lambda\in\Fp^\times$.  The operator $D_x$ does depend on $x$
itself, by \eqref{eq:quadratic}, and so does the formula of
\cref{prop:AC} that computes it, since it refers to a generator of
$\Gal(L_x| K)$ normalized by $x$.  Let
\[
  \operatorname{Artin}_{L_x| K}\colon\Cl(K)\longrightarrow
  \Gal(L_x| K)
\]
denote the composite of the map $\operatorname{Artin}_K$ of
\eqref{eq:H-classgroup} with the projection
$G^{\mathrm{ab}}/p\twoheadrightarrow\Gal(L_x| K)$.  It is the Artin
map of the everywhere unramified extension $L_x| K$ and sends the
class of a prime ideal $\mathfrak p$ of $K$ to its Frobenius.  In
particular
\begin{equation}\label{eq:artin-compat}
  x\bigl(\operatorname{Artin}_{L_x| K}(c)\bigr)=x(\overline c)
  \qquad(c\in\Cl(K)),
\end{equation}
where on the left $x$ is read on $\Gal(L_x| K)=G/\ker x$ and on the
right as a linear form on $\Cl(K)/p$.

Since $x$ vanishes on $\ker x$ and is injective on
$\Gal(L_x| K)=G/\ker x$, it induces an isomorphism
$\Gal(L_x| K)\xrightarrow{\ \sim\ }\Fp$.  We write
\begin{equation}\label{eq:sigma-dual}
  \sigma_x\in\Gal(L_x| K),\qquad x(\sigma_x)=1,
\end{equation}
for the preimage of $1$, the generator of $\Gal(L_x| K)$ dual to
$x$.  Two consequences are used below.

\begin{lemma}\label{lem:sigma-x}
Let $0\ne x\in H^1(G,\Fp)$ and let $\sigma_x$ be as in
\eqref{eq:sigma-dual}.  Then
\begin{equation}\label{eq:sigma-character}
  \operatorname{Artin}_{L_x| K}(c)
   =\sigma_x^{\,x(\overline c)}
  \qquad\text{for all } c\in\Cl(K),
\end{equation}
where $\overline c$ is the image of $c$ in $\Cl(K)/p$ and the exponent
$x(\overline c)\in\Fp$ is read modulo $p$, and
\begin{equation}\label{eq:sigma-scaling}
  \sigma_{\lambda x}=\sigma_x^{\,\lambda^{-1}}
  \qquad(\lambda\in\Fp^\times).
\end{equation}
\end{lemma}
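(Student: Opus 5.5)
The plan is to work entirely inside the cyclic group $\Gal(L_x| K)$, which by construction is the quotient $G/\ker x$. By definition, $x$ induces an isomorphism $\bar x\colon\Gal(L_x| K)\xrightarrow{\sim}\Fp$, and $\sigma_x=\bar x^{-1}(1)$ as in \eqref{eq:sigma-dual}. Since $\Gal(L_x| K)$ is cyclic of order $p$ generated by $\sigma_x$, every element $\tau$ can be written uniquely as $\sigma_x^{\,a}$ with $a\in\Fp$. The homomorphism property of $\bar x$ gives $\bar x(\sigma_x^{\,a})=a\,\bar x(\sigma_x)=a$, so the exponent is $a=\bar x(\tau)$. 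Hence
\[
  \tau=\sigma_x^{\,\bar x(\tau)}\qquad\text{for all }\tau\in\Gal(L_x| K).
\]

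For \eqref{eq:sigma-character}, I will apply this with $\tau=\operatorname{Artin}_{L_x| K}(c)$. The compatibility \eqref{eq:artin-compat} identifies $\bar x(\tau)$ with $x(\overline c)$, and this gives the formula directly. The exponent makes sense modulo $p$ because $\sigma_x$ has order $p$.

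For \eqref{eq:sigma-scaling}, I first note that $\ker(\lambda x)=\ker x$ for $\lambda\in\Fp^\times$, so $L_{\lambda x}=L_x$ and both generators live in the same group, with $\overline{\lambda x}=\lambda\bar x$. Then
\[
  \overline{\lambda x}\bigl(\sigma_x^{\,\lambda^{-1}}\bigr)=\lambda\cdot\lambda^{-1}\,\bar x(\sigma_x)=1.
\]
Since $\overline{\lambda x}$ is injective, $\sigma_x^{\,\lambda^{-1}}$ is the unique element with this property, and so it equals $\sigma_{\lambda x}$.

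There is no genuine obstacle here. The only point needing care is bookkeeping: $x$ has to be read consistently on $\Gal(L_x| K)$ and on $\Cl(K)/p$, and that is exactly what \eqref{eq:artin-compat} provides. Everything else is the elementary description of a cyclic group of order $p$ via a chosen isomorphism to $\Fp$.
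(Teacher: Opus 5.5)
Your proof is correct and follows the paper's argument: both parts use only that $x$ induces an isomorphism $\Gal(L_x| K)\to\Fp$, with \eqref{eq:artin-compat} giving the value on Artin symbols and injectivity identifying the element. Your intermediate identity $\tau=\sigma_x^{\,\bar x(\tau)}$ is the paper's injectivity step stated once for all $\tau$, and your scaling computation is the same as the paper's.
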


\begin{proof}
For $c\in\Cl(K)$,
\[
  x\bigl(\operatorname{Artin}_{L_x| K}(c)\bigr)
  \overset{\eqref{eq:artin-compat}}{=}x(\overline c)
  =x\bigl(\sigma_x^{\,x(\overline c)}\bigr),
\]
and injectivity of $x$ on $\Gal(L_x| K)$ gives
\eqref{eq:sigma-character}.  For \eqref{eq:sigma-scaling}, note that
$(\lambda x)(\sigma_x^{\,\lambda^{-1}})
 =\lambda\lambda^{-1}x(\sigma_x)=1$, so $\sigma_x^{\,\lambda^{-1}}$
is the generator dual to $\lambda x$.
\end{proof}

This is the convention of Ahlqvist--Pink, who represent a nonzero
class of $H^1(X,\Fp)$ by a pair $(L_x,\rho_x)$ with $\rho_x$ a generator of
$\Gal(L_x| K)$ and read the character through the isomorphism
$\Gal(L_x| K)\to\Fp$ sending $\rho_x$ to $1$
\cite[\S7]{AhlqvistPink}.  Thus $\rho_x=\sigma_x$.

Let $i_x$ denote extension of fractional ideals,
$J\mapsto J\mathcal O_{L_x}$, let $N_x$ denote the norm
$L_x\to K$, and let the group ring act on fractional ideals
through the Galois action, so that
\[
  I^{(1-\sigma_x)^2}
  =I\cdot\bigl(I^{\sigma_x}\bigr)^{-2}\cdot I^{\sigma_x^2}.
\]

\begin{definition}[Norm witness]\label{def:norm-witness}
Let $0\ne x\in H^1(G,\Fp)$, with $\sigma_x$ the generator
\eqref{eq:sigma-dual} dual to $x$, and let $(a',J)$ be a pair as in
\eqref{eq:pair-aJ}.  A \emph{norm witness} for $x$ and $(a',J)$ is
a pair $(t,I')$ of an element $t\in L_x^\times$ and a
fractional ideal $I'$ of $L_x$ with
\begin{equation}\label{eq:AC1}
  I'^{\,(1-\sigma_x)^2}\,(t)\,i_x(J)=\mathcal O_{L_x}
\end{equation}
and
\begin{equation}\label{eq:AC2}
  N_x(t)=a'.
\end{equation}
\end{definition}

\begin{proposition}[Ahlqvist--Carlson formula in operator form]\label{prop:AC}
Let $p>3$ and $0\ne x\in H^1(G,\Fp)$, and let $e\in\Cl(K)[p]$.
Then for every pair $(a',J)$ representing $e$ as in
\eqref{eq:pair-aJ} a norm witness for $x$ and $(a',J)$ exists, and
for every such pair and witness
\begin{equation}\label{eq:ACD}
  D_x(e)=[N_x(I')]\in\Cl(K)/p.
\end{equation}
\end{proposition}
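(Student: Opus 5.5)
The plan is to deduce \eqref{eq:ACD} from the explicit étale formula of Ahlqvist--Carlson for triple Massey products \cite{AhlqvistCarlson}, translated through the identifications of \cref{prop:comparison}.

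\textbf{Step 1: what must be shown.} By \cref{def:D}, $D_x(e)$ is the unique class $c\in\Cl(K)/p$ with
\[
  y(c)=\langle M(x,x,y),e\rangle_{\AV}\qquad\text{for all } y\in H^1(G,\Fp).
\]
So it suffices to prove
\[
  \langle M(x,x,y),e\rangle_{\AV}=y\bigl([N_x(I')]\bigr)
\]
for every $y$ and for every witness $(t,I')$.

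\textbf{Step 2: existence of a witness.} The cup product $x\smile x$ vanishes by \cref{lem:cupzero}\ref{cupzero:etale}, and $x\smile y$ vanishes as well. The Ahlqvist--Carlson construction solves a norm equation in $L_x$ and a $(1-\sigma_x)^2$-divisibility equation for ideals. I expect the vanishing of these cup products to be exactly what guarantees a solution of \eqref{eq:AC1} and \eqref{eq:AC2}. The pieces fit as follows:
\begin{itemize}
\item the ideal $i_x(J)$ is $\sigma_x$-invariant;
\item $N_x(i_x(J))=J^p=(a')^{-1}$;
\item the norm from $L_x$ to $K$ acts through $1+\sigma_x+\dots+\sigma_x^{p-1}\equiv(1-\sigma_x)^{p-1}$ modulo $p$.
\end{itemize}
For $p>3$ we have $(1-\sigma_x)^2\mid(1-\sigma_x)^{p-1}$ with room to spare, which makes the needed Hilbert~90-type step and the capitulation argument available. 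In practice I would simply quote their existence statement, checking that the hypotheses match our pair $(a',J)$.

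\textbf{Step 3: their formula.} Ahlqvist--Carlson's theorem evaluates $\langle x,x,y\rangle$, paired with the class of $(a',J)$, as
\[
  y\bigl(\operatorname{Artin}(N_x(I'))\bigr),
\]
possibly up to a sign convention. Here $\langle x,x,y\rangle$ is taken with the defining systems of Dwyer type, which is $M^{\mathrm{AC}}=M$ by our convention after \cref{lem:sign}. Through \eqref{eq:H-classgroup} and \eqref{eq:AVpair}, this expression is exactly $y([N_x(I')])$. Independence of the choice of witness and of the representative $(a',J)$ then follows automatically, because the left side does not depend on these choices.

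\textbf{Main obstacle: normalizations.} The hard part is matching all normalizations. This includes:
\begin{itemize}
\item the generator $\sigma_x$ dual to $x$ from \eqref{eq:sigma-dual}, versus the $\rho_x$ of Ahlqvist--Pink;
\item the order of the arguments ($x,x,y$ versus $y,x,x$, interchanged by the symmetry in \eqref{eq:shuffle});
\item the identification $\tfrac ap\mapsto a$ used in the pairing;
\item the multiplicative versus additive writing of ideals, which reverses the sign of $I'$ relative to their divisor convention.
\end{itemize}
I would fix these by checking compatibility with the scaling law $D_{\lambda x}=\lambda^2D_x$. Replacing $x$ by $\lambda x$ changes $\sigma_x$ to $\sigma_x^{\lambda^{-1}}$ by \eqref{eq:sigma-scaling}. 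Then $(1-\sigma_x^{\lambda^{-1}})^2\equiv\lambda^{-2}(1-\sigma_x)^2$ up to a unit of the group ring acting trivially on norms, so $N_x(I')$ scales by $\lambda^2$, and the two sides scale consistently. Any residual global sign is then settled by comparing with \eqref{eq:global-sign}.
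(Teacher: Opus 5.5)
Your route is the paper's. You reduce via \eqref{eq:Ddef} to $\langle M(x,x,y),e\rangle_{\AV}=y([N_x(I')])$ for all $y$, obtain this from \cite[Theorem~4.12]{AhlqvistCarlson} with its hypotheses made vacuous by the vanishing of cup products, and conclude by perfectness. The gap is in the step you yourself call the main obstacle.

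What their theorem actually gives is $\langle\langle x,x,y\rangle,(a',J)\rangle_{\AV}=\langle y,N_x(I')\rangle$. Here the right side is the Artin--Verdier pairing in degree one, $H^1(X,\Fp)\times H^2(X,D(\Fp))\to\Fp$, with $H^2(X,D(\Fp))\simeq\Cl(K)/p$. Your expression $y(\operatorname{Artin}(N_x(I')))$ ``possibly up to a sign'' assumes what has to be proved, and the tools you propose cannot settle it.
\begin{itemize}
\item The scaling law is blind to an overall sign. Under $x\mapsto\lambda x$, both $D_x(e)$ and $c\,[N_x(I')]$, for any constant $c\in\Fp^\times$, scale by $\lambda^2$.
\item Comparing with \eqref{eq:global-sign} is beside the point. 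You correctly noted that $M=M^{\mathrm{AC}}$ is the product their theorem evaluates, so no Massey-convention sign arises. That lemma matters only for Vogel's coefficients in \cref{prop:T-is-relations}.
\item Passing from additive divisors to multiplicative ideals reverses no sign; it is only notation.
\end{itemize}
The missing ingredient is the explicit description of the degree-one pairing. By the proof of \cite[Lemma~4.1]{AhlqvistCarlsonRing}, in agreement with \cite[(7.7)]{AhlqvistPink}, it sends $(y,\mathfrak a)$ to the exponent $n$ with $\operatorname{Artin}_{L_y|K}(\mathfrak a)=\sigma_y^{\,n}$, where $\sigma_y$ is the generator dual to $y$. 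Then \eqref{eq:sigma-character} turns $n$ into $y(\overline{\mathfrak a})$. Together with representing $e$ by Ahlqvist--Carlson's own pairs $(a',J)$ on the degree-two side, this fixes the formula with no residual ambiguity.

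The existence part also needs correcting. Besides $x\smile y=0$, the relevant hypothesis of their theorem is that the class of $(a',J)$ lies in $\ker\widetilde c_x\cap\ker\widetilde c_y$, where $\widetilde c_x$ is dual to $u\mapsto x\smile u$. This holds because $c_x$ and $c_y$ vanish identically, which is more than the vanishing of $x\smile x$ and $x\smile y$.

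Moreover, the theorem is stated for a \emph{given} witness, so there is no existence statement to quote. Existence has to be read off from the proofs:
\begin{itemize}
\item the kernel condition lets $(a',J)$ lift to a class $(b,a,J,I)$ of $H^1(X,D(P_x))$;
\item the proof of \cite[Lemma~4.11]{AhlqvistCarlson} produces $u\in L_x^\times$ by the Hasse norm theorem and $I'$ by Hilbert~90 for ideals;
\item after adjusting the lift by a boundary so that $N_x(a)=a'$, one takes $t=a+(1-\sigma_x)u$ in their additive notation.
\end{itemize}
Your heuristic that $(1-\sigma_x)^2\mid(1-\sigma_x)^{p-1}$ ``with room to spare'' is not what makes this work. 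Witnesses exist at $p=3$ as well (\cref{prop:AC-p3}), and the prime enters only the formula, through the extra factor $J$.
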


\begin{proof}
We apply Ahlqvist--Carlson's Theorem~4.12.  Through the isomorphism
$\kappa_1$ of \cref{prop:comparison}\ref{comp:iso} we regard
$x,y\in H^1(G,\Fp)$ as classes in $H^1(X,\Fp)$.  By
\cref{prop:comparison}\ref{comp:products} the triple Massey product
appearing in their theorem is the one used in the definition of $D_x$.

We first check the hypotheses of their theorem.  Let
\[
  c_x\colon H^1(X,\Fp)\longrightarrow H^2(X,\Fp),
  \qquad u\longmapsto x\smile u,
\]
and define $c_y$ similarly.  Ahlqvist--Carlson denote by
$\widetilde c_x$ and $\widetilde c_y$ the maps dual to these cup-product
maps under Artin--Verdier duality.  Their Theorem~4.12 requires
\[
  x\smile y=0
\]
and requires the Artin--Verdier dual class represented by $(a',J)$ to lie
in
\[
  \ker(\widetilde c_x)\cap\ker(\widetilde c_y).
\]
By \cref{lem:cupzero}\ref{cupzero:etale} the cup product vanishes
on $H^1(X,\Fp)$.  Hence $c_x=c_y=0$, and their
Artin--Verdier dual maps $\widetilde c_x$ and $\widetilde c_y$ are
zero as well.  Thus the kernel conditions impose no further restriction.
Every class $e\in\Cl(K)[p]$, represented on the Artin--Verdier dual side by
a pair $(a',J)$, satisfies the cohomological hypotheses of
Theorem~4.12.

The same hypotheses yield the existence clause.  Their theorem is
stated for a given witness, but its proof constructs one.  The pair
$(a',J)$ lifts to a class $(b,a,J,I)$ in $H^1(X,D(P_x))$, the
proof of \cite[Lemma~4.11]{AhlqvistCarlson} produces
$u\in L_x^\times$ by the Hasse norm theorem and a fractional
ideal $I'$ of $L_x$ by Hilbert's theorem~90 for ideals, and the proof of
\cite[Theorem~4.12]{AhlqvistCarlson}, after adjusting the lift by
a boundary so that $N_x(a)=a'$, assembles the witness with
$t=a+(1-\sigma_x)u$.

For $p>3$, that theorem now gives
\[
  \bigl\langle\langle x,x,y\rangle,(a',J)\bigr\rangle_{\AV}
  =
  \langle y,N_x(I')\rangle
\]
for every $y\in H^1(G,\Fp)$.  The pairing on the right is the
Artin--Verdier pairing in degree one,
\[
  H^1(X,\Fp)\times H^2\bigl(X,D(\Fp)\bigr)\longrightarrow\Fp .
\]
Here $H^2(X,D(\Fp))\simeq\Cl(K)/p$ by
\cite[Corollary~2.15]{AhlqvistCarlsonRing}, so the second argument is
the class of a fractional ideal of $K$.  The pairing is
computed by Artin symbols.  By the proof of
\cite[Lemma~4.1]{AhlqvistCarlsonRing} it sends a pair
$(y,\mathfrak a)$ to the exponent $n\in\Fp$ determined by
\[
  \operatorname{Artin}_{L_y| K}(\mathfrak a)=\sigma_y^{\,n},
\]
with $\sigma_y$ the generator \eqref{eq:sigma-dual} dual to $y$,
in agreement with \cite[(7.7)]{AhlqvistPink} (their $\rho_y$ is
our $\sigma_y$).  By
\eqref{eq:sigma-character} the exponent equals $y(\overline{\mathfrak a})$,
so under \eqref{eq:H-classgroup} the degree-one pairing is the
evaluation pairing, and
\[
  \langle y,N_x(I')\rangle
  =
  y\bigl([N_x(I')]\bigr).
\]
On the other hand, since $(a',J)$ represents $e\in\Cl(K)[p]$, the defining
property \eqref{eq:Ddef} of the secondary norm operator says
\[
  \bigl\langle\langle x,x,y\rangle,(a',J)\bigr\rangle_{\AV}
  =
  y\bigl(D_x(e)\bigr).
\]
Consequently
\[
  y\bigl(D_x(e)\bigr)
  =
  y\bigl([N_x(I')]\bigr)
\]
for every
\[
  y\in H^1(G,\Fp)
  \simeq\bigl(\Cl(K)/p\bigr)^*.
\]
The evaluation pairing
\[
  \bigl(\Cl(K)/p\bigr)^*\times\Cl(K)/p\longrightarrow\Fp
\]
is perfect, and hence its characters separate the elements of
$\Cl(K)/p$.  We therefore obtain
\[
  D_x(e)=[N_x(I')]\in\Cl(K)/p,
\]
as claimed.
\end{proof}

\begin{proposition}[The case $p=3$]\label{prop:AC-p3}
In the situation of \cref{prop:AC} with $p=3$, norm witnesses
again exist, and for every representing pair $(a',J)$ and every
norm witness $(t,I')$
\begin{equation}\label{eq:ACD-p3}
  D_x(e)=[N_x(I')\,J]\in\Cl(K)/3.
\end{equation}
\end{proposition}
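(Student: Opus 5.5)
The plan is to rerun the proof of \cref{prop:AC} verbatim, changing only the input from Ahlqvist--Carlson's Theorem~4.12 at the point where its case $p=3$ differs. First, the hypotheses. The vanishing of all cup products on $H^1(X,\Fp)$ (\cref{lem:cupzero}\ref{cupzero:etale}) holds for every odd $p$. Hence the maps $\widetilde c_x$ and $\widetilde c_y$ vanish, and every pair $(a',J)$ representing $e\in\Cl(K)[3]$ satisfies the kernel conditions. The existence of a norm witness is proved exactly as before. We lift $(a',J)$ to a class $(b,a,J,I)$ of $H^1(X,D(P_x))$, take $u$ from the Hasse norm theorem and $I'$ from Hilbert~90 for ideals as in the proof of \cite[Lemma~4.11]{AhlqvistCarlson}, adjust the lift by a boundary so that $N_x(a)=a'$, and put $t=a+(1-\sigma_x)u$. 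None of these steps uses $p>3$. The comparison \cref{prop:comparison} excludes only $K=\mathbb Q(\sqrt{-3})$, which is excluded here as well.

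Second, I will invoke the $p=3$ version of Ahlqvist--Carlson's formula. In their treatment, the cochain computation of $\langle x,x,y\rangle$ produces a correction term when $p=3$. It comes from the expansion of $(1-\sigma_x)^{p-1}$ relative to the norm element $N=1+\sigma_x+\cdots+\sigma_x^{p-1}$: in $\Fp[\mathbb Z/p]$ one has $(1-\sigma)^{p-1}=N$. The diagonal contribution $x\smile x$-type terms with $p=3$ add the class of $J$ to the evaluated ideal. Their statement reads
\[
  \bigl\langle\langle x,x,y\rangle,(a',J)\bigr\rangle_{\AV}
  =\langle y,N_x(I')J\rangle .
\]
I expect this to be the main obstacle. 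One has to locate the precise form of their $p=3$ statement, and check that the extra factor is $J$ itself and not $J^{-1}$ or $J^2$, under our sign convention $M=M^{\mathrm{AC}}$ of \cref{lem:sign} and the normalization $\rho_x=\sigma_x$. If their theorem is stated only for $p>3$, I would redo the last step of their proof. That step computes $N_x$ applied to the relation \eqref{eq:AC1}. Using $N_x(i_x(J))=J^3$ and $(1-\sigma)^2\equiv 1+\sigma+\sigma^2$ only up to the extra term $3\sigma$ in the integral group ring, the discrepancy from the $p>3$ identity becomes $J$ modulo cubes and principal ideals.

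Third, the conversion is identical to \cref{prop:AC}. The degree-one Artin--Verdier pairing is evaluation via Artin symbols, by \eqref{eq:sigma-character}. So the right side equals $y([N_x(I')J])$, while the left side equals $y(D_x(e))$ by \eqref{eq:Ddef}. The characters $y$ separate the points of $\Cl(K)/3$, which gives \eqref{eq:ACD-p3}.

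It remains to check that the class $[N_x(I')J]\in\Cl(K)/3$ is independent of the choices. This follows a posteriori, since the left side is. It is also consistent with $[J]\in\Cl(K)[3]$: this class need not vanish in $\Cl(K)/3$, which is exactly why the factor matters at $p=3$.
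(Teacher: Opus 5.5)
Your main line is the paper's proof. You check the hypotheses of Ahlqvist--Carlson's Theorem~4.12 and the existence of a norm witness exactly as at $p>3$, quote the $p=3$ form of their formula with the extra factor $J$ (stated at the end of the proof of their Lemma~4.11 and agreeing with Ahlqvist--Pink's (7.7)), and convert to $y([N_x(I')J])$ and separate points as in \cref{prop:AC}. The fallback would not have worked, though: taking norms in \eqref{eq:AC1} kills $I'^{(1-\sigma_x)^2}$ and gives only $N_x(t)=\pm a'$ (\cref{lem:norm-sign}), with no information on the class, so the factor $J$ has to come from the $p=3$ case of their Lemma~4.11, and the citation is what carries this step.
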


\begin{proof}
The hypotheses of Ahlqvist--Carlson's Theorem~4.12 are verified as
in the proof of \cref{prop:AC}.  All cup products vanish,
so every pair $(a',J)$ satisfies the kernel conditions of their
theorem.  The existence of a witness follows as at $p>3$.  The
$p=3$ case of their Lemma~4.11 runs with the extra factor $J$.
They state this at the end of that proof and leave the
verification to the reader.  At $p=3$ the theorem carries
the additional factor $J$ and gives
\[
  \bigl\langle\langle x,x,y\rangle,(a',J)\bigr\rangle_{\AV}
  =
  \langle y,N_x(I')\,J\rangle
\]
for every $y$.  The additional term is the one displayed in
\cite[(7.7)]{AhlqvistPink}.  The remaining
identifications are those of the proof of \cref{prop:AC}, and
perfection of the evaluation pairing yields \eqref{eq:ACD-p3}.
\end{proof}

\begin{remark}[Choice and construction of norm witnesses]
\label{rem:witness-choices}
Since the left-hand sides of \eqref{eq:ACD} and \eqref{eq:ACD-p3}
are defined intrinsically by \eqref{eq:Ddef}, every representing
pair $(a',J)$ for $e$ and every norm witness $(t,I')$ yield the
same class, so the choices are auxiliary.  The existence argument in
the proof of \cref{prop:AC} is not effective, and it is not used in
the computations.  For the fields of \cref{sec:results} the searches
construct the witnesses explicitly (\cref{subsec:searches}), the
stored data records them, and the verification checks the two
equations of \cref{def:norm-witness} directly for every character
and every class that enter the computation
(Appendix~\ref{app:verification}).
\end{remark}

The formula also explains the terminology in \cref{def:D}.  For fixed
$x$, the map
\[
  H^1(G,\Fp)\longrightarrow H^2(G,\Fp),
  \qquad
  y\longmapsto Q(x,y)=M(x,x,y),
\]
is a secondary cohomology operation, and by \eqref{eq:ACD} every
value $D_x(e)=[N_x(I')]$ of the linear operator
$D_x\colon\Cl(K)[p]\to\Cl(K)/p$ is the class of a norm from
$L_x$, whence the name.

The following lemma determines the map $x\mapsto M(x,x,x)$ at $p=3$,
where \eqref{eq:diagonal-triple} leaves it unconstrained.  It turns
out to be $\mathbb F_3$-linear, and its rank and its kernel enter
the $p=3$ part of the mildness criterion in
\cref{subsec:rank-drop-geometry}.

\begin{lemma}[The diagonal at $p=3$]\label{lem:diagonal-p3}
Let $p=3$ and $d=d_3\Cl(K)$.  Choose bases
$\chi_1,\ldots,\chi_d$ of $H^1(G,\mathbb F_3)$ and
$e_1,\ldots,e_d$ of $\Cl(K)[3]$, and let $B$ be the $d\times d$
matrix over $\mathbb F_3$ with entries
\begin{equation}\label{eq:bockstein-matrix}
  B_{\ell i}=\langle M(\chi_i,\chi_i,\chi_i),e_\ell\rangle_{\AV}
  \qquad(1\le \ell,i\le d).
\end{equation}
\begin{enumerate}[label=\textup{(\roman*)}]
\item\label{diag:linear} For $x=\sum_ix_i\chi_i$ with
$x_i\in\mathbb F_3$,
\begin{equation}\label{eq:diagonal-cube}
  \langle M(x,x,x),e_\ell\rangle_{\AV}
  =\Bigl(\sum_iB_{\ell i}x_i\Bigr)^{3}
  =\sum_iB_{\ell i}x_i .
\end{equation}
In particular the diagonal map
\[
  \beta\colon H^1(G,\mathbb F_3)\longrightarrow H^2(G,\mathbb F_3),
  \qquad x\longmapsto M(x,x,x),
\]
is $\mathbb F_3$-linear, with matrix $B$ relative to
$\chi_1,\ldots,\chi_d$ and the basis of $H^2(G,\mathbb F_3)$ dual to
$e_1,\ldots,e_d$ under \eqref{eq:H2-classgroup}.
\item\label{diag:dual} Under the Artin--Verdier identifications the
dual of $\beta$ is the natural map
\[
  \delta\colon\Cl(K)[3]\longrightarrow\Cl(K)/3 ;
\]
in particular $\operatorname{rk}\beta=\operatorname{rk}\delta$, and
this rank is the number of cyclic factors of order exactly $3$ in the
$3$-part of $\Cl(K)$.
\item\label{diag:bockstein} $\beta$ is the Bockstein map, that is,
the connecting homomorphism
$H^1(G,\mathbb F_3)\to H^2(G,\mathbb F_3)$ in the long exact
cohomology sequence of the coefficient sequence
$0\to\mathbb Z/3\to\mathbb Z/9\to\mathbb Z/3\to0$.
\end{enumerate}
\end{lemma}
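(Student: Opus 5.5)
The plan is to prove \ref{diag:linear} by formal multilinear algebra from the shuffle identities \eqref{eq:shuffle}, to identify $\beta$ with the Bockstein for \ref{diag:bockstein} by a cochain comparison, and then to obtain \ref{diag:dual} from the Bockstein description through class field theory and Artin--Verdier duality.

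For \ref{diag:linear}, write $x=\sum_i x_i\chi_i$ and expand $M(x,x,x)$ by trilinearity (\cref{lem:cupzero}\ref{cupzero:group}). Group the monomials in the $x_i$:
\begin{itemize}
\item the coefficient of $x_i^3$ is $M(\chi_i,\chi_i,\chi_i)$;
\item for $i\ne j$, the coefficient of $x_i^2x_j$ is $M(\chi_i,\chi_i,\chi_j)+M(\chi_i,\chi_j,\chi_i)+M(\chi_j,\chi_i,\chi_i)$, a single cyclic orbit, which vanishes by the cyclic identity in \eqref{eq:shuffle};
\item for distinct $i,j,k$, the coefficient of $x_ix_jx_k$ is the sum over all six orderings, which splits into two cyclic orbits and therefore also vanishes.
\end{itemize}
Hence $M(x,x,x)=\sum_i x_i^3M(\chi_i,\chi_i,\chi_i)$. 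Pairing with $e_\ell$ and using $x_i^3=x_i$ in $\mathbb F_3$ gives \eqref{eq:diagonal-cube}. The equality with $\bigl(\sum_iB_{\ell i}x_i\bigr)^3$ is just Frobenius in characteristic $3$. Linearity of $\beta$ and the description of its matrix follow at once.

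For \ref{diag:bockstein}, I would compare cochains in group cohomology and then pass the sign through \cref{lem:sign}. Lift $x$ to the set-theoretic function $\tilde x\colon G\to\{0,1,2\}$. Take the defining system $a_{12}=a_{23}=-\binom{\tilde x}{2}$ reduced mod $3$, or whatever sign the convention of \cite{Vogel} forces, via the unipotent $4\times4$ representation $g\mapsto\exp$-type matrix with entries $\binom{\tilde x(g)}{k}$. The Massey representative $x\smile a_{23}+a_{12}\smile x$ then differs by a coboundary (roughly $\binom{\tilde x}{3}$) from the carry cocycle $(\tilde x(g)+\tilde x(h)-\widetilde{x}(gh))/3$. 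This cocycle represents the Bockstein of $x$ for $0\to\mathbb Z/3\to\mathbb Z/9\to\mathbb Z/3\to0$. This is Kraines' classical identity $\langle x\rangle^p=\pm\beta(x)$ specialised to $p=3$, and I would cite it and check the constant. The main obstacle is the sign. Both the convention of \cite{Vogel} and the global sign $M^{\mathrm{AC}}=-M^{\mathrm V}$ enter, and I expect to have to verify on the cyclic group $\mathbb Z/9\to\mathbb Z/3$ directly, where everything is explicit, that the composite sign is $+1$. If it is $-1$, statement \ref{diag:bockstein} still holds up to the automorphism $-1$, which does not affect \ref{diag:dual}.

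For \ref{diag:dual}, the kernel of the Bockstein consists of the characters that lift to $\mathbb Z/9$-valued characters of $G$, equivalently of $\Cl(K)$. A character $x$ of $\Cl(K)/3$ lifts to $\Cl(K)\to\mathbb Z/9$ exactly when it vanishes on the image of $\Cl(K)[3]$ in $\Cl(K)/3$, that is, on $\operatorname{im}\delta$. Thus $\ker\beta=(\operatorname{im}\delta)^\perp$, which gives $\rk\beta=\rk\delta$. Moreover $\rk\delta$ is the number of cyclic factors $\mathbb Z/3$ exactly, since a factor $\mathbb Z/3^n$ with $n\ge2$ has its $3$-torsion inside $3\Cl(K)$. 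To upgrade this to the statement that $\beta^*=\delta$ under \eqref{eq:H-classgroup} and \eqref{eq:H2-classgroup}, I would use that the Artin--Verdier pairing is compatible with Bocksteins on both sides. On $H^1(X,D(\mathbb F_3))\simeq\Cl(K)[3]$, represented by pairs $(a',J)$, the dual Bockstein lands in $H^2(X,D(\mathbb F_3))\simeq\Cl(K)/3$ and sends $(a',J)\mapsto[J]$, which is $\delta$. Checking this last compatibility on representatives is the second delicate point, but it is needed only for the exact identification, not for the rank statement.
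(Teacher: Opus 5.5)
Your part \ref{diag:linear} is the paper's argument. The gap is in \ref{diag:dual}. Your lifting argument proves $\ker\beta=(\operatorname{im}\delta)^\perp$, hence $\rk\beta=\rk\delta$ and $\operatorname{im}\beta^*=\operatorname{im}\delta$. The lemma, however, asserts $\beta^*=\delta$ on the nose. For that you defer to a compatibility of Artin--Verdier duality with Bocksteins and to a computation, not yet done, of the Bockstein of $D(\mathbb F_3)$ on pairs $(a',J)$. The comparable statements in the literature assume $\mu_3\subseteq K$. Moreover, a formal compatibility of a duality pairing with connecting maps holds only up to a sign you would still have to track. So as written you get at most $\beta^*=\pm\delta$.

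No duality argument is needed, because the $p=3$ norm formula already contains $\delta$ through its extra factor $J$. Take $y=x=\chi_i$ in \eqref{eq:Ddef} and apply \cref{prop:AC-p3}:
\[
  B_{\ell i}=\chi_i\bigl(D_{\chi_i}(e_\ell)\bigr)
  =\chi_i\bigl([N_{\chi_i}(I')]\bigr)+\chi_i\bigl([J_\ell]\bigr)
  =\chi_i\bigl(\delta(e_\ell)\bigr).
\]
The last step holds because a norm class from $L_{\chi_i}$ has trivial Artin symbol in $\Gal(L_{\chi_i}| K)$, so $\chi_i$ kills it by \eqref{eq:artin-compat}. Combined with \ref{diag:linear}, this gives $\langle\beta(x),e\rangle_{\AV}=x(\delta(e))$ for all $x$ and $e$. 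That is \ref{diag:dual} exactly, with no sign ambiguity and without using \ref{diag:bockstein}. Your lifting argument remains a useful independent check of the rank. It also covers everything used later in the paper, namely $\ker\beta$ and $\rk\beta$.

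For \ref{diag:bockstein} your route is the paper's. The paper quotes Vogel's Proposition~1.2.15 (the diagonal of $M^{\mathrm V}$ is minus the Bockstein) and then applies \cref{lem:sign}. Your fallback ``up to the automorphism $-1$'' would not prove \ref{diag:bockstein} as stated. The sign is settled by a cochain computation valid for every group, so no test case is needed. In any case $\mathbb Z/9$ would be uninformative, since there the reduced character lifts and its Bockstein vanishes; the universal example is $\mathbb Z/3$.

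In Vogel's convention, $\partial a=x\smile x$ is solved by $a=-\binom{\tilde x}{2}$. With $m=\tilde x(g)$ and $n=\tilde x(h)$ in $\{0,1,2\}$, the representative $x\smile a+a\smile x$ takes at $(g,h)$ the value
\[
  -\Bigl(m\tbinom n2+\tbinom m2 n\Bigr)
  =-\Bigl(\tbinom{m+n}3-\tbinom m3-\tbinom n3\Bigr)
  \equiv -\,\frac{m+n-\tilde x(gh)}{3}\pmod 3 .
\]
This uses Vandermonde's identity and Lucas' theorem, and the right-hand side is minus the carry cocycle representing the Bockstein. Hence $M^{\mathrm V}(x,x,x)$ is exactly minus the Bockstein, with no coboundary correction. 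Then $M=M^{\mathrm{AC}}=-M^{\mathrm V}$ gives the sign $+1$.
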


\begin{proof}
For \ref{diag:linear}, expanding by trilinearity gives
\[
  \langle M(x,x,x),e_\ell\rangle_{\AV}
  =\sum_{i,j,k}x_ix_jx_k\,
   \langle M(\chi_i,\chi_j,\chi_k),e_\ell\rangle_{\AV}.
\]
Group the terms according to whether the indices $i,j,k$ are all
equal, exactly two are equal, or pairwise distinct.  If exactly two
agree, the orderings occurring are the three cyclic permutations of
$(\chi_i,\chi_i,\chi_j)$ with $i\ne j$, and their sum vanishes by the
cyclic identity in \eqref{eq:shuffle}.  If the three
indices are distinct, the six orderings split into two cyclic
triples, each summing to zero for the same reason.  Only the terms
with $i=j=k$ survive.  In characteristic three cubing is additive and
fixes $\mathbb F_3$, so with the entries \eqref{eq:bockstein-matrix},
\[
\begin{aligned}
  \langle M(x,x,x),e_\ell\rangle_{\AV}
  &=\sum_i x_i^{3}\,
    \langle M(\chi_i,\chi_i,\chi_i),e_\ell\rangle_{\AV}
   =\sum_iB_{\ell i}x_i^{3}\\
  &=\Bigl(\sum_iB_{\ell i}x_i\Bigr)^{3}
   =\sum_iB_{\ell i}x_i ,
\end{aligned}
\]
which is \eqref{eq:diagonal-cube}.  The right-hand side is linear in
the coordinates of $x$, and by \eqref{eq:AVpair} an element of
$H^2(G,\mathbb F_3)$ is determined by its evaluations against
$e_1,\ldots,e_d$; hence $\beta$ is linear, with the asserted matrix.

For \ref{diag:dual}, write $e_\ell=[J_\ell]$ as
in \eqref{eq:e-is-J}.  Then, by the definition
\eqref{eq:bockstein-matrix} of $B_{\ell i}$, by \eqref{eq:Ddef} with
$y=x=\chi_i$, and by \cref{prop:AC-p3},
\[
  B_{\ell i}
  =\langle M(\chi_i,\chi_i,\chi_i),e_\ell\rangle_{\AV}
  =\chi_i\bigl(D_{\chi_i}(e_\ell)\bigr)
  =\chi_i\bigl([N_{\chi_i}(I')]\bigr)+\chi_i\bigl([J_\ell]\bigr)
  =\chi_i\bigl(\delta(e_\ell)\bigr),
\]
because a norm class from $L_{\chi_i}$ has trivial Artin symbol in
$\Gal(L_{\chi_i}| K)$ and is therefore annihilated by $\chi_i$, by
\eqref{eq:artin-compat}, while $\delta(e_\ell)$ is the class of $J_\ell$ in
$\Cl(K)/3$.  Thus $B^{t}$ is the matrix of $\delta$ with respect to
$e_1,\ldots,e_d$ and the basis of $\Cl(K)/3$ dual to
$\chi_1,\ldots,\chi_d$, which is \ref{diag:dual}.

It remains to identify the rank.  The map $\delta$
is the restriction to the $3$-torsion of the quotient map
$\Cl(K)\to\Cl(K)/3$.  On a cyclic factor $\mathbb Z/3^a$ of the
$3$-part its $3$-torsion is generated by $3^{a-1}$, whose image in the
quotient by $3$ is nonzero exactly when $a=1$.  Hence
$\operatorname{rk}\delta$ is the number of factors with $a=1$.

For \ref{diag:bockstein}, write $B$ for that connecting
homomorphism.  In the notation of \cref{sec:relations} (a minimal
presentation $1\to R\to F\to G\to1$ with the Zassenhaus filtration
$F_{(n)}$ of $F$), the vanishing of the cup products gives
$R\subseteq F_{(3)}$ \cite[Corollary~1.2.9]{Vogel}, and
\cite[Proposition~1.2.15]{Vogel} computes the
threefold diagonal of $M^{\mathrm V}$ as $-B$; then
\cref{lem:sign} gives $M=M^{\mathrm{AC}}=-M^{\mathrm V}$, hence
$\beta=B$.
\end{proof}

\begin{remark}\label{rem:bockstein}
The dual description of $\beta$ by $\delta$ in
\cref{lem:diagonal-p3}\ref{diag:dual} is the analogue, in the present
setting, of \cite[Proposition~4.4]{ChungEtAl} and
\cite[Lemma~4.1]{AhlqvistCarlsonRing}.  Both are stated for base
fields containing $\mu_n$, whereas an
imaginary quadratic field contains a primitive cube root of unity
only for $K=\mathbb Q(\sqrt{-3})$, excluded here.  Compare also
\cite[Remark~4.13]{AhlqvistCarlson}.  Of the three parts, only
$\ker\beta$ and $\operatorname{rk}\beta$ are used below.  The
identification with the Bockstein map serves to name the cone of
\cref{def:bockstein-cone}.
\end{remark}

Thus at $p=3$ the image of $D_x$ need not lie in $x^\perp$, but this
failure is measured by the linear map $\beta$.  Since
$x(D_x(e))=\langle\beta(x),e\rangle_{\AV}$, the orthogonality
$\operatorname{im}D_x\subseteq x^\perp$ holds precisely for the
characters in $\ker\beta$.  This is why the cone condition enters
\cref{def:transverse-rank-one}.  It is the condition $\beta(x)=0$,
and the orthogonality it restores is what makes the map $\Theta_x$ of
\cref{lem:Theta-well-defined} well defined.  Over a finite extension
$k|\mathbb F_3$ the diagonal identity \eqref{eq:diagonal-cube}
holds in cubed form.  For $x=\sum_ix_i\chi_i$ with $x_i\in k$, the
scalar extension \eqref{eq:scalar-extended-D} gives
\[
  \langle M(x,x,x),e_\ell\rangle_{\AV}
  =\Bigl(\sum_iB_{\ell i}x_i\Bigr)^{3},
\]
so over $k$ the diagonal is \emph{not} $k$-linear.  It satisfies
$M(\lambda x,\lambda x,\lambda x)=\lambda^{3}M(x,x,x)$ for
$\lambda\in k$, and its evaluations are the cubes of the linear
forms belonging to $\beta$.  Since $t\mapsto t^{3}$ is injective on
$k$, the diagonal nevertheless vanishes at $x$ if and only if the
$k$-linear extension of $\beta$ does.  Only the vanishing locus is
used below, and the
scheme-theoretic difference is taken up in \cref{rem:reduced-cone}.

\begin{remark}[The two changes at $p=3$]\label{rem:p3}
Two independent phenomena separate $p=3$ from the larger primes.
First, Ahlqvist--Carlson's formula carries the additional
factor $J$ of \cref{prop:AC-p3}.  Second, the restricted $p$-power operation in the
Zassenhaus graded object contributes in degree three, so the cubes
join the initial forms.  This graded restricted Lie algebra is
recalled in \cref{sec:relations}.  Both changes are incorporated
there and in \cref{sec:example-p3}.  The transversality geometry
itself persists at $p=3$.  It lives along the Bockstein cone
(\cref{def:bockstein-cone}), the projectivized kernel of the linear
diagonal of \cref{lem:diagonal-p3}.
\end{remark}

\section{Mildness from transverse points of the norm-degeneracy
scheme}\label{sec:relations}

From now on write
\[
  d=d_p\Cl(K);
\]
by \eqref{eq:ranks} this is also the relation rank.  No further
assumption on $d$ is made in this section; the statements that need
one carry it explicitly.  Only the two main statements, \cref{thm:transverse-rank-one} and
\cref{cor:geometric-mildness}, repeat their hypotheses in full, so
that they can be quoted independently; every other statement is
read under the standing assumptions of this section and of
\cref{sec:setup}.  Choose a basis $\chi_1,\ldots,\chi_d$ of $H^1(G,\Fp)$ and a basis
$e_1,\ldots,e_d$ of $\Cl(K)[p]$.  Let
\begin{equation}\label{eq:minimal-presentation}
  1\longrightarrow R\longrightarrow F\longrightarrow G\longrightarrow1
\end{equation}
be a minimal pro-$p$ presentation.  Minimality means that inflation
induces an isomorphism
\begin{equation}\label{eq:inflation}
  \operatorname{inf}\colon H^1(G,\Fp)\xrightarrow{\ \sim\ }H^1(F,\Fp)
  =\Hom_{\Fp}\bigl(F/F^p[F,F],\Fp\bigr);
\end{equation}
we choose free generators $f_1,\ldots,f_d$ of $F$ dual to
$\chi_1,\ldots,\chi_d$ under \eqref{eq:inflation}, that is,
$\operatorname{inf}(\chi_j)(f_i)=\delta_{ij}$.  Let $\mgen_1,\ldots,\mgen_d$ be noncommuting variables.  The Magnus
embedding is the injective homomorphism
\[
  \mu\colon F\longrightarrow
  1+(\mgen_1,\ldots,\mgen_d)
  \subseteq
  \Fp\langle\!\langle \mgen_1,\ldots,\mgen_d\rangle\!\rangle,
  \qquad
  \mu(f_i)=1+\mgen_i.
\]
Here $\Fp\langle\!\langle \mgen_1,\ldots,\mgen_d\rangle\!\rangle$
denotes the algebra of formal power series in $d$ noncommuting variables.
We write $F_{(n)}$ for the Zassenhaus filtration of $F$: denoting by
$\Fp[[F]]$ the completed group algebra and by $I_F$ its augmentation
ideal,
\[
  F_{(n)}=\{f\in F:\ f-1\in I_F^{\,n}\},
  \qquad n\ge1,
\]
see \cite[Remark~2 following~(3.9.8)]{NSW}.  The map $\mu$ extends,
by the same letter, to an isomorphism of complete algebras
\[
  \mu\colon\Fp[[F]]\xrightarrow{\ \sim\ }
  \Fp\langle\!\langle \mgen_1,\ldots,\mgen_d\rangle\!\rangle
\]
carrying $I_F$ onto the ideal $(\mgen_1,\ldots,\mgen_d)$, hence
$I_F^{\,n}$ onto the power series with no terms of degree less than
$n$.  The filtration is therefore detected by degree, in that
\[
  f\in F_{(n)}
  \iff
  \mu(f)-1\ \text{has no terms in degrees }1,\ldots,n-1;
\]
cf.\ \cite[(1.1.24)]{Vogel}.  Writing
\[
  \operatorname{gr}F:=\bigoplus_{n\ge1}F_{(n)}/F_{(n+1)},
\]
the inclusions $[F_{(r)},F_{(s)}]\subseteq F_{(r+s)}$ and
$F_{(r)}^{\,p}\subseteq F_{(pr)}$ make $\operatorname{gr}F$ a graded
restricted Lie algebra over $\Fp$, with bracket induced by the
commutator and $p$-operation by the $p$-th power.  Let
$\mathcal A=\Fp\langle \mgen_1,\ldots,\mgen_d\rangle$ be the
free associative algebra, with homogeneous degree-$n$ part
$\mathcal A_n$.  It is a restricted Lie algebra under $[a,b]=ab-ba$
and $a^{[p]}=a^p$.  In each degree, $\mu$ induces an injective
$\Fp$-linear map
\[
  \mu_n\colon F_{(n)}/F_{(n+1)}\longrightarrow\mathcal A_n,
  \qquad
  f\bmod F_{(n+1)}\longmapsto
  \text{degree-$n$ component of }\mu(f)-1,
\]
and together the $\mu_n$ identify $\operatorname{gr}F$ with the
restricted Lie subalgebra of $\mathcal A$ generated by
$\mgen_1,\ldots,\mgen_d$, which is free on these generators and
has $\mathcal A$ as its universal enveloping algebra
\cite[Remark~2.3]{Gaertner}.

In particular, for $r\in F_{(3)}$ we may write
\[
  \mu(r)=1+\rho+\text{terms of degree at least $4$},
\]
where $\rho$ is homogeneous of degree three.  We call $\rho$ the cubic
Magnus polynomial, or cubic initial form, of $r$.  The definition
applies to any $r\in F_{(3)}$.  What we identify below is the
coefficient of each ordered word $\mgen_i\mgen_j\mgen_k$ in the cubic
initial forms of the relators of \eqref{eq:minimal-presentation}.

\subsection{The cubic word matrix and the initial forms of the
  relators}

The chosen bases turn the polarization $\Delta D$ of
\cref{thm:reconstruction} into a table of scalars, which we record as
a matrix
\begin{equation}\label{eq:T}
  T=(m_{\ell,ijk})\in M_{d\times d^3}(\Fp),
  \qquad
  m_{\ell,ijk}
   =\chi_j\bigl(\Delta D(\chi_i,\chi_k)(e_\ell)\bigr),
\end{equation}
with rows indexed by $e_1,\ldots,e_d$ and columns by the triples
$(i,j,k)$ with $1\le i,j,k\le d$.  We call $T$ the \emph{cubic word
matrix}.  By \eqref{eq:MfromD} the entries
are the values of the triple Massey product,
\begin{equation}\label{eq:T-massey}
  m_{\ell,ijk}
  =\bigl\langle M(\chi_i,\chi_j,\chi_k),e_\ell\bigr\rangle_{\AV},
\end{equation}
so that $T$ is at this stage nothing but the arithmetic of \S2 written
out in coordinates, computable from the $d(d+1)/2$ operators
\eqref{eq:minimal-family} alone (\cref{rem:six-computations} spells
this out for $d=3$).

We label the $d^3$ columns not by the triples $(i,j,k)$ but by the
words $\mgen_i\mgen_j\mgen_k$, ordered lexicographically, so that the
order begins
\[
  \mgen_1^3,\ \mgen_1^2\mgen_2,\ \mgen_1^2\mgen_3,\
  \mgen_1\mgen_2\mgen_1,\ldots .
\]
Nothing is claimed by this relabelling.  Its point is
\cref{prop:T-is-relations} below, where the $\ell$-th row of $T$ turns
out to be the coordinate vector of the cubic initial form of the
$\ell$-th relator in exactly this basis of words.  Each column then
holds, in row $\ell$, the coefficient of its own word in that initial
form.

We next explain how the rows of $T$ are related to defining relations.  Put
\[
  \overline R=R/R^p[R,F].
\]
Recall the inflation isomorphism \eqref{eq:inflation}.  Moreover,
$F$ is a free pro-$p$ group, so $H^2(F,\Fp)=0$.  The five-term
exact sequence \cite[(1.6.7)]{NSW} therefore shows that transgression
is an isomorphism
\[
  \operatorname{tg}\colon H^1(R,\Fp)^G\xrightarrow{\sim}H^2(G,\Fp);
\]
here and below, $\operatorname{tg}$ is the transgression normalized
as in \cite[(1.6.6)]{NSW}, the convention of \cite{Vogel}.
A continuous homomorphism $R\to\Fp$ is $G$-invariant precisely when it
vanishes on $[R,F]$, and it automatically vanishes on $R^p$.  Hence
\[
  H^1(R,\Fp)^G
  =\Hom_{\Fp}(\overline R,\Fp)
  =\overline R^*.
\]
Dualizing $\operatorname{tg}$ and using
$H^1(R,\Fp)^G=\overline R^*$ therefore gives the isomorphism
\[
  \operatorname{tg}^*\colon H^2(G,\Fp)^*
  \xrightarrow{\ \sim\ }\overline R^{**}=\overline R .
\]
On the other side, the perfect pairing \eqref{eq:AVpair} of
\cref{prop:comparison}\ref{comp:pairing} gives the isomorphism
\[
  \Cl(K)[p]\xrightarrow{\ \sim\ }H^2(G,\Fp)^*,
  \qquad
  e\longmapsto\lambda_e,\quad
  \lambda_e(\alpha)=\langle\alpha,e\rangle_{\AV}.
\]
The chosen basis $e_1,\ldots,e_d$ of $\Cl(K)[p]$ thus determines a basis
of $\overline R$, and we orient it explicitly: writing
$\lambda_\ell=\lambda_{e_\ell}$, set
\begin{equation}\label{eq:negative-orientation}
  \overline{r_\ell}
  =-\operatorname{tg}^*(\lambda_\ell)\in\overline R,
\end{equation}
and choose any lift $r_\ell\in R$ of $\overline{r_\ell}$.  The sign in
\eqref{eq:negative-orientation} belongs to the definition of the
classes $\overline{r_\ell}$ themselves; we do not first fix relators
and then pass to their inverses.  Since the $\lambda_\ell$ form a
basis, the normal-generation criterion \cite[(3.9.3)]{NSW}
(elements of $R$ generate $R$ as a normal subgroup if and only if their
images span $R/R^p[R,F]$) shows that $r_1,\ldots,r_d$
form a minimal system of normal generators of $R$.  Moreover
$R\subseteq F_{(3)}$, by the vanishing of the cup products
\cite[Corollary~1.2.9]{Vogel}; hence
$[R,F]\subseteq[F_{(3)},F]\subseteq F_{(4)}$ and
$R^p\subseteq F_{(3)}^{\,p}\subseteq F_{(3p)}\subseteq F_{(4)}$,
and the map
\[
  \vartheta_3\colon\overline R\longrightarrow\mathcal A_3,
  \qquad
  \overline r\longmapsto
  \text{cubic initial form of }r
  =\mu_3\bigl(r\bmod F_{(4)}\bigr),
\]
is a well-defined homomorphism of $\Fp$-vector spaces.  We set
$\rho_\ell=\vartheta_3(\overline{r_\ell})$.  It is the cubic initial
form of $r_\ell$ and is independent of the choice of the lift
$r_\ell$.

\begin{proposition}\label{prop:T-is-relations}
For every odd $p$,
\begin{equation}\label{eq:rho-from-T}
  \rho_\ell
   =\sum_{i,j,k=1}^d m_{\ell,ijk}\mgen_i\mgen_j\mgen_k,
  \qquad\ell=1,\ldots,d.
\end{equation}
Consequently, the rows of $T$ are the associative word
coordinates of the cubic initial forms of the $d$ minimal
relators.  For $p>3$ the diagonal coefficients $m_{\ell,iii}$ vanish by
\eqref{eq:diagonal-triple};
for $p=3$ they are the Bockstein values of \cref{lem:diagonal-p3}.
\end{proposition}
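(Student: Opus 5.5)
The plan is to deduce \eqref{eq:rho-from-T} from the relation formula of \cite[Proposition~1.3.3 and Theorem~1.2.6]{Vogel}, which expresses the cubic Magnus coefficients of relators through triple Massey products. Then \cref{lem:sign}, \eqref{eq:T-massey} and the orientation \eqref{eq:negative-orientation} convert this into the stated coefficients.

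First, I fix $\ell$ and recall the form of Vogel's result. Suppose $R\subseteq F_{(3)}$, which holds here by \cite[Corollary~1.2.9]{Vogel}. For $r\in R$ write $\mu(r)=1+\sum\varepsilon_{ijk}(r)\mgen_i\mgen_j\mgen_k+\cdots$. Then for all $i,j,k$
\[
  \varepsilon_{ijk}(r)=\pm\,\operatorname{tg}^{-1}\bigl(\langle\chi_i,\chi_j,\chi_k\rangle^{\mathrm V}\bigr)(r),
\]
with the transgression normalized as in \cite[(1.6.6)]{NSW}. The sign is the one recorded in \cite{Vogel}, and the triple $(\chi_i,\chi_j,\chi_k)$ is ordered as the word. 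All products are uniquely defined by \cref{lem:cupzero}. The first task is to pin down the sign convention exactly, including whether the word order matches the order of the Massey arguments. The later sign bookkeeping depends on this.

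Second, I evaluate at $r=r_\ell$. The left side is the $\mgen_i\mgen_j\mgen_k$-coefficient of $\rho_\ell$. On the right I use the definition of $\operatorname{tg}^*$: for $\alpha\in H^2(G,\Fp)$,
\[
  \operatorname{tg}^{-1}(\alpha)\bigl(\operatorname{tg}^*(\lambda)\bigr)=\lambda(\alpha).
\]
With $\overline{r_\ell}=-\operatorname{tg}^*(\lambda_\ell)$ this gives
\[
  \operatorname{tg}^{-1}(\alpha)(r_\ell)=-\langle\alpha,e_\ell\rangle_{\AV}.
\]
Substituting $\alpha=M^{\mathrm V}(\chi_i,\chi_j,\chi_k)=-M(\chi_i,\chi_j,\chi_k)$ from \cref{lem:sign}, the two minus signs cancel against Vogel's sign. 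The coefficient then equals $\langle M(\chi_i,\chi_j,\chi_k),e_\ell\rangle_{\AV}=m_{\ell,ijk}$ by \eqref{eq:T-massey}. This is exactly why the orientation \eqref{eq:negative-orientation} was chosen. The main obstacle is checking that the net sign really is $+1$, and I would first verify it against Vogel's normalization in the twofold case, $\langle u_1,u_2\rangle=u_1\smile u_2$.

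For $p=3$ one extra point needs care. The degree-three part of $\mu(r)-1$ may contain contributions from cubes of generators, via $f_i^3\mapsto 1+\mgen_i^3$. Vogel's formula is stated for all words, diagonal ones included: the diagonal Massey product is identified with the Bockstein through \cite[Proposition~1.2.15]{Vogel}. So the formula covers $m_{\ell,iii}$ as well, and \cref{lem:diagonal-p3} names these entries as Bockstein values.

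The final assertions then follow directly. For $p>3$, \eqref{eq:diagonal-triple} gives $M(\chi_i,\chi_i,\chi_i)=0$, so $m_{\ell,iii}=0$. For $p=3$, $m_{\ell,iii}=B_{\ell i}$ by \eqref{eq:bockstein-matrix}.
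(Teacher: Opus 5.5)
Your proposal follows the paper's own proof. The orientation \eqref{eq:negative-orientation} gives $\tr_{r_\ell}=-\lambda_\ell$, Vogel's coefficient theorem \cite[Theorem~1.2.6]{Vogel} writes the cubic coefficients as traces of $M^{\mathrm V}$ and covers repeated indices directly because $R\subseteq F_{(3)}$, and \cref{lem:sign} supplies the second sign. The sign you left as $\pm$ is $+$ in the form the paper quotes, $\varepsilon_{ijk,3}(r)=\tr_r M^{\mathrm V}(\chi_i,\chi_j,\chi_k)$ with $\tr_r(\alpha)=(\operatorname{tg}^{-1}\alpha)(r)$, so the orientation sign and the sign of \cref{lem:sign} cancel each other and the net sign is $+1$.
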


\begin{proof}
For $r\in R$, the trace map with respect to $r$ is
\[
  \tr_r\colon H^2(G,\Fp)\longrightarrow\Fp,
  \qquad
  \alpha\longmapsto\bigl(\operatorname{tg}^{-1}\alpha\bigr)(r);
\]
see \cite[III.9]{NSW}.  The character of $R$ transgressing to $\alpha$
lies in $H^1(R,\Fp)^G=\overline R^*$, and $\tr_r$ evaluates it at $r$.
Since that character vanishes on $R^p[R,F]$, only the class $\overline r$
enters.  Under the identification $\overline R^{**}=\overline R$ used
above, an element $\lambda\in H^2(G,\Fp)^*$ has image
$\operatorname{tg}^*(\lambda)$ characterized by
\[
  \varphi\bigl(\operatorname{tg}^*(\lambda)\bigr)
   =\lambda\bigl(\operatorname{tg}\varphi\bigr)
  \qquad\text{for all }\varphi\in\overline R^* .
\]
Applying this to $\varphi=\operatorname{tg}^{-1}\alpha$ and inserting
\eqref{eq:negative-orientation} gives
\begin{equation}\label{eq:trace-orientation}
  \tr_{r_\ell}=-\lambda_\ell .
\end{equation}
Because $R\subseteq F_{(3)}$, the coefficient theorem
\cite[Theorem~1.2.6]{Vogel} applies to every ordered multi-index of
length three, including repeated indices.  The coefficient
$\varepsilon_{ijk,3}(r_\ell)$ of the word $\mgen_i\mgen_j\mgen_k$ in
the degree-three Magnus polynomial of $r_\ell$ is
\[
  \varepsilon_{ijk,3}(r_\ell)
   =\tr_{r_\ell}M^{\mathrm V}(\chi_i,\chi_j,\chi_k).
\]
Combining this with the global sign \eqref{eq:global-sign} of
\cref{lem:sign} and with \eqref{eq:trace-orientation} (the
orientation \eqref{eq:negative-orientation} is chosen so that the
two minus signs cancel) gives the entrywise identity
\begin{equation}\label{eq:coefficient-identity}
  \varepsilon_{ijk,3}(r_\ell)
  =\bigl\langle M(\chi_i,\chi_j,\chi_k),e_\ell\bigr\rangle_{\AV}
  =m_{\ell,ijk}.
\end{equation}
Reading it for all $d^3$ ordered words gives \eqref{eq:rho-from-T}.
\end{proof}

\begin{remark}[The complete cubic initial relation space]
\label{rem:complete-space}
By the \emph{complete cubic initial relation space} $R_3$ we mean
the span of the cubic initial forms of all relations $r\in R$.
Since the cubic initial form of $r$ is $\vartheta_3(\overline r)$,
\begin{equation}\label{eq:R3}
  R_3=\operatorname{im}(\vartheta_3)
     =\bigl\langle\rho_1,\ldots,\rho_d\bigr\rangle_{\Fp},
  \qquad
  \dim_{\Fp}R_3=\rk_{\Fp}T.
\end{equation}
\end{remark}

\begin{remark}[Comparison with the commutator presentation]
The same data appear in group-theoretic form in Ahlqvist--Pink's
Proposition~7.5, which translates the presentation formula of
\cite[Proposition~1.3.3 and Example~1.2.11]{Vogel} into the
étale-cohomological setting in which Ahlqvist--Carlson compute.  The
triple Massey evaluations occur there as exponents of triple
commutators in a presentation of $G$, and here as the coefficients
\eqref{eq:T-massey} of the words $\mgen_i\mgen_j\mgen_k$.
\end{remark}

\subsection{The shuffle-symmetric space and reconstruction at
  \texorpdfstring{$p=3$}{p = 3}}

As recalled at the beginning of this section, the maps
$\mu_n$ identify $\operatorname{gr}F$ with the free restricted Lie
algebra over $\Fp$ on $\mgen_1,\ldots,\mgen_d$ inside
$\mathcal A$.  In particular
$R_3\subseteq\mu_3\bigl(F_{(3)}/F_{(4)}\bigr)$ lies in the
degree-three part of this subalgebra.  Write
$L_3\subseteq\mathcal A_3$ for the degree-three component of the
ordinary free Lie algebra on $\mgen_1,\ldots,\mgen_d$, embedded
via $[a,b]=ab-ba$.  The following lemma computes the degree-three
part.  Call an element
\[
  \sum_{i,j,k=1}^d m_{ijk}\,\mgen_i\mgen_j\mgen_k\in\mathcal A_3
\]
\emph{shuffle-symmetric} if its coefficients satisfy
\begin{equation}\label{eq:admissible}
  m_{ijk}=m_{kji},
  \qquad
  m_{ijk}+m_{jki}+m_{kij}=0
\end{equation}
for all $i,j,k$, and write $\mathcal R_3\subseteq\mathcal A_3$ for
the subspace of shuffle-symmetric elements.  The initial forms
$\rho_1,\ldots,\rho_d$ are shuffle-symmetric by the identities
\eqref{eq:shuffle} for $M$, so $R_3\subseteq\mathcal R_3$.  The
shuffle-symmetric elements are the candidate cubic initial forms,
inside which the actual relation space $R_3$ sits, and the calligraphic
notation records this.

\begin{lemma}[The shuffle-symmetric space]\label{lem:eleven-space}
The space $\mathcal R_3$ is precisely the degree-three part of the
free restricted Lie algebra
on $\mgen_1,\ldots,\mgen_d$: for $p>3$,
\[
  \mathcal R_3=L_3,\qquad\dim_{\Fp}\mathcal R_3=\frac{d^3-d}3,
\]
and for $p=3$,
\[
  \mathcal R_3=L_3\oplus
   \langle\mgen_1^3,\ldots,\mgen_d^3\rangle,
  \qquad\dim_{\Fp}\mathcal R_3=\frac{d^3-d}3+d.
\]
\end{lemma}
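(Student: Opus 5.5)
We prove the two inclusions separately and compare dimensions, with the action of the symmetric group $S_3$ on positions as the main tool. Write $\mathcal A_3=\bigoplus_{\{i,j,k\}}W_{ijk}$, where $W_{ijk}$ is spanned by the words whose letters form a fixed multiset. Both conditions in \eqref{eq:admissible} only relate coefficients of words with the same multiset of letters, and so does $L_3$. The comparison can therefore be carried out separately on each $W_{ijk}$, in three cases: all letters distinct, exactly two equal, all three equal.

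\emph{First inclusion: the restricted Lie part is shuffle-symmetric.} The space $L_3$ is spanned by the brackets $[a,[b,c]]$, and
\[
  [a,[b,c]]=abc-acb-bca+cba.
\]
The coefficients of this element are invariant under reversal $w\mapsto\bar w$: reversing $abc$ gives $cba$, and reversing $acb$ gives $bca$, which carry equal coefficients. For the cyclic condition, take any word $u$ and sum the coefficients over its cyclic class. The class of $abc$ contributes $1-1=0$, coming from $abc$ and $bca$, and the class of $acb$ contributes $-1+1=0$, coming from $acb$ and $cba$. When letters repeat, the same computation holds with multiplicities, since the identity is formal in the variables. For $p=3$, the coefficient vector of $\mgen_i^3$ is $m_{iii}=1$. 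This satisfies $m_{iii}=m_{iii}$ and $3m_{iii}=0$ in $\mathbb F_3$, so $\mgen_i^3\in\mathcal R_3$. For $p>3$, the cyclic condition forces $3m_{iii}=0$, hence $m_{iii}=0$, so no cubes occur.

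\emph{Dimension count.} We compute $\dim\mathcal R_3$ directly, case by case. If the letters are distinct, $W_{ijk}$ is the regular representation of $S_3$, of dimension $6$. Symmetry under reversal cuts it to $3$ dimensions: writing the two cyclic classes as $\{abc,bca,cab\}$ and $\{cba,acb,bac\}$, reversal pairs each word of the first class with one of the second. The cyclic-sum condition is then a single condition, so the invariant dimension is $2$. If exactly two letters agree, $W=\langle aab,aba,baa\rangle$. Reversal gives $m_{aab}=m_{baa}$, and the cyclic sum gives $2m_{aab}+m_{aba}=0$, so the dimension is $1$. If all three letters agree, the dimension is $0$ for $p>3$ and $1$ for $p=3$. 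In total we get
\[
  2\binom d3+d(d-1)+[p=3]\,d=\frac{d^3-d}{3}+[p=3]\,d.
\]
Witt's formula gives $\dim L_3=(d^3-d)/3$. The cubes $\mgen_i^3$ are linearly independent of $L_3$ modulo all other words, because $L_3$ has zero coefficient on $\mgen_i^3$: the cyclic-sum argument above, applied to a single letter, gives $3m_{iii}=0$, and over $\mathbb F_3$ one checks directly that the brackets contain no $\mgen_i^3$ term. Therefore the sum $L_3+\langle\mgen_i^3\rangle$ is direct, has the displayed dimension, and lies inside $\mathcal R_3$ by the first inclusion. Equality of dimensions gives equality of the spaces.

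It remains to identify $L_3\oplus\langle\mgen_i^3\rangle$ (for $p=3$), respectively $L_3$ (for $p>3$), with the degree-three part of the free restricted Lie algebra. This part is spanned by brackets and by the $p$-th powers of degree-one elements. Since $p$-th powers have degree $p$, these contribute in degree three only when $p=3$, through $(\sum_i c_i\mgen_i)^3$. Modulo $L_3$ this power equals $\sum_i c_i^3\mgen_i^3$, by the Jacobson formula, where all brackets terms lie in $L_3$. So exactly the cubes $\mgen_i^3$ are added. The one step needing care is the dimension count in the case of two equal letters: the dimension is $1$ and not $2$, and this holds because $2$ is invertible for every odd $p$, so that the two conditions there are genuinely independent.
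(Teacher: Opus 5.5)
Your proof is correct. Your count of $\dim\mathcal R_3$ is the same as the paper's, but you obtain the Lie side of the comparison by a different route.

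The paper never checks general brackets. It takes Vogel's basis of $F_{(3)}/F_{(4)}$, given by the basic commutators $((f_k,f_l),f_m)$ with $k<l$, $m\le l$, together with the cubes $f_k^3$ at $p=3$. It transports this basis by the injective $\mu_3$ and notes that the resulting elements are shuffle-symmetric. Since there are $(d^3-d)/3$ of them, respectively $(d^3-d)/3+d$, it concludes $\mu_3\bigl(F_{(3)}/F_{(4)}\bigr)=\mathcal R_3$ by comparing dimensions. Witt's formula is used only afterwards, to see that the bracket elements form a basis of $L_3$. The identification with the free restricted Lie algebra comes from the cited description of $\operatorname{gr}F$ due to G\"artner.

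You instead stay inside $\mathcal A$ throughout:
\begin{itemize}
\item you show that every bracket $[a,[b,c]]$ is shuffle-symmetric, and your substitution remark for repeated letters is valid, since letter substitution commutes with reversal and rotation;
\item you use Witt's formula for the dimension equality itself;
\item you identify the degree-three restricted part via Jacobson's formula.
\end{itemize}
That last step implicitly uses the standard fact that the restricted Lie subalgebra generated by the $\mgen_i$ is graded and spanned by $L$ and iterated $p$-th powers of homogeneous elements of $L$. You state this without proof, but it is standard.

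Your route is self-contained and needs neither the Zassenhaus filtration nor the basic-commutator theorem. The paper's route yields, in the same stroke, the equality $\mathcal R_3=\mu_3\bigl(F_{(3)}/F_{(4)}\bigr)$ with an explicit basis, and this is the form in which the lemma is later applied to the relation space $R_3$.

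One small correction to your closing remark: invertibility of $2$ plays no role in the two-equal-letter case. The conditions $m_{aab}=m_{baa}$ and $m_{aab}+m_{aba}+m_{baa}=0$ are independent in every characteristic, because the second contains $m_{aba}$ with coefficient $1$ and the first does not contain it at all.
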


\begin{proof}
The proof compares three subspaces of $\mathcal A_3$: the shuffle-symmetric
space $\mathcal R_3$, defined by the linear identities
\eqref{eq:admissible}; the degree-three part
$\mu_3\bigl(F_{(3)}/F_{(4)}\bigr)$ of $\operatorname{gr}F$; and
the Lie part $L_3$.  We compute the dimension of the first, produce
a basis of the second, and obtain the equalities by comparison.

Both identities in
\eqref{eq:admissible} relate $m_{ijk}$ only to
coefficients of words with the same letters, so $\mathcal R_3$
decomposes accordingly.  For three equal letters the cyclic identity
gives $3m_{iii}=0$, so the $d$ diagonal coefficients vanish for $p>3$
and are unconstrained for $p=3$.  For the letters
$\mgen_i,\mgen_i,\mgen_j$ with $i\ne j$, reversal and the cyclic
identity express $m_{jii}$ and $m_{iji}$ through $m_{iij}$, leaving
one dimension for each of the $d(d-1)$ ordered pairs $(i,j)$.  For
three pairwise distinct letters, reversal pairs the six words formed
from them, and the one remaining cyclic relation reduces the
three parameters to two, leaving two dimensions for each of the
$\binom d3$ unordered triples.  Hence
\[
  \dim\mathcal R_3
  =d(d-1)+2\binom d3=\frac{d^3-d}3\ \text{for } p>3,
  \qquad
  \frac{d^3-d}3+d\ \text{for } p=3.
\]

Next, by
\cite[Proposition~1.3.3]{Vogel}, every element of $F_{(3)}$ is,
modulo $F_{(4)}$, uniquely a product of the basic commutators
$((f_k,f_l),f_m)$ with $k<l$ and $m\le l$, of which there are
$(d^3-d)/3$, and, for $p=3$, of the cubes $f_k^3$; their classes
therefore form a basis of $F_{(3)}/F_{(4)}$.  The injective $\mu_3$
carries this basis to the expansions
\[
  [[\mgen_k,\mgen_l],\mgen_m]
  =\mgen_k\mgen_l\mgen_m-\mgen_l\mgen_k\mgen_m
  -\mgen_m\mgen_k\mgen_l+\mgen_m\mgen_l\mgen_k
\]
with $k<l$, $m\le l$ and, for $p=3$, the cubes $\mgen_k^3$.
Together they form a basis of $\mu_3\bigl(F_{(3)}/F_{(4)}\bigr)$.

Each element of this basis is shuffle-symmetric, so
$\mu_3\bigl(F_{(3)}/F_{(4)}\bigr)\subseteq\mathcal R_3$; both
spaces have dimension $(d^3-d)/3$, respectively $(d^3-d)/3+d$, so
they are equal.
The bracket elements lie in $L_3$, which has dimension
$(d^3-d)/3$ by the Witt formula; they are therefore a basis of
$L_3$, and the basis of $\mathcal R_3$ splits into this basis of
$L_3$ and, for $p=3$, the $d$ cubes.  The displayed formulas
follow, the sum being direct.
\end{proof}

At $p=3$ the shuffle identities also degenerate off the diagonal.
Taking the outer entries equal gives
\begin{equation}\label{eq:p3-contraction}
  M(x,y,x)=-2M(x,x,y)=M(x,x,y).
\end{equation}
For $x=\sum_ix_i\chi_i\in H^1(G,\Fthree)$ define the
\emph{contraction}
\[
  \gamma_x\colon\mathcal R_3\longrightarrow\Fthree^d,
  \qquad
  \gamma_x(m)_j=\sum_{i,k=1}^dx_ix_k\,m_{ijk}.
\]
Every $e\in\Cl(K)[p]$ determines the element
$m(e)\in\mathcal R_3$ with coefficients
\[
  m(e)_{ijk}=\bigl\langle M(\chi_i,\chi_j,\chi_k),e\bigr\rangle_{\AV},
\]
shuffle-symmetric by \eqref{eq:shuffle}.  In the word
basis, the coefficients of $m(e_\ell)$ form the $\ell$-th row of
$T$, by \eqref{eq:T-massey}.  Then
\begin{equation}\label{eq:row-D}
  \chi_j(D_x(e))
  =\bigl\langle M(x,x,\chi_j),e\bigr\rangle_{\AV}
  =\bigl\langle M(x,\chi_j,x),e\bigr\rangle_{\AV}
  =\gamma_x\bigl(m(e)\bigr)_j.
\end{equation}
By \eqref{eq:row-D}, knowing the operator $D_x$, that is, all
its values $\chi_j(D_x(e))$, amounts to knowing the contraction
$\gamma_x(m(e))$ for every $e$.  In the reconstruction problem the
element $m(e)$ is the unknown, and a priori it is constrained only
to lie in $\mathcal R_3$.  If the contractions of a family of
characters are jointly injective on $\mathcal R_3$, the
corresponding operators recover $m(e)$ for every $e$, and in
particular $T$; a nonzero kernel leaves two shuffle-symmetric elements with
the same contractions.  This proves the refinement
announced in \cref{rem:six-computations}:

\begin{lemma}[Four-evaluation reconstruction at $p=3$]
\label{lem:four-evaluations}
Let $p=3$ and $d=3$.  The four operators
$D_{\chi_1},D_{\chi_2},D_{\chi_3},D_{\chi_1+\chi_2+\chi_3}$
determine $m(e)$ for every $e\in\Cl(K)[p]$, and in particular the
matrix $T$.  Equivalently, the joint contraction map
\[
  (\gamma_{\chi_1},\gamma_{\chi_2},\gamma_{\chi_3},
   \gamma_{\chi_1+\chi_2+\chi_3})
  \colon\mathcal R_3\longrightarrow(\Fthree^3)^4
\]
is injective.  No family of contractions at three or fewer
characters is injective on $\mathcal R_3$.
\end{lemma}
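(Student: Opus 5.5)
The plan is to make $\mathcal R_3$ completely explicit for $p=d=3$, compute each of the four contractions on explicit coordinates, and read off injectivity. The negative statement will then follow from a dimension count. The equivalence between ``the four operators determine $m(e)$'' and injectivity of the joint contraction is already supplied by \eqref{eq:row-D} and the discussion preceding the lemma. Indeed, $D_x$ is known exactly when $\gamma_x(m(e))$ is known for every $e$, and $m(e)$ ranges over $\mathcal R_3$. So only the linear-algebra statement about $(\gamma_{\chi_1},\gamma_{\chi_2},\gamma_{\chi_3},\gamma_{\chi_1+\chi_2+\chi_3})$ needs proof.

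\emph{Step 1: coordinates on $\mathcal R_3$.} By \cref{lem:eleven-space}, $\dim\mathcal R_3=(27-3)/3+3=11$. I would solve \eqref{eq:admissible} over $\Fthree$ letter-multiset by letter-multiset, as in that proof.
\begin{itemize}
\item Three equal letters give the three free diagonal values $m_{iii}$.
\item For $i\ne j$, reversal gives $m_{jii}=m_{iij}$. The cyclic identity then gives $m_{iji}=-2m_{iij}=m_{iij}$, which is the coordinate form of \eqref{eq:p3-contraction}. So all three arrangements of $\mgen_i,\mgen_i,\mgen_j$ carry a common value $a_{ij}$, giving six parameters.
\item For the letters $1,2,3$, put $u=m_{123}=m_{321}$, $v=m_{231}=m_{132}$ and $w=m_{312}=m_{213}$, subject to $u+v+w=0$. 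This gives two parameters.
\end{itemize}
In total $3+6+2=11$, matching \cref{lem:eleven-space}.

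\emph{Step 2: the three basis contractions.} By definition $\gamma_{\chi_i}(m)_j=m_{iji}$. For $j=i$ this is $m_{iii}$, and for $j\ne i$ it is $a_{ij}$. Hence $\gamma_{\chi_1},\gamma_{\chi_2},\gamma_{\chi_3}$ together recover all nine parameters $m_{iii}$ and $a_{ij}$, and they see nothing of $u,v,w$.

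\emph{Step 3: the fourth contraction.} For $s=\chi_1+\chi_2+\chi_3$ we have $\gamma_s(m)_j=\sum_{i,k}m_{ijk}$. In the component $j=1$, the only terms not already known from Step~2 are $m_{213}+m_{312}=2w$. In the component $j=2$, they are $m_{123}+m_{321}=2u$. Since $2$ is invertible in $\Fthree$, this determines $w$ and $u$, and then $v=-u-w$. Thus an element of $\mathcal R_3$ with all four contractions zero is zero, which proves injectivity. The only step requiring care is this bookkeeping: checking that every other summand of $\gamma_s(m)_j$ is a diagonal or an $a$-coordinate. I expect that check to be routine.

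\emph{Step 4: the negative statement.} Each $\gamma_x$ takes values in $\Fthree^3$. Any family of at most three contractions therefore maps the $11$-dimensional space $\mathcal R_3$ into a space of dimension at most $9<11$, so it has a nonzero kernel. This holds for arbitrary characters, not just basis ones. By the remark before the lemma, such a family then leaves two distinct shuffle-symmetric candidates for $m(e)$ with the same contractions, so three or fewer operators cannot determine $T$.
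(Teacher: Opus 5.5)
Your proposal is correct and follows essentially the same argument as the paper. The basis contractions recover every coefficient with a repeated index, via $m_{jaa}=m_{aaj}=m_{aja}$ at $p=3$; the sum character then contributes $2m_{ijk}$ for the pairwise distinct indices; and the negative assertion is the same count $9<11=\dim\mathcal R_3$. The only cosmetic difference is that you parametrize $\mathcal R_3$ explicitly by $m_{iii}$, $a_{ij}$ and $u,v,w$, and use two components of $\gamma_s$ together with $u+v+w=0$, whereas the paper reads off each distinct-index coefficient directly from the corresponding component of $\gamma_s$.
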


\begin{proof}
Let $m\in\mathcal R_3$.  At $p=3$, for indices $a\ne j$, the
identities \eqref{eq:admissible} give
\[
  m_{jaa}=m_{aaj},
  \qquad
  m_{aja}=-2\,m_{aaj}=m_{aaj}.
\]
We first evaluate the three basis characters.  For $x=\chi_a$ the
coordinates $x_i=\delta_{ia}$ reduce the contraction to
\[
  \gamma_{\chi_a}(m)_j=m_{aja};
\]
for $j=a$ this is the diagonal coefficient $m_{aaa}$, and for
$j\ne a$ the displayed identities recover $m_{aaj}$ and $m_{jaa}$.
The basis characters therefore determine every coefficient in which
an index repeats.

Next we evaluate the sum character $s=\chi_1+\chi_2+\chi_3$, for
which
\[
  \gamma_s(m)_j=\sum_{i,k=1}^3 m_{ijk}.
\]
Of the nine terms of this sum, those in which an index repeats are
determined by the first step.  The two remaining terms are $m_{ijk}$
and $m_{kji}$, where $i$ and $k$ denote the two indices other than
$j$; by reversal they are equal, so their sum is $2\,m_{ijk}$.  Hence $\gamma_s(m)_j$ and the
first step together determine the coefficients with pairwise
distinct indices, and the joint map is injective.

For the last assertion, a family of three contractions takes values
in $(\Fthree^3)^3$, of dimension nine, while
$\dim_{\Fthree}\mathcal R_3=11$ by \cref{lem:eleven-space}; no such
family is injective.
\end{proof}

\subsection{Strong freeness and mildness}
\label{subsec:strong-freeness}

Let $d,m\ge1$, let
$\mathcal A=\Fp\langle \mgen_1,\ldots,\mgen_d\rangle$, and let
$\rho_1,\ldots,\rho_m\in\mathcal A$ be homogeneous of degrees
$h_1,\ldots,h_m\ge1$.  For a connected graded $\Fp$-algebra
$C=\bigoplus_{n\ge0}C_n$ with finite-dimensional graded pieces, write
\[
  \operatorname{Hilb}_C(z)=\sum_{n\ge0}(\dim_{\Fp}C_n)z^n
\]
for its Hilbert series.  The homogeneous sequence
$\rho_1,\ldots,\rho_m$ is called \emph{strongly free} if
\begin{equation}\label{eq:hilbert-series}
  \operatorname{Hilb}_{\mathcal A/(\rho_1,\ldots,\rho_m)}(z)
  =\frac{1}{1-dz+z^{h_1}+\cdots+z^{h_m}},
\end{equation}
where $(\rho_1,\ldots,\rho_m)$ denotes the two-sided ideal that they
generate.  Note that if $\rho_1,\ldots,\rho_m$ are linearly
independent, replacing them by another homogeneous basis of their
span changes neither side of \eqref{eq:hilbert-series}, since the
two-sided ideal is the same and any two homogeneous bases of a
graded subspace have the same number of elements in each degree.
Accordingly, a finite-dimensional graded subspace
$V\subseteq\mathcal A$ (that is, a subspace spanned by finitely
many homogeneous elements) is called \emph{strongly free} if
one, equivalently every, homogeneous basis of $V$ is strongly
free.
Both notions apply verbatim over a finite extension $k|\Fp$, with
$\mathcal A$ replaced by $\mathcal A\otimes_{\Fp}k$ and
dimensions taken over $k$.
The notion is due to Anick \cite{Anick}, who defined
strongly free sets as noncommutative analogues of regular sequences
and proved that, in the free algebra, they are exactly the sequences
satisfying \eqref{eq:hilbert-series} \cite[Theorem~2.6]{Anick}.
Taking this identity as the definition follows G\"artner
\cite[Definition~2.7]{Gaertner}.  The Lie-theoretic counterpart
underlies Labute's theory of mild pro-$p$ groups \cite{Labute2006}.  \begin{remark}[Strong freeness of the relation space]
\label{rem:span-strongly-free}
For the cubic initial forms $\rho_1,\ldots,\rho_d$ of
\cref{prop:T-is-relations}, the right-hand side of
\eqref{eq:hilbert-series} is
\[
  \frac{1}{1-dz+dz^3}
  =1+dz+d^2z^2+(d^3-d)z^3+\cdots.
\]
The two-sided ideal $(\rho_1,\ldots,\rho_d)$ is spanned by the
products $u\rho_\ell v$ with words $u,v$, of degrees
$|u|+3+|v|\ge3$.  In degrees up to two it therefore vanishes, so the
quotient has dimensions $1,d,d^2$ there and the series imposes no
condition on the $\rho_\ell$.  In degree three the ideal is the
span $\langle\rho_1,\ldots,\rho_d\rangle_{\Fp}$ itself, so
\[
  \dim_{\Fp}\bigl(\mathcal A/(\rho_1,\ldots,\rho_d)\bigr)_3
  =d^3-\dim_{\Fp}\langle\rho_1,\ldots,\rho_d\rangle_{\Fp},
\]
and comparison with the coefficient $d^3-d$ makes
$\dim_{\Fp}\langle\rho_1,\ldots,\rho_d\rangle_{\Fp}=d$, that
is, $\rk_{\Fp}T=d$ \eqref{eq:R3}, necessary for strong freeness,
since linearly dependent $\rho_\ell$ are never strongly free.  With the
basis independence noted after the definition, the sequence
$\rho_1,\ldots,\rho_d$ is strongly free precisely when
$\rk_{\Fp}T=d$ and the complete cubic initial relation space
$R_3=\langle\rho_1,\ldots,\rho_d\rangle_{\Fp}$ is strongly free.
\end{remark}

A \emph{multiplicative monomial order} is a total order on the words
in the letters $\mgen_1,\ldots,\mgen_d$ (the monomial basis of
$\mathcal A$) that is compatible with concatenation on both sides
and satisfies $1<\mgen_i$ for all $i$.  Fix such an order.  For a
nonzero $\rho\in\mathcal A$ we write $\HT(\rho)$ for its \emph{high
term}, the largest of the words occurring in $\rho$ with nonzero
coefficient.  A \emph{factor}
of a word is a contiguous segment of it, a \emph{prefix} an initial
and a \emph{suffix} a terminal factor.  A finite
sequence $w_1,\ldots,w_m$ of words is called \emph{combinatorially
free} if the following two conditions hold.
\begin{enumerate}[label=\textup{(\roman*)}]
\item\label{cf:subword} For $i\ne j$ there is no factorization
  $w_i=u\,w_j\,v$ with possibly empty words $u,v$.  In particular the
  words are pairwise distinct.
\item\label{cf:overlap} No proper nonempty prefix of any $w_i$ is a
  proper nonempty suffix of any $w_j$.  Here $i=j$ is allowed, so
  overlaps of a word with itself are excluded as well.
\end{enumerate}
Anick's criterion \cite[Theorem~3.2]{Anick} (proved there for
degree-lexicographic orders) says that if the high terms of a
sequence of
homogeneous polynomials are combinatorially free, then the sequence
is strongly free.  We use the formulation in
\cite[Theorem~3.5]{Gaertner}, stated for an arbitrary multiplicative
monomial order.  See also Forr\'e \cite{Forre} for a direct
treatment of strongly free sequences and pro-$p$ groups of cohomological
dimension two.

The mildness theorem below (\cref{prop:Gaertner}) draws the
group-theoretic conclusion $\cd G=2$ from cubic initial forms that
are strongly free over $\Fp$.  The transverse criterion of
\cref{subsec:transverse-rank-one} will establish strong freeness
after a coordinate change that exists in general only over a finite
extension $k|\Fp$.  The following lemma carries strong freeness from
$k$ down to $\Fp$ and thereby connects the two.

\begin{lemma}[Strong freeness under scalar extension]\label{lem:descent}
Let $k|\Fp$ be a finite extension, let $V\subseteq\mathcal A$ be
a finite-dimensional graded subspace, and let
$I\subseteq\mathcal A$ and $J\subseteq\mathcal A\otimes_{\Fp}k$
be the two-sided ideals generated by $V$ and by its scalar extension
$V\otimes_{\Fp}k$.
Then, for every $n\ge0$,
\[
  \dim_k\bigl((\mathcal A\otimes_{\Fp}k)/J\bigr)_n
  =\dim_{\Fp}\bigl(\mathcal A/I\bigr)_n ,
\]
so the two quotients have the same Hilbert series.  In particular,
$V$ is strongly free over $\Fp$ if and only if $V\otimes_{\Fp}k$
is strongly free over $k$.
\end{lemma}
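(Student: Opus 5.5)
The plan is to compare the graded pieces degree by degree, using that scalar extension along a field extension is exact and commutes with forming ideals. First, identify $\mathcal A\otimes_{\Fp}k$ with the free algebra $k\langle\mgen_1,\ldots,\mgen_d\rangle$, graded by word length, with $(\mathcal A\otimes_{\Fp}k)_n=\mathcal A_n\otimes_{\Fp}k$. Next, show that $J=I\otimes_{\Fp}k$ as subspaces of $\mathcal A\otimes_{\Fp}k$, compatibly with the grading.

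The inclusion $I\otimes_{\Fp}k\subseteq J$ is clear. The subspace $I\otimes_{\Fp}k$, that is, the $k$-span of $I$, is spanned by elements $u\,v\,w$ with words $u,w$ and $v\in V$, so it contains $V\otimes_{\Fp}k$. It is also closed under left and right multiplication by $k$-linear combinations of words, hence a two-sided ideal. This gives the reverse inclusion $J\subseteq I\otimes_{\Fp}k$.

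Since $I$ is generated by homogeneous elements, it is a graded ideal, $I=\bigoplus_n I_n$. Hence $J_n=I_n\otimes_{\Fp}k$. Flatness of $k$ over $\Fp$ (any vector space is flat) gives
\[
  \bigl((\mathcal A\otimes_{\Fp}k)/J\bigr)_n\simeq(\mathcal A_n/I_n)\otimes_{\Fp}k,
\]
whose $k$-dimension equals $\dim_{\Fp}(\mathcal A/I)_n$. So the two Hilbert series agree.

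For the final assertion, pick a homogeneous $\Fp$-basis of $V$. It is also a homogeneous $k$-basis of $V\otimes_{\Fp}k$ with the same degrees $h_1,\ldots,h_m$. Consequently the right-hand sides of \eqref{eq:hilbert-series} over $\Fp$ and over $k$ coincide (same $d$, same degrees), while the left-hand sides coincide by the first part. By the basis independence noted after the definition, strong freeness over $\Fp$ and over $k$ are therefore equivalent.

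The only step needing care is $J=I\otimes_{\Fp}k$, that is, checking that the $k$-span of $I$ is already an ideal. I expect no real obstacle there, since $k$ is central and words span the algebra over $k$.
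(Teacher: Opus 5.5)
Your proposal is correct and follows essentially the paper's route. In both arguments the key point is $J_n=I_n\otimes_{\Fp}k$: the paper reads it off directly from the fact that the products $u\rho_\ell v$ span $I_n$ over $\Fp$ and $J_n$ over $k$, while you obtain it by showing that the $k$-span of $I$ is already a two-sided ideal, and the comparison of the right-hand sides of \eqref{eq:hilbert-series} through a common homogeneous basis is exactly as in the paper.
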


\begin{proof}
Choose a homogeneous basis $\rho_1,\ldots,\rho_m$ of $V$, of
degrees $h_1,\ldots,h_m$.  It is also a $k$-basis of
$V\otimes_{\Fp}k$.  In every degree $n$, the graded piece $I_n$ is
spanned over $\Fp$ by the products $u\rho_\ell v$ in which $u,v$
are words with $\deg u+h_\ell+\deg v=n$, and $J_n$ is spanned
over $k$ by the same products.  Hence
$J_n=I_n\otimes_{\Fp}k$ inside $\mathcal A_n\otimes_{\Fp}k$,
and taking dimensions gives the displayed equality.  The right-hand
side of \eqref{eq:hilbert-series} is the same series over both
fields, so the identity defining strong freeness holds over $\Fp$
if and only if it holds over $k$.
\end{proof}

The mildness theorem is due to Labute \cite{Labute2006} for initial
forms lying in the free Lie subalgebra of $\mathcal A$ generated by
$\mgen_1,\ldots,\mgen_d$, and to G\"artner
\cite[Theorem~2.12]{Gaertner} for arbitrary homogeneous initial forms
in $\operatorname{gr}F$, in particular for forms containing the
restricted powers $\mgen_i^{[p]}$, which Labute's setting does not
reach (see \cite[Remarks~2.13]{Gaertner} for the comparison).  We
state the special case used in this paper.

\begin{proposition}[Mildness from strongly free cubic initial forms]
\label{prop:Gaertner}
Let $1\to R\to F\to G\to1$ be a minimal presentation with
$R\subseteq F_{(3)}$, and let $\rho_1,\ldots,\rho_d$ be the cubic
initial forms of a minimal set of relators.  If these forms are
strongly free, then $G$ is mild with respect to the Zassenhaus
filtration, and hence
\[
  \cd G=2.
\]
\end{proposition}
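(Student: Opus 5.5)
This is a special case of Gärtner's Theorem~2.12 \cite{Gaertner}, and the plan is to check that our situation fits its hypotheses and then pass from mildness to cohomological dimension.

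First, fix the grading. We use the Zassenhaus filtration $F_{(n)}$. By \cite[Remark~2.3]{Gaertner}, $\operatorname{gr}F$ is the free restricted Lie algebra on $\mgen_1,\ldots,\mgen_d$ inside $\mathcal A$, and its universal enveloping algebra is $\mathcal A$. This is exactly the setting of \cite[\S2]{Gaertner}.

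Second, match the relators. The hypothesis $R\subseteq F_{(3)}$ says each minimal relator $r_\ell$ has initial form $\rho_\ell=\mu_3(r_\ell\bmod F_{(4)})$ of degree exactly $3$, provided it is nonzero. If the $\rho_\ell$ are strongly free, they are linearly independent (\cref{rem:span-strongly-free}), so in particular none of them is zero. Hence $r_\ell\in F_{(3)}\setminus F_{(4)}$, and its initial form in $\operatorname{gr}F$ is $\rho_\ell$. At $p=3$ these forms may involve the cubes $\mgen_i^3=\mgen_i^{[3]}$. This is the reason for citing Gärtner rather than Labute: his theorem allows arbitrary homogeneous initial forms in $\operatorname{gr}F$.

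Third, apply the theorem. Gärtner's definition of mildness asks for a presentation whose initial forms are strongly free in the sense of \eqref{eq:hilbert-series}. That definition agrees with ours, since both follow \cite[Definition~2.7]{Gaertner}. So $G$ is mild. His Theorem~2.12 then gives $\cd G\le2$, and in fact that the presentation complex is a resolution, so $H^2(G,\Fp)$ has dimension $d$ and $H^3(G,\Fp)=0$.

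Fourth, get equality. Since $d\ge1$, $G$ is not free: a free pro-$p$ group has relation rank $0$, whereas here $\dim H^2(G,\Fp)=d>0$ by \eqref{eq:ranks}. Hence $\cd G\ge2$, and together with the upper bound, $\cd G=2$.

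The only delicate point is the bookkeeping between the Magnus-coefficient normalization used here and Gärtner's filtration conventions. In particular, his "initial form" must be the image in $F_{(3)}/F_{(4)}$, mapped into $\mathcal A_3$ by $\mu_3$, and not some rescaled version. I would verify this by comparing with \cite[Remark~2.3]{Gaertner} directly.
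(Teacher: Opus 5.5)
Your proposal is correct and follows the paper's proof: both check that the cubic initial forms make the presentation strongly free with respect to the Zassenhaus filtration in G\"artner's sense (all weights one, generator number $h^1(G)$ by minimality), and both then invoke \cite[Theorem~2.12(i)]{Gaertner}. Your extra steps are harmless additions: the observation that strong freeness forces the $\rho_\ell$ to be nonzero is recorded in the paper as \cref{rem:rank-three}, and the lower bound $\cd G\ge2$ from $H^2(G,\Fp)\ne0$ simply makes explicit what the paper takes from G\"artner's theorem. The normalization worry is moot, since rescaling one of the $\rho_\ell$ by a nonzero scalar leaves the ideal they generate unchanged.
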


\begin{proof}
Since $R\subseteq F_{(3)}$, the relators have initial forms of
degree three, namely $\rho_1,\ldots,\rho_d$, viewed in
$\operatorname{gr}F$ through the identification recalled at the
beginning of this section.  Their strong freeness makes the
presentation strongly free with respect to the Zassenhaus filtration
in the sense of \cite[Definition~2.11]{Gaertner}, all weights being
equal to one.  The definition also asks the generator number to equal
$h^1(G)$, which holds by minimality.  Thus $G$ is mild, and
\cite[Theorem~2.12(i)]{Gaertner} gives $\cd G=2$.
\end{proof}

\begin{remark}[The role of \texorpdfstring{$\rk T=d$}{rank T = d}]
\label{rem:rank-three}
\Cref{prop:Gaertner} asks $\rho_1,\ldots,\rho_d$ to be the initial
forms of the relators, that is, to be nonzero.  If
$\rho_1,\ldots,\rho_d$ are proved strongly free, this
hypothesis needs no separate check: strong freeness forces
$\rk_{\Fp}T=d$ by itself (\cref{rem:span-strongly-free}), so
\cref{prop:Gaertner} applies to them as they stand.
The conclusion that $G$ is \emph{not} mild, by contrast, needs
the condition $\rk_{\Fp}T=d$: it is what makes the failure of
strong freeness of $R_3$ a property of $G$ rather than of a
chosen presentation.  Indeed, since $\dim_{\Fp}\overline R=d$,
the condition is equivalent, by \eqref{eq:R3}, to injectivity
of $\vartheta_3$, so every relation with nonzero class in
$\overline R$ has nonzero cubic initial form and hence
Zassenhaus initial degree exactly three.  The classes of any
minimal system of relators then form a basis of $\overline R$
by the normal-generation criterion \cite[(3.9.3)]{NSW}, their
initial forms form a basis of $R_3$, and a change of the free
generators acts on $\mathcal A$ by a graded algebra
automorphism preserving \eqref{eq:hilbert-series}.  The claim
follows, and this is what the non-mildness proofs for the $26$
fields of \cref{sec:example-p3} rest on.
\end{remark}

Strong freeness can also be decided, in both directions, by
counting words.  The words avoiding the high terms of the elements of
the relation ideal form a basis of the quotient, so
\eqref{eq:hilbert-series} becomes a degreewise comparison of word
counts, which terminating Gr\"obner-basis completions render finite.
This route settles $11\,731$ of the $12\,243$ mild fields of
rank three at $p=3$, and the field of rank four.  Its
statements are developed in \cref{app:word-counts}
(\cref{lem:normal-words}, \cref{cor:word-counts},
\cref{lem:diamond}), and carried out by \cref{alg:completion}.

\subsection{A transversality criterion}
\label{subsec:transverse-rank-one}

Let $k|\Fp$ be a finite extension.  Using the scalar extension defined
after \cref{lem:D-image-orthogonal}, we regard $D_x$ for
\[
  x\in H^1(G,\Fp)\otimes_{\Fp}k
\]
as a $k$-linear map
\[
  D_x\colon \Cl(K)[p]\otimes_{\Fp}k
  \longrightarrow
  \Cl(K)/p\otimes_{\Fp}k.
\]
Over $k$, the annihilator is defined by the same formula,
\[
  x^\perp=
  \{c\in\Cl(K)/p\otimes_{\Fp}k:x(c)=0\},
\]
now a $k$-subspace of $\Cl(K)/p\otimes_{\Fp}k$.

As in the discussion following \cref{lem:D-image-orthogonal}, the
field $k$ is kept out of the notation: $D_x$, $\Delta D(x,v)$,
$M(x,y,z)$, $\langle\cdot,\cdot\rangle_{\AV}$ and $x^\perp$
denote the scalar-extended objects.  No ambiguity arises, and no
identity needs a new proof.  Each of these maps is multilinear
(quadratic in the case of $x\mapsto D_x$), so it has a unique
$k$-multilinear extension, and an identity of such maps holds over
$k$ once it holds over $\Fp$, both sides being determined by their
values at $\Fp$-rational arguments.  The one exception is the
Bockstein map at $p=3$.  We write $\beta_k$ for the $k$-linear
extension of $\beta$, keeping the field in the notation because the
cubic diagonal $x\mapsto M(x,x,x)$ has the same vanishing locus
over $k$ but is not $k$-linear (see the discussion following
\cref{lem:diagonal-p3}).

\begin{definition}[Rank and cone conditions]
\label{def:cone-condition}
An element $x\in H^1(G,\Fp)\otimes_{\Fp}k$ satisfies the
\emph{rank condition} if $\rk D_x=d-2$, and the
\emph{cone condition} if $M(x,x,x)=0$.
\end{definition}

The name anticipates the Bockstein cone $C_\beta$ of
\cref{def:bockstein-cone}.  For nonzero $x$ the cone condition says
precisely that the class $[x]$ lies on $C_\beta$.

Since the quadratic scaling law gives $D_0=0$, the rank
condition forces $x\ne0$.

Under the rank and cone conditions the annihilator $x^\perp$ is a
hyperplane, of dimension $d-1$, the image $\operatorname{im}D_x$
has codimension one in $x^\perp$, since the argument of
\cref{lem:D-image-orthogonal} applies verbatim at $x$, and the
kernel $\ker D_x$ is two-dimensional.  These last two dimensions do
not depend on $d$; only the source of the map $\Theta_x$ below
grows with it.

\begin{lemma}\label{lem:Theta-well-defined}
For any
$x\in H^1(G,\Fp)\otimes_{\Fp}k$ satisfying the rank and cone
conditions, there is a well-defined $k$-linear map
\begin{equation}\label{eq:Theta}
  \Theta_x\colon 
  \bigl(H^1(G,\Fp)\otimes_{\Fp}k\bigr)/kx
  \longrightarrow
  \Hom_k\!\left(
    \ker D_x,\,x^\perp/\operatorname{im}D_x
  \right)
\end{equation}
given, for every $v\in H^1(G,\Fp)\otimes_{\Fp}k$ with class
$\overline v$, by
\[
  \Theta_x(\overline v)\colon
  \ker D_x\longrightarrow x^\perp/\operatorname{im}D_x,
  \qquad
  e\longmapsto
  \overline{-\Delta D(x,v)(e)}.
\]
\end{lemma}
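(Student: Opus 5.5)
The plan is to check three things: that $-\Delta D(x,v)(e)$ lies in $x^\perp$ whenever $e\in\ker D_x$; that its class modulo $\operatorname{im}D_x$ depends only on $\overline v$, i.e.\ vanishes for $v\in kx$; and that the resulting assignment is $k$-linear in both $v$ and $e$. Linearity should be immediate. By \cref{thm:reconstruction}, extended to $k$ as explained after \cref{lem:D-image-orthogonal}, $\Delta D$ is $k$-bilinear with values in $\Hom_k$. Hence $v\mapsto -\Delta D(x,v)$ is $k$-linear, and each $-\Delta D(x,v)$ is a $k$-linear map on $\Cl(K)[p]\otimes_{\Fp}k$. Restricting to $\ker D_x$ and composing with the projection to $x^\perp/\operatorname{im}D_x$ preserves this linearity, once the target is justified.

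For the first point, let $e\in\ker D_x$ and evaluate $x$ on $\Delta D(x,v)(e)$ using \eqref{eq:MfromD} with $y=x$, $z=v$:
\[
  x\bigl(\Delta D(x,v)(e)\bigr)=\langle M(x,x,v),e\rangle_{\AV}=\langle Q(x,v),e\rangle_{\AV}.
\]
By \eqref{eq:Ddef}, read over $k$, the right-hand side equals $v(D_x(e))$, which is $0$ because $e\in\ker D_x$. Thus $-\Delta D(x,v)(e)\in x^\perp$. This step uses only $e\in\ker D_x$. The cone condition is needed separately, for $\operatorname{im}D_x\subseteq x^\perp$, so that the quotient $x^\perp/\operatorname{im}D_x$ makes sense. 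That inclusion follows as in \cref{lem:D-image-orthogonal}: $x(D_x(e'))=\langle M(x,x,x),e'\rangle_{\AV}=0$ for all $e'$.

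For the second point, take $v=\lambda x$ with $\lambda\in k$. By bilinearity and the identity $b(x,x)=2D_x$ recorded before \eqref{eq:scalar-extended-D}, we have
\[
  -\Delta D(x,\lambda x)=\lambda\, b(x,x)=2\lambda D_x.
\]
This map kills $\ker D_x$, so its class in $x^\perp/\operatorname{im}D_x$ is zero already before reducing modulo $\operatorname{im}D_x$. Hence $\Theta_x$ factors through $(H^1(G,\Fp)\otimes_{\Fp}k)/kx$.

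The only delicate point is the first one: one must use \eqref{eq:Ddef} and \eqref{eq:MfromD} over $k$ rather than over $\Fp$. This is justified by the remark that multilinear identities persist under scalar extension. At $p=3$ it is also justified by the fact that the cone condition over $k$, $M(x,x,x)=0$, is stated for the scalar-extended trilinear $M$, which is exactly what the orthogonality $\operatorname{im}D_x\subseteq x^\perp$ requires.
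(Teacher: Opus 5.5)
Your proposal is correct and follows the paper's proof. It uses the same evaluation via \eqref{eq:MfromD} and \eqref{eq:Ddef} to show $x\bigl(\Delta D(x,v)(e)\bigr)=v(D_x(e))=0$ on $\ker D_x$, and the same observation that changing $v$ by $\lambda x$ changes $-\Delta D(x,v)$ by $2\lambda D_x$, which vanishes on $\ker D_x$. You also state explicitly that the cone condition is what gives $\operatorname{im}D_x\subseteq x^\perp$ and so makes the target quotient meaningful; the paper records this in the paragraph just before the lemma.
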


\begin{proof}
If $e\in\ker D_x$, then, by \eqref{eq:MfromD} and \eqref{eq:Ddef},
\[
  x\bigl(\Delta D(x,v)(e)\bigr)
  =\langle M(x,x,v),e\rangle_{\AV}
  =v(D_x(e))=0,
\]
so $\Delta D(x,v)(e)\in x^\perp$.  Replacing $v$ by $v+\lambda x$
with $\lambda\in k$ gives, by the bilinearity of $\Delta D$ and
the quadratic scaling law \eqref{eq:quadratic},
\[
\begin{aligned}
  -\Delta D(x,v+\lambda x)
  &=-\Delta D(x,v)-\lambda\,\Delta D(x,x)\\
  &=-\Delta D(x,v)-\lambda\bigl(2D_x-D_{2x}\bigr)
  =-\Delta D(x,v)+2\lambda\,D_x.
\end{aligned}
\]
The map
\[
  -\Delta D(x,v)\colon\Cl(K)[p]\otimes_{\Fp}k\longrightarrow
  \Cl(K)/p\otimes_{\Fp}k
\]
therefore changes by $2\lambda\,D_x$, which vanishes on
$\ker D_x$.  Its restriction to $\ker D_x$ is unchanged, so
$\Theta_x(\overline v)$ depends only on the class $\overline v$.
Linearity follows from the bilinearity of $\Delta D$.
\end{proof}

\begin{definition}[Transverse element]\label{def:transverse-rank-one}
Let $x\in H^1(G,\Fp)\otimes_{\Fp}k$ satisfy the rank and cone
conditions.  We call $x$ \emph{transverse} if the map $\Theta_x$ of
\cref{lem:Theta-well-defined} is surjective.
\end{definition}

\begin{remark}[Transversality is projective]
\label{rem:transverse-projective}
Transversality is a property of the projective point $[x]$ alone.
Replacing the representative $x$ by $\lambda x$ with
$\lambda\in k^\times$ multiplies $D_x$ by $\lambda^2$.  Hence
the rank condition is unchanged.  The projective point $[x]$, and
with it the cone condition, is also unchanged.  The same
replacement leaves $\ker D_x$,
$\operatorname{im}D_x$, $x^\perp$ and the quotient
$\bigl(H^1(G,\Fp)\otimes_{\Fp}k\bigr)/kx$ unchanged.  By the
bilinearity of $\Delta D$, it rescales $\Theta_x$ by the nonzero
scalar $\lambda$.  Hence $x$ is transverse if and only if
$\lambda x$ is.
\end{remark}

The geometric meaning of the terminology is established in
\cref{prop:rank-drop-geometry}.  The picture behind the name is
spelled out in \cref{rem:two-equations} there.

\begin{theorem}[Transversality criterion]\label{thm:transverse-rank-one}
Let $K$ be an imaginary quadratic number field, let $p$ be an odd
prime, put $d=d_p\Cl(K)$, and assume $d\ge3$.  If there exist a
finite extension $k|\Fp$ and a transverse element
\[
  0\ne x\in H^1(G,\Fp)\otimes_{\Fp}k,
\]
then $G_{\varnothing}(K)(p)$ is mild with respect to the Zassenhaus
filtration, and hence
\[
  \cd G_{\varnothing}(K)(p)=2.
\]
\end{theorem}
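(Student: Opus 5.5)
The plan is to build an explicit change of variables over $k$ under which the cubic initial forms have combinatorially free high terms, and then pass back to $\Fp$.

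\textbf{Setup.} By \cref{lem:descent}, it suffices to show that $R_3\otimes_{\Fp}k$ is strongly free over $k$. Then $\rho_1,\ldots,\rho_d$ are strongly free over $\Fp$, which forces $\rk T=d$ (\cref{rem:span-strongly-free}), and \cref{prop:Gaertner} gives mildness and $\cd G=2$.

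By \cref{prop:T-is-relations} and \eqref{eq:T-massey}, the space $R_3\otimes k$ is spanned by the forms
\[
  \rho_e=\sum_{a,b,c}\langle M(\chi'_a,\chi'_b,\chi'_c),e\rangle_{\AV}\,\mgen'_a\mgen'_b\mgen'_c,\qquad e\in\Cl(K)[p]\otimes k.
\]
Here $\chi'_1,\ldots,\chi'_d$ is any $k$-basis of $H^1(G,\Fp)\otimes k$, and $\mgen'_a$ are the variables dual to it. Passing to the $\mgen'_a$ is a graded automorphism of $\mathcal A\otimes k$, so it preserves \eqref{eq:hilbert-series}.

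\textbf{Choice of coordinates.} Choose a basis $c_1,\ldots,c_d$ of $\Cl(K)/p\otimes k$ as follows:
\begin{itemize}
\item $c_3,\ldots,c_d$ is a basis of $\operatorname{im}D_x$, which has dimension $d-2$;
\item $c_2\in x^\perp\smallsetminus\operatorname{im}D_x$; this is possible because $\operatorname{im}D_x\subseteq x^\perp$ with codimension one, by the cone condition;
\item $c_1$ satisfies $x(c_1)=1$.
\end{itemize}
Let $\chi'_a$ be the dual basis. Then $\chi'_1=x$. On the other side, take $e_1,e_2$ a basis of $\ker D_x$ and $e_3,\ldots,e_d$ with $D_x(e_j)=c_j$.

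Use the deg-lex order with $\mgen'_1>\cdots>\mgen'_d$.

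\textbf{The rows $e_3,\ldots,e_d$.} The coefficient of $\mgen'_1\mgen'_1\mgen'_b$ in $\rho_e$ is $\langle M(x,x,\chi'_b),e\rangle_{\AV}=\chi'_b(D_x(e))$ by \eqref{eq:Ddef}. So $\rho_{e_j}$ for $j\ge3$ contains $\mgen'_1\mgen'_1\mgen'_j$ with coefficient $1$ and no other word beginning with $\mgen'_1\mgen'_1$. These are the largest words that can occur, so $\HT(\rho_{e_j})=\mgen'_1\mgen'_1\mgen'_j$.

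\textbf{The rows $e_1,e_2$.} These have no words beginning with $\mgen'_1\mgen'_1$. The next candidates are $\mgen'_1\mgen'_2\mgen'_b$, with coefficient $\langle M(x,\chi'_2,\chi'_b),e\rangle_{\AV}=\chi'_2(\Delta D(x,\chi'_b)(e))$ by \eqref{eq:MfromD}. For $b=1$ this vanishes on $\ker D_x$, by the computation in the proof of \cref{lem:Theta-well-defined}.

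This is the key step, and the one I expect to need the most care. The functional $\chi'_2$ vanishes on $\operatorname{im}D_x$ and on $c_1$, and equals $1$ on $c_2$. So on $x^\perp$ it is exactly the coordinate of $x^\perp/\operatorname{im}D_x\simeq k$. Hence the $2\times(d-1)$ matrix with rows $e_1,e_2$ and columns $b=2,\ldots,d$ is, up to sign, the matrix of $\Theta_x$ evaluated on the classes $\overline{\chi'_b}$, which span $\bigl(H^1\otimes k\bigr)/kx$, with $\Hom_k(\ker D_x,k)\simeq k^2$ as target. Surjectivity of $\Theta_x$ says precisely that this matrix has rank $2$.

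After row reduction among $\rho_{e_1},\rho_{e_2}$, their high terms are therefore $\mgen'_1\mgen'_2\mgen'_{k_1}$ and $\mgen'_1\mgen'_2\mgen'_{k_2}$ with $2\le k_1<k_2\le d$. No word beginning with $\mgen'_1\mgen'_1$ is present in these two rows, and $\mgen'_1\mgen'_2\mgen'_1$ has coefficient zero. The $d$ resulting forms are linearly independent, since their high terms are distinct.

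\textbf{Combinatorial freeness.} It remains to check that the words $\mgen'_1\mgen'_1\mgen'_j$ ($j\ge3$) and $\mgen'_1\mgen'_2\mgen'_{k_1}$, $\mgen'_1\mgen'_2\mgen'_{k_2}$ are combinatorially free.
\begin{itemize}
\item They are pairwise distinct words of equal length, so condition~\ref{cf:subword} holds.
\item Every proper prefix is $\mgen'_1$, $\mgen'_1\mgen'_1$ or $\mgen'_1\mgen'_2$.
\item The suffixes of length one are letters $\mgen'_j$ with $j\ge2$, never $\mgen'_1$.
\item The suffixes of length two are $\mgen'_1\mgen'_j$ with $j\ge3$, or begin with $\mgen'_2$. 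None equals $\mgen'_1\mgen'_1$ or $\mgen'_1\mgen'_2$, so condition~\ref{cf:overlap} holds.
\end{itemize}
This is exactly where $j\ge3$ in the first family and $k\ge2$ in the second are needed.

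Anick's criterion (\cite[Theorem~3.5]{Gaertner}) then gives strong freeness of $R_3\otimes k$ over $k$, which completes the argument.
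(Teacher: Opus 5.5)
Your proposal is correct and follows the paper's proof essentially step for step: the same adapted bases (a basis of $\operatorname{im}D_x$ extended through $x^\perp$, dual characters with $\eta_1=x$, a kernel basis plus preimages), the same reading of the $X_1X_2X_b$ block as $\Theta_x$ followed by the coordinate on $x^\perp/\operatorname{im}D_x$, the same echelon reduction, the same combinatorial-freeness check, and the same descent through \cref{lem:descent} and \cref{prop:Gaertner}. The one thing to tidy is the order in your setup. Strong freeness of the space $R_3$ yields strong freeness of the sequence $\rho_1,\ldots,\rho_d$ only once $\dim R_3=d$ is known, and you do obtain this later from the distinct high terms of the $d$ forms spanning $R_3\otimes_{\Fp}k$, exactly as the paper does.
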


\begin{proof}
We first work over $k$.  Any basis $\eta_1,\ldots,\eta_d$ of
$H^1(G,\Fp)\otimes_{\Fp}k$ and any basis $w_1,\ldots,w_d$ of
$\Cl(K)[p]\otimes_{\Fp}k$ together determine the following data.
Let $X_1,\ldots,X_d$ be the Magnus variables dual to
$\eta_1,\ldots,\eta_d$, ordered degree-lexicographically with
$X_d<\cdots<X_1$.  The $k$-linear extension of the isomorphism
\[
  \Cl(K)[p]\longrightarrow\overline R,
  \qquad
  e\longmapsto
  -\operatorname{tg}^*\bigl(\langle\,\cdot\,,e\rangle_{\AV}\bigr),
\]
sends $e_\ell$ to $\overline{r_\ell}$ by
\eqref{eq:negative-orientation}, and it carries the basis
$w_1,\ldots,w_d$ to a basis
$\overline{s_1},\ldots,\overline{s_d}$ of
$\overline R\otimes_{\Fp}k$.  Let $\rho_\ell$ be the cubic initial
form of $\overline{s_\ell}$.  Applying \eqref{eq:rho-from-T} over $k$, with
$\eta_1,\ldots,\eta_d$ and $w_1,\ldots,w_d$ in place of
$\chi_1,\ldots,\chi_d$ and $e_1,\ldots,e_d$, shows that the
coefficient of the word $X_aX_bX_c$ in $\rho_\ell$ is
$\langle M(\eta_a,\eta_b,\eta_c),w_\ell\rangle_{\AV}$.

The $2d$ largest words of degree three are, in decreasing
order,
\[
  X_1X_1X_1>\cdots>X_1X_1X_d>X_1X_2X_1>\cdots>X_1X_2X_d.
\]
We shall choose
the two bases, with $\eta_1=x$, so that the coefficients of
these words in $\rho_1,\ldots,\rho_d$ become
\[
\renewcommand{\arraystretch}{1.15}
\begin{array}{r|c|cc}
  & \rho_\ell,\ \ell\le d-2 & \rho_{d-1} & \rho_d\\
\hline
X_1X_1X_1 & 0 & 0 & 0\\
X_1X_1X_2 & 0 & 0 & 0\\
X_1X_1X_{j+2},\ 1\le j\le d-2 & \delta_{j\ell} & 0 & 0\\
\hline
X_1X_2X_1 & 0 & 0 & 0\\
\hdashline
X_1X_2X_m,\ 2\le m<k_1 & \ast & 0 & 0\\
X_1X_2X_{k_1} & \ast & 1 & 0\\
X_1X_2X_m,\ k_1<m<k_2 & \ast & \ast & 0\\
X_1X_2X_{k_2} & \ast & \ast & 1\\
X_1X_2X_m,\ k_2<m\le d & \ast & \ast & \ast
\end{array}
\]
for some indices $2\le k_1<k_2\le d$.  The entries marked
$\ast$ are arbitrary.  Suppose such bases have been chosen.
Degree-lexicographic order runs through the rows of the table
from top to bottom, and the first nonzero entry of each column
gives the high term of the corresponding $\rho_\ell$, so
\[
  \HT(\rho_j)=X_1X_1X_{j+2}\ (1\le j\le d-2),
  \qquad
  \HT(\rho_{d-1})=X_1X_2X_{k_1},
  \qquad
  \HT(\rho_d)=X_1X_2X_{k_2}.
\]
The $d$ high terms are pairwise distinct words of the same length,
so none occurs inside another and condition \ref{cf:subword} holds.
Each begins with $X_1$ and none ends with it, since $j+2\ge3$ and
$k_i\ge2$.  Their proper nonempty prefixes are $X_1$, $X_1X_1$ and
$X_1X_2$.  Their proper nonempty suffixes are single letters $X_m$
with $m\ge2$ and the two-letter words $X_1X_m$ with $m\ge3$ and
$X_2X_{k_i}$.  No suffix equals the prefix $X_1$.  A suffix
$X_1X_m$ differs from $X_1X_1$ and $X_1X_2$ in its second letter,
and a suffix $X_2X_{k_i}$ differs from both in its first
letter.
Condition \ref{cf:overlap} holds, and the high terms are
combinatorially free.  Anick's criterion therefore shows that
$\rho_1,\ldots,\rho_d$ are strongly free over $k$.

It remains to construct bases producing the table.

For the first block of the table, construct a basis
$c_1,\ldots,c_d$
of $\Cl(K)/p\otimes_{\Fp}k$ adapted to $x$: choose a basis
$c_3,\ldots,c_d$ of $\operatorname{im}D_x$, extend it by $c_2$ to
a basis of the hyperplane $x^\perp$, and choose $c_1$ with
$x(c_1)=1$.  Take
$\eta_1,\ldots,\eta_d$ dual to $c_1,\ldots,c_d$.  Then
$\eta_1=x$, since $x(c_1)=1$ and $x$ vanishes on
$c_2,\ldots,c_d\in x^\perp$, and
\[
  \operatorname{im}D_x=\langle c_3,\ldots,c_d\rangle,
  \qquad \eta_2(\operatorname{im}D_x)=0.
\]
Choose $w_1,\ldots,w_{d-2}\in\Cl(K)[p]\otimes k$ with
$D_x(w_j)=c_{j+2}$ for $j=1,\ldots,d-2$, and a basis
$w_{d-1},w_d$ of the two-dimensional $\ker D_x$.  Then
$w_1,\ldots,w_d$ is a basis of $\Cl(K)[p]\otimes_{\Fp}k$.  By \eqref{eq:Ddef} the coefficient of
$X_1X_1X_m$ in $\rho_\ell$ is
\[
  \langle M(x,x,\eta_m),w_\ell\rangle_{\AV}
  =\eta_m\bigl(D_x(w_\ell)\bigr).
\]
With $m$ as the row index and $\ell$ as the column index, the
first block of the table is therefore the matrix of $D_x$ with
respect to
$w_1,\ldots,w_d$ and $c_1,\ldots,c_d$.  Its first row vanishes
by the cone condition, which holds identically for $p>3$ and
reads $\beta_k(x)=0$ for $p=3$.  Its second row vanishes because
$\eta_2$ annihilates $\operatorname{im}D_x$, and its last two
columns because $w_{d-1},w_d\in\ker D_x$.  For $j\le d-2$ the column of
$\rho_j$ is the coordinate vector of $D_x(w_j)=c_{j+2}$, that is,
$\eta_m(c_{j+2})=\delta_{m,j+2}$.  This gives the first block of the table.

In the second block, the row of $X_1X_2X_1$ vanishes.  The
shuffle identities give $M(x,\eta_2,x)=-2M(x,x,\eta_2)$, so its
coefficients are scalar multiples of those of $X_1X_1X_2$ and
vanish with them.  To describe the remaining rows, associate
with each $v\in\{\eta_2,\ldots,\eta_d\}$ the functional
\[
  \varphi_v\colon\ker D_x\longrightarrow k,
  \qquad
  w\longmapsto\langle M(x,\eta_2,v),w\rangle_{\AV},
\]
so that the coefficients of $X_1X_2X_m$ in $\rho_{d-1}$ and
$\rho_d$ are $\varphi_{\eta_m}(w_{d-1})$ and
$\varphi_{\eta_m}(w_d)$.  For $w\in\ker D_x$,
\[
  \varphi_v(w)
  =\eta_2\bigl(\Delta D(x,v)(w)\bigr)
  =\eta_2\Bigl(\,\overline{\Delta D(x,v)(w)}\,\Bigr)
  =-\eta_2\bigl(\Theta_x(\overline v)(w)\bigr),
\]
where the first equality is \eqref{eq:MfromD}.  The second holds
because $\Delta D(x,v)(w)$ lies in $x^\perp$
(\cref{lem:Theta-well-defined}) and $\eta_2$ descends to
$x^\perp/\operatorname{im}D_x$, as
$\eta_2(\operatorname{im}D_x)=0$.  The third is
\eqref{eq:Theta}.  Hence
$\varphi_v=-\eta_2\circ\Theta_x(\overline v)$.  The functional induced by $\eta_2$ is an isomorphism
$x^\perp/\operatorname{im}D_x\to k$, since the class of $c_2$
spans the quotient and $\eta_2(c_2)=1$.
In the table, the values $\varphi_{\eta_m}(w_\ell)$ are the
entries below the dashed line in the last two columns.  Write them
as the $(d-1)\times2$ matrix
\[
  A=\bigl(\varphi_{\eta_m}(w_\ell)\bigr)
  _{2\le m\le d,\ \ell\in\{d-1,d\}}.
\]
Row $m$ of $A$ is the coordinate vector of $\varphi_{\eta_m}$ in
the basis of $(\ker D_x)^*$ dual to $w_{d-1},w_d$.  Since
$\overline{\eta_2},\ldots,\overline{\eta_d}$ is a basis of
$\bigl(H^1(G,\Fp)\otimes_{\Fp}k\bigr)/kx$ and $\Theta_x$ is
surjective, the functionals
$\varphi_{\eta_2},\ldots,\varphi_{\eta_d}$ span the
two-dimensional space $(\ker D_x)^*$.  The rows of $A$ therefore
span $k^2$, and $\rk A=2$.  A change of the basis $w_{d-1},w_d$
of $\ker D_x$ acts on $A$ by invertible column operations.
Such a change affects only the last two columns of the table,
and within them only the entries below the dashed line, because
the entries above vanish for every basis of $\ker D_x$.  Choose
the basis so that $A$ is in column echelon form.  There are then
indices $2\le k_1<k_2\le d$ satisfying
\[
\begin{gathered}
  A_{k_1,d-1}=1,\qquad A_{m,d-1}=0\ \text{for }m<k_1,\\
  A_{k_2,d}=1,\qquad A_{m,d}=0\ \text{for }m<k_2,\qquad
  A_{k_1,d}=0 .
\end{gathered}
\]
These identities give the second block of the table.  The high
terms are as announced, and $\rho_1,\ldots,\rho_d$ are strongly
free over $k$.

The strongly free sequence just obtained is defined only over
$k$ (the element $x$ need not be $\Fp$-rational), so we first
transport
strong freeness to the scalar extension of the original forms.
Recall from the beginning of \cref{sec:relations} that the
variables $\mgen_1,\ldots,\mgen_d$ of $\mathcal A$ are the
Magnus variables dual to the fixed basis $\chi_1,\ldots,\chi_d$
of $H^1(G,\Fp)$.  The cubic initial forms of
\cref{prop:T-is-relations} and the space $R_3$ are written in
these variables, and $X_1,\ldots,X_d$ play the same role over $k$
for the basis $\eta_1,\ldots,\eta_d$.
Write
\[
  \vartheta_3\otimes k\colon
  \overline R\otimes_{\Fp}k\longrightarrow
  \mathcal A_3\otimes_{\Fp}k
\]
for the scalar extension of $\vartheta_3$.  Write
\[
  \psi\colon
  \mathcal A\otimes_{\Fp}k\longrightarrow
  \mathcal A\otimes_{\Fp}k
\]
for the graded algebra automorphism induced by the change of
basis from $\chi_1,\ldots,\chi_d$ to $\eta_1,\ldots,\eta_d$,
which replaces each $\mgen_i$ by its expression in the variables
$X_1,\ldots,X_d$.  By construction
$\rho_\ell=\psi\bigl((\vartheta_3\otimes k)(\overline{s_\ell})\bigr)$.
Since $\overline{s_1},\ldots,\overline{s_d}$ is a basis of
$\overline R\otimes_{\Fp}k$ and
$R_3=\operatorname{im}(\vartheta_3)$ \eqref{eq:R3},
\[
  \langle\rho_1,\ldots,\rho_d\rangle_k
  =\psi\bigl(R_3\otimes_{\Fp}k\bigr).
\]
Since strong freeness depends only on the span
(\cref{rem:span-strongly-free}) and is preserved by the graded
automorphism $\psi$, the space $R_3\otimes_{\Fp}k$ is strongly
free.  Its dimension is $d$, because the $\rho_\ell$, being
strongly free, are linearly independent.  Hence $\rk_{\Fp}T=d$
\eqref{eq:R3}.

Finally, \cref{lem:descent} allows strong freeness to descend
from $k$ to $\Fp$.  Since $\rk_{\Fp}T=d$, the cubic initial forms of the relators
form a basis of $R_3$.  By \cref{rem:span-strongly-free}, they
are strongly free, and \cref{prop:Gaertner} gives the asserted
mildness and cohomological dimension.
\end{proof}

For the criterion to be usable, transversality must be verifiable by a
finite computation.  The next definition encodes $\Theta_x$ in explicit
coordinates.  The resulting rank test, a determinant test for $d=3$,
is the form of the criterion used in all later computations.

\begin{definition}[Adapted data and transversality matrix]
\label{def:transversality-matrix}
Let $x\in H^1(G,\Fp)\otimes_{\Fp}k$ satisfy the rank and cone
conditions.  An
\emph{adapted choice datum} for $x$ is a tuple
\[
  \mathcal B=(f_2,f_3;v_1,\ldots,v_{d-1};y)
\]
such that $f_2,f_3$ is a basis of $\ker D_x$, the classes
$\overline{v_1},\ldots,\overline{v_{d-1}}$ form a basis of
$\bigl(H^1(G,\Fp)\otimes_{\Fp} k\bigr)/kx$, and
$y\in H^1(G,\Fp)\otimes_{\Fp}k$ annihilates
$\operatorname{im}D_x$ but not all of $x^\perp$.  The
\emph{transversality matrix} associated with $\mathcal B$ is
\begin{equation}\label{eq:Bx}
  B_{x,\mathcal B}=
  \Bigl(-y\bigl(\Delta D(x,v_i)(f_j)\bigr)\Bigr)
  _{1\le i\le d-1,\ j\in\{2,3\}}\in M_{(d-1)\times2}(k).
\end{equation}
\end{definition}

For the bases constructed in the proof of
\cref{thm:transverse-rank-one}, the tuple
$(w_{d-1},w_d;\ \eta_2,\ldots,\eta_d;\ \eta_2)$ is an adapted
choice datum.  Up to sign and a shift of the index sets, the
matrix $A$ in that proof is the associated transversality
matrix.

\begin{corollary}[Matrix form of transversality]\label{cor:determinant-form}
Let $d\ge3$ and let $x\in H^1(G,\Fp)\otimes_{\Fp}k$
satisfy the rank and cone conditions.  Then $x$ is transverse
if and only if $\rk B_{x,\mathcal B}=2$ for one, equivalently
every, adapted choice datum $\mathcal B$.  For $d=3$ the matrix is
square and the condition reads $\det B_{x,\mathcal B}\ne0$.  In this case
$G_{\varnothing}(K)(p)$ is mild, and hence has cohomological
dimension $2$.
\end{corollary}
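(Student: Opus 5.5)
The plan is to identify $B_{x,\mathcal B}$ as the matrix of $\Theta_x$, composed with a fixed isomorphism, and then read surjectivity off as a rank condition.

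First we use the dimension count recorded before \cref{lem:Theta-well-defined}. Under the rank and cone conditions, $\ker D_x$ is two-dimensional and $x^\perp/\operatorname{im}D_x$ is one-dimensional. Given an adapted datum $\mathcal B=(f_2,f_3;v_1,\ldots,v_{d-1};y)$, the functional $y$ vanishes on $\operatorname{im}D_x$ but not on all of $x^\perp$. It therefore induces an isomorphism $\bar y\colon x^\perp/\operatorname{im}D_x\to k$. Composing with $\bar y$ gives an isomorphism
\[
  \Hom_k\bigl(\ker D_x,\,x^\perp/\operatorname{im}D_x\bigr)\xrightarrow{\ \sim\ }(\ker D_x)^*\simeq k^2,
\]
where the last identification evaluates at $f_2,f_3$. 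By the definition of $\Theta_x$ in \cref{lem:Theta-well-defined}, the image of $\overline{v_i}$ under this composite is the row vector $\bigl(-y(\Delta D(x,v_i)(f_j))\bigr)_{j=2,3}$. This is exactly row $i$ of \eqref{eq:Bx}. In this step we use that $\Delta D(x,v_i)(f_j)\in x^\perp$, proved in \cref{lem:Theta-well-defined}, so $y$ may be read through $\bar y$.

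Since the $\overline{v_i}$ form a basis of the source of $\Theta_x$, the image of $\Theta_x$ corresponds to the row space of $B_{x,\mathcal B}$. Hence $\Theta_x$ is surjective if and only if the rows span $k^2$, that is, if and only if $\rk B_{x,\mathcal B}=2$. Surjectivity of $\Theta_x$ does not involve $\mathcal B$, so the condition holds for one adapted datum exactly when it holds for every one. One can also see this directly: changing $\mathcal B$ multiplies $B_{x,\mathcal B}$ on the left and on the right by invertible matrices and rescales it by a nonzero scalar coming from $\bar y$.

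For $d=3$ the matrix is $2\times2$, so rank $2$ is the same as $\det B_{x,\mathcal B}\neq0$. The final assertion then follows from \cref{thm:transverse-rank-one}, because $x\neq0$ by the rank condition. There is no real obstacle in this argument; the only point needing care is that $y$ is well defined on the quotient $x^\perp/\operatorname{im}D_x$.
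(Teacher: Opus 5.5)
Your proposal is correct and is essentially the paper's own proof: the paper also composes $\Theta_x$ with the isomorphism $\overline{y}$ induced on $x^\perp/\operatorname{im}D_x$ and with evaluation at $f_2,f_3$, identifies the resulting matrix entrywise with $B_{x,\mathcal B}$, and reads surjectivity off as rank two. The one point the paper states and you leave implicit is that adapted choice data exist, which is needed so that ``for every'' implies ``for one''; it follows at once from the dimension count you already invoke.
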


\begin{proof}
Adapted choice data exist.  Under the rank condition the space
$\ker D_x$ is two-dimensional and
$\bigl(H^1(G,\Fp)\otimes_{\Fp}k\bigr)/kx$ has dimension $d-1$,
and since $\operatorname{im}D_x$ has codimension one in
$x^\perp$, some $y\in H^1(G,\Fp)\otimes_{\Fp}k$
annihilates $\operatorname{im}D_x$ but not $x^\perp$.

Fix an adapted choice datum
$\mathcal B=(f_2,f_3;v_1,\ldots,v_{d-1};y)$ and write
\[
  \overline y\colon x^\perp/\operatorname{im}D_x\longrightarrow k
\]
for the functional induced by $y$, an isomorphism of one-dimensional
$k$-vector spaces.  Since $\overline{v_1},\ldots,\overline{v_{d-1}}$
is a basis of the source of $\Theta_x$, $f_2,f_3$ is a basis of
$\ker D_x$, and $\overline y$ is an isomorphism, the map
$\Theta_x$ is surjective if and only if the matrix
\[
  \Bigl(\overline y\bigl(\Theta_x(\overline{v_i})(f_j)\bigr)
  \Bigr)_{1\le i\le d-1,\,j\in\{2,3\}}
\]
has rank two.  By \eqref{eq:Theta} its $(i,j)$ entry is
$-y\bigl(\Delta D(x,v_i)(f_j)\bigr)$, the corresponding entry of
$B_{x,\mathcal B}$ \eqref{eq:Bx}.  Hence, for every adapted choice
datum $\mathcal B$, the element $x$ is transverse if and only if
$\rk B_{x,\mathcal B}=2$.  As adapted choice data exist, the
condition $\rk B_{x,\mathcal B}=2$ holds for one adapted choice
datum if and only if it holds for every adapted choice datum.  The final assertion follows from
\cref{thm:transverse-rank-one}.
\end{proof}

\begin{remark}[Direct use of the complete relation space]\label{rem:direct-anick}
The transversality criterion is sufficient, not necessary.
When it does not apply, no further arithmetic is required.  The
$d(d+1)/2$ secondary norm operators already reconstruct the full
relation matrix $T$, and strong freeness can still be established from $T$
directly, by an invertible change of variables and relation
basis whose high terms are combinatorially free, or by the
terminating word counts developed in \cref{app:word-counts} and
discussed after \cref{rem:rank-three}.  The negative direction
rests on \cref{rem:rank-three}.
\end{remark}

\subsection{The norm-degeneracy scheme and the geometric mildness
criterion}
\label{subsec:rank-drop-geometry}

We now interpret transversality geometrically, applying
\cref{lem:rank-one-tangent} and \cref{lem:syzygy-smooth} of
Appendix~\ref{app:rank-one-lemmas} to the secondary norm
family; \emph{reduced} and \emph{isolated} for closed points
are used in the sense fixed there.
Besides the quadratic dependence of the family on $x$ and its
scalar extension \eqref{eq:scalar-extended-D}, two arithmetic
facts about this family are used:
\begin{enumerate}[label=\textup{(I\arabic*)}]
\item\label{input:image}
  $\operatorname{im}D_x\subseteq x^\perp$ for every $x$ satisfying
  the cone condition.  For $p>3$ this is every $x$
  (\cref{lem:D-image-orthogonal}), and for $p=3$ every $x$ with
  $\beta_k(x)=0$ (\cref{lem:diagonal-p3}).  In both cases the
  statement persists under scalar extension because the underlying
  shuffle identities are multilinear.
\item\label{input:derivative}
  $\Delta D(x,v)(e)\in x^\perp$ for every $v$ and every
  $e\in\ker D_x$ (\cref{lem:Theta-well-defined}).
\end{enumerate}
No further property of the arithmetic situation enters this
subsection.

The operators $D_x$ assemble into a matrix of quadratic forms as
follows.  Three bases enter, in different roles.  The basis
$\chi_1,\ldots,\chi_d$ of $H^1(G,\Fp)$ is the one fixed at the
beginning of this section.  It indexes the family
\eqref{eq:minimal-family} and provides the coordinates
$x_1,\ldots,x_d$ of a character $x=\sum_ix_i\chi_i\in H^1(G,\Fp)$.  The bases of
$\Cl(K)[p]$ and of $\Cl(K)/p$ are auxiliary.  Fix one of each and
represent each operator $D_x$ by its matrix
$\mathcal D_x\in M_d(\Fp)$ with respect to these two bases.
\Cref{def:rank-drop-scheme} will not depend on this choice.  Read
in these bases, the reconstruction formula
\eqref{eq:D-reconstruct} becomes the identity of matrices
\begin{equation}\label{eq:D-matrix}
  \mathcal D_x=\sum_i x_i^2\,\mathcal D_{\chi_i}
   +\sum_{i<j}x_ix_j
    \bigl(\mathcal D_{\chi_i+\chi_j}-\mathcal D_{\chi_i}
      -\mathcal D_{\chi_j}\bigr),
\end{equation}
and entry by entry it exhibits every entry of $\mathcal D_x$ as a
homogeneous quadratic form in the coordinates $x_1,\ldots,x_d$.  By
\eqref{eq:scalar-extended-D} the same forms compute $\mathcal D_x$
over every extension.  The $(d-1)\times(d-1)$ minors of
this $d\times d$ matrix of quadratic forms are homogeneous of degree
$2(d-1)$ and therefore define a closed subscheme of projective
space.

\begin{definition}[Norm-degeneracy scheme]\label{def:rank-drop-scheme}
The \emph{norm-degeneracy scheme}
\[
  \Sigma_D\subseteq\mathbb P\bigl(H^1(G,\Fp)\bigr)
\]
is the closed subscheme defined by the $(d-1)\times(d-1)$ minors of
this matrix of quadratic forms, the locus where the secondary norm
operators degenerate.  Since invertible row and column
operations do not change the ideal generated by the
$(d-1)\times(d-1)$ minors
(Cauchy--Binet), this subscheme does not depend on the chosen bases of
$\Cl(K)[p]$ and $\Cl(K)/p$.  Changing the basis of $H^1(G,\Fp)$
acts by a linear substitution of the coordinates $x_1,\ldots,x_d$ and
defines the same subscheme of $\mathbb P\bigl(H^1(G,\Fp)\bigr)$.
It is an invariant of the field $K$ itself.
For every finite extension $k|\Fp$ its
$k$-points are
\[
  \Sigma_D(k)
  =\bigl\{[x]:0\ne x\in H^1(G,\Fp)\otimes_{\Fp}k,\ 
    \rk D_x\le d-2\bigr\},
\]
where $[x]\in\mathbb P\bigl(H^1(G,\Fp)\bigr)(k)$ denotes the
line $kx$ spanned by $x$.  Here and throughout we identify $X(k)$
with $\bigl(X\otimes_{\Fp}k\bigr)(k)$ for an $\Fp$-scheme $X$.
Thus $[x]$ is also a $k$-rational point of
$\mathbb P\bigl(H^1(G,\Fp)\bigr)\otimes_{\Fp}k$, and when
$\rk D_x\le d-2$ a $k$-rational point of $\Sigma_D\otimes_{\Fp}k$.
\end{definition}

For $p>3$ one has $\operatorname{im}D_x\subseteq x^\perp$ for
every $x$, so $\rk D_x\le d-1$, and $\rk D_x=d-1$ exactly when
$[x]\notin\Sigma_D(k)$.  For $p=3$ the same bound holds for the
$x$ with $\beta_k(x)=0$, the locus of the Bockstein cone
introduced next.  Without this condition the rank can equal $d$.

\begin{definition}[Bockstein cone]\label{def:bockstein-cone}
For $p=3$, the \emph{Bockstein cone} is the reduced linear subspace
\[
  C_\beta=\mathbb P(\ker\beta)
  \subseteq\mathbb P\bigl(H^1(G,\mathbb F_3)\bigr),
\]
with $\beta$ the linear diagonal of
\cref{lem:diagonal-p3}\ref{diag:linear}.  The name refers to
\cref{lem:diagonal-p3}\ref{diag:bockstein}.  By
\cref{lem:diagonal-p3}\ref{diag:dual}, the codimension of
$C_\beta$ equals the
number of cyclic factors of order exactly $3$ in the $3$-part of
$\Cl(K)$.  For $p>3$ we set
$C_\beta=\mathbb P\bigl(H^1(G,\Fp)\bigr)$, the whole space.  The
diagonal vanishes identically there, so for every odd $p$ the cone
condition of \cref{subsec:transverse-rank-one} on a character $x$ is
the condition $[x]\in C_\beta$.
\end{definition}

\begin{remark}[The reduced cone]\label{rem:reduced-cone}
At $p=3$, over a finite extension $k$ the diagonal evaluations are
the \emph{cubes} of linear forms:
$\langle M(x,x,x),e_\ell\rangle_{\AV}=(\sum_iB_{\ell i}x_i)^{3}$ by
the scalar extension of \eqref{eq:diagonal-cube}.  The equations
$\langle M(x,x,x),e_\ell\rangle_{\AV}=0$ therefore define a
Frobenius thickening of the linear subspace, nonreduced exactly
when $0<\rk\beta<d$.  Only the
underlying points and the tangent spaces of $\Sigma_D$ enter the
arguments below, so we work throughout with the reduced linear cone
$C_\beta$.
\end{remark}

\Cref{prop:rank-drop-geometry} below and the remark following it
use a standard chart, which we set up first.  Fix a finite extension
$k|\Fp$ and a nonzero $x\in H^1(G,\Fp)\otimes_{\Fp}k$.
Coordinates refer here and below to the basis
$\chi_1,\ldots,\chi_d$ of $H^1(G,\Fp)$ fixed at the beginning of
this section.  Choose a coordinate $\ell$ with $\ell(x)\ne0$,
and let $U$ be the standard affine chart $\{\ell\ne0\}$ of
$\mathbb P\bigl(H^1(G,\Fp)\bigr)\otimes_{\Fp}k$; it contains
$[x]$, and each of its points has a unique representative with
$\ell=1$, its \emph{normalized representative}.  Replacing $x$ by $\ell(x)^{-1}x$ does not change
$[x]$, and by \cref{rem:transverse-projective} it preserves the
rank and cone conditions and transversality.  We therefore assume
$\ell(x)=1$: the given $x$ is the normalized representative
of $[x]$.  On
$U$ the scalar-extended secondary norm family
\eqref{eq:scalar-extended-D} defines
a morphism
\[
  \Phi\colon U\longrightarrow\Hom_k\bigl(\Cl(K)[p]\otimes_{\Fp}k,\
  \Cl(K)/p\otimes_{\Fp}k\bigr).
\]
Here, as in \cref{app:rank-one-lemmas}, the notation $\Hom_k$
stands for the affine $k$-scheme whose $k$-points form the vector
space used so far.  Let $R$ be a $k$-algebra.  Every $R$-point of $U$
has a unique representative
\[
  x'=\sum_i x'_i\,\chi_i\in H^1(G,\Fp)\otimes_{\Fp}R
  \qquad\text{with}\quad x'_\ell=1.
\]
To this point $\Phi$ assigns the $R$-linear map
\[
  D_{x'}\colon\Cl(K)[p]\otimes_{\Fp}R\longrightarrow
  \Cl(K)/p\otimes_{\Fp}R
\]
given by
\[
  D_{x'}=\tfrac12\,b_R(x',x'),
\]
the definition \eqref{eq:scalar-extended-D} with the polarization
$b$ now extended to the $k$-algebra $R$.  Since $b$ is
$\Fp$-bilinear and $2$ is invertible, the assignment is defined for
every $R$ and compatible with morphisms of $k$-algebras.  After the
choice of the auxiliary bases of $\Cl(K)[p]$ and $\Cl(K)/p$, the
target of $\Phi$ is identified with the affine space
$\mathbb A^{d^2}_k$ of $d\times d$ matrices.  Under this
identification $\Phi$ is represented by the matrix of quadratic
forms of \cref{def:rank-drop-scheme}, with $\ell$ set equal
to $1$.  By construction the restriction of $\Sigma_D\otimes k$
to $U$ is the closed subscheme $Z\subseteq U$ defined by the
$(d-1)\times(d-1)$ minors of this matrix, as in
\cref{lem:rank-one-tangent}.

\begin{lemma}[The evaluation family]\label{lem:evaluation-family}
Let $p>3$, and let
\[
  \lambda\colon U\longrightarrow
  \Hom_k\bigl(\Cl(K)/p\otimes_{\Fp}k,\ k\bigr)
\]
be the morphism that assigns to every $R$-point $x'$ of $U$,
represented as above, the $R$-linear evaluation functional
\[
  \lambda(x')\colon\Cl(K)/p\otimes_{\Fp}R\longrightarrow R,
  \qquad
  e\longmapsto x'(e)=\sum_i x'_i\,\chi_i(e).
\]
Then $\lambda(x')\circ\Phi(x')=0$ for every $k$-algebra $R$
and every $R$-point $x'$ of $U$.  At the $k$-point $[x]$ the
morphism specializes to
\[
  \lambda([x])\colon\Cl(K)/p\otimes_{\Fp}k\longrightarrow k,
  \qquad
  e\longmapsto x(e),
\]
which is nonzero with kernel $x^\perp$.
\end{lemma}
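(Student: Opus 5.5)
The plan is to reduce the scheme-theoretic identity $\lambda(x')\circ\Phi(x')=0$ to a polynomial identity over $\Fp$, which is the content of \cref{lem:D-image-orthogonal} made uniform in the coordinates. Fix the auxiliary bases of $\Cl(K)[p]$ and $\Cl(K)/p$, the latter dual to $\chi_1,\ldots,\chi_d$ under \eqref{eq:H-classgroup}. Then $\lambda(x')$ is the row vector $(x'_1,\ldots,x'_d)$ and $\Phi(x')$ is the matrix $\mathcal D_{x'}$ of \eqref{eq:D-matrix}. The entries of the product $\lambda(x')\,\mathcal D_{x'}$ are cubic forms
\[
  P_n(x_1,\ldots,x_d)=\sum_{m}x_m\,(\mathcal D_x)_{mn}\in\Fp[x_1,\ldots,x_d],
\]
with coefficients in $\Fp$. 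The claim for every $R$-point (with $x'_\ell=1$) follows once the $P_n$ vanish as polynomials, because an $R$-point is just a substitution into these polynomials.

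Next I identify $P_n$ with the diagonal Massey product. For $x=\sum_i x_i\chi_i$, by \eqref{eq:Ddef} with $y=x$ and trilinearity of $M$,
\[
  P_n(x)=x\bigl(D_x(e_n)\bigr)=\langle M(x,x,x),e_n\rangle_{\AV}=\sum_{i,j,k}x_ix_jx_k\,m_{n,ijk}.
\]
This expansion holds formally, that is, as an identity of polynomials. The reason is that both $\mathcal D_x$, via the polarization formula \eqref{eq:D-reconstruct} and \eqref{eq:MfromD}, and $M$ are expressed through the same constants $m_{n,ijk}$. Concretely, $(\mathcal D_x)_{mn}=\tfrac12\sum_{i,k}x_ix_k\,m_{n,imk}$, using $b(x,x)=2D_x$ and $b=-\Delta D$ with the appropriate sign. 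I would verify this sign bookkeeping carefully; it is the only step where an error could creep in.

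Finally, the cubic form $\sum x_ix_jx_k\,m_{n,ijk}$ is zero as a polynomial when $p>3$. Grouping terms by the multiset $\{i,j,k\}$ as in the proof of \cref{lem:diagonal-p3}, the cyclic shuffle identity \eqref{eq:shuffle} kills every multiset with two or three distinct indices. The diagonal coefficients satisfy $3m_{n,iii}=0$ by \eqref{eq:diagonal-triple}, so they vanish since $p\neq3$. Hence each $P_n\equiv0$.

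It follows that $\lambda(x')\circ\Phi(x')=0$ for all $R$, with no reduction argument needed. The specialization at $[x]$ is immediate from the normalization $x'=x$. This map is nonzero because $x\neq0$ and \eqref{eq:H-classgroup} is an isomorphism, and its kernel is $x^\perp$ by definition. The main obstacle is only ensuring that the polynomial, and not merely the pointwise, vanishing holds. Pointwise vanishing over $\Fp$ alone would not suffice for a cubic form in $d$ variables over a small field, which is why I argue coefficientwise rather than through \cref{lem:D-image-orthogonal} directly.
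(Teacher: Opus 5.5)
Your proof is correct and is essentially the paper's: the paper observes that the multilinear identities \eqref{eq:MfromD} and \eqref{eq:diagonal-triple} behind \cref{lem:D-image-orthogonal} persist over every $k$-algebra $R$, in which $3$ is invertible, and your coefficientwise vanishing of the universal cubic forms $P_n$ is the same observation written out in coordinates. Two slips do not affect the argument. First, since $D_x=-\tfrac12\Delta D(x,x)$, the correct formula is $(\mathcal D_x)_{mn}=-\tfrac12\sum_{i,k}x_ix_k\,m_{n,imk}$, which is harmless because only the vanishing of $P_n$ matters. Second, your closing caution is unnecessary: for $p\ge5$, a cubic form over $\Fp$ that vanishes on all of $\Fp^d$ is already the zero polynomial, since all its exponents are smaller than $p$.
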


\begin{proof}
The identity
$\lambda(x')\circ\Phi(x')=0$ reads
$x'\bigl(D_{x'}(e)\bigr)=0$ for every
$e\in\Cl(K)[p]\otimes_{\Fp}R$.  For $R=k$ this is
\cref{lem:D-image-orthogonal} after scalar extension.  The same
proof gives it for every $R$: the identities \eqref{eq:MfromD}
and \eqref{eq:diagonal-triple} underlying it are multilinear, and
$3$ is invertible in $R$.  This is where $p>3$ enters.  The value $\lambda([x])$ is the
stated functional.  It is
nonzero since $x\ne0$ and the evaluation pairing is perfect, and
its kernel is $x^\perp$ by the definition of the annihilator.
\end{proof}

\begin{proposition}[Geometric interpretation of transversality]\label{prop:rank-drop-geometry}
Let $k|\Fp$ be a finite extension, assume $d\ge3$, and let
$x\in H^1(G,\Fp)\otimes_{\Fp}k$ satisfy the rank and cone
conditions.  For $p>3$ the cone condition is vacuous.
\begin{enumerate}[label=\textup{(\alph*)}]
\item\label{rd:tangent} The tangent space of
  $\Sigma_D\otimes_{\Fp}k$ at $[x]$ is $\ker\Theta_x$.  Its
  dimension is at least $d-3$, and it equals $d-3$ exactly when
  $x$ is transverse.
\item\label{rd:smooth} For $p>3$, the element $x$ is transverse if
  and only if $\Sigma_D\otimes_{\Fp}k$ is smooth over $k$ of
  dimension $d-3$ at $[x]$.
\item\label{rd:isolated} For $d=3$ and every odd $p$, the element
  $x$ is transverse if and only if $[x]$ is a reduced isolated
  point of $\Sigma_D\otimes_{\Fp}k$.
\end{enumerate}
\end{proposition}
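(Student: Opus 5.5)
The plan is to work in the standard chart $U$ with normalized representative $x$ and to apply the two determinantal lemmas of Appendix~\ref{app:rank-one-lemmas} to the morphism $\Phi$.

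For \ref{rd:tangent}, I would invoke \cref{lem:rank-one-tangent}. For a determinantal locus $Z=\{\rk\Phi\le d-2\}$ at a point where $\Phi(x)=D_x$ has rank exactly $d-2$, it identifies the Zariski tangent space $T_{[x]}Z$ with the set of tangent vectors $\xi\in T_{[x]}U$ for which the composite
\[
  \ker D_x\xrightarrow{d\Phi(\xi)}\Cl(K)/p\otimes k\to\coker D_x
\]
vanishes. Tangent vectors at $[x]$ in the chart are classes $\overline v\in(H^1\otimes k)/kx$, taking representatives with $\ell(v)=0$. Expanding $D_{x+\varepsilon v}=D_x+\varepsilon\,b(x,v)$ over $k[\varepsilon]$ gives $d\Phi(\overline v)=b(x,v)=-\Delta D(x,v)$.

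By \ref{input:derivative}, restricted to $\ker D_x$ this map lands in $x^\perp$. Since $x^\perp/\operatorname{im}D_x$ injects into $\coker D_x$, the vanishing condition is exactly $\Theta_x(\overline v)=0$, so $T_{[x]}Z=\ker\Theta_x$. The target of $\Theta_x$ has dimension $2\cdot1=2$ and the source has dimension $d-1$. Hence $\dim\ker\Theta_x\ge d-3$, with equality if and only if $\Theta_x$ is surjective.

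For \ref{rd:smooth}, one direction is immediate: if $Z$ is smooth of dimension $d-3$ at $[x]$, the tangent space has dimension $d-3$, so $x$ is transverse by \ref{rd:tangent}. The converse is the main obstacle. A tangent space of dimension $d-3$ only bounds the local dimension from above, so I also need $\dim_{[x]}Z\ge d-3$. The naive codimension estimate for the corank-two locus in $d\times d$ matrices is $4$, which is too large.

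This is where \cref{lem:syzygy-smooth} and \cref{lem:evaluation-family} enter. The syzygy $\lambda(x')\circ\Phi(x')=0$, valid over every $R$ when $p>3$, shows that $\Phi$ factors through the subbundle of maps with image in $\ker\lambda(x')$. In effect $\Phi$ is a family of $d\times(d-1)$ matrices, and for those the corank-one drop has codimension $2$. Locally, after using \ref{input:image}, $Z$ is therefore cut out by at most two equations in the $(d-1)$-dimensional chart. Every component then has dimension at least $d-3$, and combined with the tangent-space bound this gives smoothness of dimension $d-3$. I would quote \cref{lem:syzygy-smooth} for exactly this conclusion, checking its hypotheses: the syzygy holds over all $R$, the value $\lambda([x])$ is nonzero with kernel $x^\perp$, and $\rk D_x=d-2$.

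For \ref{rd:isolated} with $d=3$, the expected dimension is $0$ and no lower bound on dimension is needed. If $x$ is transverse, the tangent space vanishes, so $[x]$ is a reduced isolated point: the local ring is a field, by Nakayama applied to $\mathfrak m/\mathfrak m^2=0$. Conversely, a reduced isolated point has local ring equal to the residue field $k$, so the tangent space is $0$, and by \ref{rd:tangent} $x$ is transverse. This argument uses only \ref{rd:tangent}, which in turn relies only on \ref{input:derivative}. It therefore holds for every odd $p$, including $p=3$ on the cone.
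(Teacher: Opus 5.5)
Your proposal is correct and follows the paper's proof essentially step for step: the same chart and dual-number computation $d\Phi_{[x]}(\overline v)=-\Delta D(x,v)$, the identification of $d^\perp\Phi_{[x]}$ with $\Theta_x$ via the injection $x^\perp/\operatorname{im}D_x\hookrightarrow\coker D_x$, \cref{lem:rank-one-tangent} for (a) and (c), and \cref{lem:syzygy-smooth} with the evaluation family for (b). For (b) the paper identifies $\theta_{[x]}$ with $\Theta_x$ and applies part (b) of \cref{lem:syzygy-smooth}, whereas your two-equations-plus-Krull argument uses its part (a) and is equally valid. One small correction: the injection $x^\perp/\operatorname{im}D_x\hookrightarrow\coker D_x$ needs $\operatorname{im}D_x\subseteq x^\perp$, so (a) and (c) rest on \ref{input:image} as well as \ref{input:derivative}, not on \ref{input:derivative} alone; this is harmless, since both hold on the cone for every odd $p$.
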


\begin{proof}
With $U$, $\Phi$ and $\lambda$ as constructed above for this $k$
and $x$, we apply \cref{lem:rank-one-tangent} at the $k$-point
$u=[x]$ of $U$ and compute its map
\[
\begin{aligned}
  d^\perp\Phi_{[x]}\colon T_{[x]}U&\longrightarrow
  \Hom_k\bigl(\ker D_x,\ \coker D_x\bigr),\\
  v&\longmapsto\pi_{D_x}\circ\bigl(d\Phi_{[x]}(v)\bigr)\big|_{\ker D_x}.
\end{aligned}
\]
In the notation of that lemma,
$\varphi=\Phi([x])=D_x$, of rank $d-2$ by hypothesis.  In the
standard description of tangent vectors by dual numbers
\cite[Proposition~6.7 and Remark~6.8]{GoertzWedhorn},
$T_{[x]}U$ is the set of $k[\varepsilon]$-points of $U$ reducing
to $[x]$ at $\varepsilon=0$.  Every such point has a unique
normalized representative $x+\varepsilon v$ with a direction
vector $v$ satisfying $\ell(v)=0$.  These normalized
representatives give an isomorphism of $k$-vector spaces
\[
  \bigl\{v\in H^1(G,\Fp)\otimes_{\Fp}k:\ell(v)=0\bigr\}
  \longrightarrow T_{[x]}U,
  \qquad v\longmapsto[x+\varepsilon v].
\]
Since $\ell(x)=1$, the hyperplane $\{\ell=0\}$ is a complement
of the line $kx$ in $H^1(G,\Fp)\otimes_{\Fp}k$, so passing to
classes gives a second isomorphism
\[
  \bigl\{v\in H^1(G,\Fp)\otimes_{\Fp}k:\ell(v)=0\bigr\}
  \longrightarrow
  \bigl(H^1(G,\Fp)\otimes_{\Fp}k\bigr)/kx,
  \qquad v\longmapsto\overline v.
\]
Together they identify
\[
  \bigl(H^1(G,\Fp)\otimes_{\Fp}k\bigr)/kx\;\simeq\;T_{[x]}U,
\]
the left-hand side being the source of $\Theta_x$
\eqref{eq:Theta}.  For the $k[\varepsilon]$-point
$[x+\varepsilon v]$, expanding
$D_{x+\varepsilon v}=\tfrac12\,b_{k[\varepsilon]}
(x+\varepsilon v,\,x+\varepsilon v)$ bilinearly gives
\[
  D_{x+\varepsilon v}
  =D_x-\varepsilon\,\Delta D(x,v)+\varepsilon^2D_v
  =D_x-\varepsilon\,\Delta D(x,v),
\]
so
\begin{equation}\label{eq:dPhi-polarization}
  d\Phi_{[x]}(\overline v)=-\Delta D(x,v),
\end{equation}
and
\[
  d^\perp\Phi_{[x]}(\overline v)
  =\pi_{D_x}\circ\bigl(-\Delta D(x,v)\bigr)\big|_{\ker D_x}.
\]

Now insert the two arithmetic ingredients, both available at the
selected point since $x$ satisfies the cone condition.
By \ref{input:image} the inclusion
$x^\perp\subseteq\Cl(K)/p\otimes_{\Fp}k$ induces an injection
\[
  \iota\colon \Hom_k\bigl(\ker D_x,\ x^\perp/\operatorname{im}D_x\bigr)
  \lhook\joinrel\longrightarrow
  \Hom_k\bigl(\ker D_x,\ \coker D_x\bigr),
\]
and by \ref{input:derivative} the map $d^\perp\Phi_{[x]}$ takes values in the
image of $\iota$.  With the representative $v$ chosen above,
$\Theta_x(\overline v)(e)$ is by definition the class of
$-\Delta D(x,v)(e)$ in $x^\perp/\operatorname{im}D_x$, and
$d^\perp\Phi_{[x]}(\overline v)(e)$ is the class of the same element in
$\coker D_x$.  Hence
\[
  d^\perp\Phi_{[x]}=\iota\circ\Theta_x,
\]
with $\Theta_x$ as in \cref{def:transverse-rank-one}.  Since $\iota$ is
injective, $\ker d^\perp\Phi_{[x]}=\ker\Theta_x$.

Part \ref{rd:tangent} now follows from
\cref{lem:rank-one-tangent}(a): the tangent space of $Z$ at $[x]$ is
$\ker d^\perp\Phi_{[x]}=\ker\Theta_x$.  The source of $\Theta_x$
has dimension $d-1$ and its target dimension two, so
$\dim\ker\Theta_x=(d-1)-\rk\Theta_x\ge d-3$, with equality
exactly when $\Theta_x$ is surjective, that is, when $x$ is
transverse.

For part \ref{rd:isolated} let $d=3$.  Source and target of
$\Theta_x$ are then both two-dimensional, so $\Theta_x$ is surjective
if and only if it is injective, that is, if and only if
$d^\perp\Phi_{[x]}$ is injective.
\Cref{lem:rank-one-tangent}(b) identifies this with $[x]$ being a
reduced isolated point of $Z$.

For part \ref{rd:smooth} let $p>3$.  If $Z$ is smooth of dimension
$d-3$ at $[x]$, then its tangent space at $[x]$ has dimension
$d-3$, and $x$ is transverse by part \ref{rd:tangent}.  For the converse suppose $x$ is transverse.  Apply
\cref{lem:syzygy-smooth} to $\Phi$ and the evaluation family
$\lambda$ of \cref{lem:evaluation-family} on the chart $U$; its
hypotheses hold at $u=[x]$, since the chart is an affine space
and hence smooth over $k$, the rank of $\varphi=D_x$ is $d-2$,
and $\lambda([x])$ is nonzero by \cref{lem:evaluation-family}.
By \cref{lem:syzygy-smooth}\ref{sy:smooth}, which here yields
smoothness of codimension $c=2$, that is, of dimension $d-3$,
it then suffices to show that the map $\theta_{[x]}$ of
\cref{lem:syzygy-smooth} is surjective, and we do so by
comparing it with $\Theta_x$.  As $\ker\lambda([x])=x^\perp$, the two maps fit into the
triangle
\[
\begin{tikzcd}[column sep=large]
T_{[x]}U \arrow[r, "\theta_{[x]}"]
& \Hom_k\bigl(\ker D_x,\ x^\perp/\operatorname{im}D_x\bigr) \\
\bigl(H^1(G,\Fp)\otimes_{\Fp}k\bigr)/kx
\arrow[u, "\simeq"] \arrow[ur, "\Theta_x"'] &
\end{tikzcd}
\]
with the identification above on the left.  The triangle commutes.  For $e\in\ker D_x$,
\[
  \theta_{[x]}(\overline v)(e)
  =\overline{d\Phi_{[x]}(\overline v)(e)}
  =\overline{-\Delta D(x,v)(e)}
  =\Theta_x(\overline v)(e)
\]
in $x^\perp/\operatorname{im}D_x$, the first equality by the
description of $\theta_u$ in \cref{lem:syzygy-smooth}, the
second by \eqref{eq:dPhi-polarization}, and the third by
\eqref{eq:Theta}.
Hence $\theta_{[x]}$ is surjective, since $\Theta_x$ is.
\end{proof}

\begin{remark}[Two equations and the local picture]
\label{rem:two-equations}
Let $p>3$ and let $[x]$ be a $k$-point of $\Sigma_D\otimes_{\Fp}k$
with $\rk D_x=d-2$.  By \cref{lem:evaluation-family} the
hypotheses of \cref{lem:syzygy-smooth} hold at $[x]$.
Part~\ref{sy:present} therefore gives, near $[x]$, a
presentation of $\Sigma_D\otimes_{\Fp}k$ by two equations
rather than the $d^2$ minors of \cref{def:rank-drop-scheme}.  Two consequences follow.  First, every
irreducible component of $\Sigma_D\otimes_{\Fp}k$ through $[x]$
is, near $[x]$, an irreducible component of the vanishing scheme
of the two equations.  It therefore has codimension at most
two and dimension at least $d-3$
\cite[Propositions~5.30 and~5.35]{GoertzWedhorn}.  Second, by
part~\ref{sy:present}, the element $x$ is transverse exactly when
the two equations define hypersurfaces smooth at $[x]$ that cross
with distinct tangent hyperplanes.  This is the picture behind
the name.  At $d=3$ the two hypersurfaces are curves in the plane
$\mathbb P\bigl(H^1(G,\Fp)\bigr)\otimes_{\Fp}k$.  If they
cross with distinct tangents, the tangent lines meet only in
zero.  The intersection then has tangent space zero at $[x]$ and
is, near $[x]$, the reduced isolated point of
part~\ref{rd:isolated}.
\end{remark}

\begin{lemma}[Points of small rank are never smooth]
\label{lem:rank-zero-excluded}
Let $k|\Fp$ be a finite extension, possibly trivial, let $\xi$ be a
closed point of $\Sigma_D\otimes_{\Fp}k$, and let
$x\in H^1(G,\Fp)\otimes_{\Fp}\kappa(\xi)$ be a representative of
$\xi$.  If $\rk D_x\le d-3$ (a condition independent of this
choice by \cref{rem:transverse-projective}), then the tangent space
of $\Sigma_D\otimes_{\Fp}k$ at $\xi$ has the full dimension $d-1$.
In particular $\xi$ is not a smooth point of dimension $d-3$, and
for $d=3$ it is not a reduced isolated point.
\end{lemma}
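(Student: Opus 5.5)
The plan is to show that every first-order deformation of $\xi$ inside projective space stays in $\Sigma_D$, because all $(d-1)\times(d-1)$ minors vanish to second order at a matrix of rank at most $d-3$. First I will reduce to a rational point. Let $k'=\kappa(\xi)$. Formation of tangent spaces commutes with the field extension $k'|k$: the tangent space of $\Sigma_D\otimes_{\Fp}k$ at $\xi$, taken over $\kappa(\xi)$, agrees with the tangent space of $\Sigma_D\otimes_{\Fp}k'$ at the $k'$-rational point $[x]$. So I may assume $\xi=[x]$ is $k$-rational.

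Next I fix the standard chart $U=\{\ell\ne0\}$ of the preceding discussion, with normalized representative $x$ and $\ell(x)=1$. On $U$ the scheme $\Sigma_D\otimes_{\Fp}k$ is the closed subscheme $Z$ cut out by the $(d-1)\times(d-1)$ minors of the matrix of quadratic forms $\Phi$. Its tangent space at $[x]$ is the set of $k[\varepsilon]$-points $[x+\varepsilon v]$, with $\ell(v)=0$, at which all these minors vanish. By \eqref{eq:dPhi-polarization}, $\Phi(x+\varepsilon v)=D_x-\varepsilon\,\Delta D(x,v)$.

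The key step is elementary linear algebra. Let $A,B$ be $d\times d$ matrices over $k$ with $\rk A\le d-3$, and let $N$ be any $(d-1)\times(d-1)$ minor of $A+\varepsilon B$ over $k[\varepsilon]$. Expanding $N$ multilinearly in its rows, and using $\varepsilon^2=0$, gives $N=\det A' + \varepsilon\sum_i\det A'_i$. Here $A'$ is the corresponding $(d-1)\times(d-1)$ submatrix of $A$, and $A'_i$ is obtained from $A'$ by replacing row $i$ with the corresponding row of $B$. The matrix $A'$ has rank at most $d-3$, so $\det A'=0$. Each $A'_i$ keeps $d-2$ rows of $A'$, and these span a space of dimension at most $d-3$, so $\det A'_i=0$ as well. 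Hence every minor vanishes at every $k[\varepsilon]$-point over $[x]$. The tangent space of $Z$ at $[x]$ is therefore all of $T_{[x]}U$, which has dimension $d-1$.

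The conclusions follow. A point at which $\Sigma_D\otimes k$ is smooth of dimension $d-3$ has a tangent space of dimension $d-3<d-1$, so $\xi$ is not such a point. For $d=3$, a reduced isolated point has zero tangent space, but here the tangent space has dimension $2$. This argument never uses the cone condition or the fact that $\Phi$ is quadratic, so it holds for every odd $p$. The only step needing care is the base-change reduction for non-rational closed points. I would handle it by the standard fact that $\mathfrak m/\mathfrak m^2$ commutes with the separable extension of finite fields, exactly as the paper's identification of $X(k)$ with points of $X\otimes k$.
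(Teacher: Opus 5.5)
Your proof is correct and uses the same key fact as the paper: the first-order part of a $(d-1)\times(d-1)$ minor is made of $(d-2)\times(d-2)$ minors, and these vanish at a matrix of rank at most $d-3$. The difference is one of formulation. The paper argues dually: cofactor expansion puts every minor in $\mathfrak m^2$ of the local ring at $\xi$, so the cotangent space is all of $\mathfrak m/\mathfrak m^2$, and no base change to $\kappa(\xi)$ is needed. Your dual-number version does need that reduction, and it is justified by the étale local homomorphism with equal residue fields in the proof of \cref{lem:reduced-isolated-descent}.
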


\begin{proof}
We choose an affine chart of projective space containing $\xi$
and dehomogenize; the entries of the matrix of quadratic forms
of \cref{def:rank-drop-scheme} then become regular functions on
the chart, forming a $d\times d$ matrix $Q$.  We work in the
local ring of projective space at $\xi$, with maximal ideal
$\mathfrak m$.  Every $(d-2)\times(d-2)$ minor of $Q$ vanishes
at $\xi$, its value there being the corresponding minor of the
matrix of $D_x$, of rank at most $d-3$.  A
$(d-1)\times(d-1)$ minor of $Q$ is a polynomial in the entries
of the chosen $(d-1)\times(d-1)$ submatrix, and its partial
derivatives with respect to these entries are, up to sign, the
$(d-2)\times(d-2)$ minors of that submatrix, by cofactor
expansion.  Its differential at $\xi$ therefore vanishes, and
the minor lies in $\mathfrak m^2$.
Hence the ideal $I$ defining $\Sigma_D\otimes_{\Fp}k$ satisfies
$I\subseteq\mathfrak m^2$, and the cotangent space at $\xi$ is
\[
  \mathfrak m/(\mathfrak m^2+I)=\mathfrak m/\mathfrak m^2,
\]
of dimension $d-1$.  No smooth point of dimension $d-3$ and no
reduced isolated point has that cotangent space.
\end{proof}

\begin{corollary}[Geometric mildness criterion]
\label{cor:geometric-mildness}
Let $K$ be an imaginary quadratic number field, let $p$ be an odd
prime, put $d=d_p\Cl(K)$, and assume $d\ge3$.
\begin{enumerate}[label=\textup{(\alph*)}]
\item\label{gm:three} For $d=3$ the following are equivalent:
\begin{enumerate}[label=\textup{(\roman*)}]
\item there exist a finite extension $k|\Fp$ and a transverse element $x\in H^1(G,\Fp)\otimes k$;
\item $\Sigma_D$ has a reduced isolated closed point lying on
      $C_\beta$.
\end{enumerate}
For $k$ one may take the residue field of such a point, and for
$p>3$ the condition on $C_\beta$ is vacuous.
\item\label{gm:general} For $p>3$ and arbitrary $d\ge3$ the
following are equivalent:
\begin{enumerate}[label=\textup{(\roman*)}]
\item there exist a finite extension $k|\Fp$ and a transverse element $x\in H^1(G,\Fp)\otimes k$;
\item $\Sigma_D$ has a closed point at which it is smooth of
      dimension $d-3$.
\end{enumerate}
For $k$ one may again take the residue field of such a point.
\end{enumerate}
If these conditions hold, then $G_{\varnothing}(K)(p)$ is mild with
respect to the Zassenhaus filtration, and hence
$\cd G_{\varnothing}(K)(p)=2$.
\end{corollary}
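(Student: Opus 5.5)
The plan is to derive both equivalences from \cref{prop:rank-drop-geometry}, with \cref{lem:rank-zero-excluded} ruling out points of small rank; the final mildness assertion then follows from \cref{thm:transverse-rank-one}.

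For (i)$\Rightarrow$(ii) in either part, let $x\in H^1(G,\Fp)\otimes k$ be transverse. By definition $x$ satisfies the rank condition, so $[x]$ is a $k$-point of $\Sigma_D\otimes k$, and it satisfies the cone condition, so $[x]$ lies on $C_\beta$. At $p=3$ this uses that $\beta_k(x)=0$ exactly when the cubic diagonal vanishes, and that the $k$-points of the reduced linear cone are exactly these. By \cref{prop:rank-drop-geometry}\ref{rd:isolated} for $d=3$, respectively \ref{rd:smooth} for $p>3$, the point $[x]$ is a reduced isolated point, respectively a smooth point of dimension $d-3$, of $\Sigma_D\otimes k$. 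Let $\xi$ be its image in $\Sigma_D$, a closed point.

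The one step needing care is descending these properties from $\Sigma_D\otimes k$ to $\Sigma_D$ at $\xi$. Base change along the finite separable extension $k|\Fp$ is étale, so $\Sigma_D\otimes k\to\Sigma_D$ is étale. Smoothness of relative dimension $d-3$ and the property of being a reduced isolated point (the local ring is a field, equivalently $\mathfrak m=0$ locally) both descend along étale morphisms. I would cite standard étale descent, or compare local rings directly: $\mathcal O_{\Sigma_D\otimes k,[x]}$ is a localization of $\mathcal O_{\Sigma_D,\xi}\otimes k$ and is flat and unramified over it. Since $\xi$ is the image of a point of $C_\beta\otimes k$, it lies on $C_\beta$.

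For (ii)$\Rightarrow$(i), let $\xi$ be a closed point with the stated property, and put $k=\kappa(\xi)$, a finite extension of $\Fp$. Over $k$ there is a $k$-rational point $[x]$ above $\xi$, represented by some $0\ne x\in H^1(G,\Fp)\otimes k$. Étale base change again shows that $\Sigma_D\otimes k$ is smooth of dimension $d-3$ at $[x]$, respectively has $[x]$ as a reduced isolated point. Since $[x]\in\Sigma_D(k)$ we have $\rk D_x\le d-2$. \Cref{lem:rank-zero-excluded} excludes $\rk D_x\le d-3$, so $\rk D_x=d-2$ and the rank condition holds. For the cone condition: if $p>3$ it is vacuous; if $p=3$, then $\xi\in C_\beta$ gives $\beta_k(x)=0$, which is the cone condition by the discussion after \cref{lem:diagonal-p3}. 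Now \cref{prop:rank-drop-geometry}\ref{rd:smooth}, respectively \ref{rd:isolated}, shows that $x$ is transverse, and $k$ is the residue field as claimed.

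Finally, \cref{thm:transverse-rank-one} turns the existence of a transverse element into mildness and $\cd G=2$. The main obstacle is the descent bookkeeping between closed points of $\Sigma_D$ and rational points over their residue fields. It is routine because finite fields are perfect, which makes base change étale.
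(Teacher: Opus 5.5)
Your proposal is correct and follows the paper's proof essentially step for step: \cref{prop:rank-drop-geometry} in both directions, \cref{lem:rank-zero-excluded} to force $\rk D_x=d-2$, the cone condition read off from the vanishing at $\xi$ of the equations of $C_\beta$, and \cref{thm:transverse-rank-one} for the final assertion. The étale comparison of local rings you sketch is exactly what the paper isolates as \cref{lem:reduced-isolated-descent}, and using \cref{prop:rank-drop-geometry}\ref{rd:smooth} rather than \ref{rd:tangent} for the implication from smoothness to transversality is an inessential variation.
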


\begin{proof}
For part \ref{gm:three} let $d=3$.  Assume (i) and let $\xi$ be the
image of $[x]$ in
$\Sigma_D$.  By \cref{prop:rank-drop-geometry}\ref{rd:isolated},
transversality of
$x$ makes $[x]$ a reduced isolated point of $\Sigma_D\otimes k$.
By \cref{lem:reduced-isolated-descent}\ref{bc:local}, $\xi$ is
a reduced isolated closed point of $\Sigma_D$.  It lies on
$C_\beta$: the transverse element $x$ satisfies the cone
condition by definition, so the $k$-point $[x]$ factors through
the reduced closed subscheme $C_\beta$, and its image $\xi$
lies on $C_\beta$.  This proves
\textup{(i)}$\Rightarrow$\textup{(ii)}.

Conversely, assume (ii) and let $\xi$ be a reduced isolated closed point on
$C_\beta$.  Put $k=\kappa(\xi)$ and let
$x\in H^1(G,\Fp)\otimes_{\Fp}k$ be a representative of $\xi$.
It exists because \cref{lem:reduced-isolated-descent}\ref{bc:points}
provides the $k$-rational point $[x]$ of $\Sigma_D$, and over the
field $k$ every point of $\mathbb P\bigl(H^1(G,\Fp)\bigr)(k)$ is
the class of a nonzero vector.
The defining equations of $C_\beta$ lie in the prime ideal of
$\xi$, hence vanish at $x$, so $x$ satisfies the cone
condition.  On $\Sigma_D$ the rank of $D_x$ is at most one, and
rank zero is excluded by \cref{lem:rank-zero-excluded}.  Hence
$\rk D_x=1=d-2$.  The representative defines a $k$-rational point
$[x]$ of $\Sigma_D\otimes k$ lying over $\xi$, and $[x]$ is
reduced and isolated by
\cref{lem:reduced-isolated-descent}\ref{bc:local}.  Now
\cref{prop:rank-drop-geometry}\ref{rd:isolated} makes $x$
transverse.  This proves
\textup{(ii)}$\Rightarrow$\textup{(i)}.

For part \ref{gm:general} let $p>3$, so that the cone condition is
vacuous.  If $x$ is transverse, then
\cref{prop:rank-drop-geometry}\ref{rd:smooth} shows that
$\Sigma_D\otimes_{\Fp}k$ is smooth of dimension $d-3$ at the
$k$-rational point $[x]$.  By
\cref{lem:reduced-isolated-descent}\ref{bc:local}, $\Sigma_D$ is
smooth of dimension $d-3$ at the image $\xi$ of $[x]$, which is
(ii).  Conversely, let $\xi$ be such a point and put $k=\kappa(\xi)$.
Choose a representative $x$ of $\xi$ as in the proof of
\ref{gm:three}.  Then $[x]$ is a $k$-rational point of
$\Sigma_D\otimes k$ over $\xi$, and $\Sigma_D\otimes k$ is
smooth of dimension $d-3$ at $[x]$ by
\cref{lem:reduced-isolated-descent}\ref{bc:local}.  On
$\Sigma_D\otimes k$ the rank of $D_x$ is at most $d-2$.  By \cref{lem:rank-zero-excluded}, rank at most $d-3$ would
force the tangent space at $[x]$ to have the full dimension
$d-1$, incompatible with smoothness of dimension $d-3$.  Hence $\rk D_x=d-2$.  The tangent space at the point has dimension $d-3$ by
smoothness, so $x$ is transverse by
\cref{prop:rank-drop-geometry}\ref{rd:tangent}.

The final assertion is \cref{thm:transverse-rank-one}.
\end{proof}

\begin{remark}[Effectivity]
\label{rem:effectivity}
Suppose $d=3$, and that $\Sigma_D$ is zero-dimensional.  This is
the case for every field computed in
\cref{subsec:uniform-computation}: the exact Gr\"obner-basis
computations recorded in \cite{Repository} show that the quotient
of $\Fp[x_1,x_2,x_3]$ by the nine minors has Krull dimension at
most one, while a curve inside $\Sigma_D$ would make the affine
vanishing locus two-dimensional.
Then $\Sigma_D$ has only finitely many closed points, each defined over
an explicitly computable finite extension of $\Fp$, and the transversality criterion becomes a finite procedure.  One determines the
closed points of $\Sigma_D$ from the nine quartic minors and applies
the rank test of \cref{cor:determinant-form}, a determinant test
for $d=3$, to each point at which the rank is one.
Every closed point has residue degree at most
$\dim_{\Fp}H^0(\Sigma_D,\mathcal O_{\Sigma_D})$, so explicit
a priori bounds on the degrees to be inspected could be obtained
by standard methods, for instance from a B\'ezout-type estimate
for two general linear combinations of the quartic minors.  We
have not pursued this, since in our examples the search succeeds
directly, in degree at most $6$.  At $p=3$ the same procedure applies
after intersecting with the cone.  The closed points of $\Sigma_D$ on
the line or point $C_\beta$, whose form is determined from the
$3$-class group by \cref{lem:diagonal-p3}, are inspected in the same
way.  There the cone itself bounds the degrees a priori: a cone
point is rational, and on a cone line the minors restrict to
binary quartic forms, so the residue degree is at most four.  The
inspection carried out in \cref{subsec:uniform-computation} is
therefore complete.
\end{remark}

\section{Two algorithms: secondary-norm evaluation and
transversality search}\label{sec:computation}

Let $K$ be, as in \cref{sec:setup}, an imaginary quadratic field,
$p$ an odd prime, and assume $d_p\Cl(K)=3$.  The following data
are fixed once and for all:
\begin{itemize}
\item generators $g_1,\ldots,g_r$ of $\Cl(K)$ whose orders
  $n_1,\ldots,n_r$ are the elementary divisors; exactly three of
  the orders are divisible by $p$, say those of
  $g_{i_1},g_{i_2},g_{i_3}$;
\item the basis $e_j=(n_{i_j}/p)\,g_{i_j}$, $j=1,2,3$, of
  $\Cl(K)[p]$;
\item the basis of $\Cl(K)/p$ formed by the images of
  $g_{i_1},g_{i_2},g_{i_3}$, as in
  \cref{subsec:rank-drop-geometry}.
\end{itemize}
The computation behind
\cref{sec:results} consists of two algorithms.
\Cref{alg:matrices} evaluates the six operators of the family
\eqref{eq:minimal-family},
\[
  D_x\colon\Cl(K)[p]\longrightarrow\Cl(K)/p,
\]
through norm witnesses, and returns their matrices
$\mathcal D_x$ with respect to these bases.  The six characters
are $\chi_1$, $\chi_2$, $\chi_3$ and the sums $\chi_1{+}\chi_2$,
$\chi_1{+}\chi_3$, $\chi_2{+}\chi_3$; at $p=3$, the sum
$\chi_1{+}\chi_2{+}\chi_3$ replaces $\chi_2{+}\chi_3$
(\cref{subsec:computation-p3}).  \Cref{alg:transverse} then
decides the transversality criterion on the reconstructed family.
\Cref{subsec:construct-verify} is formulated for $p>3$;
\cref{subsec:computation-p3} records the changes at $p=3$.  Their
validity rests on \cref{prop:AC}, its $p=3$ variant
\cref{prop:AC-p3}, and \cref{lem:norm-sign}.  How class fields and
norm witnesses are found does not enter.  Candidates may come from
any method, because every property the argument uses is verified
inside the algorithm (\cref{app:verification}).  The searches actually used are
described in \cref{subsec:searches} and recorded, with programs
and per-field commands, in \cite{Repository}.

\subsection{Computing the six matrices}\label{subsec:construct-verify}

For a basis class $e_j$ represented by
$(a'_j,J_j)$ as in \eqref{eq:pair-aJ}, the two equations of
\cref{def:norm-witness} for the norm witness
$(t_{x,j},I'_{x,j})$ read
\begin{align}
  I_{x,j}'^{\,(1-\sigma_x)^2}\,(t_{x,j})\,J_j\mathcal O_{L_x}
   &=\mathcal O_{L_x},\label{eq:checkAC1}\\
  N_x(t_{x,j})&=a'_j.\label{eq:checkAC2}
\end{align}

\begin{algo}[Evaluation of the secondary norm family]
\label{alg:matrices}\leavevmode
\begin{description}[font=\normalfont\itshape,topsep=2pt,itemsep=0pt]
\item[Input:] the field $K$, the fixed bases of $\Cl(K)[p]$ and
  $\Cl(K)/p$, and the six characters listed above.
\item[Output:] the matrices $\mathcal D_x$ of the six input
  characters, every entry backed by a verified norm witness.
\end{description}
\begin{enumerate}[label=\textup{Step~\arabic*.},leftmargin=*]
\item For each character $x$, construct the everywhere unramified
  cyclic degree-$p$ field $L_x$ attached to $\ker x$ by class
  field theory, and normalize a generator $\sigma_x$ of
  $\Gal(L_x| K)$ to the character $x$ through the Artin symbols of
  the fixed class-group generators, as in
  \eqref{eq:sigma-character}.
\item For each basis class $e_j$, represented by $(a'_j,J_j)$ as
  in \eqref{eq:pair-aJ}, produce a candidate for the norm witness
  $(t_{x,j},I'_{x,j})$.  This step is nondeterministic.  Any method
  may propose the candidate; in practice $t_{x,j}$ arrives in
  factored form, as a product of powers of algebraic numbers.
\item Verify \eqref{eq:checkAC1} as an equality of fractional
  ideals.  Then decide \eqref{eq:checkAC2} through
  \cref{lem:norm-sign}.  The residue of $N_x(t_{x,j})/a'_j$ modulo
  a prime ideal of odd residue characteristic is computed factor
  by factor and fixes the sign, so $t_{x,j}$ is never expanded
  (\cref{subsec:unconditionality}).  If a check fails, return to
  Step~2.
\item Compute $N_x(I'_{x,j})$, take its class, reduce it modulo
  $p$, and record the coordinate vector as the $j$-th column of
  $\mathcal D_x$.
\end{enumerate}
\end{algo}

If \cref{alg:matrices} terminates, its output is correct.  Step~3
verifies exactly the hypotheses of \cref{prop:AC}, so each
recorded column is the value the proposition asserts.  Termination is not claimed.  The expensive
class-group and unit-group computations in the degree-$2p$ fields
$L_x$ all sit in the nondeterministic Step~2.  For the fields of
\cref{sec:results}, candidates were found for every character and
every basis class, by the routes of \cref{subsec:searches}.

\begin{lemma}\label{lem:norm-sign}
Let $(a',J)$ represent an element of $\Cl(K)[p]$ as in
\eqref{eq:pair-aJ}, and suppose that
$t\in L_x^\times$ and a fractional ideal $I'$ of $L_x$ satisfy
\eqref{eq:checkAC1}.  Then $N_x(t)=\pm a'$.  For a prime ideal
$\mathfrak q$ of $K$ of odd residue characteristic, $N_x(t)=a'$
if and only if $N_x(t)/a'\equiv1\bmod{\mathfrak q}$.
\end{lemma}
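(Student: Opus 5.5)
The plan is to take norms of \eqref{eq:checkAC1} from $L_x$ down to $K$ and compare the result with \eqref{eq:pair-aJ}. For this I use the multiplicativity of the ideal norm $N_x$ and its compatibility with the Galois action. The norm of a conjugate ideal is unchanged, $N_x(I^{\sigma})=N_x(I)$. Therefore
\[
  N_x\bigl(I'^{\,(1-\sigma_x)^2}\bigr)
  =N_x(I')\,N_x(I')^{-2}\,N_x(I')=\mathcal O_K .
\]
For an ideal $J$ of $K$ one has $N_x(i_x(J))=J^{p}$, since $[L_x:K]=p$. The ideal norm of a principal ideal $(t)$ is $(N_x(t))$. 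Applying $N_x$ to \eqref{eq:checkAC1} thus gives
\[
  (N_x(t))\,J^{p}=\mathcal O_K .
\]
Together with $(a')J^{p}=\mathcal O_K$ from \eqref{eq:pair-aJ}, this shows that $(N_x(t))=(a')$. So $N_x(t)/a'$ is a unit of $\mathcal O_K$.

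Next I show that this unit is $\pm1$. The unit group of an imaginary quadratic field is $\mu(K)$, of order $2$, $4$ or $6$. The element $u=N_x(t)/a'$ need not a priori be a norm, but $a'$ itself is an element of $K$. The key observation is that $u$ has order dividing $|\mathcal O_K^\times|$, and I want to rule out $u=\pm i$ and $u$ a primitive sixth or cube root of unity. For $K=\mathbb Q(i)$ and $K=\mathbb Q(\sqrt{-3})$ there is a cleaner route. Elements of $K=N_x$-image satisfy $N_x(s)=s^p$ for $s\in K$. Since $p$ is odd and prime to $|\mathcal O_K^\times|$ apart from the excluded case, every unit $\zeta$ is a $p$-th power of a unit, $\zeta=\eta^p=N_x(\eta)$. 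Replacing $t$ by $t\eta^{-1}$ therefore preserves \eqref{eq:checkAC1}, because $(t)$ is unchanged. This shows that the sign ambiguity is really an ambiguity up to $\mu(K)$.

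The honest claim "$\pm a'$" thus probably requires $\mu(K)=\{\pm1\}$, or else a convention. \textbf{This is the main obstacle}: for $K=\mathbb Q(i)$ a unit $\pm i$ can occur. I would check whether the paper's fields avoid $D_K=-4$, or argue via the conjugation-invariance of the data. Failing that, I would state the lemma as $N_x(t)=\zeta a'$ with $\zeta\in\mu(K)$. I would also note that $\mathbb Q(i)$ has class number one, so it never enters.

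For the congruence criterion, let $\mathfrak q$ be a prime of $K$ of odd residue characteristic. The reduction map $\mu(K)\to(\mathcal O_K/\mathfrak q)^\times$ is injective on roots of unity of order prime to the residue characteristic. In particular $-1\not\equiv1\bmod\mathfrak q$, because $2\notin\mathfrak q$. Since $u\in\{\pm1\}$ is a unit, it is defined modulo $\mathfrak q$. So $u=1$ if and only if $u\equiv1\bmod\mathfrak q$.

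In practice the residue of $u$ is computed as a quotient of residues, which needs $N_x(t)$ and $a'$ to be $\mathfrak q$-units. This holds after choosing $\mathfrak q$ away from the finitely many primes in their supports; since $u$ is a unit, the residue of $u$ itself is always defined.
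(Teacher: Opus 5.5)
Your norm computation giving $(N_x(t))=(a')$ matches the paper's argument, and so does your final step ($u=\pm1$, and $-1\not\equiv1\bmod\mathfrak q$ because $2\notin\mathfrak q$). The gap is the middle step. You leave it as an unresolved ``main obstacle'' and close it only for $\mathbb Q(i)$.

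The missing observation is that the hypotheses already force $p\mid h_K$. The lemma concerns $t\in L_x^\times$, where $L_x|K$ is an unramified cyclic extension of degree $p$ attached to a nonzero character $x$ of $\Cl(K)/p$. Such an extension exists only if $p$ divides the class number. The only imaginary quadratic fields with units other than $\pm1$ are $\mathbb Q(i)$ and $\mathbb Q(\sqrt{-3})$, and both have class number one. Hence $\mathcal O_K^\times=\{\pm1\}$, and the lemma holds exactly as stated; no weakening to $N_x(t)=\zeta a'$ is needed. Note that $\mathbb Q(\sqrt{-3})$ must be ruled out this way also for $p>3$. There it is not among the fields excluded by the standing assumptions, and a unit of order $3$ or $6$ would otherwise break the claim. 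Your remark about class number one covers only $\mathbb Q(i)$.

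You should also discard the paragraph in which you replace $t$ by $t\eta^{-1}$. It shows only that some modified element has norm $a'$. It does not show that the given $t$ satisfies $N_x(t)=\pm a'$, which is what the lemma asserts. The verification (V3) also uses the lemma for the stored $t$ itself, not for an adjusted one.
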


\begin{proof}
The ideal norm is invariant under $\Gal(L_x| K)$, so
\[
  N_x\bigl(I^{(1-\sigma_x)^2}\bigr)
  =N_x(I)\,N_x(I)^{-2}\,N_x(I)=\mathcal O_K
\]
for every fractional ideal $I$ of $L_x$, while
$N_x\bigl((t)\bigr)=\bigl(N_x(t)\bigr)$ and
$N_x\bigl(J\mathcal O_{L_x}\bigr)=J^{\,p}$.  Taking ideal norms in
\eqref{eq:checkAC1} therefore gives
$\bigl(N_x(t)\bigr)J^{\,p}=\mathcal O_K$, and comparison with
$(a')\,J^{\,p}=\mathcal O_K$ yields the equality of principal ideals
$\bigl(N_x(t)\bigr)=(a')$; hence $N_x(t)/a'$ is a unit of $K$.  Since
$p| h_K$, the field $K$ is neither $\mathbb Q(i)$ nor
$\mathbb Q(\sqrt{-3})$, the only imaginary quadratic number fields
with units other than $\pm1$; hence $\mathcal O_K^\times=\{\pm1\}$.
Finally, $N_x(t)/a'=\pm1$, and $1\not\equiv-1\bmod{\mathfrak q}$
because the residue characteristic of $\mathfrak q$ is odd.
\end{proof}

\subsection{The transversality computation}\label{subsec:rank-one-computation}

After \cref{alg:matrices}, no further number-field computation is
required.  Everything below is linear algebra over finite fields,
carried out on the matrices $\mathcal D_x$ of
\cref{subsec:rank-drop-geometry}.

\begin{algo}[Transversality search]\label{alg:transverse}\leavevmode
\begin{description}[font=\normalfont\itshape,topsep=2pt,itemsep=0pt]
\item[Input:] the six matrices of \cref{alg:matrices} and a degree
  bound $m_0\ge1$.
\item[Output:] a transverse element
  $x\in H^1(G,\Fp)\otimes_{\Fp}k$ for an extension $k| \Fp$ of
  degree at most $m_0$ (so that $G_{\varnothing}(K)(p)$ is mild
  by \cref{thm:transverse-rank-one}), or the report that no
  closed point of residue degree at most $m_0$ is transverse,
  which leaves mildness open.
\end{description}
\begin{enumerate}[label=\textup{Step~\arabic*.},leftmargin=*]
\item List the closed points $[x]$ of the norm-degeneracy scheme
  $\Sigma_D$ (\cref{def:rank-drop-scheme}) of residue degree at
  most $m_0$; at $p=3$, keep only the points on the Bockstein
  cone $C_\beta$ (\cref{def:bockstein-cone}), which at $p>3$ is
  the whole plane.
\item For each listed point, compute $\mathcal D_x$ by
  substituting the coordinates of $x$ into the matrix identity
  \eqref{eq:D-matrix}, and keep the points with
  $\rk\mathcal D_x=1$.
\item For each of these, choose an adapted datum
  $\mathcal B=(e_2,e_3;v_1,v_2;y)$ (a basis $e_2,e_3$ of
  $\ker\mathcal D_x$, representatives $v_1,v_2$ of a basis modulo
  $kx$, and a functional $y$ vanishing on
  $\operatorname{im}\mathcal D_x$) and form the transversality
  matrix
  \begin{equation}\label{eq:Bx-computation}
  B_{x,\mathcal B}=
  \begin{pmatrix}
  -y\bigl(\Delta\mathcal D(x,v_1)\,e_2\bigr)&
  -y\bigl(\Delta\mathcal D(x,v_1)\,e_3\bigr)\\
  -y\bigl(\Delta\mathcal D(x,v_2)\,e_2\bigr)&
  -y\bigl(\Delta\mathcal D(x,v_2)\,e_3\bigr)
  \end{pmatrix},
  \quad
  \Delta\mathcal D(x,v)=\mathcal D_x+\mathcal D_v-\mathcal D_{x+v}.
  \end{equation}
\item If $\det B_{x,\mathcal B}\ne0$ for a listed point, return
  the transverse element $x$.
\end{enumerate}
\end{algo}

A returned element is transverse by \cref{cor:determinant-form},
and the mildness statement of the output is
\cref{thm:transverse-rank-one}.  The
negative report is a proved statement about $\Sigma_D$, not about
the tower group.  Mildness stays open.  Termination is clear, since
each degree carries finitely many closed points.  How much the
negative report covers depends on $m_0$.  When $\Sigma_D$ is
zero-dimensional, the residue degrees of all its closed points are
bounded a priori (\cref{rem:effectivity}), and any $m_0$ at least
this bound makes the list in Step~1 exhaustive.

\subsection{The case \texorpdfstring{$p=3$}{p=3}}
\label{subsec:computation-p3}

Two points change at $p=3$.
\begin{itemize}
\item The identities of \cref{subsec:construct-verify} are
  verified unchanged, but by \cref{prop:AC-p3} the ideal $J_j$
  of the representing pair enters the output class.  Step~4 of
  \cref{alg:matrices} records
  $D_x(e_j)=\bigl[N_x(I'_{x,j})\,J_j\bigr]$.
\item The reconstruction runs through the space of
  shuffle-symmetric forms.  By \cref{lem:four-evaluations}, four
  evaluations, at the basis characters and at
  $\chi_1{+}\chi_2{+}\chi_3$, already determine the complete
  product.  This is why the input list of \cref{alg:matrices}
  carries $\chi_1{+}\chi_2{+}\chi_3$ in place of
  $\chi_2{+}\chi_3$.  The two remaining sums are computed as
  well.  They overdetermine the matrix $T$ of \eqref{eq:T}, and the shuffle identities
  \eqref{eq:shuffle} are checked on the assembled values
  (\cref{subsec:certificate-verification}).  At $p>3$ the identity
  $D_{2x}=4D_x$ distinguishes a character from the other
  generators of its line, since $D_{\lambda x}=\lambda^2D_x$ and
  $\lambda^2\ne1$ for some $\lambda\in\Fp^{\times}$; at $p=3$
  every $\lambda^2=1$, so $D_{\lambda x}=D_x$ carries no such
  information, and the shuffle checks on the overdetermined
  values provide the consistency test instead.
\end{itemize}

\section{Mild class tower groups for
\texorpdfstring{$|D_K|<2^{30}$}{|D\_K|<2\^{}30}}\label{sec:results}

This section works in the range $|D_K|<2^{30}$, which contains
$326\,378\,404$ imaginary quadratic number fields.  The tables of
Mosunov--Jacobson \cite{MosunovJacobson} determine the class
groups of all imaginary quadratic number fields with
$|D_K|<2^{40}$ unconditionally, so the population of the range,
and of each stratum below, is read off from them.  The odd primes
$p$ for which fields of $p$-class rank three exist in the range
are exactly $3$, $5$, and $7$.  There are $204$ such fields at
$p=5$, three at $p=7$, and $12\,749$ at $p=3$, and this section
treats every one of these $12\,956$ fields.  A single field
of the range has $p$-class rank exceeding three; it appears in
\cref{subsec:exceptional-fields}.  Wherever fields are
enumerated in this section, they appear in order of increasing
absolute discriminant.

Three routes lead from the secondary norm operators of a field to
mildness of its tower group, and all three read the same input, the
six matrices $\mathcal D_x$ returned by \cref{alg:matrices}.
\begin{itemize}
\item The \emph{transversality criterion} decides whether the
  norm-degeneracy scheme $\Sigma_D$ has a reduced isolated closed
  point on the Bockstein cone $C_\beta$.  Such a point makes
  $G_{\varnothing}(K)(p)$ mild by \cref{cor:geometric-mildness},
  while its absence in the residue degrees inspected is a
  statement about $\Sigma_D$ and leaves mildness undecided.
\item An \emph{Anick witness}, an invertible change of variables
  after which the three cubic initial forms have combinatorially
  free high terms, proves the cubic initial relation space
  strongly free, and \cref{prop:Gaertner} then makes the group
  mild (\cref{rem:direct-anick}).  The notion is available over
  any coefficient field; the searches of
  \cref{subsec:uniform-computation} ran over $\Fthree$ and over
  $\Fnine$, and every witness they found is rational.
\item The \emph{word counts} of \cref{alg:completion} concern the
  two-sided ideal $I$ generated by the three cubic initial forms,
  in the given coordinates or after an invertible linear
  substitution over $\Fp$ or a finite extension of $\Fp$.  Strong
  freeness is invariant under both (the remark following
  \cref{prop:Gaertner} and \cref{lem:descent}).  They compare the
  number of $I$-normal words in each degree, the words with no high
  term of the completed basis of $I$ as a subword, with the
  coefficients of $1/(1-3z+3z^3)$.  Once the completion terminates,
  agreement in all degrees is decided by finitely many comparisons
  and proves strong freeness.  A deviation in a degree whose overlaps
  have been resolved disproves it, and a completion that does not
  terminate says nothing.  For a field with $\rk_{\Fp}T=3$ the
  failure is one of the group (\cref{rem:rank-three}), so this is
  the only route that decides against mildness as well as for it.
\end{itemize}
Each field is counted once in \cref{tab:range}, for the first
route in this order that decides it, so the columns are disjoint
by convention and not by exclusion.  This is a counting
convention, not a description of the runs.  The criterion ran on
every field, and the witness searches ran on every field the
criterion left undecided, so the last two route columns are
disjoint in fact, since the exhausted searches show that no
word-count field has an Anick witness.  A criterion field was not passed
on, and the proof of the criterion itself exhibits an Anick
witness over the residue field of its point
(\cref{thm:transverse-rank-one}).

\begin{table}[!htp]\centering\small
\begin{tabular}{l r r r r r r r}
\toprule
$p$ & fields & criterion & Anick & word count & mild & not mild & undecided\\
\midrule
$5$ & $204$ & $203$ & $0$ & $1$ & $204$ & $0$ & $0$\\
$7$ & $3$ & $3$ & $0$ & $0$ & $3$ & $0$ & $0$\\
$3$ & $12\,749$ & $505$ & $7$ & $11\,731$ & $12\,243$ & $26$ & $480$\\
\bottomrule
\end{tabular}
\caption{The $12\,956$ imaginary quadratic number fields of
$p$-class rank three with $|D_K|<2^{30}$, by prime and by the route
that decides them.  The criterion column counts the fields with a
reduced isolated closed point of $\Sigma_D$ on $C_\beta$, the Anick
column the seven fields settled by a rational Anick witness
(\cref{subsec:exceptional-fields}), and
the word-count column those settled by a terminating completion
(\cref{alg:completion}).  Each field is counted for the first route that decides it, so
the three columns are disjoint and add up to the mild column.  The last
two columns count the fields whose tower group is provably not mild,
and those for which none of the three routes yields a mildness
verdict.}
\label{tab:range}
\end{table}

All three routes read stored data of one kind.  For every one of the
$12\,956$ fields, a certificate in the sense of
\cref{def:certificate} records the six input characters of
\cref{alg:matrices} (with $\chi_1{+}\chi_2{+}\chi_3$ in place of
$\chi_2{+}\chi_3$ at $p=3$) against the three basis classes
$e_1,e_2,e_3$, and all $12\,956$ certificates verify in the sense of
\cref{def:verifies}, in runs on two machines with identical outcomes
on all $233\,208$ recorded values.  By \cref{prop:soundness} the six
assembled matrices are then the matrices of the secondary norm
operators of these characters and determine the complete Massey
product, at $p=3$ through the four evaluations of
\cref{lem:four-evaluations}, and with \cref{prop:T-is-relations}
they determine the cubic initial relation space of
$G_{\varnothing}(K)(p)$.  The checks, the sense in which the results
are unconditional, and the public procedure that repeats them are the
subject of \cref{app:verification}.  Every computation in the remainder of this section reads the
verified matrices and nothing else, and each is exact and
finite.  The complete
tables of the range are Parts~W5 and~W6 of \cite{Repository}.

Mildness is the one property proved field by field.  The other
assertions of the theorems below follow from it uniformly.

\begin{lemma}[Mildness and the tower group]\label{lem:mild-tower}
Let $K$ be an imaginary quadratic number field, let $p$ be an odd
prime, assume $d_p\Cl(K)\ge3$, and write $G=G_{\varnothing}(K)(p)$.
If $G$ is mild with respect to the Zassenhaus filtration, then $G$
is an infinite FAb pro-$p$ group with $\cd G=2$.
\end{lemma}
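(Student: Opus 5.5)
Three claims need proof: FAb, $\cd G=2$, and infiniteness. FAb needs no mildness at all: it is \cref{prop:FAb}, valid for every number field and every prime. For the cohomological dimension I would invoke the mildness theorem directly. Mildness in the sense of \cite[Definition~2.11]{Gaertner} means that $G$ has a strongly free presentation with respect to the Zassenhaus filtration, and \cite[Theorem~2.12(i)]{Gaertner} then gives $\cd G\le 2$; this is the conclusion already drawn in \cref{prop:Gaertner}. For the reverse inequality I would observe that $H^2(G,\Fp)\ne0$, since by \eqref{eq:ranks} its dimension is $d\ge3$. Hence $\cd G\ge2$, and so $\cd G=2$ exactly.

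Infiniteness follows from finiteness of the cohomological dimension. A pro-$p$ group of finite cohomological dimension is torsion-free, since a nontrivial element of order $p$ generates a closed subgroup $\mathbb Z/p$ of infinite cohomological dimension, and cohomological dimension does not increase on closed subgroups \cite{NSW}. A finite torsion-free group is trivial. But $G$ is not trivial, because $G^{\ab}/p\simeq\Cl(K)/p$ has dimension $d\ge3$. So $G$ is infinite.

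As an alternative, or a cross-check, infiniteness also follows from the Hilbert series. Mildness gives $\operatorname{gr}$ of the completed group algebra Hilbert series $1/(1-dz+dz^3)$. The denominator $1-dz+dz^3$ has a root in $(0,1)$ for $d\ge3$; for instance at $z=1/2$ it equals $1-d/2+d/8<0$. So the coefficients grow exponentially, whereas a finite $p$-group has a polynomial Hilbert series. I expect no real obstacle here. The only point needing care is to cite the correct parts: \cref{prop:FAb} for FAb, \cref{prop:Gaertner} (Gärtner/Labute) for $\cd\le2$, and \eqref{eq:ranks} for $H^2\ne0$.
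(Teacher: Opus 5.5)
Your proposal is correct and follows the paper's proof. FAb comes from \cref{prop:FAb}, $\cd G=2$ from \cite[Theorem~2.12(i)]{Gaertner}, and infiniteness from the fact that a nontrivial finite pro-$p$ group has infinite cohomological dimension, while $\dim_{\Fp}H^1(G,\Fp)=d\ge3$ makes $G$ nontrivial; your explicit lower bound $\cd G\ge2$ from $H^2(G,\Fp)\ne0$ and the Hilbert-series cross-check are sound but not needed.
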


\begin{proof}
A mild pro-$p$ group has cohomological dimension two
\cite[Theorem~2.12(i)]{Gaertner}.  The group $G$ is FAb by \cref{prop:FAb}.
Finally $\dim_{\Fp}H^1(G,\Fp)=d_p\Cl(K)\ge3$ by \eqref{eq:ranks}, so $G$ is
nontrivial, while a nontrivial finite pro-$p$ group has infinite
mod-$p$ cohomological dimension.  With $\cd G=2$ this makes $G$
infinite.
\end{proof}

\noindent
The infinitude in \cref{lem:mild-tower} is not new.  At $d=r=3$
the quadratic Golod--Shafarevich inequality $r>d^{2}/4$ is
satisfied and excludes nothing.  All relations sit in degree at
least three, however, so the weighted bound applies, and
Koch--Venkov derived the infinitude from it for every imaginary
quadratic field of $p$-class rank at least three
\cite{VenkovKoch}.  What the route through mildness adds is the
cohomological dimension.

\subsection{The computations}\label{subsec:uniform-computation}

\paragraph{The criterion: closed points and transversality.}
For every field of the range, the closed points of $\Sigma_D$
with residue degree up to a bound fixed in advance were
determined from the verified matrices alone.
The bound is three at $p=3$ and at $p=7$, and six at $p=5$.  These bounds are search choices, not the a priori bound
of \cref{rem:effectivity}.  \Cref{alg:transverse} then evaluated the transversality
matrix
\eqref{eq:Bx-computation} at each of them.  This is the exact linear
algebra of \cref{subsec:rank-one-computation}, with no further
number-field computation.  Wherever the search returns a transverse
element, \cref{thm:transverse-rank-one} applies.

\paragraph{The criterion at \texorpdfstring{$p=3$}{p=3}: the
Bockstein cone.}
At $p=3$ the criterion can apply only at closed points lying on the
Bockstein cone $C_\beta$ (\cref{def:bockstein-cone}).  The
cone, unlike $\Sigma_D$, involves no secondary norm operators.
By \cref{lem:diagonal-p3}\ref{diag:dual} the rank of the natural map
$\delta\colon\Cl(K)[3]\to\Cl(K)/3$ is the number of cyclic factors
of order exactly $3$ in the $3$-part of $\Cl(K)$.  It partitions
the $12\,749$ fields into four strata, on which $C_\beta$ is the
whole plane ($1$ field), a projective line ($220$), a single
rational point ($5183$), or empty ($7345$), and it decides in
advance where the criterion can apply at all.

The strata are inspected as follows.
\begin{itemize}
\item For the single field whose cone is the whole plane, the
  search in bounded residue degree returned a transverse point.
\item On a cone line there is a dichotomy.  Either some
  $2\times2$ minor of the matrix of quadratic forms restricts to
  a nonzero binary quartic form on the line.  Then the closed
  points of $\Sigma_D$ on the line lie among the at most four
  zeros of that form, and their residue degree is at most four.
  Or every minor vanishes identically on the line.  Then the line
  lies inside $\Sigma_D$, no point of it is an isolated point of
  $\Sigma_D$, and the criterion cannot apply along it.  For each
  of the $220$ cone lines of the range the first case occurs, so
  every closed point of $\Sigma_D$ on a line has residue degree
  at most four.  The search at $p=3$ stops at degree three.  A
  separate pass over the degree-four points of the lines returned
  no transverse point beyond those already found.
\item In the stratum whose cone is a single rational point, that
  point is the only candidate, and the search inspects it.
\item In the stratum whose cone is empty there is no candidate
  at all.
\end{itemize}
For every field at $p=3$ not counted in the criterion column of
\cref{tab:strata-p3}, the whole cone has thereby been inspected,
so $\Sigma_D$ has no reduced isolated closed point on $C_\beta$,
and by \cref{cor:geometric-mildness} no transverse element exists
over any extension of $\Fthree$.  The reports are Part~W6 of
\cite{Repository}.

\paragraph{Anick witnesses.}
This route and the next were invoked only at $p=3$; at $p=5$ and
$p=7$ the criterion had decided every field except the single
$p=5$ field recorded undecided (\cref{sec:example}).  A search over
all $11\,232$ elements of $\mathrm{GL}_3(\Fthree)$ decides
whether a field has a rational Anick witness, and a search over
all $663\,390$ ordered projective bases of $\Fnine$ decides the
same over $\Fnine$.  By \cref{rem:flag} the high terms depend only
on the flag, so the $52$ flags over $\Fthree$ and the $910$ flags
over $\Fnine$ would have sufficed.  The searches were run over the
full groups, which is exhaustive a fortiori.  Both searches ran on every field the
criterion left undecided, and both runs are recorded in
\cite{Repository}.  They agree: exactly the seven fields of
\cref{tab:range} carry an Anick witness, every witness is
already rational, and for every other number field the exhausted
searches prove that no witness exists over $\Fthree$ or
$\Fnine$.  For the single field with $\rk_{\Fp}T=2$ the records
note instead that linearly dependent forms admit no witness
(\cref{subsec:exceptional-fields}).

\paragraph{Word counts in the cubic relation space.}
On the fields the criterion left undecided we compute in the
cubic relation space itself, by \cref{alg:completion} applied to the
three cubic initial forms reconstructed from the verified matrices,
first in the given coordinates with the degree bound thirteen
throughout, then, where that leaves the field undecided, after a
change of variables.  The two directions of
the verdict rest on different parts of \cref{lem:diamond}.

A completion that terminates below the bound exhibits a finite set
of words generating the ideal of high terms, and this set determines
the number of $I$-normal words in every degree
(\cref{lem:diamond}\ref{dl:full} and
\cref{cor:computing-normal-words}\ref{wc:finite-basis}), so that the
comparison with $1/(1-3z+3z^3)$ becomes the finite one of
\cref{alg:completion}.  Agreement of the counts with the
coefficients of that series proves the three cubic initial forms
strongly free by \cref{cor:word-counts}, and \cref{prop:Gaertner}
applies.  Agreement in degree three also forces the three forms to be
linearly independent, so that they span the complete cubic
initial relation space (\cref{rem:span-strongly-free}).

The negative direction needs no terminating completion.  In every
degree whose overlaps have been resolved, the word count is the
exact dimension of that graded piece
(\cref{lem:diamond}\ref{dl:truncated}), so a coefficient deviating
there from $1/(1-3z+3z^3)$ disproves strong freeness by
\cref{cor:word-counts}.  A completion abandoned at the bound whose
counts agree throughout says nothing about the higher degrees and
leaves the field undecided.  Every strong-freeness verdict carries two independent
completions, and the pairs agree on all $379$ verdicts: $352$
strongly free and $27$ not (\cref{subsec:software}).  Of the
$27$, the $26$ fields of \cref{tab:nonmild-p3} have
$\rk_{\Fp}T=3$ and their groups are not mild; the one field with
$\rk_{\Fp}T=2$ is taken up in \cref{subsec:exceptional-fields}.

On the $11\,859$ fields that the completion in the given
coordinates left undecided we repeat it after an invertible linear
substitution, running through the flags of \cref{rem:flag} over
$\mathbb F_{3^e}$ for $e\le8$ with the degree bound nine or eleven,
and stop at the first terminating completion.  This succeeds on
$11\,379$ fields,
$9\,975$ over $\Fthree$ itself and $1\,404$ over an extension of
degree at most eight.  Part~W7 of \cite{Repository} records for each
of them the substitution, the coefficient field and the high terms of
the completed basis, each recomputed by a second, independent
implementation.  For the remaining $480$ fields no flag inspected
terminates below the bound, a statement about the search, not about
the field.

\subsection{The fields at \texorpdfstring{$p=5$}{p=5}}\label{sec:example}

At $p=5$ the criterion decides $203$ of the $204$ fields.  A
transverse element exists over $\Ffive$ itself for $151$ of them,
and over an extension of degree between two and six for a further
$52$, so the freedom of a nontrivial residue field in
\cref{thm:transverse-rank-one} is needed.  The one remaining field
fails the hypotheses of the criterion for a provable reason,
recorded after the theorem.  The smallest absolute discriminant among the $204$ fields at
$p=5$, $-11203620$, was identified by Buell as the first imaginary
quadratic discriminant for which the $5$-Sylow subgroup of the class
group has rank three \cite{Buell}.  His computation gives
$\Cl(K)\simeq(\mathbb Z/10\mathbb Z)^3$, which our computation
reproduces.  It is the principal example of this paper.  Its
complete data are Parts~W1 and~W2 of \cite{Repository}, with the
transversality computation carried out in full in Part~W2, and the
closed points of its norm-degeneracy scheme are Part~W3.

\begin{theorem}\label{thm:mainexample}
Let $K$ be an imaginary quadratic number field with $5$-class rank
three and $|D_K|<2^{30}$.  Then the
$5$-class tower group
\[
  G_{\varnothing}(K)(5)=\Gal(K_{\varnothing}(5)| K)
\]
is an infinite mild FAb pro-$5$ group.  In particular,
\[
  \cd G_{\varnothing}(K)(5)=2.
\]
\end{theorem}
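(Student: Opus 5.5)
The proof is computer-assisted and runs field by field over the $204$ fields of $5$-class rank three in the range. By \cref{lem:mild-tower} it suffices to show that each tower group $G_{\varnothing}(K)(5)$ is mild with respect to the Zassenhaus filtration. Infinitude, the FAb property and $\cd=2$ then follow uniformly from that lemma, \cref{prop:FAb} and \cite[Theorem~2.12(i)]{Gaertner}.

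The first step is to read the list of fields from the Mosunov--Jacobson tables. For each field we take the certificate of \cref{def:certificate} recording the six characters $\chi_1,\chi_2,\chi_3,\chi_1+\chi_2,\chi_1+\chi_3,\chi_2+\chi_3$ against the basis classes $e_1,e_2,e_3$. We then invoke its verification in the sense of \cref{def:verifies} and \cref{prop:soundness}, so that the six assembled matrices are the matrices $\mathcal D_x$ of \cref{alg:matrices}. This rests on \cref{prop:AC} together with the norm-witness checks \eqref{eq:checkAC1} and \cref{lem:norm-sign}. By \cref{thm:reconstruction} and \eqref{eq:D-matrix} these matrices determine the whole quadratic family $x\mapsto\mathcal D_x$ over every finite extension, and by \cref{prop:T-is-relations} they determine the cubic initial forms $\rho_1,\rho_2,\rho_3$.

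For $203$ of the fields we run \cref{alg:transverse} with degree bound $m_0=6$. The algorithm returns a closed point $[x]$ of $\Sigma_D$, with residue field $k=\mathbb F_{5^m}$ for some $m\le6$. At that point we check $\rk\mathcal D_x=1$ and $\det B_{x,\mathcal B}\ne0$ for an explicit adapted datum. Since $p=5>3$, the cone condition is vacuous. \Cref{cor:determinant-form} then makes $x$ transverse, and \cref{thm:transverse-rank-one} gives mildness, with descent from $k$ to $\Ffive$ handled by \cref{lem:descent}. For $151$ of these fields $k=\Ffive$, and for $52$ fields a genuine extension is needed. All of this is exact linear algebra over finite fields performed on the verified matrices, and the stored transverse points can be rechecked independently.

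The main obstacle is the single remaining field. For it $\Sigma_D$ has no reduced isolated closed point, so by \cref{cor:geometric-mildness} the criterion provably cannot apply. For this field we use the direct route of \cref{rem:direct-anick}. We form the three cubic forms $\rho_\ell=\sum m_{\ell,ijk}\mgen_i\mgen_j\mgen_k$ from $T$ and run the noncommutative Gröbner completion of \cref{alg:completion} in the given coordinates. If that does not terminate, we rerun it after a linear substitution along a flag as in \cref{rem:flag}. Once a completion terminates, \cref{cor:computing-normal-words} reduces the comparison of normal-word counts with the coefficients of $1/(1-3z+3z^3)$ to finitely many degrees. Agreement in all of them proves strong freeness by \cref{cor:word-counts}, and \cref{prop:Gaertner} then gives mildness. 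The risk is that the completion does not terminate within a feasible bound. In that case we would first try other substitutions, including substitutions over small extensions of $\Ffive$, using \cref{lem:descent} to descend.
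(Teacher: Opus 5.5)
Your proposal follows the paper's proof: the transversality criterion, at closed points of residue degree at most six, decides $203$ fields, and the remaining field is handled by the direct route. That field is $D_K=-781922404$, whose $\Sigma_D$ has no reduced isolated closed point; it is settled by a terminating completion of \cref{alg:completion} after a change of variables, followed by \cref{prop:Gaertner} and \cref{lem:mild-tower}. The contingency you raise at the end does not occur, because Part~W7 of \cite{Repository} records a substitution over $\Ftwentyfive$ for which the completion terminates with word counts matching $1/(1-3z+3z^3)$; also, the reduction to finitely many degrees is \cref{lem:finite-comparison}, as assembled in \cref{prop:completion-correct}, and not \cref{cor:computing-normal-words} alone.
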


\begin{proof}
Of the $204$ fields, $203$ carry a transverse element.  For each of
them Part~W5 of \cite{Repository} records such an element over an
extension of $\Ffive$ of degree at most six, recomputed from the
verified matrices by \cref{alg:transverse}, so
$G_{\varnothing}(K)(5)$ is mild by \cref{thm:transverse-rank-one}.
For the remaining field, $D_K=-781922404$, Part~W7 of \cite{Repository}
records a change of variables over $\Ftwentyfive$ after which the
completion of \cref{alg:completion} terminates and the word counts
agree with $1/(1-3z+3z^3)$, so the cubic initial forms are strongly
free and \cref{prop:Gaertner} makes $G_{\varnothing}(K)(5)$ mild.
In each case \cref{lem:mild-tower} gives the remaining assertions.
\end{proof}

The field with $D_K=-781922404$, that is,
$K=\mathbb Q(\sqrt{-195480601})$, is the one field at $p=5$ to which
the criterion provably cannot apply.  Its norm-degeneracy scheme
consists of a single non-reduced closed point of degree two, the
chart-wise radical computation recorded in \cite{Repository} showing
that there are no further closed points.  Hence $\Sigma_D$ has no
reduced isolated closed point, so by \cref{cor:geometric-mildness} no
transverse element exists over any extension of $\Ffive$ and
\cref{thm:transverse-rank-one} cannot apply.  The word counts of
\cref{alg:completion} in the given coordinates leave the field
undecided as well.  After a change of variables over $\Ftwentyfive$
the completion terminates, and the field is mild
(\cref{thm:mainexample}).

\subsection{The fields at \texorpdfstring{$p=7$}{p=7}}\label{sec:example-p7}

At $p=7$ the range contains exactly three fields, and the procedure
of \cref{sec:computation} applies to them without modification.  The
fields are
\[
\begin{aligned}
  K_1&=\mathbb Q(\sqrt{-501510767}), &
  \Cl(K_1)&\simeq\mathbb Z/378\times\mathbb Z/7\times\mathbb Z/7,\\
  K_2&=\mathbb Q(\sqrt{-648153647}), &
  \Cl(K_2)&\simeq\mathbb Z/294\times\mathbb Z/7\times\mathbb Z/7,\\
  K_3&=\mathbb Q(\sqrt{-931506071}), &
  \Cl(K_3)&\simeq\mathbb Z/840\times\mathbb Z/7\times\mathbb Z/7.
\end{aligned}
\]
The criterion proves all three mild.  A rational transverse
point exists for $D_K=-501510767$ and for $D_K=-931506071$; for
$D_K=-648153647$ the transverse point of smallest degree has
residue degree two.

\begin{theorem}\label{thm:p7}
Let $K$ be an imaginary quadratic number field of $7$-class rank
three with $|D_K|<2^{30}$.  Then the $7$-class tower group
$G_{\varnothing}(K)(7)$ is an infinite mild FAb pro-$7$ group, and in
particular
\[
  \cd G_{\varnothing}(K)(7)=2.
\]
\end{theorem}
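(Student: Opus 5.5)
The proof follows the pattern of \cref{thm:mainexample}, with the difference that here the criterion alone suffices and no word count is needed. By the enumeration of \cref{sec:results}, read off from the unconditional tables of Mosunov--Jacobson \cite{MosunovJacobson}, the fields of $7$-class rank three with $|D_K|<2^{30}$ are exactly $K_1,K_2,K_3$ listed above. For each of them, Part~W5 of \cite{Repository} stores a certificate that verifies in the sense of \cref{def:verifies}. By \cref{prop:soundness}, the six matrices $\mathcal D_{\chi_1},\mathcal D_{\chi_2},\mathcal D_{\chi_3},\mathcal D_{\chi_1+\chi_2},\mathcal D_{\chi_1+\chi_3},\mathcal D_{\chi_2+\chi_3}$ are then the matrices of the secondary norm operators. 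Since $p=7>3$, \cref{prop:AC} applies without the extra factor $J$. By \eqref{eq:D-matrix}, these six matrices determine $\mathcal D_x$ for every $x$ over every finite extension of $\mathbb F_7$.

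For each field I then run \cref{alg:transverse}. For $K_1$ and $K_3$ I take $k=\mathbb F_7$, and for $K_2$ I take $k=\mathbb F_{49}$, using the transverse point recorded there. In each case I exhibit an explicit $x\in H^1(G,\mathbb F_7)\otimes k$ and check three things:
\begin{enumerate}[label=\textup{(\roman*)}]
\item the rank condition $\rk\mathcal D_x=1=d-2$;
\item that an adapted choice datum $\mathcal B$ exists (the cone condition is vacuous because $p>3$, by \cref{lem:D-image-orthogonal});
\item that $\det B_{x,\mathcal B}\neq0$ for the $2\times2$ transversality matrix \eqref{eq:Bx-computation}.
\end{enumerate}
Each of these is exact linear algebra over a finite field. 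By \cref{cor:determinant-form}, $x$ is then transverse. \Cref{thm:transverse-rank-one} then gives mildness of $G_{\varnothing}(K)(7)$ with respect to the Zassenhaus filtration, and \cref{lem:mild-tower} supplies the remaining assertions: the group is infinite, FAb, and has $\cd=2$.

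The only genuine obstacle sits outside this formal chain. It is the existence of verified norm witnesses in the degree-$14$ fields $L_x$ for all six characters and all three basis classes, which is Step~2 of \cref{alg:matrices}. That step is nondeterministic, and the proof does not rely on it. It relies only on the verification of \eqref{eq:checkAC1} and \eqref{eq:checkAC2} via \cref{lem:norm-sign}, which is recorded in \cref{app:verification}. Everything downstream of the verified matrices is a finite check. For $K_2$ the one subtlety is that the transverse point is not rational: one has to work over $\mathbb F_{49}$ with the scalar extension \eqref{eq:scalar-extended-D} rather than with a naive tensoring of the quadratic map. Strong freeness then descends to $\mathbb F_7$ by \cref{lem:descent}, and this descent is already built into the proof of \cref{thm:transverse-rank-one}.
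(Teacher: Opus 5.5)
Your proposal is correct and follows the paper's proof: apply \cref{thm:transverse-rank-one} to the recorded transverse element of each of the three fields (rational for $K_1$ and $K_3$, of residue degree two for $K_2$), then use \cref{lem:mild-tower} for infiniteness, FAb, and $\cd=2$. The paper compresses the verification chain you spell out (verified certificates, \cref{prop:soundness}, \cref{cor:determinant-form}) into a single reference to the repository records, so your version is a more explicit rendering of the same argument.
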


\begin{proof}
\Cref{thm:transverse-rank-one}, applied to the transverse element
recorded in \cite{Repository} for each of the three fields, shows
that $G_{\varnothing}(K)(7)$ is mild.  \Cref{lem:mild-tower} gives
the remaining assertions.
\end{proof}

The field with $D_K=-648153647$ is the one field of the range whose
norm-degeneracy scheme has no rational point at all.  Its single
closed point of residue degree at most three has degree two, and
this point is transverse, so the freedom of a nontrivial residue
field in \cref{thm:transverse-rank-one} is needed at $p=7$ as well.

\subsection{The fields at \texorpdfstring{$p=3$}{p=3}}\label{sec:example-p3}

At $p=3$ the four cone strata of \cref{subsec:uniform-computation}
partition the range, and the three routes reach them unevenly.
\Cref{tab:strata-p3} gives the whole range at once.

\begin{table}[!htp]\centering\small
\begin{tabular}{l r r r r r r r}
\toprule
cone $C_\beta$ & fields & criterion & Anick & word count & mild & not mild & undecided\\
\midrule
the plane & $1$ & $1$ & $0$ & $0$ & $1$ & $0$ & $0$\\
a line & $220$ & $79$ & $1$ & $128$ & $208$ & $1$ & $11$\\
a point & $5183$ & $425$ & $6$ & $4550$ & $4981$ & $11$ & $191$\\
empty & $7345$ & $0$ & $0$ & $7053$ & $7053$ & $14$ & $278$\\
\midrule
all fields & $12\,749$ & $505$ & $7$ & $11\,731$ & $12\,243$ & $26$ & $480$\\
\bottomrule
\end{tabular}
\caption{The fields of $3$-class rank three with $|D_K|<2^{30}$,
stratified by the rank of $\delta$, equivalently by the shape of the
Bockstein cone.  The columns are those of \cref{tab:range}.  The
single field with a degenerate cubic relation space, of
discriminant $-873204895$, is counted undecided and discussed in
\cref{subsec:exceptional-fields}.}
\label{tab:strata-p3}
\end{table}

The criterion proves mildness for $505$ of the $12\,749$ fields, for
$496$ of them at a rational point of the cone, for six at a point of
residue degree two and for three at a point of residue degree three.
Part~W6 of \cite{Repository} records the point field by field.  Seven
further fields carry a rational Anick witness
(\cref{subsec:exceptional-fields}), and the word counts settle
$11\,731$ more, $7\,053$ of them in the stratum on which the cone is
empty, where no criterion that requires a closed point of $C_\beta$
can apply.  At $p=3$ the criterion thus decides only a small fraction of
the mild fields, and it isolates those on which mildness becomes
visible on $\Sigma_D$.  For $26$ fields the
same computation rules mildness out (\cref{prop:nonmild-p3}), and
one further field has a degenerate cubic relation space
(\cref{subsec:exceptional-fields}).

\begin{theorem}[Mild fields at $p=3$]\label{thm:mild-p3}
Let $K$ be any of the $12\,243$ imaginary quadratic number fields
of $3$-class rank three with $|D_K|<2^{30}$ listed in Parts~W6
and~W7 of \cite{Repository}.  Then the $3$-class tower group
$G_{\varnothing}(K)(3)$ is an infinite mild FAb pro-$3$ group, and in
particular
\[
  \cd G_{\varnothing}(K)(3)=2.
\]
\end{theorem}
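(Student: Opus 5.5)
The plan is to split the $12\,243$ fields according to the route recorded for them in \cite{Repository}: the $505$ criterion fields of Part~W6, and the $7$ Anick-witness fields together with the $11\,731$ word-count fields of Part~W7. For every field we take as input the six verified matrices $\mathcal D_x$ produced by \cref{alg:matrices}. By the soundness statement of \cref{app:verification} (\cref{prop:soundness}), these are the matrices of the secondary norm operators. Through \cref{lem:four-evaluations} they then determine the complete Massey product, and by \cref{prop:T-is-relations} they give the cubic word matrix $T$, whose rows are the cubic initial forms $\rho_1,\rho_2,\rho_3$ of a minimal system of relators. At $p=3$ the Ahlqvist--Carlson input is \cref{prop:AC-p3}, with its additional factor $J$. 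The Massey product is reconstructed through the eleven-dimensional space $\mathcal R_3$ of \cref{lem:eleven-space}, so the cubes $\mgen_i^3$ are included among the initial forms, as \cref{prop:Gaertner} permits.

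For the criterion fields, the record gives a closed point $[x]$ of $\Sigma_D$ on $C_\beta$, of residue degree at most three. The first step is to recheck that $\rk\mathcal D_x=1$ and that $\beta_k(x)=0$, the latter using the matrix $B$ of \cref{lem:diagonal-p3}. The second step is to evaluate the transversality matrix \eqref{eq:Bx-computation} and confirm that its determinant is nonzero. \Cref{cor:determinant-form} then makes $x$ transverse, and \cref{thm:transverse-rank-one} gives mildness.

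For the seven Anick fields, the record gives $g\in\mathrm{GL}_3(\Fthree)$. We apply $g$ to $\rho_1,\rho_2,\rho_3$, pass to a suitable relation basis, and read off the high terms. Checking that they are combinatorially free is a finite word comparison. Anick's criterion in G\"artner's form then gives strong freeness, and \cref{prop:Gaertner} gives mildness, as in \cref{rem:direct-anick}.

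For the word-count fields, the record gives a substitution over $\mathbb F_{3^e}$ together with a terminating completed basis. We verify it with \cref{alg:completion}: the completion must terminate, and the normal-word counts must agree with the coefficients of $1/(1-3z+3z^3)$ in the finitely many degrees required by \cref{lem:diamond} and \cref{cor:computing-normal-words}. \Cref{cor:word-counts} then gives strong freeness over the extension. \Cref{lem:descent}, together with invariance under graded automorphisms, brings it down to $\Fthree$. Finally, \cref{rem:span-strongly-free} and \cref{prop:Gaertner} give mildness.

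In every case, \cref{lem:mild-tower} supplies the remaining assertions: $G$ is infinite and FAb, and $\cd G=2$.

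The main obstacle is not conceptual. It is making sure that each recorded certificate is valid for the objects to which the theorems are applied. Two points need care. First, the substitution, applied over $\mathbb F_{3^e}$, must be recognised as an automorphism of $\mathcal A\otimes k$. Second, the finite-degree comparison must actually control every degree. This second point is exactly what \cref{lem:diamond}\ref{dl:full} provides, once we know that the completion has terminated with all overlaps resolved. I would therefore rely on the two independent completions recorded for each field as the check on this step.
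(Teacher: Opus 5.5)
Your proposal is correct and is essentially the paper's proof. The verified matrices give the cubic initial forms, restricted cubes included, through \cref{prop:soundness} and \cref{prop:T-is-relations}; each field is then settled by its recorded transverse element, rational Anick witness, or terminating completion, and \cref{prop:Gaertner} and \cref{lem:mild-tower} conclude. Only small corrections are needed: the seven Anick fields are recorded in Part~W6, $352$ of the word-count fields are decided in the given coordinates so that only the other $11\,379$ carry a substitution in Part~W7, and the reduction of the word-count comparison to finitely many degrees comes from \cref{lem:finite-comparison} (packaged in \cref{prop:completion-correct}), not from \cref{lem:diamond}\ref{dl:full} alone.
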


\begin{proof}
For each of the $12\,243$ fields, Parts~W6 and~W7 of
\cite{Repository} record the route that decides it and the datum
that does so, a transverse element, a rational Anick witness, or a
substitution with a terminating completion, recomputed from the
verified matrices of the field by \cref{alg:transverse} or
\cref{alg:completion}.  The matrices determine the cubic initial
relation space of $G_{\varnothing}(K)(3)$, restricted cubes included
(\cref{prop:soundness}, \cref{prop:T-is-relations}).  A transverse
element makes $G_{\varnothing}(K)(3)$ mild by
\cref{cor:geometric-mildness}, applied over the residue field of the
point, and an Anick witness or a terminating completion with word
counts $1/(1-3z+3z^3)$ makes the three forms strongly free
(\cref{rem:direct-anick}, \cref{cor:word-counts}), so
\cref{prop:Gaertner} applies.  \Cref{lem:mild-tower} gives the
remaining assertions.
\end{proof}

\begin{proposition}\label{prop:nonmild-p3}
For the $26$ fields listed in \cref{tab:nonmild-p3} the tower
group $G_{\varnothing}(K)(3)$ is not mild with respect to the
Zassenhaus filtration.
\end{proposition}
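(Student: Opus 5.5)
The proof will combine \cref{rem:rank-three} with the negative direction of the word counts. For each of the $26$ fields, the verified certificate (\cref{prop:soundness}) gives the six matrices $\mathcal D_x$. Through the four evaluations of \cref{lem:four-evaluations} they determine the shuffle-symmetric elements $m(e_\ell)$, and hence the cubic word matrix $T$. By \cref{prop:T-is-relations}, the rows of $T$ are the cubic initial forms $\rho_1,\rho_2,\rho_3$ of a minimal system of relators, restricted cubes included.

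The first step is to check, by exact linear algebra over $\Fthree$ on the stored data, that $\rk_{\Fthree}T=3$. This is what makes the verdict a property of the group. By \cref{rem:rank-three}, $\vartheta_3$ is then injective, so every minimal system of relators has initial forms in degree exactly three, and these forms span $R_3$. Any other minimal presentation changes the forms only by a basis change of $R_3$ and a graded automorphism of $\mathcal A$. Neither operation changes the Hilbert series of $\mathcal A/(\rho_1,\rho_2,\rho_3)$.

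The second step is to run \cref{alg:completion} on $\rho_1,\rho_2,\rho_3$, in the given coordinates or after the recorded substitution over a finite extension; \cref{lem:descent} makes this harmless. We then exhibit a degree $n$ with two properties:
\begin{itemize}
\item all overlaps up to degree $n$ have been resolved;
\item the number of $I$-normal words in degree $n$ differs from the coefficient of $z^n$ in $1/(1-3z+3z^3)$.
\end{itemize}
By \cref{lem:diamond}\ref{dl:truncated} this count is $\dim(\mathcal A/I)_n$, so by \cref{cor:word-counts} the forms are not strongly free. Two independent completions agreeing on the deviation serve as the verification.

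The last step is to conclude that $G$ is not mild with respect to the Zassenhaus filtration. Mildness would require some minimal presentation whose initial forms are strongly free (\cref{prop:Gaertner}, \cite[Definition~2.11]{Gaertner}). By the first step, every minimal presentation yields the same Hilbert series, and that series has just been shown to differ from the required one.

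The main obstacle is not the computation but making this last implication airtight: the definition of mildness must quantify only over minimal presentations with the Zassenhaus weighting. In particular, there must be no presentation in which the relators have initial forms of higher degree. The condition $\rk T=3$ rules this out, and I would spell out carefully why injectivity of $\vartheta_3$ forces every relator with nonzero class in $\overline R$ to have initial degree three.
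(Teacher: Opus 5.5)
Your proposal is correct and follows the paper's own proof. A deviation of the $I$-normal word count from the coefficient of $1/(1-3z+3z^3)$, in a degree whose overlaps the completion has resolved, refutes strong freeness by \cref{lem:diamond}\ref{dl:truncated} and \cref{cor:word-counts}, and $\rk_{\Fthree}T=3$ turns this, via \cref{rem:rank-three}, into a failure of the group rather than of a chosen presentation.
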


\begin{proof}
In each of these fields the number of $I$-normal words, for the
two-sided ideal $I$ generated by the three cubic initial forms,
exceeds the coefficient of $1/(1-3z+3z^3)$ in a degree whose
overlaps the completion has resolved, first in the degree
recorded in \cref{tab:nonmild-p3}, so the cubic initial relation space is not
strongly free by \cref{cor:word-counts}.  Each of the fields has
$\rk_{\Fp}T=3$, so by \cref{rem:rank-three} the failure is one of
the group and not of a chosen presentation.
\end{proof}

\noindent
For these $26$ tower groups the cohomological dimension remains
open, since mildness is sufficient for
$\cd G_{\varnothing}(K)(3)=2$ but not necessary.

\begin{table}[!htp]\centering\small
\begin{tabular}{@{}l@{\quad}l@{\quad}>{\raggedright\arraybackslash}p{0.62\textwidth}@{}}
\toprule
degree & fields & discriminants\\
\midrule
$4$ & $17$ & $-54962868$, $-123927735$, $-135819467$, $-252545119$,
$-259111511$, $-297353236$, $-306860520$, $-572463199$,
$-591612196$, $-718886391$, $-764214755$, $-835614552$,
$-896366299$, $-896646495$, $-1032299263$, $-1052572404$,
$-1070351128$\\
\addlinespace
$9$ & $2$ & $-257444104$, $-790383263$\\
\addlinespace
$10$ & $3$ & $-400506295$, $-526671699$, $-698938715$\\
\addlinespace
$12$ & $4$ & $-176214724$, $-522302531$, $-623631988$,
$-1030775848$\\
\bottomrule
\end{tabular}
\caption{The $26$ fields of the range whose cubic initial relation
space is provably not strongly free, by the first degree in which
the number of $I$-normal words exceeds the coefficient of
$1/(1-3z+3z^3)$.  The completions behind the verdicts are recorded
under \texttt{records/p3/results/} of \cite{Repository}.}
\label{tab:nonmild-p3}
\end{table}

The remaining $480$ fields of the range are undecided.  Failure of
the criterion, or of a search, proves nothing about mildness, and
neither does the absence of a deviation through the degree bound.
The mildness and the cohomological dimension of these fields remain
open.  No claim is intended about how often the criterion applies
beyond the bound $2^{30}$.

\subsection{Boundary cases}\label{subsec:exceptional-fields}

Nine fields at $p=3$ exhibit the three boundary phenomena of
the routes: a Bockstein cone that is the whole plane, cone
points at which the criterion fails while an Anick witness
exists, and a rank defect in the reconstructed matrix $T$.  The
range contains no other instances.

\paragraph{The plane stratum.}  The only field of the range on
which the Bockstein cone is the whole plane is the one with
$D_K=-364435991$.  Its $3$-class
group has no cyclic factor of order exactly $3$, so
$\delta=0$, hence $\beta=0$ by
\cref{lem:diagonal-p3}\ref{diag:dual} and $C_\beta$ imposes no
condition at all, and the criterion applies.  This is the single
field of the plane stratum in \cref{tab:strata-p3}.

\paragraph{The witness fields.}  Seven fields carry a rational
Anick witness: the fields of discriminant $-211248887$,
$-263780072$, $-435714452$, $-459350555$, $-622080024$,
$-639273539$, and $-746301223$.  In each of them the
cone holds a closed point at which the criterion fails; for the
first two fields the point is degenerate, with local ring of
$\Sigma_D$ of length three.  All seven are recorded in Part~W6
of \cite{Repository}.

\paragraph{The rank defect.}  The field with $D_K=-873204895$ is the
only one of the range with $\rk_{\Fp}T<3$.  Its cubic initial
relation space has dimension two, so by \cref{prop:T-is-relations}
one relator of the tower group has initial degree greater than
three, no three cubic initial forms can be strongly free, and the
field is counted undecided.  Mildness is not thereby excluded,
because the sequence of relator initial forms now contains one
form of degree at least four and could still be strongly free, a
case none of our routes tests.  The word count detects the defect in
degree three, where $27-\rk_{\Fp}T$ exceeds the coefficient $24$
(\cref{rem:span-strongly-free}).

\paragraph{The field of rank four.}  A single field of the
range has $p$-class rank exceeding three for an odd prime $p$:
the field with $D_K=-653329427$, of $3$-class rank four and
$3$-class group $(3,3,3,3)$, read off from the tables as for
the populations above.  The criterion of
\cref{thm:transverse-rank-one} applies to any rank $d\ge3$, and
the arithmetic of \cref{sec:computation}, adapted to rank four,
was carried out for this field in exact arithmetic and verified as
in \cref{app:verification} (\cref{rem:rank-d-verification}).
The
$d(d+1)/2=10$ secondary norm operators determine the Bockstein
matrix, which has rank four; the cone $C_\beta$ is therefore
empty, and the $p=3$ route admits no candidate point.  The
direct route decides the field.

\begin{theorem}[The field of rank four]\label{thm:rank-four}
The $3$-class tower group of the field with $D_K=-653329427$ is an
infinite mild FAb pro-$3$ group of $3$-class rank four, and
$\cd G_{\varnothing}(K)(3)=2$.
\end{theorem}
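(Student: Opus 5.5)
The plan follows the proofs of \cref{thm:mainexample} and \cref{thm:mild-p3}. The route, however, is the direct one, because the criterion is unavailable for this field. The Bockstein matrix reconstructed from the ten verified secondary norm operators has rank four. By \cref{lem:diagonal-p3}\ref{diag:dual} this agrees with $\Cl(K)[3^\infty]\simeq(\mathbb Z/3)^4$, all four cyclic factors having order exactly $3$. Hence $\beta$ is injective and $C_\beta$ is empty. By \cref{cor:geometric-mildness} no transverse element exists over any extension of $\Fthree$, so mildness has to come from the cubic initial relation space itself.

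First, I would invoke the verified certificate of \cref{rem:rank-d-verification}, using \cref{prop:soundness}, to know that the stored matrices are the operators $D_{\chi_i}$ and $D_{\chi_i+\chi_j}$ for $1\le i\le j\le4$. \Cref{thm:reconstruction} then gives the full Massey product. \Cref{prop:T-is-relations} gives the four cubic initial forms $\rho_1,\ldots,\rho_4$ in the $4^3=64$ words, with the restricted cubes $\mgen_i^3$ carrying the Bockstein entries. As a consistency check I would verify that each row lies in the $24$-dimensional space $\mathcal R_3$ of \cref{lem:eleven-space}, i.e.\ that the shuffle identities \eqref{eq:shuffle} hold on the assembled values.

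Second, I would run \cref{alg:completion} on the ideal $(\rho_1,\ldots,\rho_4)\subseteq\Fthree\langle\mgen_1,\ldots,\mgen_4\rangle$. This is done either in the given coordinates or, if that does not terminate, after an invertible substitution taken from the flags of \cref{rem:flag} over some $\mathbb F_{3^e}$. The aim is a completion that terminates below the degree bound and whose normal-word counts match the coefficients of $1/(1-4z+4z^3)$. Degree three already forces $\rk_{\Fp}T=4$, as in \cref{rem:span-strongly-free}. \Cref{lem:diamond} and \cref{cor:word-counts} turn termination plus agreement into strong freeness. \Cref{lem:descent} brings this back to $\Fthree$ if an extension was used. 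A cheaper alternative to try first is a search for an Anick witness in $\mathrm{GL}_4(\Fthree)$ modulo flags (\cref{rem:direct-anick}); if one is found, it proves strong freeness by itself.

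Finally, \cref{prop:Gaertner} gives mildness with respect to the Zassenhaus filtration, and \cref{lem:mild-tower} gives that $G$ is infinite, FAb and of cohomological dimension two, with rank four by \eqref{eq:ranks}.

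The main obstacle is the second step. In four variables the Gröbner completion grows quickly, and nothing guarantees that it terminates in any fixed coordinates. The computational substance is therefore finding a substitution after which the completion closes. As with the $p=5$ exception, I would search through flags over small extensions first.
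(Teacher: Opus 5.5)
Your proposal is correct and follows the paper's proof. The verified ten-matrix certificate (\cref{rem:rank-d-verification}, \cref{prop:soundness}) gives the four cubic initial forms through \cref{prop:T-is-relations}; a recorded substitution (in the paper already in $\mathrm{GL}_4(\Fthree)$, so \cref{lem:descent} is not even needed) makes \cref{alg:completion} terminate with word counts $1/(1-4z+4z^3)$; and \cref{prop:Gaertner} with \cref{lem:mild-tower} finishes the argument. One small correction to your motivation: \cref{cor:geometric-mildness} is stated only for $d=3$ or $p>3$, so it does not cover $p=3$, $d=4$, but the absence of transverse elements follows directly from \cref{def:transverse-rank-one}, since injectivity of $\beta$ makes the cone condition force $x=0$.
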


\begin{proof}
The arithmetic of \cref{sec:computation}, verified as in
\cref{app:verification} (\cref{rem:rank-d-verification}), determines
the four cubic initial forms (\cref{prop:T-is-relations} holds for any rank),
and Part~W7 of \cite{Repository} records a change of variables in
$\mathrm{GL}_4(\Fthree)$ after which the completion of
\cref{alg:completion} terminates and the word counts agree with
$1/(1-4z+4z^3)$.  The four forms are therefore strongly free
(\cref{cor:word-counts} holds for any number of generators and
relations), and \cref{prop:Gaertner} and \cref{lem:mild-tower}
give the assertions.
\end{proof}

\begin{remark}[Rank four beyond the range]\label{rem:rank-four-census}
We have also treated the fields of $3$-class rank four with
$|D_K|<2\cdot10^{10}$.  There are $62$ of them, enumerated
independently from the cubic-field counts of Belabas and from the
tables of Mosunov--Jacobson, and all $62$ have a mild $3$-class tower
group, four by the criterion, at a rational point of their Bockstein
cone, and the others by a word count after a change of variables in
$\mathrm{GL}_4(\Fthree)$.  The records are in \cite{Repository}.
\end{remark}

\section{Consequences of mildness}\label{sec:consequences}

Let $K$ be one of the fields of \cref{thm:mainexample},
\cref{thm:p7}, \cref{thm:mild-p3}, or \cref{thm:rank-four}, let
$p$ be the corresponding prime, let $d$ be the $p$-class rank of $K$,
and write $G=G_{\varnothing}(K)(p)$.  In each case $G$ is an
infinite mild FAb pro-$p$ group.  It has a presentation with $d$
generators and $d$ strongly free cubic initial relations, $d=3$
except for the field of rank four, and \eqref{eq:ranks} gives
$\dim_{\Fp} H^1(G,\Fp)=\dim_{\Fp} H^2(G,\Fp)=d$.  The
consequences recorded in this section apply uniformly to all
$12\,451$ groups.

\begin{corollary}\label{cor:nonanalytic}
The group $G$ is not a $p$-adic analytic pro-$p$ group.
\end{corollary}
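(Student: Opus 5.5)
The plan is to combine three facts available at this point: $\cd G=2$, the FAb property, and Lazard's structure theory of $p$-adic analytic groups. Suppose, for contradiction, that $G$ is $p$-adic analytic. By Lazard, $G$ contains an open uniform (in particular torsion-free, powerful) subgroup $U$. Since $\cd G=2<\infty$, $G$ is torsion-free, and Serre's theorem gives $\cd U=\cd G=2$ for every open subgroup. For a uniform pro-$p$ group, $\cd U$ equals its dimension $\dim U=d(U)=\dim_{\Fp}H^1(U,\Fp)$ (Lazard: $H^\ast(U,\Fp)$ is the exterior algebra on $H^1$). So $U$ is a uniform group of dimension $2$.

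Next we use FAb. A uniform group $U$ has $U/\Phi(U)\simeq(\mathbb Z/p)^{\dim U}$ and, more to the point, its abelianization is infinite unless $U$ is trivial: $U^{\ab}$ surjects onto $U/[U,U]$, and for a uniform group of dimension $n\ge1$ the Lie algebra $L_U$ over $\mathbb Z_p$ satisfies $[L_U,L_U]\subseteq pL_U$, so $U/[U,U]$ is infinite as soon as $[L_U\otimes\mathbb Q_p,L_U\otimes\mathbb Q_p]\ne L_U\otimes\mathbb Q_p$, i.e.\ unless the $\mathbb Q_p$-Lie algebra is perfect. A $2$-dimensional Lie algebra over $\mathbb Q_p$ is solvable, hence not perfect, so $U^{\ab}$ is infinite. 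This contradicts \cref{prop:FAb}, which says every open subgroup of $G$ has finite abelianization. Hence $G$ is not $p$-adic analytic.

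The step needing the most care is the identification $\cd U=\dim U$ for uniform $U$ together with the reduction of FAb to perfectness of the Lie algebra; I would cite Lazard and Dixon--du Sautoy--Mann--Segal for both, rather than reprove them. An alternative I would keep in reserve avoids Lie algebras: a $p$-adic analytic pro-$p$ group of $\cd 2$ is a Poincaré duality group of dimension $2$ (Lazard), and a FAb Demuškin group cannot exist since Demuškin groups have infinite abelianization ($H^1$ nonzero and relation rank one force $U^{\ab}\supseteq\mathbb Z_p^{d-1}$ with $d\ge2$). Either route uses from the paper only $\cd G=2$ (\cref{lem:mild-tower}) and \cref{prop:FAb}, and it is consistent with the unramified Fontaine--Mazur conjecture mentioned in the introduction.
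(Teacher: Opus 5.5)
Your proof is correct, but it reaches the contradiction through a different invariant than the paper does. The paper applies Lazard's theorem to $G$ itself. Being torsion free and $p$-adic analytic, $G$ would be a Poincar\'e group of dimension $\cd G=2$, hence $H^2(G,\Fp)\simeq\Fp$, contradicting $\dim_{\Fp}H^2(G,\Fp)=d\ge3$ from \eqref{eq:ranks}. You never use the relation rank. Instead you use the FAb property of \cref{prop:FAb}, either through a $2$-dimensional uniform subgroup with solvable Lie algebra, or through the infinite abelianization of a Demu\v{s}kin group. On that reserve route the paper's ending is available at once, since a Demu\v{s}kin group has $h^2=1$. Your version thus proves a statement that does not depend on $h^2$: no FAb pro-$p$ group of cohomological dimension $2$ is $p$-adic analytic, because FAb forces a perfect $\mathbb Q_p$-Lie algebra and hence dimension at least $3$. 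The price is more structure theory where the paper needs a single citation.

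One sentence of your main route needs repair. The abelianization of a nontrivial uniform group can be finite, for example for uniform open subgroups of $\mathrm{SL}_2(\mathbb Z_p)$, whose Lie algebra is perfect. Also, the inclusion $[L_U,L_U]\subseteq pL_U$ plays no role in the conclusion. What you actually need is one of two things. The first is the Lie correspondence: a non-perfect $\mathfrak g$ gives an open subgroup with an infinite abelian quotient, e.g.\ through the isolated ideal $[\mathfrak g,\mathfrak g]\cap L_U$. The second, simpler, is to note that a $2$-dimensional uniform group has cohomology ring $\Lambda(H^1)$ with $\dim H^1=2$, so it is itself Demu\v{s}kin and your reserve argument applies to it directly.
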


\begin{proof}
Since $\cd G=2$ is finite, $G$ has no $p$-torsion and is
therefore torsion free.  If $G$ were $p$-adic analytic, Lazard's
theorem \cite[Th\'eor\`eme~V.2.5.8]{Lazard} would make it a
Poincar\'e group of dimension equal to its analytic dimension.  A
Poincar\'e group of dimension $n$ has $\cd=n$, so $n=2$, and then
$H^2(G,\Fp)\simeq\Fp$ \cite[(3.7.6)]{NSW}.  But
$\dim_{\Fp}H^2(G,\Fp)=d\ge3$ by \eqref{eq:ranks}.
\end{proof}

\begin{remark}\label{rem:FM}
The unramified Fontaine--Mazur conjecture asserts in particular that
$G_{\varnothing}(K)(p)$ has no infinite $p$-adic analytic quotient
for any number field $K$ \cite[Conjecture~5b]{FontaineMazur}.
\Cref{cor:nonanalytic} confirms this prediction for the group itself
in all our examples, at all three primes; it says nothing about proper
quotients, since cohomological dimension does not pass to quotients.
Non-analyticity as such is not new for this class of fields.  For an
imaginary quadratic number field with odd $p$ and $p$-class rank
$d\ge3$, the defining relations lie in $F_{(3)}$, so the minimal
presentation satisfies the Golod--Shafarevich condition in its
weighted form, with each relator counted by its Zassenhaus degree:
$1-dt+dt^{3}<0$ for some $t\in(0,1)$, for instance $t=3/5$.  By
Zelmanov's theorem such groups
contain nonabelian free pro-$p$ subgroups \cite{Zelmanov}.  In
particular they are not $p$-adic analytic.  The argument of
\cref{cor:nonanalytic} is of a different nature.  Given mildness it
is elementary, and it excludes analyticity through the cohomological
dimension rather than through subgroup structure.
\end{remark}

Mildness has two further quantitative consequences, recorded in the
remarks below.

\begin{remark}[Relation rank along the tower]
\label{rem:chi-ledger}
Let $L$ be a finite subextension of $K_{\varnothing}(p)| K$ and
put $U=\Gal(K_{\varnothing}(p)| L)$, an open subgroup of $G$ of
index $[L:K]$.  Recall first that
$L_{\varnothing}(p)=K_{\varnothing}(p)$.  The extension
$L_{\varnothing}(p)| K_{\varnothing}(p)$ is unramified and
pro-$p$, and so is every conjugate
$\sigma(L_{\varnothing}(p))=(\sigma L)_{\varnothing}(p)$ over
$K_{\varnothing}(p)$, since $\sigma L\subseteq K_{\varnothing}(p)$;
the compositum of the conjugates is therefore an everywhere
unramified pro-$p$ extension of $K$, and it lies in
$K_{\varnothing}(p)$.  Thus
$U=G_{\varnothing}(L)(p)$.

By the Artin isomorphism, the maximal
abelian everywhere unramified pro-$p$ extension of $L$ has group the
$p$-primary part of $\Cl(L)$, so
$H^1(U,\Fp)\simeq(\Cl(L)/p)^{*}$.  Each $H^i(U,\Fp)$ is finite,
since by Shapiro's lemma $H^i(U,\Fp)=H^i(G,\Fp[G/U])$ and $\Fp[G/U]$ is a
successive extension of trivial modules $\Fp$.  Since $\cd G=2$, the
Euler--Poincar\'e characteristic
\[
  \chi(U)=1-\dim H^1(U,\Fp)+\dim H^2(U,\Fp)
\]
is multiplicative \cite[Proposition~3.3.13]{NSW},
\[
  \chi(U)=[G:U]\,\chi(G),
\]
and $\chi(G)=1-d+d=1$.  Hence
\[
  \dim_{\Fp}H^2\bigl(G_{\varnothing}(L)(p),\Fp\bigr)
  \;=\;\dim_{\Fp}\Cl(L)/p\;+\;[L:K]-1
\]
for every finite subextension $L$ of the tower.
\end{remark}

\begin{remark}[Growth of the Zassenhaus graded pieces]
\label{rem:growth}
The mild presentation, with its $d$ generators and $d$ strongly
free cubic initial relations, also determines the dimensions of all
Zassenhaus graded pieces of $G$.  We write the case $d=3$ and note
the change for $d=4$ at the end.  Write $G_{(n)}$ for the Zassenhaus
filtration of $G$, in analogy with $F_{(n)}$.  For a mild presentation,
\[
  \operatorname{gr}\Fp[[G]]\;\simeq\;U(\operatorname{gr}G),
\]
where $U$ denotes the restricted universal envelope and $\Fp[[G]]$
carries the induced filtration.  This is Quillen's theorem
\cite{Quillen}, given for the mild pro-$p$ setting in
\cite[Theorem~2.12(iv)]{Gaertner}.  Moreover
\[
  \operatorname{gr}G\;\simeq\;
  (\operatorname{gr}F)/(\rho_1,\rho_2,\rho_3),
\]
the quotient of the restricted Lie algebra $\operatorname{gr}F$ by
the ideal generated by the initial forms
\cite[Theorem~2.12(iii)]{Gaertner}.  Combining the two identifications,
\[
 \operatorname{gr}\Fp[[G]]\;\simeq\;
 \Fp\langle X_1,X_2,X_3\rangle/(\rho_1,\rho_2,\rho_3);
\]
by strong freeness, the Hilbert series of this algebra is
$1/(1-3z+3z^3)$ \eqref{eq:hilbert-series}.
The restricted Poincar\'e--Birkhoff--Witt theorem
\cite[Chapter~V, Theorem~12]{Jacobson} factors the
series as
\[
  \frac1{1-3z+3z^3}
  \;=\;\prod_{n\ge1}
  \Bigl(\frac{1-z^{pn}}{1-z^{n}}\Bigr)^{c_n},
  \qquad c_n=\dim_{\Fp}G_{(n)}/G_{(n+1)},
\]
which determines every $c_n$.  Here the prime enters.  The first
values are
\[
 (c_n)_{n\ge1}=3,\,3,\,5,\,9,\,24,\,41,\,96,\,207,\,475,\,1077,\dots
 \qquad(p=5),
\]
\[
 (c_n)_{n\ge1}=3,\,3,\,5,\,9,\,21,\,41,\,99,\,207,\,475,\,1074,\dots
 \qquad(p=7),
\]
\[
 (c_n)_{n\ge1}=3,\,3,\,8,\,9,\,21,\,44,\,96,\,207,\,483,\,1074,\dots
 \qquad(p=3).
\]
The value of $c_3$ can also be seen directly.  The ideal
$(\rho_1,\rho_2,\rho_3)$ of $\operatorname{gr}F$ is generated in
degree three, so its degree-three part is the span of the three
initial forms, and these are linearly independent by strong
freeness (\cref{rem:span-strongly-free}).  Hence
\[
  c_3=\dim_{\Fp}(\operatorname{gr}F)_3-3,
\]
and $\dim_{\Fp}(\operatorname{gr}F)_3$ is $8$ for $p>3$ and $11$
for $p=3$ (\cref{lem:eleven-space}).  This gives $c_3=5$ and
$c_3=8$, in agreement with the values above.  For the field of rank
four the same argument starts from $1/(1-4z+4z^3)$ and gives
$(c_n)_{n\ge1}=4,\,6,\,20,\,44,\,140,\,430,\,1380,\,4456,\dots$ at $p=3$.
\end{remark}

\appendix

\section{Word counts and Gr\"obner bases}\label{app:word-counts}

The mildness proofs of \cref{sec:example-p3} use, besides Anick's
criterion, a route that counts words.  Unlike Anick's criterion it
can refute strong freeness as well as establish it.  Its
ingredients (normal words, the diamond lemma, Gr\"obner bases in
the free algebra) are not new, but the computation needs them
in an adapted form.  The completion stops at a degree bound, and
the comparison of Hilbert series has to be finite.  This appendix
assembles them in that form.

We keep the notation of \cref{subsec:strong-freeness}:
$\mathcal A=\Fp\langle\mgen_1,\ldots,\mgen_d\rangle$ is the free
associative $\Fp$-algebra on $\mgen_1,\ldots,\mgen_d$, a
multiplicative monomial order on its words is fixed, and
$\HT(\rho)$ denotes the high term of a nonzero
$\rho\in\mathcal A$.

Let $I\subseteq\mathcal A$ be a homogeneous two-sided ideal and
write $I_n=I\cap\mathcal A_n$.  Since $I$ is homogeneous, the
quotient $\mathcal A/I$ is graded and the projection induces an
isomorphism
\[
  \mathcal A_n/I_n\xrightarrow{\ \sim\ }(\mathcal A/I)_n
\]
in every degree.  We use this identification throughout.  Define the
\emph{high-term ideal} of $I$ by
\[
  \HT(I):=\bigl(\HT(g):0\ne g\in I\text{ homogeneous}\bigr).
\]
A word $w$ of $\mathcal A$ is called \emph{$I$-normal} if
$w\notin\HT(I)$, equivalently, if no factor of $w$, including $w$
itself, is the high term of a nonzero homogeneous element of $I$.

\begin{lemma}[Normal words compute the Hilbert series]
\label{lem:normal-words}
Let $I\subseteq\mathcal A$ be a homogeneous two-sided ideal.  In
every degree $n$, the $I$-normal words of $\mathcal A$ of degree $n$
project to an
$\Fp$-basis of $(\mathcal A/I)_n$.  In particular
$\dim_{\Fp}(\mathcal A/I)_n$ is the number of $I$-normal words of
$\mathcal A$ of degree $n$.
\end{lemma}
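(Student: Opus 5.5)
The argument is the standard one for Gröbner bases in the free algebra, carried out degree by degree. Fix $n$, and write $N_n$ for the set of $I$-normal words of degree $n$. I would prove two statements: that $N_n$ spans $\mathcal A_n$ modulo $I_n$, and that it is linearly independent modulo $I_n$. By the identification $\mathcal A_n/I_n\simeq(\mathcal A/I)_n$ recorded before the lemma, these two statements give the claim. Throughout I use that $\mathcal A_n$ is finite-dimensional and that the monomial order restricts to a total order on the finitely many words of degree $n$.

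The key observation comes first: every word $w\in\HT(I)$ of degree $n$ is itself the high term of a homogeneous element of $I_n$. Such a $w$ contains a factor $w'=\HT(g)$ with $g\in I$ nonzero homogeneous, so $w=uw'v$. Multiplicativity of the order gives $\HT(ugv)=u\,\HT(g)\,v=w$, since multiplying by $u$ and $v$ preserves the strict order among the words of $g$. The element $ugv$ is nonzero, lies in $I$, and is homogeneous of degree $n$.

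For spanning, I would argue by induction along the total order on the words of degree $n$. Let $f\in\mathcal A_n$ and suppose $f$ has a non-normal word; let $w$ be the largest one with nonzero coefficient. By the key observation, choose $h\in I_n$ with $\HT(h)=w$, normalized to leading coefficient one. Subtracting the coefficient of $w$ in $f$ times $h$ removes $w$ and introduces only words smaller than $w$. So the largest non-normal word of $f-c\,h$ is strictly smaller, and since there are finitely many words of degree $n$, the process terminates in a normal combination congruent to $f$ modulo $I_n$.

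For independence, suppose a nonzero combination $g$ of normal words of degree $n$ lies in $I_n$. Then $\HT(g)$ is the high term of a nonzero homogeneous element of $I$, so it lies in $\HT(I)$. But $\HT(g)$ is one of the normal words, which contradicts the definition of normality. The dimension count follows immediately. I do not expect a real obstacle here. The only point that needs care is the multiplicativity step in the key observation: the order must be compatible with concatenation on both sides, so that $\HT(ugv)=u\,\HT(g)\,v$. This is exactly what the definition of a multiplicative monomial order provides.
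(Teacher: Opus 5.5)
Your proof is correct and follows the paper's argument essentially step for step. Spanning comes from reducing a non-normal word $u\,\HT(g)\,v$ against $ugv\in I_n$, using multiplicativity of the order and finiteness of the degree-$n$ words, and independence holds because the high term of a nonzero normal combination lying in $I_n$ would itself belong to $\HT(I)$. The only difference is cosmetic: you reduce an arbitrary $f$ by its largest non-normal word, whereas the paper inducts on single words.
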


\begin{proof}
Fix $n$.  We first show that every word $w$ of $\mathcal A$ of
degree $n$ is congruent modulo $I_n$ to a linear combination of
$I$-normal words of $\mathcal A$.
First observe that if a word $w$ of $\mathcal A$ of degree $n$ is not
$I$-normal, then, after scaling a suitable nonzero homogeneous
$g\in I$, we can write
\[
  w=u\HT(g)v,
\]
where the high term of $g$ has coefficient one and $u,v$ are possibly
empty words of $\mathcal A$.  Every other word $w'$ of $\mathcal A$
occurring in $g$ satisfies
$w'<\HT(g)$, and hence
\[
  uw'v<u\HT(g)v=w
\]
because the order is multiplicative.  Since $ugv\in I_n$, the word
$w$ is congruent modulo $I_n$ to $w-ugv$, which is a linear combination
of words of $\mathcal A$ of degree $n$ strictly smaller than $w$.

We now argue by induction on $w$ in the finite ordered set of words
of $\mathcal A$ of degree $n$, the case of the least word being the
one where the induction hypothesis is empty.  So let $w$ be a word
of degree $n$ and assume the assertion for every word of
$\mathcal A$ of degree $n$ strictly smaller than $w$.  If $w$ is
$I$-normal, there is nothing to prove.  Otherwise the observation
above and the induction hypothesis express $w$ modulo $I_n$ as a
linear combination of $I$-normal words of $\mathcal A$.  This
completes the induction and proves that the images of the
$I$-normal words of $\mathcal A$ of degree $n$ span
$\mathcal A_n/I_n$, and hence $(\mathcal A/I)_n$.

It remains to prove that these images are linearly independent.
Suppose, to the contrary, that distinct $I$-normal words $w_1,\ldots,w_r$ of
$\mathcal A$ of degree $n$ and coefficients $c_1,\ldots,c_r\in\Fp$,
not all zero, satisfy
\[
  f:=\sum_{i=1}^r c_iw_i\in I_n.
\]
Then $f$ is a nonzero homogeneous element of $I$, and $\HT(f)$ is one
of the words $w_i$.  But $\HT(f)$ is the high term of $f\in I$ and is
a factor of itself, so it is not $I$-normal, contradicting the choice
of the $w_i$.  Thus the images of the $I$-normal words of $\mathcal A$
of degree $n$ are linearly independent.  Together with the spanning
result above, this proves that they form a basis of $\mathcal A_n/I_n$
and hence of $(\mathcal A/I)_n$.  The dimension formula follows.
\end{proof}

Anick works with the degree-lexicographic order, constructing an
order-ideal basis in \cite[Lemma~1.1]{Anick} and proving the
dimension inequality of \cite[Theorem~1.4]{Anick}.  The argument
above gives the exact count for every multiplicative monomial
order, which is what the computation uses.

Recall from \cref{subsec:strong-freeness} that homogeneous
$\rho_1,\ldots,\rho_m\in\mathcal A$ of degrees
$h_1,\ldots,h_m\ge1$ are called \emph{strongly free} precisely when
\[
  \operatorname{Hilb}_{\mathcal A/(\rho_1,\ldots,\rho_m)}(z)
  =\frac{1}{1-dz+z^{h_1}+\cdots+z^{h_m}}.
  \tag{\ref{eq:hilbert-series}}
\]

\begin{corollary}[Word counts decide strong freeness]
\label{cor:word-counts}
Let $\rho_1,\ldots,\rho_m\in\mathcal A$ be homogeneous of degrees
$h_1,\ldots,h_m\ge1$ and set $I=(\rho_1,\ldots,\rho_m)$.  The sequence
$\rho_1,\ldots,\rho_m$ is strongly free if and only if, for every
$n$, the number of $I$-normal words of $\mathcal A$ of degree $n$
equals the coefficient of $z^n$ in the right-hand side of
\eqref{eq:hilbert-series}.  In particular, failure of this equality
in a single degree disproves strong freeness.
\end{corollary}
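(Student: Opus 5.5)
The plan is to combine \cref{lem:normal-words} with the definition of strong freeness \eqref{eq:hilbert-series}; the argument is essentially a translation.

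First, set $I=(\rho_1,\ldots,\rho_m)$. This is a homogeneous two-sided ideal, because it is generated by homogeneous elements. So \cref{lem:normal-words} applies, and it gives, for every $n\ge0$,
\[
  \dim_{\Fp}(\mathcal A/I)_n=\#\{\text{$I$-normal words of degree $n$}\}.
\]
Each graded piece $(\mathcal A/I)_n$ is finite-dimensional, being a quotient of $\mathcal A_n$. Hence the Hilbert series of $\mathcal A/I$ is the generating function
\[
  \sum_{n\ge0} N_n z^n ,
\]
where $N_n$ is the number of $I$-normal words of degree $n$.

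Second, I would note that the right-hand side of \eqref{eq:hilbert-series} is a well-defined formal power series. Its denominator has constant term $1$, so it is invertible in $\mathbb Z[[z]]$.

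Third, two formal power series are equal exactly when all their coefficients agree. By definition, the sequence is strongly free precisely when $\operatorname{Hilb}_{\mathcal A/I}(z)$ equals that series. By the first step, this happens precisely when $N_n$ equals the $n$-th coefficient for every $n$. This proves the equivalence.

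The final clause is the contrapositive of the forward direction. If the equality fails in one degree, the two series differ, so \eqref{eq:hilbert-series} fails and the sequence is not strongly free.

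I expect no real obstacle here. The only points to check are that $I$ is homogeneous, so that the lemma applies, and that the comparison of series is coefficientwise. The substantive work, namely computing the $I$-normal words from a finite completed basis, belongs to the later diamond-lemma statements rather than to this corollary.
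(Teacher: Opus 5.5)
Your proposal is correct and is the intended argument: the paper states this corollary without a separate proof, as an immediate consequence of \cref{lem:normal-words} (applicable since $I$ is generated by homogeneous elements) and the definition \eqref{eq:hilbert-series}, with the series compared coefficient by coefficient.
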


\begin{corollary}[Computing normal-word counts]
\label{cor:computing-normal-words}
Let $\rho_1,\ldots,\rho_m\in\mathcal A$ be homogeneous of degrees
$h_1,\ldots,h_m\ge1$ and set $I=(\rho_1,\ldots,\rho_m)$.
\begin{enumerate}[label=\textup{(\roman*)}]
\item\label{wc:single-degree} For every fixed $n$, $I_n$ is spanned
  by the finite set of products $u\rho_\ell v$, where
  $u,v$ are possibly empty words of $\mathcal A$ and
  $\deg u+h_\ell+\deg v=n$.  Row reduction of their coefficient
  vectors in the word basis of $\mathcal A_n$ determines
  $\dim_{\Fp}I_n$, and
  \[
    \#\{\text{$I$-normal words of $\mathcal A$ of degree $n$}\}
    =d^n-\dim_{\Fp}I_n.
  \]
\item\label{wc:finite-basis} If $\HT(I)$ is generated by a finite set
  $W$ of words of $\mathcal A$, then a word of
  $\mathcal A$ is $I$-normal if and only if none of its factors lies
  in $W$.  The counts in all degrees are then determined by $W$.
\end{enumerate}
\end{corollary}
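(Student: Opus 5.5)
Part \ref{wc:single-degree} comes straight from the definition of the two-sided ideal. Part \ref{wc:finite-basis} is a formal consequence of Lemma~\ref{lem:normal-words} together with the definition of $\HT(I)$.

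\emph{Part \ref{wc:single-degree}.} The ideal $I$ is generated by homogeneous elements, so it is homogeneous. Every element of $I$ is a finite sum $\sum a\rho_\ell b$ with $a,b\in\mathcal A$. Expanding $a$ and $b$ in the word basis writes it as a linear combination of products $u\rho_\ell v$ with words $u,v$. Taking the degree-$n$ component keeps exactly the products with $\deg u+h_\ell+\deg v=n$, since each such product is homogeneous of that degree. Hence these finitely many products span $I_n$. This is the same observation already used in the proof of Lemma~\ref{lem:descent}. Row reduction of their coefficient vectors in the word basis of $\mathcal A_n$ computes $\dim_{\Fp}I_n$. By Lemma~\ref{lem:normal-words}, the number of $I$-normal words of degree $n$ equals $\dim(\mathcal A/I)_n=\dim\mathcal A_n-\dim I_n=d^n-\dim_{\Fp}I_n$.

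\emph{Part \ref{wc:finite-basis}.} I would first describe membership in a two-sided ideal generated by words. A word lies in the ideal generated by a set $W$ of words if and only if some word of $W$ is a factor of it. One direction is immediate. For the other, suppose $w=\sum_i c_i\,a_i\omega_i b_i$ with $\omega_i\in W$. Expanding $a_i,b_i$ into words shows that $w$ lies in the span of the words $u\omega v$. The words form a basis of $\mathcal A$, so $w$ must itself be one of these words. Applying this with $\HT(I)=(W)$ shows that $w$ is $I$-normal, that is $w\notin\HT(I)$, exactly when no factor of $w$ lies in $W$. The set of such words of each degree is then determined by $W$ alone. By Lemma~\ref{lem:normal-words}, so are the counts, and hence the whole Hilbert series.

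The only step needing care is this monomial-ideal membership argument. It rests on the words being a linear basis, and on the high-term ideal being generated by words by definition, so I expect no real obstacle.
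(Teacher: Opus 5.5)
Your proposal is correct and follows the argument the paper intends. The paper states this corollary without a separate proof, as an immediate consequence of \cref{lem:normal-words}, of the spanning of $I_n$ by the products $u\rho_\ell v$ already used in the proof of \cref{lem:descent}, and of the fact that a word lies in the ideal generated by a set of words exactly when one of them is a factor; your linear-independence argument supplies that last fact properly.
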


Part \ref{wc:finite-basis} determines the counts in all degrees,
so comparing them with \eqref{eq:hilbert-series} looks like an
infinite task.  It is not.  Both sequences obey linear recurrences,
and finitely many degrees then settle the comparison.

\begin{lemma}[Word counts satisfy a linear recurrence]
\label{lem:finite-comparison}
Let $W$ be a finite set of words of $\mathcal A$, let $c_n$ be the
number of words of degree $n$ with no factor in $W$, and let $s$ be
the number of proper prefixes of elements of $W$, the empty word
included.
\begin{enumerate}[label=\textup{(\roman*)}]
\item\label{fc:recurrence} The sequence $(c_n)_{n\ge0}$ satisfies a
  linear recurrence of order at most $s$.
\item\label{fc:comparison} Let $(b_n)_{n\ge0}$ be a sequence of
  rational numbers satisfying a linear recurrence of order at most
  $r$ with rational coefficients.  If
  $c_n=b_n$ for all $n\le s+r-1$, then $c_n=b_n$ for all $n$.
\end{enumerate}
\end{lemma}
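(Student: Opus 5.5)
For \ref{fc:recurrence} the plan is to build a finite automaton (the Aho--Corasick automaton) that recognizes words with no factor in $W$, and to read $c_n$ as a path count. Let $P$ be the set of proper prefixes of elements of $W$, with the empty word included, so that $|P|=s$. For a word $w$ with no factor in $W$, define its state $\pi(w)\in P$ to be the longest suffix of $w$ that lies in $P$.

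The key step is to show that, for each letter $\mgen_i$, two things depend only on $\pi(w)$: whether $w\mgen_i$ still avoids $W$, and, if it does, what $\pi(w\mgen_i)$ is. The argument runs as follows. Any suffix of $w\mgen_i$ that lies in $P\cup W$ has the form $u\mgen_i$, where $u$ is a suffix of $w$ and $u$ is a prefix of some element of $W$. Such a $u$ is therefore a proper prefix, so $u\in P$, and since $\pi(w)$ is the longest such suffix, $u$ is a suffix of $\pi(w)$. Hence both the avoidance test and the new longest-suffix state are determined by $\pi(w)\mgen_i$ alone. This gives a transition matrix $A\in M_s(\mathbb Z)$, where $A_{qq'}$ is the number of letters taking state $q$ to state $q'$ without creating a factor in $W$. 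It follows that $c_n=\mathbf e_{\varnothing}^{t}A^n\mathbf 1$, and Cayley--Hamilton gives a recurrence of order at most $s$ with integer coefficients. This justification of the well-definedness of the transitions is the main point requiring care. Everything else is formal.

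For \ref{fc:comparison}, consider the difference $e_n=c_n-b_n$. Its generating function is a difference of two rational functions whose denominators have degrees at most $s$ and $r$. Equivalently, $e_n$ satisfies the recurrence given by the product of the two characteristic polynomials, which has order at most $s+r$. I will phrase this via the shift operator $E$ on sequences: if $p(E)c=0$ and $q(E)b=0$, then $(pq)(E)e=0$. If $e_0=\cdots=e_{s+r-1}=0$, induction along this recurrence gives $e_n=0$ for all $n$. The recurrence is monic, so it expresses $e_{n+s+r}$ through the $s+r$ preceding terms.

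One subtlety needs a sentence. A "linear recurrence of order at most $r$" should be understood as a monic relation $e_{n+r}=\sum_{j<r}a_je_{n+j}$ valid for all $n\ge0$. Cayley--Hamilton produces exactly this form for $c_n$, so the product operator is monic of degree at most $s+r$.
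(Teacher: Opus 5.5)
Your proposal is correct and follows the paper's proof essentially step for step. Like the paper, you record the longest suffix lying in the set of proper prefixes, show that the effect of appending a letter depends only on that suffix, apply Cayley--Hamilton to the resulting $s\times s$ transfer matrix, and prove part~(ii) by multiplying the two annihilating polynomials in the shift operator. The one point to tighten is that your suffix claim holds only for nonempty suffixes. Also, passing from factors of $w\mgen_i$ to suffixes uses that $w$ itself avoids $W$, which you should state explicitly, as the paper does.
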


\begin{proof}
Let $\mathcal P$ be the set of proper prefixes of elements of $W$,
including the empty word, so that $|\mathcal P|=s$.  For
$P\in\mathcal P$, let $c_{n,P}$ be the number of words of degree
$n$ with no factor in $W$ whose longest suffix in $\mathcal P$ is
$P$.

Let $u$ be such a word and $x$ a letter.  We claim that
\[
  \text{$ux$ has a factor in $W$}
  \quad\Longleftrightarrow\quad
  \text{$Px$ has a suffix in $W$},
\]
and that the suffixes of $ux$ lying in $\mathcal P$ are exactly
those of $Px$.

Since $P$ is a suffix of $u$, the word $Px$ is a suffix of $ux$.
Consequently, every suffix of $Px$ lying in $W$ is a factor of
$ux$, which proves the implication from right to left.  For the
implication from left to right, suppose that $ux$ has a factor
$w\in W$.  Since $u$ has no factor in $W$, the factor $w$
must end in the final letter $x$.  Thus $w=vx$ for some suffix $v$
of $u$.  As $v$ is a proper prefix of $w\in W$, we have
$v\in\mathcal P$.  The maximality of $P$ gives
$\deg v\le\deg P$.  Since both $v$ and $P$ are suffixes of $u$, it
follows that $v$ is a suffix of $P$, and hence that $w=vx$ is a
suffix of $Px$.

For the two sets of suffixes, one inclusion is again immediate.
As $Px$ is a suffix of $ux$, every suffix of $Px$ lying in
$\mathcal P$ is a suffix of $ux$.  For the other, let
$w\in\mathcal P$ be a suffix of $ux$.  If $w$ is
empty, then it is also a suffix of $Px$.  Otherwise $w=vx$ for
some suffix $v$ of $u$.  Since $w$ is a proper prefix of an
element of $W$, so is $v$, and hence $v\in\mathcal P$.  The same
maximality argument as above shows that $v$ is a suffix of $P$,
and $w=vx$ a suffix of $Px$.

Whether $ux$ still has no factor in $W$, and which its longest
suffix in $\mathcal P$ is, therefore depend only on $P$ and $x$.
Writing $m_{PQ}$ for the number of letters $x$ such that $Px$ has
no factor in $W$ and longest suffix $Q$ in $\mathcal P$, this
gives
\[
  c_{n+1,Q}=\sum_{P\in\mathcal P}m_{PQ}\,c_{n,P},
  \qquad
  c_n=\sum_{P\in\mathcal P}c_{n,P}.
\]
The numbers $m_{PQ}$ and $c_{n,P}$ are non-negative integers.  We
regard them as elements of $\mathbb Q$, so that the linear algebra
below takes place over a field.  Write
$v_n\in\mathbb Q^s$ for the row vector $(c_{n,P})_{P\in\mathcal P}$,
and let $M=(m_{PQ})_{P,Q\in\mathcal P}$ be the $s\times s$ matrix
over $\mathbb Q$ whose entry in row $P$ and column $Q$ counts the
letters leading from $P$ to $Q$.  The first identity above then
reads
\[
  v_{n+1}=v_nM,
\]
and $c_n$ is the sum of the entries of $v_n$.  Let
$t^s-a_{s-1}t^{s-1}-\cdots-a_0\in\mathbb Q[t]$ be the characteristic
polynomial of $M$.  By the Cayley--Hamilton theorem
\[
  M^s=a_{s-1}M^{s-1}+\cdots+a_0E,
\]
with $E$ the $s\times s$ identity matrix.  Multiplying on the left
by $v_n$
gives
\[
  v_{n+s}=a_{s-1}v_{n+s-1}+\cdots+a_0v_n,
\]
and summing the entries of each side gives
\[
  c_{n+s}=a_{s-1}c_{n+s-1}+\cdots+a_0c_n,
\]
a linear recurrence of order at most $s$.  This is
\ref{fc:recurrence}.

For \ref{fc:comparison}, write $S$ for the shift operator
$S(a_n)_n=(a_{n+1})_n$ on the $\mathbb Q$-vector space of
sequences of rational numbers, so that a linear recurrence of
order $e$ for a sequence $a$ is a monic polynomial
$f\in\mathbb Q[t]$ of degree $e$ with $f(S)a=0$.  Let $f_c$ and $f_b$ be such polynomials for $(c_n)$ and
$(b_n)$, of degrees at most $s$ and $r$.  Polynomials in $S$
commute, so $f_cf_b$ annihilates both sequences and hence their
difference, which satisfies a linear recurrence of order
at most $s+r$.
Vanishing in the degrees $0,\ldots,s+r-1$ makes it vanish
identically.
\end{proof}

It remains to produce a finite set $W$ as in
\cref{cor:computing-normal-words}\ref{wc:finite-basis}.  This is
what a two-sided Gr\"obner basis of $I$ does, and the computations
of \cref{subsec:uniform-computation} compute one (with Singular's
Letterplace subsystem \cite{Singular,LaScalaLevandovskyy})
by completing the defining relations.  Overlaps of high terms are
reduced, nonzero results are added, and the process repeats.  If the
completion terminates, the high terms of the resulting set generate
$\HT(I)$ (\cref{lem:diamond}\ref{dl:full} below).

A two-sided ideal of $\mathcal A$ need not have a finite
Gr\"obner basis, and completions that do terminate can still take
arbitrarily long.  The computation is therefore given a degree
bound in advance and is abandoned once the overlaps to be resolved
exceed it, which in \cref{subsec:uniform-computation} is the usual outcome.
\Cref{lem:diamond}\ref{dl:truncated} below says what an abandoned
computation still yields.  In every degree whose overlaps have been
resolved the word counts are exact.  This is what lets such a computation
prove that a relation ideal is \emph{not} strongly free.

Let $G$ be a finite set of homogeneous elements of $I$, each
normalized to high-term coefficient one.  A \emph{reduction step}
with respect to $G$ replaces a homogeneous element
$h\in\mathcal A$ in which a
word $u\HT(g)v$ occurs with coefficient $c\ne0$, where $g\in G$ and
$u,v$ are possibly empty words, by $h-c\,ugv$.  This removes the word
$u\HT(g)v$ in favour of strictly smaller words of the same degree.
Iterated reduction steps terminate.  Each step removes one word and
alters only strictly smaller ones, so the set of words occurring in
$h$ decreases strictly in the lexicographic comparison along the
finitely many words of its degree, taken from the largest word
downwards.  We say that $h$ \emph{reduces to zero} with
respect to $G$ if some sequence of reduction steps ends in $0$.
Reduction rewrites the single element $h$ and uses $G$ as a fixed
system of rewriting rules; it does not change $G$.

\begin{lemma}[Bases from resolved overlaps]
\label{lem:diamond}
Let $G$ be a finite set of homogeneous elements generating $I$, each
with high-term coefficient one, and set $W=\{\HT(g):g\in G\}$.
Assume that the high terms of distinct elements of $G$ are
distinct and that no element of $W$ is a factor of a different
element of $W$.  Call a pair of factorizations
\[
  \HT(g)=AB,\qquad \HT(g')=BC
\]
with $g,g'\in G$ and nonempty words $A,B,C$ an \emph{overlap} of
$G$.  Its \emph{degree} is $\deg ABC$, the degree of the overlapped
word.
\begin{enumerate}[label=\textup{(\roman*)}]
\item\label{dl:truncated} Let $N\ge0$, and suppose that for every
  overlap of degree at most $N$ the element $gC-Ag'$ reduces to
  zero with respect to $G$.  Then in every degree $n\le N$ the
  words of degree $n$ with no factor in $W$ project to an
  $\Fp$-basis of $(\mathcal A/I)_n$.
\item\label{dl:full} If this holds for every overlap, of any
  degree, then $\HT(I)=(W)$.
\end{enumerate}
\end{lemma}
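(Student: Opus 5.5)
This is the diamond lemma in the homogeneous, degree-truncated form, and I would prove it through the standard reduction argument, degree by degree. The plan has three steps.

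\textbf{Spanning.} Call a word \emph{$W$-normal} if it has no factor in $W$. Since every element of $W$ is the high term of an element of $G\subseteq I$, every word outside $(W)$ reduces. Repeated reduction steps, which terminate as observed before the lemma, therefore express every element of $\mathcal A_n$, modulo the span of the products $ugv$, as a combination of $W$-normal words. In particular the $W$-normal words of degree $n$ span $(\mathcal A/I)_n$ for every $n$, with no hypothesis on overlaps.

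\textbf{Linear independence.} Fix $n\le N$ and let $V_n$ be the span of the elements $ugv$ of degree $n$, with $g\in G$. Since $G$ generates $I$, we have $V_n=I_n$. It suffices to show that every nonzero $f\in I_n$ has a high term lying in $(W)$, that is, containing some factor $\HT(g)$. As in \cref{lem:normal-words}, no nonzero combination of $W$-normal words can then lie in $I_n$.

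Write $f=\sum_i c_i u_ig_iv_i$ and take the representation minimizing the largest word $w=u_i\HT(g_i)v_i$ that occurs, and then the number of indices attaining it. If $w$ does not cancel, then $\HT(f)=w\in(W)$ and we are done. If it cancels, two distinct terms share the same $w=u\HT(g)v=u'\HT(g')v'$, say with $\deg u\le\deg u'$. The two occurrences in $w$ are then either disjoint, in an overlap position, or nested; nesting forces $g=g'$ with $u=u'$ by the hypotheses on $W$.

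In the disjoint case, $w=u\HT(g)m\HT(g')v'$. The difference $ugm\,g'v'-ug\,m\,g'v'$ rewritten appropriately,
\[
ug\,m\HT(g')v'-u\HT(g)m\,g'v' = ug\,m(\HT(g')-g')v'-u(\HT(g)-g)m\,g'v',
\]
is a combination of terms $u''g''v''$ whose words are all below $w$. In the overlap case, with $\HT(g)=AB$ and $\HT(g')=BC$ of degree at most $\deg w=n\le N$, the hypothesis says that $gC-Ag'$ reduces to zero. Recording the reduction steps writes $gC-Ag'$ as a combination of terms $u''g''v''$ with words strictly below $ABC$, and multiplicativity of the order keeps this true after multiplying by $u$ and $v'$.

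In both cases we can trade one top term for the other, which lowers the count of indices attaining $w$, or eventually lowers $w$ itself. This contradicts minimality. Hence the $W$-normal words of degree $n$ form a basis of $(\mathcal A/I)_n$, which is part~\ref{dl:truncated}.

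\textbf{Part~\ref{dl:full}.} If every overlap resolves, the argument above applies for all $n$, so the high term of every nonzero homogeneous element of $I$ lies in $(W)$. This gives $\HT(I)\subseteq(W)$, and the reverse inclusion holds since $G\subseteq I$.

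The main obstacle is the bookkeeping in the cancellation step. One has to check that the rewrite strictly decreases the well-founded invariant (the top word, then its multiplicity), and that the degree bound suffices, since only overlaps of degree exactly $\deg w\le N$ ever occur. I would handle this with a Bergman-style case split: disjoint, overlap, and nested, the last excluded by the hypotheses on $W$.
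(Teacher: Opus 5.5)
Your proof is correct, but it takes a different route from the paper's. The paper does not reprove the diamond lemma. It applies Bergman's theorem to the auxiliary ideal $I+\bigoplus_{n>N}\mathcal A_n$, encoded by the rules (R1) $\HT(g)\mapsto\HT(g)-g$ together with rules (R2) that send every word of degree $N+1$ to zero. It checks the order and termination hypotheses, disposes of every ambiguity of degree greater than $N$ (including all those involving (R2)) by the truncation rules, and resolves the overlaps of degree at most $N$ by hypothesis. It then obtains part~(ii) indirectly, by comparing the count of words without a factor in $W$ with the count of $I$-normal words from \cref{lem:normal-words}.

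You instead give the self-contained minimal-representation argument, the usual proof of the Buchberger criterion, in a single degree $n\le N$. Homogeneity gives $I_n=\operatorname{span}\{ugv\}$, so the truncation needs no auxiliary ideal. Your argument also yields the stronger intermediate statement that every nonzero $f\in I_n$ has $\HT(f)\in(W)$, from which part~(ii) follows at once without any counting. Your route shows that only a representation of $gC-Ag'$ by terms $u''g''v''$ with $u''\HT(g'')v''<ABC$ is used. This is resolvability relative to $ABC$, the weaker property the paper needs in the proof of \cref{prop:completion-correct}. The paper's route buys brevity by delegating the combinatorics to Bergman.

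A few slips should be fixed, though none affects validity:
\begin{itemize}
\item It is the words \emph{in} $(W)$, not those outside it, that admit a reduction step.
\item Your first expression for the difference in the disjoint case is identically zero; the display after it is the correct one.
\item The overlaps that arise have degree $\deg ABC\le\deg w$, not exactly $\deg w$.
\item The nested case should be closed explicitly. A minimal representation has no two terms with the same triple $(u,g,v)$, since merging them would lower the count of indices attaining $w$.
\end{itemize}
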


Part \ref{dl:full} is the Buchberger criterion \cite[Theorem~5.9]{Mora}.
We prove both parts together, since the argument for the truncated
statement gives it along the way.

\begin{proof}
Fix $N$ as in \ref{dl:truncated}.  We apply the diamond lemma
\cite[Theorem~1.2]{Bergman} (see also \cite[\S1.4]{Rogalski})
not to $I$ itself but to the ideal
\[
  I+\bigoplus_{n>N}\mathcal A_n,
\]
which has the same graded pieces as $I$ in the degrees at most
$N$.  It is the ideal defined by the augmented reduction system
\begin{enumerate}[label=\textup{(R\arabic*)}]
\item for each $g\in G$, replace $\HT(g)$ by $\HT(g)-g$;
\item for each word $q$ of degree $N+1$, replace $q$ by zero.
\end{enumerate}

To apply the diamond lemma, we must check that the reductions are
compatible with a partial order satisfying the descending chain
condition and that every ambiguity is resolvable relative to its
ambiguity word \cite[Theorem~1.2(a$'$)]{Bergman}.
\begin{itemize}
\item \emph{Order and termination.}  Order words of the same degree
  by the fixed monomial order and leave words of different degrees
  incomparable.  This partial order is compatible with concatenation.
  An \textup{(R1)} rule replaces a word by smaller words of the
  same degree, and an \textup{(R2)} rule replaces a word by
  nothing at all, so both are compatible with this order.  A descending chain lies in one
  degree and is finite because that degree contains only finitely
  many words.

\item \emph{Ambiguities.}  The possible ambiguities are exhausted by
  the following three cases.
  \begin{itemize}
  \item An inclusion between two \textup{(R1)} rules cannot occur,
    since by assumption their left-hand sides are distinct and neither is
    a factor of the other.

  \item Suppose that the ambiguity word has degree greater than $N$.
    After either initial reduction, every word that remains has the
    same degree as the ambiguity word and hence contains a factor
    of degree $N+1$.
    Successive \textup{(R2)} rules therefore reduce both results to
    zero.  This covers every ambiguity involving an \textup{(R2)}
    rule and every overlap between two \textup{(R1)} rules in degree
    greater than $N$.

  \item The remaining ambiguities are overlaps between two
    \textup{(R1)} rules in degree at most $N$.  They have the form
    \[
      \HT(g)=AB,\qquad \HT(g')=BC,
    \]
    with $A,B,C$ nonempty.  The two reductions differ, up to sign,
    by $gC-Ag'$, which reduces to zero by hypothesis.  All words in
    $gC-Ag'$ are strictly smaller than the ambiguity word $ABC$, and
    subsequent reductions introduce only still smaller words.  Thus
    the ambiguity is resolvable relative to $ABC$.
  \end{itemize}
\end{itemize}
All hypotheses of the diamond lemma are verified, and it concludes
that the words in which neither an element of $W$ nor a word of
degree $N+1$ occurs as a factor, that is, the words of degree at
most $N$ with no factor in $W$, project to a basis of the
quotient by the displayed ideal.  In degrees $n\le N$ that quotient
has the same graded pieces as $\mathcal A/I$, proving
\ref{dl:truncated}.

For \ref{dl:full}, note first that $W\subseteq\HT(I)$ and hence
$(W)\subseteq\HT(I)$.  Since both ideals are generated by words, it
suffices to see that they contain equally many words in each
degree $n$.  Applying \ref{dl:truncated} with $N=n$, the words of
degree $n$ outside $(W)$ number $\dim_{\Fp}(\mathcal A/I)_n$, and
by \cref{lem:normal-words} so do the words of degree $n$ outside
$\HT(I)$.  Hence $\HT(I)=(W)$.
\end{proof}

The computation of \cref{subsec:uniform-computation} puts these statements
together as follows.

\begin{algo}[Deciding strong freeness by completion]
\label{alg:completion}\leavevmode
\begin{description}[font=\normalfont\itshape,topsep=2pt,itemsep=0pt]
\item[Input:] nonzero homogeneous
  $\rho_1,\ldots,\rho_m\in\mathcal A$, $m\ge1$, of degrees
  $h_1,\ldots,h_m\ge1$, and a degree bound $N\ge0$.
\item[Output:] \textsc{strongly free}, \textsc{not strongly free},
  or \textsc{undecided}.
\end{description}
\begin{enumerate}[label=\textup{Step~\arabic*.},leftmargin=*]
\item \emph{Completion.}  Normalize $\rho_1,\ldots,\rho_m$ to
  high-term coefficient one and inter-reduce them.  As long as the
  high term of one element occurs in another, reduce the latter by
  the former, discarding it if the result is zero and normalizing
  it again otherwise.  Let $G$
  be the resulting set, which satisfies the standing assumption of
  \cref{lem:diamond}.
  Process the overlaps of elements of $G$ in increasing degree.  For
  $\HT(g)=AB$, $\HT(g')=BC$ reduce $gC-Ag'$ with respect to $G$.
  If the result is nonzero, normalize it, add it to $G$ and
  inter-reduce again, so that $G$ keeps satisfying that
  assumption.  None of these
  operations changes the ideal
  generated by $G$, which therefore stays
  $I=(\rho_1,\ldots,\rho_m)$ throughout.  Stop
  when no unresolved overlap is left, or when every unresolved
  overlap has degree greater than $N$.  Write $W$ for the set of
  high terms of the final $G$.
\item \emph{Verdict.}  Write $c_n$ for the number of words of
  degree $n$ with no factor in $W$, and $b_n$ for the coefficient
  of $z^n$ in the right-hand side of \eqref{eq:hilbert-series}.
  \begin{itemize}
  \item If $c_n\ne b_n$ for some $n\le N$, return
    \textsc{not strongly free}.
  \item Otherwise, if Step~1 stopped at the bound, return
    \textsc{undecided}.
  \item Otherwise compare $c_n$ with $b_n$ for
    $n\le s+H-1$, where $s$ is the number of proper prefixes of
    elements of $W$ including the empty word and
    $H=\max_\ell h_\ell$.  Return \textsc{strongly free} if they
    agree throughout, and \textsc{not strongly free} otherwise.
  \end{itemize}
\end{enumerate}
\end{algo}

\begin{proposition}\label{prop:completion-correct}
The output of \cref{alg:completion} is correct.
\end{proposition}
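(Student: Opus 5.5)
The plan is to follow the algorithm step by step and match each possible output with one of the lemmas of this appendix. First, the completion step. Every operation in Step~1 replaces $G$ by a set generating the same two-sided ideal: normalization and inter-reduction subtract multiples $u g v$ of elements already present, discarding happens only for elements that have reduced to zero, and adjoining a reduced overlap element $gC-Ag'$ (minus multiples of elements of $G$) adds an element of $I$. Hence $G$ generates $I=(\rho_1,\ldots,\rho_m)$ throughout. After each inter-reduction no high term is a factor of another, and high terms are distinct, so the standing hypotheses of \cref{lem:diamond} hold for the final $G$. The loop terminates because it is run with the bound $N$. There are finitely many words of degree at most $N$, every adjoined element is reduced and so has a new high term, and only finitely many overlaps of degree at most $N$ exist. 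If the loop stops because the overlaps are exhausted, every overlap of the final $G$ reduces to zero with respect to $G$. If it stops at the bound, this holds for all overlaps of degree at most $N$. Here I expect the one genuine care point: resolving an overlap at one stage must still count as resolved for the \emph{final} $G$. I would handle this by noting that once $gC-Ag'$ reduces to zero with respect to $G$, it still reduces to zero after $G$ grows. The same holds under inter-reduction, because an element removed from $G$ reduces to zero modulo the remaining ones, so any reduction step using it can be rerouted. Overlaps involving modified elements are new overlaps and are processed as such.

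Next, the verdict \textsc{not strongly free} in a degree $n\le N$. By \cref{lem:diamond}\ref{dl:truncated}, applied with this $N$, the count $c_n$ equals $\dim(\mathcal A/I)_n$. So $c_n\ne b_n$ contradicts \eqref{eq:hilbert-series}, and \cref{cor:word-counts} gives the conclusion. The verdict \textsc{undecided} asserts nothing, so it is trivially correct.

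Finally, the case where the completion terminated. By \cref{lem:diamond}\ref{dl:full}, $\HT(I)=(W)$. By \cref{cor:computing-normal-words}\ref{wc:finite-basis} and \cref{lem:normal-words}, $c_n=\dim(\mathcal A/I)_n$ for all $n$. By \cref{lem:finite-comparison}\ref{fc:recurrence}, $(c_n)$ satisfies a linear recurrence of order at most $s$. The sequence $(b_n)$ is the coefficient sequence of $1/P(z)$ with $P(z)=1-dz+\sum_\ell z^{h_\ell}$, a polynomial of degree at most $H$ with constant term $1$. Multiplying through by $P$ gives a linear recurrence for $b_n$ of order at most $H$ with rational coefficients. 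Reversing the recurrence to make it monic in the top index, if the leading coefficient vanishes, only lowers the order. Then \cref{lem:finite-comparison}\ref{fc:comparison} with $r=H$ shows that agreement for $n\le s+H-1$ forces $c_n=b_n$ for all $n$, which by \cref{cor:word-counts} is strong freeness. A disagreement in any degree disproves strong freeness by the same corollary, since the counts are now exact in every degree.
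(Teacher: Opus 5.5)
Your outline is the paper's: termination, stability of resolved overlaps, then \cref{lem:diamond}\ref{dl:truncated} for the verdicts in degrees $n\le N$, and \cref{lem:diamond}\ref{dl:full} with \cref{lem:finite-comparison} after termination. But the step you flag as the care point is not proved. You claim that \emph{reducing to zero} survives inter-reduction because a reduction step that uses a removed element $g''$ can be rerouted through its replacements. Reduction is not additive. A step subtracts the \emph{entire} current coefficient of one word times one chosen rule. Suppose inter-reduction wrote $g''=\gamma\tilde g''+c'u_hhv_h$ with $h\in G$. Then the step $f\mapsto f-c\,ug''v$ cannot in general be imitated by a step with $h$ followed by a step with $\tilde g''$. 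The second step would have to remove a prescribed amount of a word that $f$ may already contain, or that may no longer occur at all.

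Already for rewriting rules in a single degree the principle fails. Take words $W>V>X>Y$ and rules $W+V$, $V-X$, $V-Y$. The element $W+Y$ reduces to zero via $W+Y\to Y-V\to 0$. After the inter-reduction $W+V\mapsto (W+V)-(V-X)=W+X$, its only reduction is $W+Y\to Y-X$, which is irreducible. This happens although the removed rule $W+V$ still reduces to zero modulo the new ones. So you have not verified the hypothesis of \cref{lem:diamond} for the final $G$, in either outcome of Step~1. Reduction to zero with respect to the final $G$ does hold in the end, but only as a consequence of the diamond lemma, so it cannot be used to check that lemma's hypothesis.

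The repair, which is the paper's argument, is to carry along the weaker property that a reduction to zero produces. This is a representation $gC-Ag'=\sum_i c_i\,u_ig_iv_i$ with $g_i\in G$ and $u_i\HT(g_i)v_i<ABC$, that is, resolvability of the ambiguity relative to $ABC$ in Bergman's sense. This form is stable under every later operation:
\begin{itemize}
\item Adjoining elements changes nothing.
\item Replacing $g''$ by $\tilde g''$ rewrites $u\,g''v=\gamma\,u\tilde g''v+c'\,uu_hhv_hv$, and both new high-term words are at most $u\HT(g'')v<ABC$.
\item Discarding $g''$ is the case $\tilde g''=0$.
\item Modifying $g$ or $g'$ by terms below their high terms changes $gC-Ag'$ by products whose high-term words stay below $ABC$ after multiplication by $C$ or $A$.
\end{itemize}
The last point means these overlaps need not be reprocessed either. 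The proof of \cref{lem:diamond} uses its hypothesis only to obtain exactly this resolvability before invoking Bergman's theorem. So both parts of the lemma apply to the final $G$, and the rest of your argument goes through as you wrote it.
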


\begin{proof}
Step~1 terminates.  At every stage the elements of $G$ are
homogeneous of degree at most $\max(N,h_1,\ldots,h_m)$.  Each
inter-reduction step lowers a high term in the word order, and
each element adjoined to $G$ strictly enlarges the monomial ideal
generated by the high terms seen so far, so in bounded degree
neither happens infinitely often.

Once resolved, an overlap remains resolvable with respect to the
final $G$.  Reduction to zero writes $gC-Ag'$ as a sum of products
$u\,g''\,v$ with $g''\in G$ and $u\HT(g'')v<ABC$, and each later
operation (replacing some $g''$ by $g''-c\,u'hv'$ with every
replaced word below $\HT(g'')$, discarding an element that has
become a combination of the others, or modifying $g$ and $g'$
themselves by terms that stay below $ABC$ after multiplication by
$C$ and $A$) rewrites the sum in the same form.  This is
resolvability of the ambiguity relative to $ABC$, the property the
proof of \cref{lem:diamond} derives from its hypothesis.

Suppose first that Step~1 leaves no unresolved overlap, and let $q$
be the largest degree of an element of the final $G$.  In an overlap
$\HT(g)=AB$, $\HT(g')=BC$ the word $B$ is nonempty, so
$\deg ABC\le 2q-1$.  There are finitely many overlaps, all of degree
at most $2q-1$, and all of them have been resolved.  Hence
\cref{lem:diamond}\ref{dl:full} gives $\HT(I)=(W)$ for
$I=(\rho_1,\ldots,\rho_m)$, so by
\cref{cor:computing-normal-words}\ref{wc:finite-basis} the number
$c_n$ of Step~2 is the number of $I$-normal words of degree $n$; so
\cref{cor:word-counts} turns $c_n=b_n$ for all $n$ into strong
freeness, and a single deviation into its failure.  Writing
$\bigl(\sum_nb_nz^n\bigr)\bigl(1-dz+\sum_\ell z^{h_\ell}\bigr)=1$
and comparing coefficients gives
\[
  b_{n+H}=d\,b_{n+H-1}-\sum_{\ell}b_{n+H-h_\ell}
  \qquad (n\ge0),
\]
a linear recurrence of order at most $H$, so the degrees compared
in Step~2 decide all of them by
\cref{lem:finite-comparison}\ref{fc:comparison}.

Suppose now that Step~1 stops at the bound.  All overlaps of degree
at most $N$ have been resolved, so
\cref{lem:diamond}\ref{dl:truncated} gives $c_n=\dim_{\Fp}
(\mathcal A/I)_n$ for every $n\le N$.  A deviation $c_n\ne b_n$ with
$n\le N$ therefore disproves strong freeness by
\cref{cor:word-counts}, while agreement up to $N$ says nothing
about the higher degrees, which is the verdict
\textsc{undecided}.
\end{proof}

\begin{remark}[Changes of variables and flags]\label{rem:flag}
The verdict of \cref{alg:completion} depends on the coordinates.
Let $I\subseteq\mathcal A$ be the two-sided ideal generated by the
input $\rho_1,\ldots,\rho_m$ of \cref{alg:completion}, let $k|\Fp$
be a finite extension with $q$ elements, let
$I_k\subseteq\mathcal A\otimes_{\Fp}k=k\langle\mgen_1,\ldots,\mgen_d\rangle$
be the ideal generated by $I$, and let $g\in\mathrm{GL}_d(k)$.  The
substitution $\varphi_g\colon\mgen_i\mapsto\sum_a g_{ia}\mgen_a$ is a
graded automorphism of $\mathcal A\otimes_{\Fp}k$, so it preserves
strong freeness and the Hilbert series of the quotient (the remark
following \cref{prop:Gaertner} and \cref{lem:descent}), but it
changes the ideal of high terms.  For the degree-lexicographic order
with $\mgen_1>\ldots>\mgen_d$, an upper triangular $g$ sends each
$\mgen_i$ to $g_{ii}\mgen_i$ plus smaller variables and therefore
fixes the high term of every polynomial, so the ideal of high terms
of $\varphi_g(I_k)$ depends only on the coset $gB$ of $g$ modulo the
upper triangular subgroup $B$ of $\mathrm{GL}_d(k)$, that is, on the
complete flag spanned by the columns of $g$.  There are
$\prod_{j=1}^{d}(q^j-1)/(q-1)$ such flags, $52$ for $d=3$ and
$q=3$.  A search for coordinates in which the completion terminates
is therefore a finite search through flags, and its failure is a
statement about those flags and that degree bound only.
\end{remark}

\section{Degeneracy loci and base change}
\label{app:rank-one-lemmas}

The geometric interpretation of transversality rests on three
general facts.  The arithmetic input is used only in
\cref{subsec:rank-drop-geometry}.  The first two facts concern
tangent spaces and smoothness of determinantal loci and are stated
over an arbitrary field $k$.  They are proved in full.  The third fact describes
closed points under finite extension of a finite base field.  It
is elementary commutative algebra, needed in a form that keeps
track of the residue fields.

Let $A,B$ be finite-dimensional $k$-vector spaces and write
$k[\varepsilon]=k[t]/(t^2)$.  We denote by $\Hom_k(A,B)$ also the affine
$k$-scheme representing the functor
$R\mapsto\Hom_R(A\otimes_kR,\,B\otimes_kR)$ on $k$-algebras.  Writing
$m=\dim_kA$ and $n=\dim_kB$, after a choice of bases this is the affine
space $\mathbb A^{mn}_k$, whose
$k$-points form the vector space $\Hom_k(A,B)$.  Accordingly, a morphism
from a $k$-scheme $U$ to $\Hom_k(A,B)$ is nothing but an $n\times m$
matrix of regular functions on $U$, the columns being the images of the
chosen basis vectors of $A$.  Equivalently, it is an
$\mathcal O_U$-linear map
$A\otimes_k\mathcal O_U\to B\otimes_k\mathcal O_U$.  Functorially, it
assigns to every $R$-point $u\in U(R)$ an $R$-linear map
$\Phi(u):A\otimes_kR\to B\otimes_kR$, compatibly with morphisms of
$k$-algebras $R$.  For a $k$-point $u$ of $U$, a tangent vector
$v\in T_uU$ is a $k[\varepsilon]$-point $u_v$ of $U$ reducing
to $u$, and the expansion
\[
  \Phi(u_v)=\Phi(u)+\varepsilon\,d\Phi_u(v)
\]
defines $d\Phi_u(v)\in\Hom_k(A,B)$, the entrywise derivative
of $\Phi$ at $u$ in the direction $v$.

For $\varphi\in\Hom_k(A,B)$ let
\[
  \pi_\varphi\colon B\longrightarrow\coker\varphi
\]
be the projection onto the cokernel.

We call a closed point $u$ of a $k$-scheme $Z$ \emph{isolated} if
some neighbourhood of $u$ contains no other point of $Z$, and
\emph{reduced} if the local ring $\mathcal O_{Z,u}$ contains no
nilpotent elements.  For $Z$ locally of finite type over $k$ the two
conditions together are equivalent to the local ring being the
residue field.  Indeed, by \cite[Tag~01TH]{StacksProject},
applied to the structure morphism $Z\to\Spec k$, a closed point
$u$ is isolated exactly when no other point of $Z$ specializes to
$u$, that is, when $\dim\mathcal O_{Z,u}=0$.  A zero-dimensional
local ring is reduced exactly when it is a field, its maximal
ideal being its nilradical.  The proofs
below use the conditions in this combined form.  A $k$-rational point of $Z$ has
as its image a closed point with residue field $k$.  We apply the
terminology and the notation $\mathcal O_{Z,u}$ to a $k$-rational
point $u$ through this image, so that reduced and isolated then
means $\mathcal O_{Z,u}=k$.

\begin{lemma}[Tangent spaces of determinantal loci]
\label{lem:rank-one-tangent}
Let $U$ be a scheme locally of finite type over $k$, let
$\Phi\colon U\to\Hom_k(A,B)$ be a morphism, let $r\ge1$, and let
$Z\subseteq U$ be the
closed subscheme defined by the $(r+1)\times(r+1)$ minors of the
matrix of
$\Phi$.  Let $u\in U(k)$ with $\rk\Phi(u)=r$ (so that in particular
$u\in Z(k)$), write $\varphi=\Phi(u)$, and let
\[
  d^\perp\Phi_u\colon T_uU\longrightarrow
  \Hom_k\bigl(\ker\varphi,\ \coker\varphi\bigr),
  \qquad
  d^\perp\Phi_u(v)=\pi_\varphi\circ\bigl(d\Phi_u(v)\bigr)\big|_{\ker\varphi}.
\]
Then:
\begin{enumerate}[label=\textup{(\alph*)}]
\item $T_uZ=\ker d^\perp\Phi_u$;
\item the following are equivalent:
  \begin{enumerate}[label=\textup{(\roman*)}]
  \item $d^\perp\Phi_u$ is injective;
  \item $Z$ is smooth over $k$ of dimension zero at $u$;
  \item $u$ is a reduced isolated point of $Z$.
  \end{enumerate}
\end{enumerate}
\end{lemma}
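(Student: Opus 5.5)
The plan is to reduce everything to a local computation with block matrices after choosing bases adapted to $\varphi$. Choose bases of $A$ and $B$ so that $\varphi=\begin{pmatrix}E_r&0\\0&0\end{pmatrix}$, with the first $r$ basis vectors of $A$ spanning a complement of $\ker\varphi$ and the last $m-r$ spanning $\ker\varphi$. Likewise the first $r$ basis vectors of $B$ span $\operatorname{im}\varphi$, and the images of the last $n-r$ give a basis of $\coker\varphi$. Work in the local ring $\mathcal O_{U,u}$ and write the matrix of $\Phi$ as $\begin{pmatrix}P&Q\\S&V\end{pmatrix}$, where $P$ is $r\times r$ and $P(u)=E_r$. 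Then $P$ is invertible in $\mathcal O_{U,u}$. Invertible row and column operations over $\mathcal O_{U,u}$ preserve the ideal of $(r+1)\times(r+1)$ minors, by the Cauchy--Binet argument already used in \cref{def:rank-drop-scheme}. Block elimination therefore shows that, near $u$, the ideal of $Z$ equals the ideal generated by the entries of the Schur complement
\[
  \Psi=V-SP^{-1}Q ,
\]
an $(n-r)\times(m-r)$ matrix of functions vanishing at $u$. Indeed, after elimination the matrix is $\operatorname{diag}(E_r,\Psi)$, and its $(r+1)\times(r+1)$ minors generate the same ideal as the $1\times1$ minors of $\Psi$.

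For part (a), I compute the tangent space through dual numbers. A $k[\varepsilon]$-point $u_v$ lies in $Z$ exactly when all entries of $\Psi(u_v)$ vanish. Since $S(u)=0$ and $Q(u)=0$, the term $SP^{-1}Q$ has no $\varepsilon$-linear part, so $\Psi(u_v)=\varepsilon\, d V_u(v)$. In the adapted bases, $dV_u(v)$ is exactly the matrix of $\pi_\varphi\circ d\Phi_u(v)|_{\ker\varphi}$. Hence
\[
  T_uZ=\ker d^\perp\Phi_u .
\]

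For part (b), the implications (ii)$\Rightarrow$(iii) and (iii)$\Rightarrow$(i) are formal. Smoothness of dimension zero at a rational point means $\mathcal O_{Z,u}$ is étale over $k$, hence equal to $k$. This is reduced and isolated in the combined sense fixed before the lemma. Then $T_uZ=0$, so $d^\perp\Phi_u$ is injective by (a).

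The main step is (i)$\Rightarrow$(ii). If $d^\perp\Phi_u$ is injective, then by (a) the Zariski tangent space $\mathfrak m_{Z,u}/\mathfrak m_{Z,u}^2$ vanishes. Since $Z$ is locally of finite type, $\mathcal O_{Z,u}$ is Noetherian, so Nakayama gives $\mathfrak m_{Z,u}=0$. Thus $\mathcal O_{Z,u}=k$, which is zero-dimensional and regular at a $k$-rational point. This is smoothness of dimension zero over $k$, since $u$ is then an open point of $Z$ isomorphic to $\Spec k$.

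The only delicate point is justifying the Schur-complement reduction of the minor ideal. I will handle it by the explicit factorization
\[
  \begin{pmatrix}P&Q\\S&V\end{pmatrix}
  =\begin{pmatrix}E&0\\SP^{-1}&E\end{pmatrix}
  \begin{pmatrix}P&0\\0&\Psi\end{pmatrix}
  \begin{pmatrix}E&P^{-1}Q\\0&E\end{pmatrix},
\]
together with the observation that the $(r+1)$-minors of $\operatorname{diag}(P,\Psi)$ generate the ideal of entries of $\Psi$, because $P$ is invertible.
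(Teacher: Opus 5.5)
Your proof is correct. Part (b) runs essentially as in the paper: injectivity gives $T_uZ=0$, Nakayama gives $\mathcal O_{Z,u}=k$, and the remaining implications are read off from that. The only difference there is that the paper cites G\"ortz--Wedhorn, Theorem~6.28, for ``regular local ring at a rational point implies smooth'', where you argue directly through the openness of $u$ and through étaleness.

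The genuine difference is in part (a). The paper evaluates the $(r+1)\times(r+1)$ minors at the dual-number point $\varphi+\varepsilon\,d\Phi_u(v)$. It then quotes Jiang's Lemma~2.2 for the linear-algebra fact that these minors vanish exactly when $d\Phi_u(v)(\ker\varphi)\subseteq\operatorname{im}\varphi$. You instead first replace the minor ideal in $\mathcal O_{U,u}$ by the ideal of entries of the Schur complement $\Psi=V-SP^{-1}Q$. After that you only need the first-order expansion $\Psi(u_v)=\varepsilon\,dV_u(v)$, which makes your argument self-contained and proves the cited fact along the way.

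Your route also gives more. It yields an explicit local presentation of $Z$ near $u$ by the $(m-r)(n-r)$ entries of $\Psi$. This is the same Gauss elimination the paper performs later, in the proof of \cref{lem:syzygy-smooth}, to obtain its germs $\mathcal S_1,\dots,\mathcal S_c$. The paper's version of (a) is shorter, at the cost of an external citation.
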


\begin{proof}
(a) A tangent vector $v\in T_uU$ lies in $T_uZ$ precisely
when the defining equations of $Z$ vanish at $u_v$, that is,
when all $(r+1)\times(r+1)$ minors of the matrix
\[
  \Phi(u_v)=\varphi+\varepsilon\,d\Phi_u(v)
\]
vanish in $k[\varepsilon]$.  Now, for a matrix
$\varphi$ of rank exactly $r$, the matrices
$\varphi+\varepsilon\psi$ with entries in $k[\varepsilon]$ whose
$(r+1)\times(r+1)$ minors all vanish are precisely those with
$\psi(\ker\varphi)\subseteq\operatorname{im}\varphi$.  In the
language of tangent spaces this is \cite[Lemma~2.2]{Jiang}.  So the
condition on $v$ is $d^\perp\Phi_u(v)=0$.

(b) If $d^\perp\Phi_u$ is injective, then $T_uZ=0$ by (a).  The
cotangent space $\mathfrak m_u/\mathfrak m_u^2$ of the noetherian local
ring $\mathcal O_{Z,u}$ then vanishes, so $\mathfrak m_u=0$ by Nakayama's
lemma and $\mathcal O_{Z,u}=k$.  A local ring equal to the base
field is regular of dimension zero, so $Z$ is smooth of dimension
zero at $u$ by the final implication of
\cite[Theorem~6.28]{GoertzWedhorn}, and $u$ is
reduced and isolated.  If $Z$ is smooth of dimension zero at $u$,
the same theorem gives $\dim T_uZ=0$, and $d^\perp\Phi_u$ is
injective by (a).  If $u$ is reduced and isolated, then
$\mathcal O_{Z,u}$ is a field, so again $T_uZ=0$ and (a) applies.
\end{proof}

\begin{proposition}[Corank two under an annihilating cosection]
\label{prop:cosection-abstract}\label{lem:syzygy-smooth}
Let $k$ be a field and let $U$ be a $k$-scheme, locally of
finite type and smooth over $k$ at a point $u\in U(k)$.  Let
\[
  \mathcal E\xrightarrow{\ \varphi\ }\mathcal F
  \xrightarrow{\ \lambda\ }\mathcal L
\]
be a complex of $\mathcal O_U$-modules with $\mathcal E$ and
$\mathcal F$ finite locally free of ranks $m$ and $n$, with
$n\ge3$ and $m\ge n-1$, and $\mathcal L$ invertible, and assume
$\rk\varphi(u)=n-2$ and
$\lambda(u)\ne0$.  Let $Z\subseteq U$ be the closed subscheme
defined by the ideal $I_{n-1}(\varphi)$ of $(n-1)$-minors, and
write $c=m-n+2$.  Fix a trivialization of the complex on a
neighbourhood of $u$, with $\varphi$ given by an $n\times m$
matrix $\Phi$ of regular functions.  Then
\[
  \theta_u\colon T_uU\longrightarrow
  \Hom_k\bigl(\ker\varphi(u),\
  \ker\lambda(u)/\operatorname{im}\varphi(u)\bigr),
  \qquad
  \theta_u(v)(w)=\overline{d\Phi_u(v)(w)},
\]
is a well-defined linear map, and:
\begin{enumerate}[label=\textup{(\alph*)}]
\item\label{ca:present}\label{sy:present} there are germs
  $\mathcal S_1,\dots,\mathcal S_{c}\in\mathfrak m_u$
  generating $I_{n-1}(\varphi)$ in $\mathcal O_{U,u}$, and
  $\theta_u$ is surjective if and only if the differentials
  $d(\mathcal S_1)_u,\dots,d(\mathcal S_c)_u$ are linearly
  independent;
\item\label{ca:smooth}\label{sy:smooth} if $\theta_u$ is surjective, then $Z$
  is smooth over $k$ at $u$, of codimension $c$ in $U$.
\end{enumerate}
In particular both assertions apply to a morphism
$\Phi\colon U\to\Hom_k(A,B)$ as in \cref{lem:rank-one-tangent}
with $r=n-2$, where $m=\dim_kA$ and $n=\dim_kB$, together with
a morphism $\lambda\colon U\to\Hom_k(B,k)$ satisfying
$\lambda(u)\ne0$ and $\lambda(x')\circ\Phi(x')=0$ for every
$k$-algebra $R$ and every $R$-point $x'$ of $U$: such a pair
defines a complex
$A\otimes_k\mathcal O_U\to B\otimes_k\mathcal O_U
\to\mathcal O_U$ as above, and the codimension in
\ref{sy:smooth} is $c=m-n+2=m-r$.
\end{proposition}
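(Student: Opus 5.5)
We work locally at $u$ in a trivialization: $\mathcal E=\mathcal O^m$, $\mathcal F=\mathcal O^n$, $\mathcal L=\mathcal O$. The plan is to reduce $\Phi$ to a normal form by row and column operations over $\mathcal O_{U,u}$, and then read off the minors.

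\textbf{Well-definedness of $\theta_u$.} First we check that $\theta_u$ is well defined. Differentiate $\lambda\circ\Phi=0$ at $u$ along $v$, that is, evaluate on $k[\varepsilon]$-points. This gives
\[
  \lambda(u)\circ d\Phi_u(v)+d\lambda_u(v)\circ\varphi(u)=0.
\]
On $w\in\ker\varphi(u)$ the second term vanishes, so $d\Phi_u(v)(w)\in\ker\lambda(u)$. Moreover $\operatorname{im}\varphi(u)\subseteq\ker\lambda(u)$ because the composite is zero at $u$. So the quotient $\ker\lambda(u)/\operatorname{im}\varphi(u)$ makes sense, it is one-dimensional, and $\theta_u$ is a linear map. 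Its target has dimension $2\cdot1=2$ for the kernel $\ker\varphi(u)$ of dimension $m-n+2=c$, so the target has dimension $c$.

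\textbf{Normal form.} Since $\rk\varphi(u)=n-2$, some $(n-2)$-minor is a unit in $\mathcal O_{U,u}$. Invertible row and column operations over $\mathcal O_{U,u}$ bring $\Phi$ to block form $\begin{pmatrix}E_{n-2}&0\\0&\Psi\end{pmatrix}$, where $\Psi$ is a $2\times c$ matrix with entries in $\mathfrak m_u$. Then $I_{n-1}(\varphi)=I_1(\Psi)$ is generated by the $2c$ entries of $\Psi$. Transforming $\lambda$ by the same row operations, $\lambda$ becomes a row vector $(\lambda',\lambda'')$ with $\lambda'\cdot E=0$, so $\lambda'=0$. Hence $\lambda''\Psi=0$ with $\lambda''=(\ell_1,\ell_2)$ nonzero at $u$. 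After a further row operation on the last two rows we may assume $\lambda''=(0,1)$ up to a unit, i.e.\ $\ell_2$ is a unit and $\ell_1$ is arbitrary. Replacing the last two rows by a suitable invertible combination, we can make $\lambda''=(0,1)$, which forces the second row of $\Psi$ to vanish. Thus $I_{n-1}(\varphi)$ is generated by the $c$ entries $\mathcal S_1,\dots,\mathcal S_c$ of the first row of $\Psi$, all lying in $\mathfrak m_u$.

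\textbf{Differentials.} In these coordinates, $\ker\varphi(u)$ is spanned by the last $c$ basis vectors, and $\ker\lambda(u)/\operatorname{im}\varphi(u)$ is spanned by the class of the $(n-1)$-st basis vector. Moreover $\theta_u(v)$ is the row vector $(d(\mathcal S_j)_u(v))_j$. The changes of bases only multiply $d\Phi_u(v)$ restricted to the kernel and projected to the cokernel by the values at $u$ of invertible matrices. This is the usual computation: the derivative of a product $P\Phi Q$ restricted to the kernel and projected is $P(u)\,d\Phi_u\,Q(u)$, because the terms involving $dP$ and $dQ$ are killed by $\varphi(u)$ on the kernel or land in the image. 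Hence $\theta_u$ is surjective iff the $d(\mathcal S_j)_u$ are linearly independent, which proves (a).

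\textbf{Smoothness.} For (b): $U$ is smooth at $u$, so $\mathcal O_{U,u}$ is regular, and elements with linearly independent differentials at a rational point are part of a regular system of parameters. The quotient $\mathcal O_{U,u}/(\mathcal S_1,\dots,\mathcal S_c)$ is then regular of dimension $\dim_uU-c$. Since $u$ is $k$-rational, regularity gives smoothness, by the Jacobian criterion as in \cite[Theorem~6.28]{GoertzWedhorn}. The final sentence of the statement is just the translation: a pair $(\Phi,\lambda)$ with $\lambda\circ\Phi=0$ functorially defines such a complex.

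\textbf{Main obstacle.} The main obstacle is the normal-form step: making the cosection condition force one row of $\Psi$ to vanish while keeping track of the invariance of $\theta_u$ under the base changes. This is where I would be most careful.
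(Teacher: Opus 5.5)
Your argument is correct; it differs from the paper's in where the cosection is used and in how (a) is deduced.

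The paper uses $\lambda$ first. It factors $\Phi=C\Psi$ through the rank-$(n-1)$ kernel subbundle of $\lambda$, so $I_{n-1}(\Phi)$ becomes the ideal of maximal minors of an $(n-1)\times m$ matrix. Only then does it eliminate, to $\mathrm{diag}(I_{n-2},\mathcal S)$ with $\mathcal S$ a single row. You eliminate first, to $\mathrm{diag}(E_{n-2},\Psi)$ with $\Psi$ of size $2\times c$. You then use $\lambda''\Psi=0$ and the invertible matrix $\begin{pmatrix}1&0\\ \ell_1&\ell_2\end{pmatrix}$ (after swapping rows if necessary) to kill the second row. That step, which you flag as the main obstacle, works as you wrote it.

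For the equivalence in (a), the paper never computes $\theta_u$ in coordinates. It proves $\ker\alpha_u=T_uZ=\ker d^\perp\psi_u=\ker\theta_u$, using \cref{lem:rank-one-tangent} for $\psi$ and a product-rule identity. It then concludes from the targets of $\alpha_u$ and $\theta_u$ both having dimension $c$. You instead check that $\theta_u$ transforms by the values at $u$ of the invertible base-change matrices, and read it off in normal form as $v\mapsto(d(\mathcal S_j)_u(v))_j$. This identifies $\theta_u$ with the differential map outright, not merely up to equal kernels. Your version is self-contained and independent of the tangent-space lemma and its citation. The paper's version avoids the bookkeeping of how source and target move under the base changes by reusing a lemma it already has. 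Part (b) is the same in both.

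One slip: in your well-definedness paragraph, ``dimension $2\cdot1=2$'' holds only when $m=n$. In general the target has dimension $\dim\ker\varphi(u)\cdot 1=c$, as you then correctly state.
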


\begin{proof}
All assertions are local at $u$, so we may shrink $U$.  In
the fixed trivialization $\lambda$ is given by a row
$(\lambda_1,\dots,\lambda_n)$ of regular functions, with
$\lambda\Phi=0$ and $\lambda(u)\ne0$.  Evaluating $\lambda\Phi=0$ at $u$ gives
$\operatorname{im}\varphi(u)\subseteq\ker\lambda(u)$;
evaluating it at the $k[\varepsilon]$-point attached to
$v\in T_uU$ and taking the coefficient of $\varepsilon$ gives
$d\lambda_u(v)\,\varphi(u)+\lambda(u)\,d\Phi_u(v)=0$, so
$d\Phi_u(v)\bigl(\ker\varphi(u)\bigr)\subseteq\ker\lambda(u)$.
Hence $\theta_u$ is well defined, and it is linear in $v$ and
$w$.

\ref{ca:present}  Since $\lambda(u)\ne0$, after shrinking $U$
there is a commutative diagram
\[
\begin{tikzcd}
& & \mathcal O_U^m \arrow[d, "\Phi"] \arrow[dl, "\psi"'] & & \\
0 \arrow[r] & \mathcal K \arrow[r, "\iota"]
  & \mathcal O_U^n \arrow[r, "\lambda"]
  & \mathcal O_U \arrow[r] & 0
\end{tikzcd}
\]
with exact row, in which $\mathcal K$ is a subbundle of rank
$n-1$ with $\mathcal K(u)=\ker\lambda(u)$ and $\psi$ exists
because $\lambda\Phi=0$.  A frame $C$ of
$\mathcal K$ identifies $\psi$ with a morphism
$U\to\Hom_k\bigl(k^m,\ker\lambda(u)\bigr)$, the values of $C$
at $u$ serving as a basis of $\ker\lambda(u)$;
evaluating $\Phi=\iota\circ\psi$ at $u$ gives
$\iota(u)\circ\psi(u)=\varphi(u)$, so $\rk\psi(u)=n-2$.  In a
frame of $\mathcal O_U^n$ extending $C$ the matrix of
$\varphi$ becomes the matrix $\Psi$ of $\psi$ with a zero row
appended, and in matrix form the factorization reads
$\Phi=C\Psi$.  Since a change of frame multiplies the matrix by an
invertible matrix over $\mathcal O_U$,
$I_{n-1}(\Phi)=I_{n-1}(\Psi)$, the ideal of maximal minors of
the $(n-1)\times m$ matrix $\Psi$.

Since $\rk\psi(u)=n-2$, some $(n-2)$-minor of $\Psi$ is
invertible in $\mathcal O_{U,u}$, and Gauss elimination over
$\mathcal O_{U,u}$ yields invertible matrices $P$ and $Q$, of sizes
$(n-1)\times(n-1)$ and $m\times m$, with
\[
  P\Psi Q=\begin{pmatrix}I_{n-2}&0\\0&\mathcal S\end{pmatrix},
  \qquad
  \mathcal S=(\mathcal S_1,\dots,\mathcal S_{c}),
  \qquad
  c=m-n+2,
\]
the row $\mathcal S$ being the bottom-right block of this
$(n-1)\times m$ matrix, and $\mathcal S_j\in\mathfrak m_u$,
since a nonzero value at
$u$ would make $\rk\psi(u)$ exceed $n-2$.  Invertible
operations preserve ideals of minors, and the maximal minors of
the block matrix, each of them $\pm\mathcal S_j$ or $0$,
generate $(\mathcal S_1,\dots,\mathcal S_c)$.
Hence these germs generate $I_{n-1}(\Phi)$ in
$\mathcal O_{U,u}$, and $Z=V(\mathcal S_1,\dots,\mathcal S_c)$
near $u$.  Write
\[
  \alpha_u\colon T_uU\longrightarrow k^{c},
  \qquad
  v\longmapsto\bigl(d(\mathcal S_1)_u(v),\dots,
    d(\mathcal S_c)_u(v)\bigr).
\]
A $k[\varepsilon]$-point over $u$ lies on $Z$ exactly when
$\mathcal S_1,\dots,\mathcal S_c$ vanish on it, so
$T_uZ=\ker\alpha_u$.  On the other
hand
\cref{lem:rank-one-tangent}, applied to $\psi$, gives
$T_uZ=\ker d^\perp\psi_u$.  The maps $d^\perp\psi_u$ and
$\theta_u$ have the same target:
$\iota(u)\circ\psi(u)=\varphi(u)$ gives
$\ker\psi(u)=\ker\varphi(u)$ and
$\operatorname{im}\psi(u)=\operatorname{im}\varphi(u)$, so
$\coker\psi(u)=\ker\lambda(u)/\operatorname{im}\varphi(u)$.
Moreover $d^\perp\psi_u=\theta_u$: for
$w\in\ker\varphi(u)$ the product rule applied to
$\Phi=C\Psi$ gives
\[
  d\Phi_u(v)(w)
  =C(u)\,d\psi_u(v)(w)
  +dC_u(v)\,\underbrace{\psi(u)\,w}_{=\,0}
  =d\psi_u(v)(w),
\]
the second summand vanishing because
$\ker\psi(u)=\ker\varphi(u)$, and the last equality holding
because $C(u)$ is the inclusion of $\ker\lambda(u)$ into
$k^n$.  Hence
$\ker\alpha_u=\ker\theta_u$.  The targets of $\alpha_u$ and
$\theta_u$ both have dimension $c$, the second because
$\dim\ker\varphi(u)=c$ and
$\dim\bigl(\ker\lambda(u)/\operatorname{im}\varphi(u)\bigr)=1$.
So $\alpha_u$ and $\theta_u$ have equal rank, and $\theta_u$ is
surjective if and only if $\alpha_u$ is, that is, if and only if
$d(\mathcal S_1)_u,\dots,d(\mathcal S_c)_u$ are linearly
independent.

\ref{ca:smooth}  By \ref{ca:present} the germs
$\mathcal S_1,\dots,\mathcal S_c$ generate the ideal of $Z$ in
$\mathcal O_{U,u}$; if $\theta_u$ is surjective, their
differentials at $u$ are linearly independent.  Since
$T_uU=(\mathfrak m_u/\mathfrak m_u^2)^*$, their images in
$\mathfrak m_u/\mathfrak m_u^2$ are linearly independent.  The
local ring $\mathcal O_{U,u}$ is regular, since $U$ is smooth
at $u$ \cite[Lemma~6.26]{GoertzWedhorn}, so
$\mathcal O_{Z,u}
=\mathcal O_{U,u}/(\mathcal S_1,\dots,\mathcal S_c)$ is
regular of dimension $\dim\mathcal O_{U,u}-c$
\cite[Proposition~B.77~(3)]{GoertzWedhorn}.  As $\kappa(u)=k$, the final implication of
\cite[Theorem~6.28]{GoertzWedhorn} shows that $Z$ is smooth
over $k$ at $u$ of relative dimension
$\dim\mathcal O_{U,u}-c$, that is, of codimension $c$ in
$U$.

For the final assertion, the morphisms $\Phi$ and $\lambda$
define $\mathcal O_U$-linear maps whose values at every
$R$-point $x'$ are $\Phi(x')$ and $\lambda(x')$.  The entries
of the composite are regular functions on $U$ and vanish at
every $R$-point, hence vanish identically (evaluate at the
canonical point of each affine open), so the pair defines a
complex of the stated kind, with fibres at $u$ the given
$\varphi$ and $\lambda(u)$.
\end{proof}

\begin{lemma}[Closed points under base change]
\label{lem:reduced-isolated-descent}
Let $F$ be a finite field, let $Z$ be a scheme locally of finite
type over $F$, and let $k|F$ be a finite extension.
\begin{enumerate}[label=\textup{(\alph*)}]
\item\label{bc:points} The image $\xi$ in $Z$ of a $k$-rational
point of $Z$ is a closed point, and every closed point $\xi$ of
$Z$ is the image of a canonical $\kappa(\xi)$-rational point,
the canonical morphism $\Spec\kappa(\xi)\to Z$.
\item\label{bc:local} Let $u\in Z(k)$ with
image $\xi$ in $Z$, and regard $u$ also as a $k$-rational point
of $Z\otimes_Fk$, as in \cref{subsec:rank-drop-geometry}.  Then,
for every $n\ge0$, the scheme $Z\otimes_Fk$ is smooth of
dimension $n$ at $u$ if and only if $Z$ is smooth of dimension
$n$ at $\xi$.  Likewise $u$ is a reduced isolated point of
$Z\otimes_Fk$ if and only if $\xi$ is a reduced isolated point
of $Z$.
\end{enumerate}
\end{lemma}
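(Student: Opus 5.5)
Part~\ref{bc:points} is handled by Hilbert's Nullstellensatz in the scheme-theoretic form. A morphism $\Spec k\to Z$ has image a point $\xi$ with a field embedding $\kappa(\xi)\hookrightarrow k$. Since $k|F$ is finite, $\kappa(\xi)$ is finite over $F$. For a scheme locally of finite type over a field, a point with residue field finite over the base is closed: take an affine open $\Spec A$ containing $\xi$ with $A$ of finite type; the image of $A$ in $\kappa(\xi)$ is a domain finite over $F$, hence a field, so $\xi$ is closed in every affine open containing it, hence closed. Conversely, for a closed point $\xi$ the canonical morphism $\Spec\kappa(\xi)\to Z$ is a $\kappa(\xi)$-rational point with image $\xi$.

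For part~\ref{bc:local} I would reduce everything to the local rings. Let $\pi\colon Z_k=Z\otimes_Fk\to Z$ be the projection, and let $u'$ be the closed point of $Z_k$ given by $u$; it lies over $\xi$. Then $\mathcal O_{Z_k,u'}$ is a localization of $\mathcal O_{Z,\xi}\otimes_Fk$ at a maximal ideal. Moreover $\kappa(\xi)\otimes_Fk\simeq\prod_{\tau}k$, a product over the $F$-embeddings $\tau\colon\kappa(\xi)\to k$, because $\kappa(\xi)|F$ is separable, $F$ being finite (perfect). Hence $\mathcal O_{Z,\xi}\otimes_Fk$ is a finite flat $\mathcal O_{Z,\xi}$-algebra, étale over it, whose maximal ideals correspond to the embeddings $\tau$. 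The point $u'$ corresponds to the embedding induced by $u$. Thus $\mathcal O_{Z,\xi}\to\mathcal O_{Z_k,u'}$ is a local flat homomorphism with $\mathfrak m_\xi\mathcal O_{Z_k,u'}=\mathfrak m_{u'}$ and separable residue extension. This is the key step, and I expect it to be the main point requiring care: one has to identify the factor of $\kappa(\xi)\otimes_Fk$ belonging to $u'$, and show that the maximal ideal of $\mathcal O_{Z,\xi}$ generates the maximal ideal upstairs, which follows from étaleness of $F\to k$.

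Given this, the conditions transfer. Dimension is preserved, since the map is flat and the fibre ring $\mathcal O_{Z_k,u'}/\mathfrak m_\xi\mathcal O_{Z_k,u'}=k$ has dimension zero, so $\dim\mathcal O_{Z_k,u'}=\dim\mathcal O_{Z,\xi}$. Regularity ascends and descends along a flat local map whose closed fibre is a field (EGA~IV/Matsumura~23.7). Over the perfect field $F$, smoothness of dimension $n$ at a closed point is equivalent to regularity of the local ring of dimension $n$, by \cite[Theorem~6.28]{GoertzWedhorn}, and similarly over $k$. This gives the smoothness equivalence.

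For ``reduced isolated'' I would use the combined form fixed in Appendix~\ref{app:rank-one-lemmas}: the local ring equals its residue field, i.e.\ $\mathfrak m=0$. Since $\mathfrak m_{u'}=\mathfrak m_\xi\mathcal O_{Z_k,u'}$ and the map is faithfully flat, $\mathfrak m_\xi=0$ if and only if $\mathfrak m_{u'}=0$. Equivalently, this is the case $n=0$ of the smoothness statement. This completes part~\ref{bc:local}.
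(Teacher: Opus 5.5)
Your proposal is correct and follows essentially the same route as the paper. Part (a) uses the embedding $\kappa(\xi)\hookrightarrow k$ and the closedness of points whose residue field is finite over $F$. Part (b) shows that $\mathcal O_{Z,\xi}\to\mathcal O_{Z\otimes_Fk,u}$ preserves dimension and preserves and reflects regularity, then uses that smooth $\Leftrightarrow$ regular at points with separable residue field, and that reduced isolated $\Leftrightarrow$ the local ring is a field.

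The only difference is how the properties of this local map are justified. You verify them by hand, computing $\kappa(\xi)\otimes_Fk\simeq\prod_\tau k$ and invoking Matsumura's flat dimension formula and Theorem~23.7. The paper instead notes that the map is an étale local homomorphism, as a base change of $\Spec k\to\Spec F$, and cites the Stacks Project.
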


\begin{proof}
\ref{bc:points}  By \cite[Proposition~3.8]{GoertzWedhorn}, a
morphism $\Spec k\to Z$ of $F$-schemes with image $\xi$ amounts
to an $F$-algebra homomorphism $\kappa(\xi)\to k$.  The residue
field of the image of a $k$-rational point therefore embeds into
$k$, hence is finite over $F$, and the image is closed
\cite[Proposition~3.33]{GoertzWedhorn}.  For a closed point
$\xi$, the residue field $\kappa(\xi)$ is a finite extension of
$F$, and the identity of $\kappa(\xi)$ provides the canonical
$\kappa(\xi)$-rational point with image $\xi$.

\ref{bc:local}  Finite fields are perfect, so the extension
$k|F$ is separable and $\Spec k\to\Spec F$ is étale
\cite[Tag~02GL]{StacksProject}.  Hence so is its base change
$Z\otimes_Fk\to Z$ \cite[Tag~02GO]{StacksProject}.  Over an
affine open neighbourhood $\Spec A\subseteq Z$ of $\xi$ this
morphism restricts to $\Spec(A\otimes_Fk)\to\Spec A$.  The ring
map $A\to A\otimes_Fk$ is therefore étale, and it is of finite
type over the noetherian ring $A$.  Hence the induced local
homomorphism
$\mathcal O_{Z,\xi}\to\mathcal O_{Z\otimes_Fk,\,u}$ is an étale
homomorphism of local rings \cite[Tag~039L]{StacksProject}.
Such a homomorphism leaves the dimension unchanged
\cite[Tag~039S]{StacksProject} and preserves and reflects
regularity \cite[Tag~025N]{StacksProject}.

The residue fields $\kappa(\xi)$ and $\kappa(u)$ are finite, so
the extensions $\kappa(\xi)|F$ and $\kappa(u)|k$ are separable.  At a point with separable
residue field, smoothness over the base field is equivalent to
regularity of the local ring \cite[Tag~00TV]{StacksProject}.
Consequently $Z\otimes_Fk$ is smooth at $u$ exactly when
$\mathcal O_{Z\otimes_Fk,\,u}$ is regular, and $Z$ is smooth at
$\xi$ exactly when $\mathcal O_{Z,\xi}$ is regular.  Since
regularity passes both ways along the \'etale homomorphism above,
the two smoothness statements are equivalent.  By
\cite[Theorem~6.28]{GoertzWedhorn}, smoothness of dimension $n$
at the closed point $u$, respectively $\xi$, means smoothness
there together with local dimension $n$.  As the two local
dimensions are equal, the smoothness statement follows.  A
reduced isolated point is one whose local ring is a field, as
recalled at the beginning of this appendix, that is, regular of
dimension zero.  Since regularity and dimension are unchanged
along $\mathcal O_{Z,\xi}\to\mathcal O_{Z\otimes_Fk,\,u}$, the
point $u$ is reduced and isolated if and only if $\xi$ is.
\end{proof}

\section{Verification and reproducibility}\label{app:verification}

This appendix records the checks behind the computer-assisted parts
of \cref{thm:mainexample,thm:mild-p3,thm:p7}.
\Cref{subsec:certificate-verification} defines the arithmetic
certificate, the stored output of \cref{alg:matrices} for one
field, and defines when a certificate \emph{verifies}: the checks
re-establish the claim carried by each stored datum by exact
arithmetic.  It also states in which sense the results are
unconditional.
\Cref{subsec:searches} records how the class fields and the norm
witnesses were found; nothing in the proofs depends on it.
\Cref{subsec:software} describes the released programs and the
public verification procedure.  The subsequent computations
(\cref{alg:transverse} and \cref{alg:completion})
read only the verified matrices; they are exact and deterministic,
and they are checked by direct recomputation.

\subsection{Certificates and their verification}
\label{subsec:certificate-verification}
\label{subsec:unconditionality}

The following definitions make the stored data and the checks
precise.
Fix an imaginary quadratic number field $K$ and an odd prime $p$ as
in \cref{sec:computation}, and write $g_1,\ldots,g_r$ for the
class-group generators fixed there, of orders $n_1,\ldots,n_r$.
Exactly $d$ of the orders are divisible by $p$, say those of
$g_{i_1},\ldots,g_{i_d}$.  The stored data refer to these
generators through three conventions.
\begin{itemize}
\item The basis element $e_j$ of $\Cl(K)[p]$ is the
  $(n_{i_j}/p)$-fold multiple of $g_{i_j}$.
\item A coordinate vector in $\Fp^{\,d}$ denotes a class of
  $\Cl(K)/p$ through the images of $g_{i_1},\ldots,g_{i_d}$.
\item A character vector $x\in\Fp^{\,d}$ denotes the character
  with $x(\overline{g_{i_j}})=x_j$ for $j=1,\ldots,d$.  Characters are read as
  functionals on $\Cl(K)/p$ as in \eqref{eq:sigma-character},
  so every character vanishes on classes of order prime to $p$.
\end{itemize}
In these
coordinates $\chi_1,\ldots,\chi_d$ are the unit vectors, and the
six input characters of \cref{alg:matrices} (with
$\chi_1{+}\chi_2{+}\chi_3$ in place of $\chi_2{+}\chi_3$ at
$p=3$) are called the \emph{stored characters}.

\begin{definition}\label{def:certificate}
A \emph{certificate} for $(K,p)$ consists of a header, stored
once, and a list of entries.  The header contains:
\begin{center}
\begin{tabular}{@{}l@{\quad}>{\raggedright\arraybackslash}p{0.40\textwidth}@{\quad}>{\raggedright\arraybackslash}p{0.34\textwidth}@{}}
\toprule
datum & stored form & role\\
\midrule
$K$ & a monic quadratic polynomial over $\mathbb Q$; an integer &
a defining polynomial of the base field and the discriminant
$D_K$\\
\addlinespace
$\Cl(K)$ & the elementary divisors $n_1,\ldots,n_r$;
representatives of the generators $g_1,\ldots,g_r$ & the class
group of $K$, presented by the generators fixed in
\cref{sec:computation}\\
\addlinespace
units & an order and a generator & the torsion units of
$K$\footnotemark\\
\addlinespace
$B_K$ & a list of two elements of $K$ & the integral basis of
$\mathcal O_K$, to which all base-field coordinates refer\\
\addlinespace
$T$ (at $p=3$) & a $3\times27$ matrix over $\Fp$ & a copy of the
matrix $T$ of \eqref{eq:T}, recorded at the search stage
(\cref{subsec:searches}) and called the \emph{expected
tensor}\footnotemark\\
\bottomrule
\end{tabular}
\end{center}
\addtocounter{footnote}{-1}%
\footnotetext{The quotient $N_x(t)/a'$ in \cref{lem:norm-sign}
is a unit of $\mathcal O_K$, hence a torsion unit.  The stored
group is the candidate list for that check, and the reduction
place must separate the candidates.}%
\stepcounter{footnote}%
\footnotetext{The name is that of the data format of
\cite{Repository}.  The asymmetry between the primes is
historical.  The $p=3$ search stage recorded the complete matrix
family of every field, so an independent record existed to embed;
at $p>3$ no record was embedded in the format.}%

An entry consists of the data listed below.  The stated roles
are claims, made precise and decided by the checks of
\cref{def:verifies}.
\begin{center}
\begin{tabular}{@{}l@{\quad}>{\raggedright\arraybackslash}p{0.40\textwidth}@{\quad}>{\raggedright\arraybackslash}p{0.34\textwidth}@{}}
\toprule
datum & stored form & role\\
\midrule
$x$, $e$ & a nonzero vector in $\Fp^{\,d}$; one of
$e_1,\ldots,e_d$ & the pair at which $D_x$ is
evaluated\\
\addlinespace
$f_{\mathrm{rel}}$, $f_{\mathrm{abs}}$ & monic polynomials, of
degree $p$ over $K$ and of degree $2p$ over $\mathbb Q$ & the
relative and the absolute defining polynomial of one field $L$\\
\addlinespace
$B_L$ & $2p$ elements of $\mathbb Q[T]/(f_{\mathrm{abs}})$, each
a polynomial in the image $\theta$ of $T$, a primitive element
of $L$ & the integral basis of $L$, to which all coordinates of
the entry refer\\
\addlinespace
$w$ & a vector in $\mathbb Q^{2p}$ & the coordinates in $B_L$ of
the image of $\theta$ under the normalized generator of
$\Gal(L\mid K)$\\
\addlinespace
$a'$, $J$ & an element of $K^\times$; an ideal of $\mathcal O_K$
in Hermite normal form & a pair as in \eqref{eq:pair-aJ}
representing $e$\\
\addlinespace
$t$, $I'$ & a formal product of elements of $L$ with integer
exponents; an ideal of $\mathcal O_L$ in Hermite normal form & a
norm witness for $x$ and $(a',J)$ (\cref{def:norm-witness})\\
\addlinespace
$\ell$, $\mathfrak q$ & an odd prime; a prime ideal of
$\mathcal O_K$ above it & the reduction place for
\eqref{eq:checkAC2}\\
\addlinespace
$v$ & a vector in $\Fp^{\,d}$ & the coordinate vector of
$D_x(e)$\\
\bottomrule
\end{tabular}
\end{center}
The table lists the mathematical content of an entry.  The
layout of the serialized data file, with its labels and field
order, is documented in \cite{Repository}.  The list contains
exactly one entry for each of the six
stored characters and each basis element $e_j$.
\end{definition}

Part~W4 of \cite{Repository} presents one certificate entry as
a worked example, item by item.

\begin{definition}\label{def:verifies}
A certificate \emph{verifies} if the conditions listed below
hold, each decided by exact arithmetic.  The conditions of group \textup{(V1)}
concern the base field and are checked once.  For every entry,
the conditions of group \textup{(V2)} establish the field $L$
and the automorphism $\sigma_L$, and those of group
\textup{(V3)} verify the arithmetic identities on them.
\begin{center}
\begin{tabular}{@{}l@{\quad}l@{\quad}>{\raggedright\arraybackslash}p{0.62\textwidth}@{}}
\toprule
check & datum & condition\\
\midrule
\textup{(V1)} & $D_K$ & the recomputed discriminant of $K$ is
the stored one\\
\addlinespace
\textup{(V1)} & $B_K$ & the recomputed integral basis is the
stored one\\
\addlinespace
\textup{(V1)} & $\Cl(K)$ & the class group, recomputed and
established unconditionally, has the stored elementary divisors
and generators\\
\addlinespace
\textup{(V1)} & units & the torsion units of $K$ are the stored
ones\\
\midrule
\textup{(V2)} & $f_{\mathrm{rel}}$, $f_{\mathrm{abs}}$ &
$f_{\mathrm{rel}}$ is irreducible over $K$ and defines a field
$L=K[T]/(f_{\mathrm{rel}})$ of degree $p$; the extension is
unramified everywhere, decided through the discriminant relation
$D_L=D_K^{\,p}$; $f_{\mathrm{abs}}$ is the absolute defining
polynomial derived from $f_{\mathrm{rel}}$, so that the two
polynomials present the same field\\
\addlinespace
\textup{(V2)} & $B_L$ & the change of basis between $B_L$ and a
recomputed integral basis is integral with determinant $\pm1$,
so that $B_L$ is itself an integral basis and the coordinate
vectors of the entry denote well-defined elements and ideals of
$\mathcal O_L$\\
\addlinespace
\textup{(V2)} & $w$ & the element with coordinate vector $w$ is
the image of $\theta$ under a power of an independently
recomputed generator of $\Gal(L\mid K)$; the automorphism so
determined, denoted $\sigma_L$, fixes $K$ and has exact order
$p$\\
\addlinespace
\textup{(V2)} & $x$ & the Artin symbols of $g_1,\ldots,g_r$,
written as powers of $\sigma_L$, have exponent vector $x$, so
that $L=L_x$ and $\sigma_L=\sigma_x$ in the normalization of
\eqref{eq:sigma-character}\\
\midrule
\textup{(V3)} & $a'$, $J$ & $(a')\,J^{\,p}=\mathcal O_K$, and
the class coordinates of $J$ with respect to the stored
generators are those of $e$\\
\addlinespace
\textup{(V3)} & $t$, $I'$ & identity \eqref{eq:checkAC1} holds
as an equality of ideals in Hermite normal form\\
\addlinespace
\textup{(V3)} & $\ell$, $\mathfrak q$ & the principal ideal
generated by $N_x(t)/a'$, formed factor by factor from the
stored representation of $t$, equals $\mathcal O_K$;
$\mathfrak q$ lies above the odd prime $\ell$, every factor of
the quotient is a unit at $\mathfrak q$, and exact reduction at
$\mathfrak q$ gives $N_x(t)/a'=1$\\
\addlinespace
\textup{(V3)} & $v$ & the recomputed class $[N_x(I')]$, at
$p=3$ the class $[N_x(I')\,J]$ of \cref{prop:AC-p3}, has
coordinate vector $v$\\
\bottomrule
\end{tabular}
\end{center}
\end{definition}

\begin{proposition}\label{prop:soundness}
If a certificate for $(K,p)$ verifies, then for every entry
\[
  D_x(e)=v \quad\text{in } \Cl(K)/p,
\]
with respect to the stored generators.  In particular the six
matrices assembled from the eighteen entries are the matrices of
the secondary norm operators of the six stored characters, and
they determine the complete Massey product.
\end{proposition}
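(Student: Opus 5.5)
The plan is to reduce the statement, entry by entry, to \cref{prop:AC} (respectively \cref{prop:AC-p3} at $p=3$). The verification checks are designed to re-establish exactly the hypotheses of those propositions. The proof is therefore a bookkeeping argument that matches each group of checks to one hypothesis.

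First, the base field. By (V1), $K$ is the imaginary quadratic field with the stored discriminant. Its class group, established unconditionally, has the stored generators $g_1,\ldots,g_r$ and elementary divisors. Hence the stored $e_j$ is the basis of $\Cl(K)[p]$ fixed in \cref{sec:computation}, and coordinate vectors denote classes of $\Cl(K)/p$ as intended. The torsion units check, together with $p\mid h_K$, confirms $\mathcal O_K^\times=\{\pm1\}$, as used in \cref{lem:norm-sign}.

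Second, the identification of the class field. By (V2), $L=K[T]/(f_{\mathrm{rel}})$ is a degree-$p$ extension. The discriminant relation $D_L=D_K^{\,p}$ forces it to be unramified at all finite primes. It is Galois cyclic because $\sigma_L$ is an automorphism of exact order $p$ fixing $K$. Since $p$ is odd, there is no archimedean ramification. By class field theory, $L$ is the fixed field of the kernel of the character sending $g_i$ to the Artin exponent of $g_i$ relative to $\sigma_L$. The check on $x$ says these exponents form the vector $x$. By \eqref{eq:sigma-character} and injectivity of $x$ on $\Gal(L_x| K)$, this gives $L=L_x$ and $\sigma_L=\sigma_x$. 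I expect this step to be the main obstacle. One must argue carefully that matching Artin symbols on generators determines both the field and the normalization of the generator. One must also check that the integral-basis check ensures that coordinates of $I'$ and of $\sigma_L(\theta)$ denote genuine elements and ideals of $\mathcal O_{L}$, so that \eqref{eq:checkAC1} checked in Hermite normal form is an honest equality of ideals.

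Third, the arithmetic. The (V3) check on $(a',J)$ says that $(a')J^p=\mathcal O_K$ and $[J]=e$. So $(a',J)$ is a representing pair as in \eqref{eq:pair-aJ}, using the isomorphism $(a',J)\mapsto[J]$ of \cref{prop:comparison}\ref{comp:dual}. The check on $(t,I')$ is \eqref{eq:checkAC1}. \Cref{lem:norm-sign} then gives $N_x(t)=\pm a'$. The reduction at $\mathfrak q$ above an odd prime, computed factor by factor, with every factor a $\mathfrak q$-unit so that reduction is a ring homomorphism, yields $N_x(t)/a'\equiv1$, hence \eqref{eq:checkAC2}. Thus $(t,I')$ is a norm witness in the sense of \cref{def:norm-witness}. \Cref{prop:AC} (or \cref{prop:AC-p3}) gives $D_x(e)=[N_x(I')]$ (times $[J]$ at $p=3$), and the last check identifies this class with $v$.

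Finally, running over all eighteen entries gives the columns of the six matrices $\mathcal D_x$. By \cref{thm:reconstruction}, at $p>3$ the family \eqref{eq:minimal-family} determines all $D_x$ and hence $M$ via \eqref{eq:MfromD}. At $p=3$, the four characters $\chi_1,\chi_2,\chi_3,\chi_1+\chi_2+\chi_3$ suffice by \cref{lem:four-evaluations}.
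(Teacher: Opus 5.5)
Your proposal is correct and follows the paper's proof step for step. \textup{(V1)} fixes the class-group coordinates, and \textup{(V2)} gives $L=L_x$ and $\sigma_L=\sigma_x$. \textup{(V3)}, combined with \cref{lem:norm-sign} and the reduction at $\mathfrak q$, produces a norm witness, so \cref{prop:AC} (respectively \cref{prop:AC-p3}) yields $D_x(e)=v$. The final assertion then comes from \cref{thm:reconstruction} for $p>3$ and from \cref{lem:four-evaluations} for $p=3$. Your extra details spell out what the paper compresses into ``by \textup{(V2)}'': the discriminant relation forcing $\mathfrak d_{L|K}=\mathcal O_K$, cyclicity from $\sigma_L$, and the role of the integral-basis check.
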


\begin{proof}
By \textup{(V2)} the stored field is everywhere unramified, cyclic
of degree $p$, and its Artin symbols realize the prescribed
character; hence $L=L_x$, and the automorphism $\sigma_L$ determined by
$w$ satisfies the normalization
\eqref{eq:sigma-character}: $\sigma_L=\sigma_x$.  By \textup{(V1)}
the class coordinates in \textup{(V3)} refer to the stored
generators of $\Cl(K)$, and the entry's pair $(a',J)$ represents
$e$ as in \eqref{eq:pair-aJ}.  Identity \eqref{eq:checkAC1} holds by
\textup{(V3)}; \cref{lem:norm-sign} then forces $N_x(t)=\pm a'$,
and the exact reduction at $\mathfrak q$ excludes the sign $-1$,
which is identity \eqref{eq:checkAC2}.  Thus the hypotheses of
\cref{prop:AC}, at $p=3$ of \cref{prop:AC-p3}, are
verified, and $D_x(e)$ is the recomputed norm class of
\textup{(V3)}, whose coordinate vector is $v$.  The final
statements follow as stated: for $p>3$ by
\cref{thm:reconstruction} applied to the six assembled matrices,
and at $p=3$ by \cref{lem:four-evaluations} applied to the four
of them at the basis characters and at
$\chi_1{+}\chi_2{+}\chi_3$.
\end{proof}

\begin{remark}[Verification at higher rank]\label{rem:rank-d-verification}
For a field with $d_p\Cl(K)=d\ge3$ the same scheme applies with
exactly $d$ of the elementary divisors divisible by $p$, the basis
$e_1,\ldots,e_d$ of $\Cl(K)[p]$, the $d(d+1)/2$ characters of
\cref{alg:matrices} and the $d(d+1)/2$ assembled matrices with their
$d\cdot d(d+1)/2$ entries.  \Cref{def:certificate},
\cref{def:verifies} and the proof of \cref{prop:soundness} apply
verbatim, since nothing in them uses $d=3$ beyond notation.  The
record of the field of rank four follows this scheme, with ten
matrices and forty entries.
\end{remark}

The certificates of all three primes are checked by one program:
\texttt{verify\_certificate.c}, in the top-level directory
\texttt{verifier} of \cite{Repository}.  It checks the conditions
\textup{(V1)}--\textup{(V3)} directly, by exact arithmetic, and
a successful run establishes that its input verifies.  It computes no class group,
unit group, or ray class group of any of the fields $L_x$.
Entry by entry the chain
\[
  x \;\longrightarrow\; L_x\mid K \;\longrightarrow\; \sigma_x
  \;\longrightarrow\; \eqref{eq:checkAC1}, \eqref{eq:checkAC2}
  \;\longrightarrow\; [N_x(I')]
\]
is re-established from the stored data alone, independently of the
search that produced the data.  The checks change with the prime
only in the stored character family and, at $p=3$, in the
factor $J$ of \cref{prop:AC-p3} in \textup{(V3)}, which enters
every recomputed class.

\Cref{def:verifies} stops at the six matrices; reconstructing the
Massey product from them is linear algebra over $\Fp$, with no
further stored data entering.  The program carries this
reconstruction out as well.  From the verified matrices it
computes the three polarizations $\Delta D(\chi_i,\chi_k)$,
$i<k$, of \eqref{eq:DeltaD}.  At $p=3$, where $\chi_2+\chi_3$ is
not among the stored characters, it solves the expansion
$D_{\chi_1+\chi_2+\chi_3}=\sum_i D_{\chi_i}-\sum_{i<k}\Delta
D(\chi_i,\chi_k)$, bilinearity as in \cref{thm:reconstruction},
for the missing polarization, so all six stored matrices enter.
It then assembles the complete matrix $T$, with the coefficients
placed by the word-order convention of \eqref{eq:T}, and rejects
unless the shuffle identities \eqref{eq:shuffle} hold.  At $p=3$ the six stored
characters overdetermine the product (four would suffice by
\cref{lem:four-evaluations}), and these identities cross-check
the assembled data.  Verification uses the certificate
alone.\footnote{Some $p=3$ directories carry auxiliary
factorization hints, read alongside the certificate.  Every hint
is proved prime before use, so a hint can shorten a factorization
or cause rejection, but no hint can make a failing certificate
pass.}  Beyond it, the
program compares the reconstruction against the expected tensor
when the header contains one (at $p=3$), and against a result
record of the search stage when such a record is passed
alongside.  A mismatch causes rejection.  Neither comparison
enters \cref{def:verifies}; both are redundancy checks against
the search stage.

The proofs of \cref{sec:results} use three further
computations: the ranks and determinants of
\cref{subsec:rank-one-computation}, the transversality decision
of \cref{alg:transverse}, and the strong-freeness computations
of \cref{alg:completion}.  These are performed by separate
programs of \cite{Repository}, which read nothing of the
certificate beyond the matrices, or the matrix $T$ reconstructed
from them.

The repository \cite{Repository} provides a test program that
alters temporary copies of two stored certificates, one from the
$p=3$ collection and one from the $p=5$ collection.  Five of
its alterations change a stored arithmetic datum: the character
vector, the normalized automorphism, the multiplicity of an
entry, a norm-class vector, and the absolute defining polynomial
are changed separately; the remaining alterations malform the
file itself or alter the embedded expected tensor.  The program requires the verifier to reject
each altered copy with the message of the check that guards the
altered object, and it records the outcomes.  These rejections are
a check on the implementation, not on the mathematics.  They confirm
that the implementation ties each altered object to the stated
character, field, and norm computation, rather than checking the
displayed equations in isolation.

The verification uses PARI's exact arithmetic for number fields,
Galois actions, ideals, and residue fields, and no floating-point
comparison enters it.  The results of this paper are unconditional in the
following sense.  A generalized Riemann hypothesis could enter the
chain at exactly one point, namely the class- and unit-group
computation for the base field $K$, whose initial run uses
analytic bounds.
\textup{(V1)} removes this dependence by establishing the
class group unconditionally before any entry is checked.  No other
step of the verifier rests on an unproved hypothesis.  After
\textup{(V1)}, only exact relative-field, Galois, ideal, and
finite-field arithmetic is used, and everything after the verifier
is finite linear algebra over $\Fp$.  The searches are free to use
conditional or floating-point computations
(\cref{subsec:searches}); their output enters only as data.  What remains assumed is the correctness of
PARI's exact arithmetic, not any unproved hypothesis of number
theory.

\subsection{The oracles: finding the class fields and the norm
witnesses}\label{subsec:searches}

The verifier checks the stored class fields, normalized
automorphisms, and norm witnesses of
\cref{subsec:construct-verify}, but it does not construct them.
The class fields and the norm witnesses are found by searches and
then written into the certificate; the normalized automorphisms
are computed from the class fields.
The searches play the role of oracles.  They may propose data
obtained by any method whatsoever, because every claim the proofs
draw from a certificate is re-established by the verifier from
the stored data alone.  A defective oracle can fail to produce a
certificate that verifies, but cannot make a false claim pass.  Nothing in this paper depends on how
the oracles work, but this subsection records how they did.

The constructive part was carried out with Eric Ahlqvist's
\texttt{Massey-pari} program, which grew out of the computations
accompanying Ahlqvist--Carlson \cite{AhlqvistCarlson}.  We extended
it so that the character $x$ can be prescribed and the resulting
secondary norm operator is returned in fixed class-group
coordinates.

The class fields were first obtained by the Kummer-theoretic
route, which computes the class group and unit group of
$K(\zeta_p)$ and obtains $L_x$ from the ray class-field routines.
Its cost is dominated by $\Cl(K(\zeta_p))$.  It produced the
whole $p=3$ collection; at $p=5$ it carried the fields of
smallest absolute discriminant and then became impracticable.

At that point the searches were delegated to large language
models used as coding agents.  The oracle principle is what made
the delegation safe.  The models proposed and implemented an
analytic construction of the class fields: period sums of
Schertz-corrected double-eta values over the cosets of the
character kernel, followed by exact recognition of the defining
polynomial \cite{EngeSchertz,AtkinMorain}.  This is a
complex-analytic, floating-point computation.  Its rounded output,
the integral defining polynomial, is checked exactly against the
prescribed data (degree, ramification, and Artin symbols)
before a certificate entry is written.  With the degree
parametrized, this route carried the remainder of the $p=5$ range
and all three fields at $p=7$.

Finding a norm witness (\cref{def:norm-witness}) comes down to
solving one equation in $\Cl(L_x)$ and determining one unit of
$\mathcal O_{L_x}$, through PARI's class-group and unit-group
routines in these fields of degree $2p$.  For the third
field at $p=7$, with $D_K=-931506071$, a single such computation in
degree $14$ was infeasible even so.  Here the models located the norm-relation
method of
Biasse--Fieker--Hofmann--Page \cite{BiasseFiekerHofmannPage},
implemented it for the $D_7$-extensions in the Hecke system
\cite{Hecke}, and coordinated
the PARI and Hecke computations field by field.  The resulting
data enter the same certificate format and are verified like
every other.

The models' programs and choices are not part of the argument;
only their verified output is.

\subsection{Software and the public verification procedure}
\label{subsec:software}

\Cref{subsec:certificate-verification} isolated the one assumption on
which the verification rests, the correctness of PARI's exact
arithmetic.  In the released software that assumption is
PARI~2.17.4 \cite{PARI}, built with the one-file patch described
below, which removes three \texttt{static} qualifiers and alters no
computation.
Every arithmetic claim encoded in an individual certificate is checked
from that certificate alone.  The remaining claims are finite
computations from the resulting matrices and tensors, whose programs
and records are listed below.  All paths
quoted in this subsection refer to the archived version of
\cite{Repository}.

The table lists the top-level components of \cite{Repository} that
carry evidence, with the role each plays in the argument.
\begin{longtable}{@{}>{\raggedright\arraybackslash}p{0.22\textwidth}@{\quad}>{\raggedright\arraybackslash}p{0.33\textwidth}@{\quad}>{\raggedright\arraybackslash}p{0.31\textwidth}@{}}
\toprule
path & contents & role in the argument\\
\midrule
\endhead
\texttt{certificates/} & one collection per prime and one directory
per field, each with the data file \texttt{certificate.gp}; at $p=3$
some directories contain in addition a file of proven prime factors, at
$p=7$ the search matrices and transversality protocols & the stored
data of \cref{def:certificate}, on which
\textup{(V1)}--\textup{(V3)} are decided; the factor files only
speed up the factorizations, each factor being proved prime before
it is used\\
\addlinespace
\texttt{verifier/} & \texttt{verify\_certificate.c}, its
\texttt{Makefile}, the rejection tests \texttt{test\_rejections.py},
and their recorded outcomes & the exact arithmetic check of
\cref{def:verifies}, with the negative tests of
\cref{subsec:certificate-verification}\\
\addlinespace
\texttt{records/} & result records with run logs, source tensors,
strong-freeness verdicts with their second-engine crosschecks, the
criterion reports on the Bockstein cone at $p=3$, the exhaustive
Anick scans over $\Fthree$ and $\Fnine$, the
transversality records at $p=5$, Part~W7 with the substitution, the
coefficient field and the high terms for each field decided after a
change of variables, a standalone verifier for these completions,
and the records of the field of rank four &
the provenance of the search stage, the redundancy data compared
against the certificates, and the outcome protocols of the finite
computations\\
\addlinespace
\texttt{tools/} & the harnesses that run the verifier over a
collection, the drivers of the finite computations, the PARI build
script, and the generator of the human-readable companion of a
certificate & implements the finite computations of
\cref{subsec:rank-one-computation}, \cref{alg:transverse}, and
\cref{alg:completion}; the provenance of their matrix and tensor
inputs and the crosschecks are recorded under \texttt{records/}\\
\addlinespace
\texttt{src/}, \texttt{headers/} & the C search program, and the
declarations of the three PARI routines named below & produces the
stored data and takes no part in checking it, apart from the
declaration header, which the verifier includes as well\\
\addlinespace
\texttt{cm-constructor/}, \texttt{parallelization/} & the
complex-multiplication construction of the class fields, and the
per-character drivers used for the most expensive fields & further
programs of the search stage (\cref{subsec:searches}), on which no
proof depends\\
\addlinespace
\texttt{census/} & the discriminant list of each prime, the
provenance of the class-group tables, and a comparison against an
independent scan & the completeness of the range $|D_K|<2^{30}$\\
\addlinespace
\texttt{worksheets/} & Parts W1--W4 as one document, the tables W5
and W6 as documents of their own & human-readable restatements of
computations established elsewhere, no step of a proof\\
\addlinespace
\texttt{tests/} & regression tests of the discriminant scanner and of
the tools of the finite computations & checks on those
implementations, not on the mathematics\\
\addlinespace
\texttt{doc/}, \texttt{REPRODUCING.md} & the one-file PARI patch with
its rebuild steps; the principal build and rerun commands &
the software environment, and the reproduction of everything beyond
the minimal procedure below\\
\addlinespace
\texttt{MANIFEST.sha256} & one SHA-256 line per other tracked release
file &
identifies byte for byte the files to which the paths above refer\\
\bottomrule
\end{longtable}

The released verifier requires PARI~2.17.4.  Besides its exact
number-field, ideal, and finite-field arithmetic, the checks of
\textup{(V2)} use the internal relative-field routines
\texttt{rnfcycaut}, \texttt{allauts}, and \texttt{cyclicrelfrob},
which stock PARI~2.17.4 does not export.  None of the three computes
a relative class group or unit group.  The one-file patch that
exposes them is documented in \texttt{doc/}, and the script
\texttt{tools/build-patched-pari.sh} applies it and installs the
result into a private prefix.  From the repository root the minimal
public procedure consists of four commands.
\begin{verbatim}
tools/build-patched-pari.sh
make -C verifier PARI=$HOME/.local
verifier/verify_certificate certificates/p5/K-2800905-p5/certificate.gp
make -C verifier PARI=$HOME/.local check
\end{verbatim}
The build script ends by reporting \texttt{exported patch
symbols:\ 3 of 3}, and it stops if fewer than three are exported.  The second
command produces the verifier executable in \texttt{verifier/}.
The third prints one line per certificate entry, carrying the outcome
of every check of \cref{def:verifies} and the recomputed norm class,
then the six matrices and the reconstructed matrix $T$, and ends by
printing \texttt{CERTIFICATE VERIFIED}.  A failed check instead
prints \texttt{CERTIFICATE FAILURE} with the entry, where applicable,
and the failed check, and the program exits with a nonzero status.

The fourth command runs the implementation tests of
\cref{subsec:certificate-verification}.  It verifies three stored
certificates unchanged, one for each prime, and then requires the
verifier to reject fifteen altered copies of the $p=3$ and the $p=5$
fixture, eight and seven respectively.  It is a regression test on
these three fixtures, not a run over all $12\,956$ stored
certificates, which is a separate and far longer computation.
A single run of
\texttt{python3 tools/verify\_all\_certificates.py --all
--read-only} verified all $12\,956$ stored certificates in four
and a quarter hours on one core of an AMD Ryzen~9~9950X3D and
reported no failures.  A second run, on one core of an AMD
Ryzen~9~3950X, took nine hours and reported the same.

Verifying a stored certificate takes the single command above,
whereas regenerating one invokes the full search of
\cref{subsec:searches}.  Regeneration is therefore a separate task
and no part of the public procedure.  \texttt{REPRODUCING.md} of
\cite{Repository} documents it, together with the principal build and
rerun commands; the collection-specific \texttt{README}s document the
remaining computations.

Each of the $379$ strong-freeness verdicts of
\cref{sec:example-p3} is computed by two of three engines: the
implementation of \cref{alg:completion} in
\texttt{tools/strong-freeness/}, Singular's Letterplace subsystem
\cite{Singular,LaScalaLevandovskyy}, which recomputes the high terms
while the final word count is shared, and a second implementation of
\cite{Repository}.  The records under \texttt{records/p3/results/}
name the pair used for each collection, and the paired verdicts agree
on all $379$ fields.

\vspace{2\baselineskip}

\noindent\textsc{Universität Heidelberg, Institut für Mathematik,
Im Neuenheimer Feld 205, D-69120 Heidelberg, Deutschland}

\medskip

\noindent\textit{E-mail address}: \texttt{vogel@mathi.uni-heidelberg.de}

\end{document}